\numberwithin{equation}{section}
\numberwithin{figure}{section}
\renewcommand*\env@cases[1][1.2]{%
  \let\@ifnextchar\new@ifnextchar
  \left\lbrace
  \def\arraystretch{#1}%
  \array{@{\,}c@{\ }l@{}}%
}
\newcommand{\R}{\mathbb{R}}
\newcommand{\N}{\mathbb{N}}
\mathchardef\emptyset="001F
\newtheorem{theorem}{Theorem}[section]
\newtheorem{lemma}[theorem]{Lemma}
\newtheorem{remark}[theorem]{Remark}
\newtheorem{definition}[theorem]{Definition}
\newtheorem{proposition}[theorem]{Proposition}
\newtheorem{example}[theorem]{Example}
\newtheorem{notation}[theorem]{Notation}
\newtheorem{hypothesis}[theorem]{Hypothesis}
\newcommand{\eps}{\varepsilon}
\newcommand{\epsk}{{\varepsilon_k}}
\newcommand{\tauk}{{\tau_k}}
\newcommand{\weakto}{\rightharpoonup} 
\newcommand{\aein}{\text{a.e.\ in }}
\newcommand{\down}{\downarrow}
\newcommand{\weaksto}{\overset{*}{\rightharpoonup}}
\newcommand{\AC}{\mathrm{AC}}
 \def\calE{{\mathcal E}} \def\calF{{\mathcal F}}
\def\calM{{\mathcal M}}  
  \def\calR{{\mathcal R}}
\def\calS{{\mathcal S}}  \def\calU{{\mathcal U}}
\def\calV{{\mathcal V}} \def\calW{{\mathcal W}} 
 \def\calZ{{\mathcal Z}}
\def\rmd{{\mathrm d}} \def\rme{{\mathrm e}}
 \def\rmB{{\mathrm B}} \def\rmC{{\mathrm C}}
\def\rmD{{\mathrm D}}  
 \def\rmH{{\mathrm H}} 
  \def\rmL{{\mathrm L}}
\def\rmM{{\mathrm M}} \def\rmN{{\mathrm N}} 
  \def\rmR{{\mathrm R}}
\def\rmV{{\mathrm V}}
\def\FG{\mathbf}
 \def\bfQ{{\FG Q}} 
  \def\bfU{{\FG U}}
 \def\bfZ{{\FG Z}}
\def\BS{\boldsymbol} 
          \def\bftheta{{\BS\theta}}
\def\bfPsi{{\BS\Psi}}
\newcommand{\dd}{\:\,\!\mathrm{d}} 
\renewcommand{\rmd}{\mathrm{d}} 
\newcommand{\pairing}[4]{ \sideset{_{ #1 }}{_{ #2 }}  {\mathop{\langle #3 , #4
\rangle}}}
\newcommand\GC{\Gamma_{\rmC}}
\newcommand\GDir{\Gamma_{\rmD}}
\newcommand\GNeu{\Gamma_{\rmN}}
\newcommand{\teta}{\vartheta}
\definecolor{violet}{rgb}{0.4,0,0.9}
\definecolor{ddcyan}{rgb}{0,0.1,0.9}
\definecolor{dcyan}{rgb}{0,0.4,0.7}
\definecolor{ddmagenta}{rgb}{0.8,0,0.8}
\definecolor{lmagenta}{rgb}{0.7,0,0.6}
\definecolor{vgreen}{rgb}{0.1,0.5,0.2}
\definecolor{dred}{rgb}{.8,0,0}
\definecolor{Turk}{rgb}{0,0.7,0.4}
\definecolor{purpler}{rgb}{0.4,0,0.9}
\newcommand{\piecewiseConstant}[2]{\overline{#1}_{\kern-1pt#2}}
\newcommand{\upiecewiseConstant}[2]{\underline{#1}_{\kern-1pt#2}}
\newcommand{\piecewiseLinear}[2]{{#1}_{\kern-1pt#2}}
\newcommand{\piecewiseVariational}[2]{\tilde{#1}_{\kern-1pt#2}}
\newcommand{\foraa}{\text{for a.a. }}
\newcommand{\sfr}{\mathsf{r}}
\newcommand{\sfs}{\mathsf{s}}
\newcommand{\sft}{\mathsf{t}}
\newcommand{\sfu}{\mathsf{u}}
\newcommand{\sfq}{\mathsf{q}}
\newcommand{\sfx}{\mathsf{x}}
\newcommand{\sfz}{\mathsf{z}}
\newcommand{\Spq}{\mathbf{Q}}
\newcommand{\Iof}{\mathbf{S}}
\newcommand{\eneq}[2]{\mathcal{E}(#1,#2)}
\newcommand{\ene}[3]{\mathcal{E}(#1,#2, #3)}
\newcommand{\en}[2]{\mathcal{E}(#1,#2)}
\newcommand{\frsub}[4]{\partial_{\mathsf{#1}}\calE (#2,#3, #4)}
\newcommand{\frsubq}[3]{\partial_{\mathsf{#1}}\calE (#2,#3)}
\newcommand{\frname}[1]{\partial_{\mathsf{#1}}}
\newcommand{\subl}[1]{\mathcal{S}_{#1}}
\newcommand{\pet}[2]{\partial_t \mathcal{E}(#1,#2)}
\newcommand{\domene}[1]{\mathrm{D}_{\mathsf{#1}}}
\newcommand{\domq}{\mathrm{D}}
\newcommand\JUMP[1]{\mathchoice
                  {\big[\hspace*{-.3em}\big[#1\big]\hspace*{-.3em}\big]}
                   {[\hspace*{-.15em}[#1]\hspace*{-.15em}]}
                   {[\![#1]\!]}
                   {[\![#1]\!]}}
\newcommand{\Spu}{\bfU}
\newcommand{\Spw}{\Spu_{\mathrm{e}}}
\newcommand{\Spz}{\bfZ}
\newcommand{\Spx}{\Spz_{\mathrm{e}}}
\newcommand{\Spy}{\Spz_{\mathrm{ri}}}
\newcommand{\disv}[1]{\calV_{\mathsf{#1}}}
\newcommand{\sigmav}[1]{\sigma_{\mathsf{#1}}}
\newcommand{\sigmave}[2]{\sigma_{\mathsf{#1}, #2}}
\newcommand{\disve}[2]{\calV_{\mathsf{#1}}^{#2}}
\newcommand{\conj}[1]{\mathcal{W}_{\mathsf{#1}}^*}
\newcommand{\SetG}[3]{\mathscr{G}^{#1}[#2,#3]}
\newcommand{\la}{\langle}
\newcommand{\ra}{\rangle}
\newcommand{\norm}[2]{\| #1\|_{#2}}
\newcommand{\BV}{\text{BV}}
\newcommand{\Var}{\mathrm{Var}}
\newcommand{\pBV}{\text{pBV}}
\newcommand{\Varname}[1]{\mathrm{Var}_{#1}}
\newcommand{\Variq}[4]{\Var_{#1}(#2;[#3,#4])}
\newcommand{\wh}[1]{\widehat{#1}}
\newcommand{\bbA}{\mathbb{A}}
\newcommand{\bbB}{\mathbb{B}}
\newcommand{\bbC}{\mathbb{C}}
\newcommand{\bbG}{\mathbb{G}}
\newcommand{\bsA}{\boldsymbol{A}}
\newcommand{\bsJ}{\boldsymbol{J}}
\newcommand{\bsD}{\boldsymbol{D}}
\newcommand{\bsC}{\boldsymbol{C}}
\newcommand{\bbD}{\mathbb{D}}
\newcommand{\meq}[6]{\mathfrak{M}_{#1}^{#2}(#3,#4, #5,#6)} 
\newcommand{\mename}[2]{\mathfrak{M}_{#1}^{#2}}
\newcommand{\mredq}[6]{\mathfrak{M}_{#1}^{#2,\mathrm{red}}(#3,#4, #5,#6)}
\newcommand{\mredname}[2]{\mathfrak{M}_{#1}^{#2,\mathrm{red}}}
\newcommand{\slov}[3]{\mathscr{S}_{\mathsf{#1}}^*(#2,#3)}
\newcommand{\slovname}[1]{\mathscr{S}_{\mathsf{#1}}^*}
\newcommand{\argminSlo}[3]{\mathfrak{A}_{\mathsf{#1}}^{*}(#2,#3)}
\newcommand{\RIS}{(\Spu {\times} \Spz,\calE,\calV_{\mathsf{u}}^{\epsalpha}%
            \!{+}\calR{+}\calV_{\mathsf{z}}^{\eps})_{\eps\down0}}
\newcommand{\tht}[1]{\teta_{\mathsf{#1}}}
\newcommand{\tetaopt}{\serifTeta^{\mathrm{opt}}}
\newcommand{\Ctc}{\Sigma}
\newcommand{\thn}[1]{\lambda_{\mathsf{#1}}}
\newcommand{\rgs}[2]{\mathrm{#1}_{\mathsf{#2}}}
\newcommand{\Reg}[3]{\mathrm{R}(#1,#2;#3)}
\newcommand{\llim}[2]{{#1}(#2^-)}
\newcommand{\rlim}[2]{{#1}(#2^+)}
\newcommand{\costq}[4]{\mathrm{cost}_{#1}(#2;#3,#4)}
\newcommand{\costname}[1]{\mathrm{cost}_{#1}}
\newcommand{\admtcq}[3]{\mathcal{A}_{#1}(#2,#3)}
\newcommand{\sfS}{\mathsf{S}}
\newcommand{\vertiii}[1]{{\vert\kern-0.25ex \vert\kern-0.25ex \vert #1 
    \vert\kern-0.25ex\vert\kern-0.25ex\vert}}
\newcommand{\Bfun}{\mathcal{B}}
\newcommand{\Bfu}[4]{\mathcal{B}_{#1}(#2,#3,#4)}
\newcommand{\RCBfu}[7]{\mathfrak{B}_{#1}^{#2}(#3,#4,#5, #6, #7)}
\newcommand{\tMfu}[5]{\widetilde{\mathfrak{M}}_{#1}^{#2}(#3,#4,#5)}
\newcommand{\RJMF}{rescaled joint M-function}
\newcommand{\ti}{{\times}}
\newcommand{\indic}{{\boldsymbol1}}
\newcommand{\mfb}{\mathfrak b}
\newcommand{\calB}{\mathcal{B}} 
\newcommand{\bigset}[2]{\big\{\, #1\, \big| \, #2 \,\big\}}
\newcommand{\Bigset}[2]{\Big\{\: #1\: \Big| \, #2 \:\Big\}}
\newcommand{\STEP}[1]{\noindent\underline{\emph{Step #1}}}
\newcommand{\Sign}{\mathop{\mathrm{Sign}}}
\renewcommand{\uppercasenonmath}[1]{}
\newcommand{\epsalpha}{\eps^\alpha} 
\newcommand{\scrX}{X}
\newcommand{\mfB}{\mathfrak{B}}
\newcommand{\mfE}{\mathfrak{E}}
\newcommand{\mfM}{\mathfrak{M}}
\newcommand{\pl}{\partial}
\newcommand{\dk}{{\delta_k}}
\newcommand{\serifteta}{{\sansmath \theta}}
\newcommand{\serifTeta}{{\sansmath \Theta}}
\newcommand{\db}[2]{{#1}_{\mathsf{#2}}}
\newcommand{\Loptimal}[3]{\Lambda_{\mathsf{#1}}(#2,#3)}
\renewcommand\section{\@startsection{section}{1}%
  \z@{.7\linespacing\@plus\linespacing}{.5\linespacing}%
  {\normalfont\huge\scshape\centering}%
 }%
\renewcommand\subsection{\@startsection{subsection}{2}%
  \z@{.5\linespacing\@plus.7\linespacing}{-.5em}%
  {\normalfont\LARGE\scshape}}
\renewcommand\subsubsection{\@startsection{subsubsection}{3}%
  \z@{.5\linespacing\@plus.7\linespacing}{-.5em}%
  {\Large\scshape}}
\newcommand{\Section}[1]{\section{#1} \markleft{\thesection. #1}}
\newcommand{\Subsection}[1]{\subsection{#1} \markright{\thesubsection. #1}}
\begin{document}

\title[Balanced-Viscosity solutions for multi-rate systems]%
 {\Large Balanced-Viscosity solutions to\\ infinite-dimensional
         multi-rate systems\thanks{A.M. was partially supported by DFG within
           SPP\,2256 (no.\,441470105) under grant Mi\,459/9-1.}}

\author[Alexander Mielke]{\large Alexander Mielke}

\address{\upshape A.\ Mielke, Weierstra\ss-Institut f\"ur Angewandte Analysis und
Stochastik, Mohrenstr.\ 39, D--10117 Berlin and
 Institut f\"ur Mathematik, Humboldt-Universit\"at 
 zu Berlin, Rudower Chaussee 25, D--12489 Berlin (Adlershof) --  Germany\newline
\indent ORCID 0000-0002-4583-3888}
\email{alexander.mielke\,@\,wias-berlin.de}

\author[Riccarda Rossi]{\large Riccarda Rossi}

\address{\upshape R.\ Rossi, DIMI, Universit\`a degli studi di Brescia,
via Branze 38, I--25133 Brescia -- Italy}
\email{riccarda.rossi\,@\,unibs.it} 

\date{December 2, 2021}

\begin{abstract}
We consider generalized gradient systems with rate-independent and
rate-dependent dissipation potentials. We provide a general framework for
performing a vanishing-viscosity limit leading to the notion of parametrized
and true Balanced-Viscosity solutions that include a precise description of the
jump behavior developing in this limit. Distinguishing an elastic variable $u$
having a viscous damping with relaxation time $\eps^\alpha$ and an internal
variable $z$ with relaxation time $\eps$ we obtain different limits for the
three cases $\alpha \in (0,1)$, $\alpha=1$ and $\alpha>1$.  
An application to a delamination problem shows that the theory is general
enough to treat nontrivial models in continuum mechanics. 
\\[0.2em]
\textbf{Keywords:}  balanced-viscosity solution, reparametrized
solutions, energy-dissipation principle, generalized gradient systems,
delamination model.
\\[0.2em]
\textbf{MSC:} 
35Q74 
47J30 
49J40 
49J45 
49J52 
74D10 
74R99 
\end{abstract} 

\maketitle

\setlength{\leftmargini}{2em}
\setlength{\leftmargin}{1.3em}

{\small\tableofcontents}

\subsubsection*{List of symbols}
\mbox{}\\[-1.7em]
{\small 
\begin{center}\bigskip
\begin{longtable}{lll}
$\Spu, \Spz$  &  state spaces & \eqref{intro-state_spaces}  
\smallskip
\\
$\Spq = \Spu \ti \Spz$  & overall state space & 
\smallskip
\\
$  \Spw \subset \Spu,   \  \Spx \Subset \Spz $  & energy spaces & Hyp.\ \ref{hyp:setup}
\smallskip
\\
$\Spy \supset \Spz$  & space for $1$-homogeneous dissipation potential  &  Hyp.\  \ref{hyp:setup}
\smallskip
\\
$\calR: \Spy \to [0,\infty) $& $1$-homogeneous dissipation potential &  Hyp.\ \ref{hyp:diss-basic}
\smallskip
\\
$\disv u :  \Spu \to [0,\infty)$, \ $\disv z :  \Spz \to [0,\infty)$ & viscous dissipation potentials &  Hyp.\  \ref{hyp:diss-basic}
\smallskip
\\
 $\disv x^*: \mathbf{X}^* \to [0,\infty)$, $ \mathbf{X} \in \{ \Spu, \Spz\}$
 & Legendre-Fenchel conjugate of   $\disv x$  for $\mathsf{x} \in \{ \sfu, \sfz\} $ 
 & Def.\ \ref{def:DissPotential}
\smallskip
\\
$\conj z: \Spz^* \to [0,\infty)$ & conjugate of $\calR{+}\disv z$ & \eqref{def:conj}
\smallskip
\\
$\disve x{\lambda}, \ \mathsf{x} \in \{ \sfu,\sfz\}, \ \lambda \in (0,\infty) $ & rescaled  viscous dissipation potentials & \eqref{eq:Def.Vx.lambda}
\smallskip
\\
$\Psi_{\eps,\alpha} = \disve u{\eps^\alpha} +  \calR+  \disve z{\eps}$ & overall viscous potential  & \eqref{eq:def.Psi.e.a}
\smallskip
\\
$\calE: [0,T]\times \Spu \ti \Spz \to (-\infty,+\infty]$ & driving energy functional &  Hyp.\  \ref{hyp:1}
\smallskip
\\
$\mathcal{S}_E$, $E>0$ & energy sublevels & \eqref{Esublevels}
\smallskip
\\
$\pl_q \calE$ & Fr\'echet subdifferential of $\calE(t,\cdot)$ & \eqref{Frsubq}
\smallskip
\\
  $\slovname{x}: [0,T]\ti \domq \to [0,\infty] $
 & generalized slope functional for $\mathsf{x} \in \{ \sfu, \sfz\} $ 
 & \eqref{def:GeneralSlope}
\smallskip
\\
 $\argminSlo xtq, \   (t,q)\in [0,T]\ti\domq$  
 & set of minimizers for the slope $ \slov {x}tq, \ \mathsf{x} \in \{ \sfu, \sfz\}$  & \eqref{not-empty-mislo}
 \smallskip
 \\
  $  \SetG\alpha tq, \  (t,q)\in [0,T]\ti\domq  $  &  sets of  positivity for
  the slopes at $(t,q)$ &     \eqref{setGalpha}   
 \smallskip
 \\
$\calB_\psi$ & B-function associated with a dissipation potential $\psi$  &
\eqref{Bfunctions-2} 
\smallskip
\\
$\mfb_\psi$ & vanishing-viscosity contact potential assoc.\ with $\psi$ & \eqref{bipotentials-1}
\smallskip
\\
$\mfB_\eps^\alpha$, $\eps\geq 0$,  & rescaled joint B-function & \eqref{eq:def.B.al.eps}
\smallskip
\\
$\mfM_\eps^\alpha$, $\eps >0$, & rescaled joint M-function & \eqref{def-Me}
\smallskip
\\
$\mfM_0^\alpha$ & (limiting) rescaled joint M-function & \eqref{mename-0}
\smallskip
\\
$\mfM_0^{\alpha, \mathrm{red}}$ & reduced rescaled joint M-function &  \eqref{decomposition-M-FUNCTION} 
\smallskip
\\
$\mathscr{A}([a,b];[0,T]\ti \Spq)$ &   admissible parametrized curves from $[a,b]$ to $[0,T]\ti \Spq$ &   Def.\ \ref{def:adm-p-c}
\\
$ \admtcq{t}{q_0}{q_1} $ & admissible transition curves betw.\ $q_0$ and $q_1$ at time $t$  &  Def.\ \ref{def:adm-p-c}
\\
 $\Sigma_\alpha$ &   contact set &  \eqref{ctc-set}
\\
 $\rgs Au \rgs Cz  = \rgs Au \cap \rgs Cz$&  evolution regimes with $  
   \mathrm{A \in \{E,V,B\}} \text{ and }  \mathrm{C \in \{R,V,B\} }$ & \eqref{regime-sets}
\\ 
 $\Varname \calR$ &   $\calR$-variation &   \eqref{R-variation}
\\
 $  \mathrm{J}[q] $ &   jump set of a true $\BV$ solution &  Def.\ \ref{def:jumpset}
\\
$  \costname{\mename 0\alpha}$ &  Finsler cost induced by  $ \mename 0\alpha$   &  \eqref{Finsler-cost}
\\
 $  \Varname{\mename 0\alpha} $ &  total variation induced by $\mename 0
\alpha$ &  \eqref{Finsler-variation} 
 \end{longtable}
\end{center}
} 

\vspace{-1em}

\Section{Introduction}
\label{se:Intro}

In this paper we address rate-independent  limits of viscous evolutionary
systems that are motivated by applications  in solid mechanics.  These
systems  can be described in terms of two variables $u\in \Spu $ and $
z\in \Spz$; throughout, we shall assume that the state spaces 
\begin{equation}
\label{intro-state_spaces}
\text{$\Spu$ and $\Spz$ are (separable) reflexive Banach spaces.}
\end{equation}
Typically, $u$ is the displacement, or the deformation of the body, whereas $z$
is an internal variable specific of the phenomenon under investigation, in
accordance with the theory of \emph{generalized standard materials}, see
\cite{HalNgu75MSG}.

\Subsection{Rate-independent systems}
\label{su:RIS} 

Under very slow loading rates, one often assumes that  $u$ satisfies a
\emph{static} balance law that  arises as Euler-Lagrange equation from
minimizing  the energy functional $\calE$ with respect to $u$. The 
evolution of $z$ is governed by a (doubly nonlinear) subdifferential inclusion
featuring the $z$-derivative of the energy and the  viscous force in form
of the subdifferential $\pl \calR$ of a  dissipation potential $\calR$:
\begin{subequations}
\label{grad-structure}
\begin{align} & 
\label{grad-structure-u}
\rmD_u \ene t{u(t)}{z(t)} =0   && \text{ in } \Spu^*,  && t \in (0,T), 
\\
\label{grad-structure-z}
\mbox{}\qquad\pl  \calR(z'(t)) + {}& 
\rmD_z \ene t{u(t)}{z(t)} \ni 0  && \text{ in } \Spz^*, &&  t \in (0,T)\,. 
\qquad\mbox{}
\end{align}
\end{subequations}
 If $\calR : \Spz \to [0,\infty]$ is positively homogeneous of degree $1$,
i.e.\ $\calR(\lambda z')=\lambda \calR(z')$ for all $\lambda>0$, then the
system \eqref{grad-structure} is called \emph{rate-independent}, and the triple 
$(\Spu\ti\Spz,\calE,\calR)$ is called a rate-independent system, cf.\
\cite{MieRouBOOK}.

Here, $\pl  \calR: \Spz \rightrightarrows \Spz^*$ denotes  the subdifferential
of convex analysis for  the nonsmooth functional $\calR$, whereas,
throughout this introduction, for simplicity we will assume that the (proper)
energy functional $\calE: [0,T]\ti \Spu \ti \Spz\to (-\infty,\infty]$, 
which is  smooth
with respect to\ time, is additionally smooth with respect to\ both variables $u$ and $z$.
System \eqref{grad-structure} reflects the ansatz that energy is dissipated
through changes of the internal variable $z$ only: in particular, the doubly
nonlinear evolution inclusion \eqref{grad-structure-z} balances the dissipative
frictional forces from $\pl  \calR(z')$ with the restoring force
$\rmD_z \ene t{u}{z}$.  Despite the assumed smoothness of
$(u,z) \mapsto \ene tuz$, system \eqref{grad-structure} is only formally
written: due to the $1$-homogeneity of $\calR$, one can in general expect only
$\BV$-time regularity for $z$. Thus $z$ may have jumps as a function of time
and the pointwise derivative $z'$ in the subdifferential inclusion
\eqref{grad-structure-z} need not be defined. This has motivated the
development of various weak solution concepts for system
\eqref{grad-structure}.
\par
\emph{Energetic  solutions} were advanced in the late '90s in
\cite{MieThe99MMRI, MiThLe02, MieThe04RIHM} for \emph{abstract}
rate-independent systems, and in the context of phase transformations in
solids.
In the realm of crack propagation, an analogous notion of evolution was
pioneered in \cite{Francfort-Marigo98} and later further developed in
\cite{DM-Toa2002} with the concept of `quasistatic evolution'. Due to its
flexibility, the energetic concept has been successfully applied to a wide
scope of problems, see e.g.\ \cite{MieRouBOOK} for a survey. 

\Subsection{The vanishing-viscosity approach}
\label{su:I.VVA}
However,  it has been observed that the energetic notion may fail to provide a
feasible description of the system behavior at jumps, in the case of a
\emph{nonconvex} driving energy.  This fact has motivated the introduction of
an alternative weak solvability concept, first suggested in \cite{EfeMie06RILS}
and based on the vanishing-viscosity regularization of the rate-independent
system as a selection criterion for mechanically feasible weak solutions.  In
the context of system \eqref{grad-structure}, this `viscous regularization'
involves a second (lower semicontinuous, convex) dissipation potential
$ \disv z : \Spz \to [0,+\infty)$, with superlinear growth at infinity; to fix
ideas, we may think of a \emph{quadratic} potential.  The vanishing-viscosity
approach then consists in performing the asymptotic analysis of solutions to
the \emph{rate-dependent} system
\begin{subequations}
\label{naive-vanvisc}
\begin{align} 
\label{naive-van-visc-u}
&\rmD_u \ene t{u(t)}{z(t)} =0   && \text{ in } \Spu^*,  && t \in (0,T), 
\\
\label{naive-van-visc-z}
\mbox{}\qquad\pl  \calR(z'(t)) +  \pl  \disv z (\eps z'(t))  +{}
& 
\rmD_z \ene t{u(t)}{z(t)} \ni 0  && \text{ in } \Spz^*, &&  t \in (0,T)\,,
\qquad\mbox{}
\end{align}
\end{subequations}
as the \emph{viscosity} parameter $\eps \to 0^+$.  System \eqref{naive-vanvisc}
now features \emph{two rates}: in addition to that of the external loading,
scaling as $\eps^0=1$, the internal rate of the system, set on the faster scale
$\eps$, is revealed.  In diverse (finite-dimensional, infinite-dimensional, and
even metric) setups, cf.\ \cite{EfeMie06RILS,MRS09,MRS12,Mielke-Zelik,MRS13}
 (see also \cite{KZ21}  and \cite{RiScVe21TSSN}), 
solutions to the `viscous system' have been shown to converge to a
different type of solution of \eqref{grad-structure}, which we shall
refer to as \emph{Balanced-Viscosity} solution, featuring a better description
of the jumps of the system.  In parallel, the vanishing-viscosity approach has
proved to be a robust method in manifold applications, ranging from plasticity
(cf.\ e.g.\ \cite{DalDesSol11, BabFraMor12, FrSt2013}), to fracture
\cite{KnMiZa08ILMC, Lazzaroni-Toader, Negri14}, damage and fatigue \cite{KRZ13,
  Crismale-Lazzaroni,ACO2018}, and to optimal control \cite{SteWachWach2017} to
name a few.

This paper revolves around a different, but still of \emph{vanishing-viscosity
  type}, solution notion for system \eqref{grad-structure}. Indeed, we are
going to regularize it by considering a viscous approximation of
\eqref{grad-structure-u}, besides the viscous approximation
\eqref{naive-van-visc-z} of \eqref{grad-structure-z}.  Therefore, we will
address the asymptotic analysis as $\eps\to 0^+$ of the system of doubly
nonlinear differential inclusions
\begin{subequations}
\label{van-visc-intro}
\begin{align}
\label{van-visc-intro-u}
&
\pl  \disve{u}{\epsalpha}(u'(t)) +\rmD_u \ene t{u(t)}{z(t)}  
 \ni 0 && \text{ in } \Spu^*  && \foraa\, t \in (0,T), 
\\
&
\label{van-visc-intro-z}
\pl  \calR (z'(t)) + \pl  \disve{z}{\eps}(z'(t)) +\rmD_z 
\ene t{u(t)}{z(t)}   \ni 0  &&  \text{ in } \Spz^*  && \foraa\, t \in (0,T), 
\end{align}
\end{subequations}
where  for $\mathsf x\in \{\mathsf u, \mathsf z\}$ we have set
\begin{subequations}
  \label{eq:Def.Vx.la}
\begin{equation}
  \label{eq:Def.Vx.lambda}
  \disve x{\lambda}(w): = \frac{1}{\lambda} \disv x (\lambda w) \text{ for }
  \lambda\in (0,\infty) \  \text{ and } \ \disve x{\infty} (w): = 
  \begin{cases} 0 &\text{for } w=0, \\ \!\infty &\text{for }w\neq 0\end{cases}
\end{equation}
 (the functional $\disve x{\infty}$ will indeed come into play later on, cf.\ 
\eqref{e:diff-char-intro}). 
Throughout we assume that $\disv x$ satisfies $\disv x(0)=0$, \ $\pl \disv
x(0)=\{0\}$, and has superlinear growth, which implies that $\disve x0$ and $\disve
x\infty$ are indeed the Mosco limits of $\disve x\lambda$ for
$\lambda\to 0^+$ and $\lambda \to \infty$, respectively. We will use that the
subdifferentials take the form
\begin{equation}
  \label{eq:SubDiff.Vx}
  \pl  \disve x{\lambda}(w)=\pl \disv x(\lambda w)  
  \text{ for }\lambda\in [0,\infty)  \ \text{ and } \ 
  \pl  \disve x{\infty}(w)=
    \begin{cases}  \mathbf{X}^*  &\text{for } w=0, \\   
               \emptyset &\text{for }w\neq 0.\end{cases}
\end{equation}
\end{subequations}

The parameter $\alpha$ in \eqref{van-visc-intro-u} determines which of the two
variables $u$ and $z$ relaxes faster to \emph{equilibrium} and
\emph{rate-independent} evolution, respectively.  Hence, following
the finite-dimensional work \cite{MRS14}  we shall refer to
\eqref{van-visc-intro} as a \emph{multi-rate} system, with the time scale
$\eps^0=1$ of the external loading and the (possibly different) relaxation
times $\eps$ and $\eps^\alpha$ of the variables $z$ and $u$.

From a broader perspective, with our analysis we aim to contribute to the
investigation of \emph{coupled} rate-dependent/rate-independent phenomena, a
topic that has attracted some attention over the last decade.  In this
connection, we may mention the study of systems with a \emph{mixed}
rate-dependent/rate-independent character (typically, a rate-independent flow
rule for the internal variable coupled with the momentum balance, with
viscosity and inertia, for the displacements, and possibly with the heat
equation),  see the series of papers by T.\ Roub\'{\i}\v{c}ek
\cite{Roub08, Roub10TRIP, Roub-PP, Roubicek-defect,
  Roub-Tomassetti},  among others. There, a weak solvability notion, still of
\emph{energetic 
  type}, was advanced, cf.\ also \cite{Rossi-Thomas-SIAM, Maggiani-Mora}.

However, unlike in those contributions, in our `modeling' approach the balanced
interplay of rate-dependent and rate-independent behavior does not stem from
coupling equations with a rate-dependent and a rate-independent character.  Instead, it
emerges through the asymptotic analysis as $\eps \to 0^+$ of the `viscous'
system \eqref{van-visc-intro}, which leads to a solution of the
rate-independent one \eqref{grad-structure} that is `reminiscent of viscosity',
\emph{in both variables} $u$ and $z$, in the description of the system behavior
at jumps.  This `full' vanishing-viscosity approach, also involving the
displacement variable $u$, has been already carried out for a model for
fracture evolution with pre-assigned crack path in \cite{Racca}, as well as in
the context of perfect plasticity \cite{DMSca14QEPP,Rossi2018} and delamination
\cite{Scala14}.  With different techniques, based on an alternating
minimization scheme, the emergence of viscous behavior both for the
displacement and for the internal variable is demonstrated in
\cite{Knees-Negri} for a phase-field type fracture model.

In this mainstream, in \cite{MRS14} we have addressed the vanishing-viscosity
analysis of \eqref{van-visc-intro} in a preliminary {finite-dimensional}
setting, with $\Spu = \R^n$ and $\Spz = \R^m$, and for a {smooth} energy
$\calE \in \rmC^1([0,T]{\ti}\R^n{\ti}\R^m)$, with the aim of emphasizing
the role of viscosity in the description of the jump behavior of the limiting
rate-independent system.  Even in this significantly simplified setup, the
analysis in \cite{MRS14} conveyed how the balanced interplay of the different relaxation
rates in \eqref{van-visc-intro} enters in the description of the jump dynamics
of the rate-independent system. In particular, it showed that viscosity in $u$
and viscosity $z$  determine the jump transition path 
in different ways depending on whether the parameter $\alpha$ is
strictly bigger than, or equal to, or strictly smaller than $1$.

The aim of this paper is to thoroughly extend the results from \cite{MRS14} to
an \emph{infinite-dimensional} and \emph{non-smooth} setting, suited for the
application of this vanishing-viscosity approach to models in solid mechanics.
What is more, we will also broaden the analysis in \cite{MRS14},  which is
 confined to the case of \emph{quadratic} `viscous' dissipation potentials,
to a fairly general class of potentials $\disv u$ and $\disv z$.

\Subsection{Our results}
\label{ss:1.1}

Throughout most of this paper, we will confine the discussion to the
\emph{abstract rate-independent} system $\RIS$ arising in the
vanishing-viscosity limit of \eqref{van-visc-intro}.  The notation looks a bit
extensive, but has the advantage of emphasizing the dependence of the solution
concept on the energy functional $\calE$, the three different types of
dissipation $\disv u$, $\calR$, and $\disv z$, and the parameter
$\alpha>0$. This also explains the name ``Balanced-Viscosity solution'' that
suggests the appearance of the viscous effects by balancing the influence of
$\calR$, $\disv x$, and $\disv z$ in such a way that the energy-dissipation
balance remains true. Of course, using the abbreviation ``BV solution'' should
remind us about the fact that these solutions may not be continuous but may
have jumps as functions of time.

In our opinion, in that general framework the main ideas underlying the
vanishing-viscosity approach are easier to convey. Indeed, we aim to provide
some possible  recipes  for the application of this approach to concrete
rate-independent  limiting  processes, where of course the `abstract
techniques' may have 
to be suitably adjusted to the specific situation.   For this,  we will strive
to work in a fairly general setup,
\begin{compactenum}
\item encompassing nonsmoothness of the energies $u\mapsto \ene tuz$ and
  $z\mapsto \ene tuz$ through the usage of suitable subdifferentials
  $\frname u\calE :[0,T]\ti \Spu \ti \Spz \rightrightarrows \Spu^*$ and
  $\frname z\calE :[0,T]\ti \Spu \ti \Spz \rightrightarrows \Spz^*$ in place of
  the G\^ateau derivatives $\rmD_u \calE$ and $\rmD_z \calE$, and
\item allowing for a wide range of `viscous dissipation potentials' $\disv u$
  and $\disv z$.  In particular, we shall allow for a much broader class of
  dissipation potentials $\disv z$ than those considered in \cite{MRS13}.
\end{compactenum}

The first cornerstone of our vanishing-viscosity analysis is the observation
that the viscous system \eqref{van-visc-intro} has the structure of a
\emph{generalized gradient system}  (cf.\ \cite{Miel16EGCG}):  indeed,
it rewrites as 
\begin{equation}
\label{GGS-structure}
\pl  \Psi_{\eps,\alpha}(q'(t)) +\rmD_q \eneq t{q(t)} \ni 0 \qquad 
\text{in } \Spq^* \quad  \foraa\, t \in (0,T)
\end{equation}
with $q=(u,z)\in \Spq = \Spu\ti \Spz$ and 
\begin{equation}
 \label{eq:def.Psi.e.a}
 \Psi_{\eps,\alpha}(q')  = ( \disve u{\epsalpha}{\oplus} (\calR {+}  \disve
 z{\eps}))(q') := \disve{u}{\epsalpha}(u') + \calR(z') + \disve z{\eps}(z').
\end{equation}
In turn, \eqref{GGS-structure} can be equivalently formulated using the single
\emph{energy-dissipation balance} 
\begin{equation}
 \label{EnDissBal-intro}
   \eneq t{q(t)}+ \int_s^t \tMfu\eps\alpha {r}{q(r)}{q'(r)} \dd r 
   =  \eneq s{q(s)}+ \int_s^t \pl_t \eneq r{q(r)} \dd r  
\end{equation}
for all $0 \leq s\leq t \leq T$,  featuring the M-function 
\begin{equation}
  \label{tildeMF}
  \tMfu{\eps}{\alpha} {t}{q}{q'}: = \Psi_{\eps,\alpha} (q') +
  \Psi_{\eps,\alpha}^* ({-}\rmD_q \eneq tq) 
\end{equation}
with the Legendre-Fenchel conjugate $\Psi_{\eps,\alpha}^*$ of
$\Psi_{\eps,\alpha}$. This reformulation is often referred to
\emph{energy-dissipation principle}; the germs of this idea trace back to
E.\ De Giorgi's \emph{variational theory for gradient flows}
in \cite{Ambr95MM},  see also \cite[Prop.\,1.4.1]{AGS08} and
\cite[Thm.\,3.2]{Miel16EGCG}. In  our setup, it is based on the validity of
a suitable chain rule for $\calE$, which will be
thoroughly discussed in the sequel.  From \eqref{EnDissBal-intro} we obtain the
basic a priori estimates on a sequence $(u_\eps,z_\eps)_\eps$ of
solutions to \eqref{van-visc-intro}. Together with the additional bound
\begin{equation}
 \label{u-bound-intro}
 \int_0^T \| u_\eps'(t)\|_{\Spu} \dd t \leq C,
\end{equation} 
we are able to reparametrize the curves
$(q_\eps)_\eps = (u_\eps,z_\eps)_\eps$ by their  ``dissipation arclength''
$ \mathsf{s}_\eps: [0,T] \to [0,\mathsf{S}_\eps]$ given by
\[
 \mathsf{s}_\eps(t) : =\int_0^t \left( 1{+} \tMfu{\eps}\alpha{r}{q_\eps(t)}{q_\eps'(r)}
  {+} \| u_\eps'(r)\|_{\Spu}\right) \dd r \,.
\]
Reparametrization was first advanced in \cite{EfeMie06RILS} as a tool to
capture the viscous transition paths, at jumps, in the rate-independent limit.
With this aim, first of all we observe that, setting
$\sft_\eps: = \sfs_\eps^{-1}: [0, ,\mathsf{S}_\eps]\to [0,T]$ and
$\sfu_\eps: = u_\eps \circ \sft_\eps$, $\sfz_\eps: = z_\eps \circ \sft_\eps$,
the rescaled curves $(\sft_\eps, \sfu_\eps,\sfz_\eps )_\eps$ satisfy a
reparametrized version of \eqref{EnDissBal-intro}.  Using the first main
results of this paper presented in Theorems \ref{thm:existBV} and
  \ref{thm:exist-enh-pBV}, we are able to pass  to the limit in this
reparametrized energy balance as $\eps\to0^+$ and obtain a triple
$(\sft,\sfq) = (\sft,\sfu,\sfz): [0,\mathsf{S}]\to [0,T]\ti \Spu\ti \Spz$
satisfying the energy-dissipation balance
\begin{equation}
\label{lim-EDbal-intro}
\begin{aligned}
  & \eneq {\sft(s_2)}{\sfq(s_2)} +\int_{s_1}^{s_2} \meq
  0\alpha{\sft(s)}{\sfq(s)}{\sft'(s)}{\sfq'(s)} \dd s
  \\
  & = \eneq {\sft(s_1)}{\sfq(s_1)} + \int_{s_1}^{s_2} \pl_t \ene
  s{\sfq(s)} {\sft'(s)} \dd s \quad \text{for all } 0 \leq s_1\leq s_2 \leq
  \mathsf{S}\,,
  \end{aligned}
\end{equation}
which encodes all the information on the behavior of the limiting
rate-independent system in the expression of the `time-space dissipation
function' $\mename 0\alpha$, thoroughly investigated in Section \ref{su:ReJoMFcn}.  We
shall call a triple $(\sft,\sfu,\sfz)$ complying with \eqref{lim-EDbal-intro} a
\emph{parametrized Balanced-Viscosity} ($\pBV$, for short) solution to the
rate-independent system $\RIS$.

We highlight two main properties of this solution concept  that
follow from the special form of $\mename 0\alpha$: 
\begin{compactitem}
\item[\textbullet] When  a solution  does not jump, i.e.\ when the function
  $\sft$ of the artificial time $s$, recording the (slow) external time scale,
  fulfills $\sft'(s)>0$, the term
  $\meq 0\alpha{\sft}{\sfq}{\sft'}{\sfq'} $ is finite if and only
  if $\sfu$ is \emph{stationary} and $\sfz$ is \emph{locally stable}, i.e.\
 \[
   -\rmD_u \ene {\sft(s)} {\sfu(s)} {\sfz(s)} =0 \text{ in } \Spu^* \quad
   \text{ and } \quad -\rmD_z \ene {\sft(s)} {\sfu(s)} {\sfz(s)} \in
   \pl \calR(0) \text{ in } \Spz^*.
 \]
 Because of the local character of the second condition, the unfeasible jumps that
 may occur in `energetic solutions' via their `global stability' are
 thus avoided.
\item[\textbullet] The function $\mename 0\alpha$ in \eqref{lim-EDbal-intro}
  comprises  the contributions of the dissipation potentials  $\calR$, 
  $\disv u$ and $\disv z$ by  condensing the viscous effects into a 
  description of the  limiting jump behavior that can occur only if
  $\sft'(s)=0$,  i.e.\ the slow external time is frozen. 
  For example, if the dissipation potentials $\disv u$ and
  $\disv z$ are  $p$-homogeneous (i.e.\ $\disv x(\lambda \sfx')=\lambda^p \disv
  x(\sfx')$ for $\lambda>0$), then for $\alpha=1$ and $\sft'=0$ we have
\begin{equation}
 \label{2-homog-mename-intro}
 \begin{aligned}
&\meq 01{\sft}{(\sfu,\sfz)}{0}{(\sfu',\sfz')} = \calR(\sfz') \\
&\qquad + \widehat c_p \: \big(\disv z
(\sfz'){+} \disv u (\sfu')\big)^{1/p} \:\big( \disv u^*({-}\rmD_u 
  \ene {\sft}{\sfu}{\sfz}) {+} \conj z({-}\rmD_z \ene {\sft}{\sfu}{\sfz}) 
  \big)^{1{-}1/p} 
 \end{aligned}
\end{equation}
(see Example \ref{ex:p-homog}). The symmetric role of $\disv u $ and $\disv z$
in \eqref{2-homog-mename-intro} arises because of $\alpha=1$ and reflects the
fact that, at a jump, the system may switch to a viscous regime where
\emph{both} dissipation mechanisms intervene in the evolution of $u$ and $z$,
respectively. In contrast, for $\alpha>1$ and $\alpha<1$, the $M$-function
$\mename 0\alpha$ shows the different roles of $\disv u $ and $\disv z$, cf.\
\eqref{l:partial}.
\end{compactitem}

These features are even more apparent in the characterization of a suitable
class of $\pBV$ solutions in terms of a system of subdifferential inclusions
that has the very same structure as the original viscous system
\eqref{van-visc-intro} as provided by Theorem \ref{thm:diff-charact}.
 This result shows  that a triple
$(\sft,\sfu,\sfz) : [0,\mathsf{S}]\to [0,T]  \ti \Spu\ti \Spz$ is an
\emph{enhanced} $\pBV$ solution 
if and only if  there exist measurable
functions $\lambda_\sfu , \lambda_\sfz:[0,\mathsf{S}] \to [0,\infty]$ such that for
almost all $s\in (0,\sfS)$ we have
\begin{subequations}
\label{e:diff-char-intro}
\begin{equation}
\label{param-subdif-incl-intro}
\begin{aligned}
\pl  \disve u{\thn u(s) } (\sfu'(s)) +\rmD_u \ene {\sft(s)}{\sfu(s)}{\sfz(s)}
\ni 0 & \text{ in } \Spu^*, 
\\
\pl \calR(\sfz'(s))  + \pl  \disve z{\thn z(s)} ( \sfz'(s)) +\rmD_z \ene
{\sft(s)}{\sfu(s)}{\sfz(s)} \ni 0 & \text{ in } \Spz^*,  
\end{aligned}
\end{equation}
\begin{equation}
\label{switch-intro-1}
\sft'(s)\:\frac{\thn u(s)}{1{+}\thn u(s)} =0 \quad  \text{and} \quad
\sft'(s)\:\frac{\thn z(s)}{1{+}\thn z(s)} =0, 
\end{equation}
\begin{equation}
  \label{switch-intro-2}
  \begin{cases}
    \thn u(s) \: \dfrac{1}{1{+}\thn z(s)} =0& \text{ for }\alpha>1 , 
    \\[0.6em]
    \thn u(s) =\thn z(s) &\text{ for }\alpha=1, 
    \\[0.3em]
   \dfrac{1}{1{+}\thn u(s)}\,\thn z(s) =0& \text{ for }\alpha\in (0,1).
\end{cases}
\end{equation}
\end{subequations}
In \eqref{switch-intro-1} and \eqref{switch-intro-2} we use the obvious
conventions $\frac{\infty}{1+\infty}=1$ and $\frac1{1+\infty}=0$, respectively.
Condition \eqref{switch-intro-1} entails that the coefficients $\thn u(s)$ and
$\thn z(s)$ of the `viscous terms' in \eqref{param-subdif-incl-intro} are
allowed to be nonzero only when $\sft'(s)=0$, i.e.\ viscous behavior may
manifest itself only at jumps happening now at a fixed time $t_*=\sft(s)$ for
$s\in [s_0,s_1]$.  Conditions \eqref{switch-intro-2} reveal that the onset of
viscous effects in $u$ and/or in $z$ depends on whether $u$ relaxes to
equilibrium faster (case $\alpha>1$), with the same speed (case $\alpha =1$),
or more slowly (case $\alpha<1$), than $z$ relaxes to local stability.  In
particular, the case $\lambda_\sfx=\infty$ leads to a blocking of the variable
$\sfx\in \{\sfu,\sfz\}$, i.e.\ $\sfx'(s)=0$ and
$\pl  \disve x{\infty}(0) =\Spx^*$.  These aspects will be thoroughly
explored in Sections \ref{s:EDI} and \ref{ss:6.3-diff-charact}.

Finally, in analogy with the case of the `single-rate'
vanishing-viscosity approach developed in \cite{MRS12, MRS13}, here as well we
introduce ``true \emph{Balanced-Viscosity solutions}''  (shortly referred to as
\emph{BV solutions})  as the
\emph{non-parametrized counterpart} to $\pBV$ solutions, see Definition
\ref{def:trueBV}.  These solutions are functions of the \emph{original time
  variable} $t\in [0,T]$ and fulfill an energy balance that again encompasses
the contribution of the viscous dissipation potentials $\disv u$ and $\disv z$
to the description of energy dissipation at jump times of the solution.  We are
going to show that true $\BV$ solutions are related to $\pBV$ solutions in a
canonical way, see Theorem \ref{th:pBV.v.BVsol}.  What is more, in Theorems
\ref{thm:exist-trueBV} and \ref{thm:exist-nonpar-enh} we provide general
assumptions that guarantee that all pointwise-in-time limits of a family of
(\emph{non-parametrized}) viscous solutions $q_\epsk:[0,T]\to \Spq$, for
$\epsk \to 0^+$, is indeed a $\BV$ solution.

We emphasize that the definition of BV solutions is independent of the
vanishing-viscosity approach. This  independence   guarantees that the
solution concept is indeed stable under parameter variations in the way shown
in   \cite[Thm.\,4.8]{MRS2013} for generalized gradient systems (cf.\ also
 \cite[Thm.\ 4.2]{MiRoSa_VARRIS12}).    Otherwise,  doing the limit $\eps\to 0^+$ first and
then a parameter limit $\delta\to \delta_*$ it is not possible to show that the
obtained limit curve is a vanishing-viscosity limit for fixed $\delta_*$, see
Remark \ref{rm:VVAvsBV}.  In principle, our general definition of
(parametrized) BV solutions for limiting rate-independent systems can be used
and analyzed independently of the vanishing-viscosity approach. However, to
avoid overburdening the present work we do not following this line and restrict
ourselves to situations where existence of solutions can be established exactly
by these methods. After all, this is the mechanical motivation for considering
such solution classes.   %

\subsection{Application to a model for delamination}
In Section \ref{s:appl-dam} we show that our existence results for $\pBV$
solutions, characterized by \eqref{e:diff-char-intro}, and (true) $\BV$
solutions apply to a rate-independent process modeling delamination between two
elastic bodies in adhesive contact along a prescribed interface.  For a first
approach to energetic solutions for this delamination problem, we refer to
\cite{KoMiRo06RIAD}. A systematic approach to BV solutions for a  multi-rate system 
involving elastoplasticity and damage is given in \cite{Crismale-Rossi19}. 

The vanishing-viscosity analysis for the viscously regularized 
delamination model  poses nontrivial challenges due to the presence of
various maximal monotone nonlinearities, in the displacement equation and in
the flow rule for the  delamination variable $z$, which for instance render the
constraints  $z(t,x) \in [0,1]$ and the 
unidirectionality of the  evolution. In particular, the main  challenge is to 
obtain the a priori estimate \eqref{u-bound-intro} uniformly in $\eps$
when  taking the vanishing-viscosity limit. For this, it is necessary to
carefully regularize the viscous system. Because of the relatively weak
coupling between the displacement equation and the flow rule for $z$, the
smoothened system possesses a \emph{semilinear structure} that allows us to
apply the techniques developed in \cite[Sec.\,4.4]{Miel11DEMF} and
\cite[Sec.\,2]{Mielke-Zelik}, see Section \ref{su:DelamSmooth}.

\Subsection{Plan of the paper}
\label{su:PlanPaper}

In Section \ref{s:EDI} we introduce a prototype of the coupled
systems that we aim to mathematically model through the Balanced-Viscosity
concept. In this simplified context, avoiding technicalities we illustrate the
notion of (parametrized) $\BV$ solution and its mechanical interpretation.

Section \ref{se:Tools} contains some auxiliary tools on that will be central
for the rest of the paper.  It revolves around the construction of
\emph{vanishing-viscosity contact potential} that will be relevant for
describing the dissipative behavior of the viscously regularized system in the
multi-rate case with $1$, $\eps$, and $\eps^\alpha$. In fact, it will enter
into the definition of the function $\mename 0\alpha$ in
\eqref{lim-EDbal-intro}. Since in this paper we will extend the analysis of
\cite{MRS14} to general viscous dissipation potentials, we will not be able to
explicitly calculate the related vanishing-viscosity contact potential except
for particular cases. Thus, a large part of Section \ref{se:Tools} will focus on
the derivation of general properties of contact potentials that will lay the
ground for the study of the dissipation function $\mename 0\alpha$.  

In Section \ref{s:setup} we thoroughly establish the setup for our analysis,
specifying the basic conditions on the spaces, on the energy functional, and on
the dissipation potentials.  Moreover, Theorem \ref{th:exist} recalls the
existence result from \cite{MRS2013} for the viscous system
\eqref{van-visc-intro}.  Section \ref{su:AprioViscSol} is devoted to the
derivation of a priori estimates for the solutions $(u_\eps,z_\eps)_\eps$ to
\eqref{van-visc-intro} that are uniform with respect to the parameter $\eps$.

Section \ref{su:ReJoMFcn} entirely revolves around the functional
$\mename 0\alpha$ that has a central role in the definition of both $\pBV$ and
true $\BV$ solutions. In particular, (i) we motivate its definition as the
Mosco limit of the family of the time-integrated dissipation
functional appearing in \eqref{EnDissBal-intro}, and (ii) relying on the
results from Section \ref{se:Tools} we compute the limit $\mename 0\alpha$
explicitly and investigate its properties. In the subsequent subsections we
give the definition of parametrized Balanced-Viscosity solution to the
rate-independent system $\RIS$, state our existence results in Theorem
\ref{thm:existBV} (and Theorem \ref{thm:exist-enh-pBV} for enhanced $\pBV$
solutions), and present the characterizations of $\pBV$ in terms of the
subdifferential inclusions \eqref{e:diff-char-intro}, cf.\ Theorem
\ref{thm:diff-charact}.

In Section \ref{ss:4.3} we introduce \emph{true} $\BV$ solutions and state our
existence result Theorem \ref{thm:exist-trueBV} (and Theorem
\ref{thm:exist-nonpar-enh} for enhanced BV solutions). In particular, we show
that these solutions are obtained by taking the vanishing-viscosity limit in
system \eqref{van-visc-intro} written in the \emph{real time} variable
$t\in [0,T]$.  We also gain further insight into the description of the jump
dynamics provided by true $\BV$ solutions.

The proofs of the main results of Sections \ref{s:4+} and \ref{ss:4.3} are
carried out in Section \ref{s:8}.

Section \ref{s:appl-dam} shows that our abstract setup is suitable to
handle a concrete application to in solid mechanics.  In particular,
in Theorem \ref{thm:BV-adh-cont} we prove the existence of enhanced
parametrized and true $\BV$ solutions for a  viscoelastic model with
delamination along a prescribed interface. 

\Subsection{General notations}
\label{su:I.GenNotations}
Throughout the paper, for a given Banach space $X$, we will denote its norm by
$\| \cdot \|_X$. For product spaces $X \ti \cdots \ti X$, we will often (up to
exceptions) simply write $\| \cdot \|_X$ in place of
$\| \cdot\|_{X {\ti \cdots \ti}X}$.  By $\pairing{}{X}{\cdot}{\cdot}$ we shall
denote both the duality pairing between $X$ and $X^*$ and the scalar product in
$X$, if $X$ is a Hilbert space.

We shall use the symbols $c,\,c',\, C,\,C'$, etc., whose meaning may vary even
within the same line, to denote various positive constants depending only on
known quantities. Furthermore, the symbols $I_i$, $i = 0, 1,... $, will be used
as place-holders for terms involved in the various estimates: we warn the
reader that we will not be self-consistent with the numbering, so that, for
instance, the symbol $I_1$ will occur several times with different meanings.

\Section{A prototypical class of coupled systems}
\label{s:EDI}

In this section we illustrate the notion of parametrized $\BV$ solution for a
prototypical  and simple  class of coupled systems to which the
existence and characterization results obtained in the sequel will apply. 
In particular, it contains a model combining linearized viscoelasticity and
viscoplasticity.  We shall confine the discussion to the particular case in
which the ambient spaces
\begin{subequations}
\label{prototypical-setup}
\begin{equation}
\label{prot-Hilb}
\text{$\Spu$  and $\Spz$ are Hilbert spaces,}
\end{equation}
 the viscous  dissipation potentials are \emph{quadratic}, namely
\begin{equation}
\label{prot-pot}
 \disv u : \Spu \to [0,\infty);\ u'\mapsto   \frac12 \la \mathbb V_\sfu u',u'\ra,
\qquad \disv z : \Spz \to [0,\infty);\ z' \mapsto \frac12 \la \mathbb V_\sfz
z', z'\ra ,  
\end{equation}
 with bounded linear and symmetric operators
$\mathbb V_\sfx:\Spx\to \Spx^* $,  and the driving energy functional is of
the form
\begin{equation}
\label{prot-en}
\ene tuz: = \frac12 \pairing{}{\Spu}{\bbA u}{u}  +
\pairing{}{\Spz}{\bbB u}{z} +\frac12
\pairing{}{\Spu}{\bbG u}{u}  -  \pairing{}{\Spu}{f(t)}{u}  -
 \pairing{}{\Spz}{g(t)}{z} ,
\end{equation}
\end{subequations}
where $\bbA : \Spu \to \Spu^* $ and $\bbG: \Spz \to \Spz^*$ are
linear, bounded and self-adjoint, $\bbB : \Spu \to \Spz^*$ is linear and
bounded, and  $(f,g): [0,T]\to \Spu^*\ti \Spz^*$ are smooth time-dependent
applied forces. Moreover, we assume that the block operator 
$\binom{\bbA\ \bbB^*}{\bbB\ \bbG}$  is positive semidefinite.  Together with the
$1$-homogeneous potential $\calR: \Spz \to [0,\infty)$ the viscous system
\eqref{van-visc-intro} reads 
\begin{subequations}
\label{prot-van-visc-syst}
\begin{align}
&
\label{prot-van-visc-u}
\qquad \quad \eps^\alpha \mathbb V_\sfu u' + 
\bbA u+ \bbB^*z = f(t) && \text{in } \Spu 
&& \foraa\, t \in (0,T),
\\
&
\label{prot-van-visc-z}
\pl \calR(z') + \eps  \mathbb V_\sfz z' +
\bbB u +  \bbG z =g(t) && \text{in } \Spz
&& \foraa\, t \in (0,T), 
\end{align}
\end{subequations}
with $\mathbb V_\sfx$ from \eqref{prot-pot}. It will be important to allow for
coercivity of $\calR$ on a Banach space $\Spy$ such that $\Spz \subset \Spy$
continuously and $\calR(z')\geq c\|z'\|_{\Spy}$ for all $z'\in \Spy$.

\begin{example}[Linearized elastoplasticity with hardening]
  \slshape Let the elastoplastic body
  occupy a bounded Lipschitz domain $\Omega\subset\R^d$: linearized
  elastoplasticity is described in terms of the displacement $u:\Omega
  \to \R^d$  with $u(t)\in \Spu=\rmH^1_0(\Omega)$ for simplicity  and in
  terms of the symmetric, trace-free
  plastic strain tensor $z:\Omega \to \R_{\mathrm{dev}}^{d\ti d} 
  := \bigset{z\in \R_{\mathrm{sym}}^{d\ti d}}{ \mathrm{tr}(z)=0}$.  The
  driving energy functional  $\calE: [0,T]\ti \Spu\ti \Spz \to \R$ with
  $\Spz= \rmL^2(\Omega; \R_{\mathrm{dev}}^{d\ti d})$ is defined by
\[
\ene tuz : = \int_\Omega \left( \tfrac12 (e(u){-}z){:}
  \mathbb{C}(e(u){-}z)  {+}\tfrac12z{:} \mathbb{H} z \right) \dd  x -   
 \pairing{}{H_0^1(\Omega;\R^d)}{f(t)}{u}
\]
with $e(u)$ the linearized symmetric strain tensor,
$\mathbb{C} \in \mathrm{Lin}(\R_\mathrm{sym}^{d\ti d}) $ and
$\mathbb{H}\in \mathrm{Lin}(\R_\mathrm{devm}^{d\ti d})$ are the positive
definite and symmetric elasticity and hardening tensors, respectively, and
$f: [0,T]\to \rmH^{-1}(\Omega;\R^d)$ a time-dependent volume loading. The
dissipation potentials are
\[
\calR(z') = \int_\Omega  \sigma_\mathrm{yield}|z'| \dd x , \quad 
\disv u(u'): = \int_\Omega \tfrac12 e(u') : \bbD e(u') \dd x, \quad \disv z
  (z') : = \int_\Omega \tfrac12 z': \mathbb V z' \dd x
\]
where $\sigma_\mathrm{yield}>0$ is the yield stress and $\mathbb{D} \in
\mathrm{Lin}(\R_\mathrm{sym}^{d\ti d})$  and $\mathbb V \in
\mathrm{Lin}(\R_\mathrm{devm}^{d\ti d})$  are  the  symmetric and positive definite
viscoelasticity and viscoplasticity tensors, respectively. 

Hence,  system \eqref{prot-van-visc-syst} translates into
\[
\begin{aligned}
  -\mathrm{div}\big(\eps^\alpha \bbD e(u') + \bbC (e(u){-}z)\big) \ \ \qquad
  & = f(t) && \text{ in } \Omega \ti (0,T), 
  \\
  \sigma_\mathrm{yield} \Sign (z') + \eps \mathbb V  z' + \mathrm{dev}\big(\bbC
  (z{-}e(u))\big) + \mathbb{H} z & \ni\ \ 0 && \text{ in } \Omega \ti (0,T).
\end{aligned}
\]
where ``$\mathrm{dev}$'' projects to the deviatoric part, namely
$\mathrm{dev}\,A = A - \frac1d (\mathrm{tr}\!\; A)\,I$.
\end{example}

For the system $\RIS$ from \eqref{prototypical-setup}, featuring $2$-positively
homogeneous dissipation potentials, the time-space dissipation function
$\mename 0\alpha$ that enters into the definition of (parametrized) Balanced
Viscosity solution can be explicitly computed (cf.\ Example \ref{ex:p-homog}
ahead). Nonetheless, here we can give an even more transparent illustration of
(parametrized) $\pBV$ solutions in terms of their differential
characterization \eqref{e:diff-char-intro}.  The upcoming Theorem
\ref{thm:diff-charact} states that
a triple $(\sft,\sfu,\sfz)$ is an (\emph{enhanced}) parametrized $\BV$
solution if and only if it solves, for almost all $s\in (0,\sfS)$, 
\begin{equation}
\label{param-subdif-incl-2}
\begin{aligned}
   \thn u(s) \mathbb V_\sfu {\sfu'(s)} +{}&\bbA  \sfu(s)+
    \bbB^* \sfz(s)  \ni f(\sft(s)) && \text{in }
  \Spu^*,
  \\
   \pl \calR(\sfz'(s)) + \thn z(s) \mathbb V_\sfz{\sfz'(s)}+ {}&
   \bbB \,\sfu(s) \,+ \bbG \,\sfz(s)   \ni g(\sft(s)) && \text{in } \Spz^*,
\end{aligned}
\end{equation}  
joint  with the `switching conditions'
\eqref{switch-intro-1}--\eqref{switch-intro-2} on the measurable functions
$\thn u, \, \thn z: (0,\sfS)\to [0,\infty]$.  Here ``$\infty \mathbb
V_\sfz{\sfz'}\,$'' has to be interpreted in the sense of $\pl  \disve
z{\infty}(\sfz')$, see \eqref{eq:SubDiff.Vx}. 

We recall that
\eqref{switch-intro-1} simply ensures that, if the system is not jumping (i.e.,
$\sft'(s)>0$), then viscosity does not come into action, i.e.\ 
$\thn u(s)=\thn z(s)=0$.   This means that $\sfu(s)$ is in `E'quilibrium with respect
to $\sfz(s)$ and the loading $f(\sft(s))$, whereas $\sfz$ evolves according to
the truly `R'ate-independent evolution $\pl \calR(\sfz')+\bbB \sfu+
\bbG\sfz   \ni g$, hence we will denote this  evolution  
regime by $\rgs Eu \rgs Rz$ in Section \ref{ss:6.3-diff-charact}.  

Conditions \eqref{switch-intro-2} differ in the
three cases $\alpha=1$, $\alpha>1$ and $\alpha\in (0,1)$ and indeed show how
the (possibly different) relaxation rates of the variables $u$ and $z$
influence the system behavior at jumps,  see Section
\ref{ss:6.3-diff-charact} for a full discussion  of the occurring evolution
regimes.   

For \underline{\emph{$\alpha=1$}} the variables $u$ and $z$ relax with the same
rate: at a jump, the system \emph{may} switch to a viscous regime where the
viscosity in $u$ and in $z$ 
are involved  \emph{equally}, since the
coefficients $\thn u$ and $\thn z$ modulating the  `V'iscosity  terms $\mathbb
V_\sfu{\sfu'}$ and $\mathbb V_\sfz{\sfz'} $ coincide.  This  evolution 
regime will be denoted $\rgs V{uz}$. 

For \underline{\emph{$\alpha>1$}} the switching condition
\eqref{switch-intro-2} imposes that either $\thn z =\infty$ (i.e.\ $\sfz'=0$)
or that $\thn u=0$ (so that $\sfu$ is at equilibrium). Indeed, since $\sfu$
relaxes `V'iscously faster to equilibrium than $\sfz$ to rate-independent
evolution, $\sfz$ is `B'locked until $\sfu$ has reached the equilibrium: and we
call this  evolution  regime $\rgs Vu \rgs Bz$.  After that $\sfu$ is
in `E'quilibrium and $\sfz$ may have a `V'iscous transition with $\thn z>0$, a
regime denoted by $\rgs Eu \rgs Vz$.  Moreover, under suitable conditions on
the operators $\bbA $, $\bbB $, and $\bbG$ which in particular ensure that the
functional $\ene t{\cdot}z$ from \eqref{prot-en} is uniformly convex, the
arguments from \cite[Prop.\,5.5]{MRS14} may be repeated for the system $\RIS$
defined via \eqref{prototypical-setup}. Hence, it is possible to show that the
regime $\rgs Vu \rgs Bz$ can only occur once in the initial phase, while $\sfu$
never leaves equilibrium afterwards, i.e.\ only $\rgs Eu \rgs Rz$ and
$\rgs Eu \rgs Vz$ are possible.

For  \underline{\emph{$\alpha \in (0,1)$}} the variable $\sfz$ relaxes faster
than $\sfu$, which leads to the two viscous regimes: (i) $\rgs Bu \rgs Vz$ where
$\sfu$ is blocked ($\thn u=\infty$) while $\sfz$ evolves viscously, and (ii)
$\rgs Vu \rgs Rz$ where $\sfu$ relaxes to equilibrium while $\sfz$ stays in
locally stable states ($\thn z=0$). For $\alpha \in (0,1)$ these  two
regimes and $\rgs Eu \rgs Rz$  may occur more than once in the evolution of
the system.

\Section{Some auxiliary tools  for dissipation potentials}
\label{se:Tools}

In this section we prepare a series of useful tools  for handling the
balanced effect of the different dissipation potentials. They   will be
essential for the upcoming analysis and may be interesting elsewhere.

\begin{definition}[Primal and dual dissipation potentials]
\label{def:DissPotential} Let $X$ be a
  reflexive Banach space. Then, a function $\psi:X\to [0,\infty]$ is called a
  \emph{(primal) dissipation potential}, if 
\[
\psi \text{ is  convex, lower semicontinuous (lsc, for
short) and } \ \psi(0)=0.
\] 
The \emph{dual  dissipation potential} $\psi^*:X^*\to [0,\infty]$ is defined
via Legendre-Fenchel conjugation  as 
\[
\psi^*(\xi): = \sup\bigset{\langle \xi,v\rangle - \psi(v)}{ v\in X}.
\]
\end{definition} 

Note that $\psi^*$ is indeed again a dissipation potential, and we have
$(\psi^*)^*=\psi$. In this section, 
we allow for functionals $\psi$
taking the value $\infty$ as well as degenerate functionals such that  $\psi(v)=0$
for $v\neq 0$.  With $\psi$ we associate the  \emph{B-function} 
\begin{equation}
 \label{Bfunctions-2}
  \calB_{\psi} : (0,\infty) \ti X \ti [0,\infty) \to [0,\infty], \qquad 
 \Bfu {\psi}\tau v\sigma: =\tau \psi\left(\frac v\tau\right) +\tau\sigma\,. 
\end{equation}
 We highlight the rescaling properties of $\calB_\psi$ as follows
\begin{equation}
  \label{eq:mfB.rescale}
   \Bfu {\psi}\tau v\sigma = \tau \, \Bfu {\psi} 1{\frac1\tau v}\sigma =
   \frac1\delta  \Bfu {\psi}{\delta\tau}{\delta v}\sigma 
\quad \text{for all }\delta>0.
\end{equation}
We will  use  that the functional $ \calB_{\psi}(\cdot,\cdot, \sigma)$ is
convex for all $\sigma \geq 0$. To see this, we consider $\tau_0,\, \tau_1
\in (0,\infty)$, $v_0,v_1 \in X$, and $\theta \in [0,1]$ and 
set $\tau_\theta: = (1{-}\theta)\tau_0+\theta\tau_1 >0$ and
$v_\theta: = (1{-}\theta)v_0+\theta v_1 $.   With this we find 
\begin{equation}
 \label{miracle-HS}
 \begin{aligned}
   \Bfu {\psi}{\tau_\theta}{v_\theta}\sigma  &= \tau_\theta \,\psi\left(\frac
     {v_\theta}{\tau_\theta}\right) +\tau_\theta\sigma
  = \tau_\theta \,\psi\Big( \frac{(1{-}\theta)\tau_0}{\tau_\theta}
   \,\frac1{\tau_0} v_0 + \frac{\theta  \tau_1}{\tau_\theta} \,\frac1{\tau_1}
   v_1 \Big) +  \tau_\theta \sigma
   \\
   & \overset{(1)}{\leq} \tau_\theta\Big( \frac{(1{-}\theta)
     \tau_0}{\tau_\theta} \psi\left(\frac{v_0}{\tau_0} \right) +
   \frac{\theta\tau_1}{\tau_\theta} \psi\left(\frac{v_1}{\tau_1} \right) \Big)
   + \big(1{-}\theta)\tau_0 \sigma + \theta\tau_1 \sigma 
   \\ & 
   =(1{-}\theta)
   \Bfu\psi{\tau_0}{v_0}\sigma + \theta \Bfu\psi{\tau_1}{v_1}\sigma,
    \end{aligned}
\end{equation}
where in $\overset{(1)}{\leq} $  we used the convexity of $\psi$.  
We next define the functional
\begin{align}
&
\label{bipotentials-1} \qquad
\mfb_\psi: X\ti [0,\infty) \to [0,\infty];  && \mfb_{\psi}(v,\sigma)   : =
\inf\bigset{\Bfu {\psi}\tau v\sigma}{\tau>0}. 
\end{align}
We shall refer to the functional $\mfb_\psi $ as \emph{vanishing-viscosity
  contact potential} associated with $\psi$, in accordance with the terminology
used in \cite{MRS12}. As we will see, $\mfb_\psi$ will be handy for describing
the interplay of vanishing viscosity and time rescaling upon taking the limit
of \eqref{van-visc-intro}.

\Subsection{Properties of vanishing-viscosity contact potentials $\mfb_\psi$}
\label{su:mfb.psi}

For arbitrary dissipation potentials $\psi$, we  define the 
\emph{rate-independent part} $\psi_\mathrm{ri}:X \to [0,\infty]$ via
\begin{equation}
  \label{eq:RI.part}
  \psi_\mathrm{ri}(v)=\lim_{\gamma \to 0^+} \frac1\gamma \psi(\gamma
  v) = \sup\bigset{\langle \eta,v\rangle_X }{ \eta \in \pl \psi(0)
  }.
\end{equation}
The following results are slight variants of the results in
\cite[Thm.\,3.7]{MRS12}.

\begin{proposition}[Properties of  vanishing-viscosity contact potentials] 
\label{pr:VVCP}
Assume that the dissipation potential $\psi:X \to [0,\infty]$ is superlinear,
i.e.\ 
\begin{equation}
\label{ass-diss-pot-superl}
\lim_{\|v\|_X\to \infty}\frac{\psi(v)}{\|v\|_X} =  \infty\,.
\end{equation}
Then,  $\mfb_\psi$ 
 has  the following properties: 
\begin{compactenum}
\item[\upshape(b1)] $\mfb_\psi(v,\sigma)=0$ implies $\sigma=0$ or
  $v=0$. Moreover, $\mfb_\psi(0,\sigma)=0$ for all $\sigma\geq 0$.
\item[\upshape(b2)] For all $v\in X$ the function
  $\mfb_\psi(v,\cdot):[0,\infty) \to [0,\infty]$ is nondecreasing and concave.
  For $v\neq 0$ and $\sigma>0$ the  infimum  in the definition of
  $\mfb_\psi$ is  attained  at a value $\tau_{v,\sigma} \in (0,\infty)$.
  Moreover, for all $v \neq 0 $ and $\sigma>0$ we have
  $\mfb_\psi (v,\sigma) > \mfb_\psi (v,0)=\psi_\mathrm{ri}(v)$.
\item[\upshape(b3)] For all $\sigma\geq 0$ the function
  $\mfb_\psi(\cdot,\sigma):X \to {[0,\infty)}$ is positively 1-homogeneous,
  lsc, and convex.
 \item[\upshape(b4)]  If $\psi= \phi+\varphi$ where $\phi$ is 1-homogeneous, then
$\mfb_{\phi+\varphi}(v,\sigma)= \phi(v) + \mfb_\varphi(v,\sigma)$.  
\item[\upshape(b5)] For all $(v,\eta)\in X\ti X^*$ we have
  $ \mfb_\psi(v,\psi^*(\eta)) \geq \langle \eta,v\rangle_X$.
\end{compactenum}
\end{proposition}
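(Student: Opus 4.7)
The plan organizes the five items around two key tools: Fenchel--Young duality, which yields (b5) and the dual representation underlying (b3), and the joint convexity \eqref{miracle-HS} already observed in the paper.

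First, (b5) is a direct consequence of Fenchel--Young: for $\eta\in X^*$ with $\psi^*(\eta)<\infty$ and every $\tau>0$, one has $\tau\psi(v/\tau) + \tau\psi^*(\eta) \geq \tau\langle\eta,v/\tau\rangle_X = \langle\eta,v\rangle_X$, so $\Bfu\psi\tau v{\psi^*(\eta)} \geq \langle\eta,v\rangle_X$, and taking the infimum over $\tau$ yields (b5). For (b1), the identity $\Bfu\psi\tau 0\sigma = \tau\sigma$ implies $\mfb_\psi(0,\sigma)=0$ by sending $\tau\downarrow 0$; the converse direction will be obtained from the attainment argument in (b2).

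Next I address (b2). For $v\neq 0$ and $\sigma>0$, the map $\tau\mapsto f_{v,\sigma}(\tau):= \tau\psi(v/\tau)+\tau\sigma$ is convex and lsc on $(0,\infty)$ (being the perspective of the convex lsc $\psi$ plus a linear term), and coercive at both ends: the superlinearity \eqref{ass-diss-pot-superl} together with $\|v/\tau\|_X\to\infty$ forces $f_{v,\sigma}(\tau)\to\infty$ as $\tau\downarrow 0$, while $\tau\sigma\to\infty$ as $\tau\to\infty$. Hence the infimum is attained at some $\tau_{v,\sigma}\in(0,\infty)$, which in particular yields $\mfb_\psi(v,\sigma)\geq \tau_{v,\sigma}\sigma>0$, closing (b1). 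Monotonicity and concavity of $\sigma\mapsto \mfb_\psi(v,\sigma)$ are immediate since $\sigma\mapsto \Bfu\psi\tau v\sigma$ is affine and nondecreasing for each fixed $\tau$. For the identification of $\mfb_\psi(v,0)$ I would invoke the standard monotonicity of difference quotients from a point where a convex function vanishes: $\gamma\mapsto \psi(\gamma v)/\gamma$ is nondecreasing, hence $\tau\mapsto \tau\psi(v/\tau)$ is nonincreasing, giving
\[
\mfb_\psi(v,0) \;=\; \lim_{\tau\to\infty}\tau\psi(v/\tau) \;=\; \lim_{\gamma\downarrow 0}\frac{\psi(\gamma v)}{\gamma} \;=\; \psi_\mathrm{ri}(v).
\]
The strict inequality $\mfb_\psi(v,\sigma)>\mfb_\psi(v,0)$ for $v\neq 0$, $\sigma>0$ then follows by evaluating at $\tau_{v,\sigma}$ and using $\tau_{v,\sigma}\sigma>0$.

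For (b4), the $1$-homogeneity of $\phi$ yields $\tau\phi(v/\tau) = \phi(v)$, which pulls out of the infimum. For (b3), positive $1$-homogeneity of $\mfb_\psi(\cdot,\sigma)$ follows from the substitution $\tau=\lambda\tau'$; convexity in $v$ follows from \eqref{miracle-HS} applied to near-minimizing pairs $(\tau_i,v_i)$ and letting the slack go to $0$. Lower semicontinuity is most transparently obtained through the dual representation
\[
\mfb_\psi(v,\sigma) \;=\; \sup\bigset{\langle\eta,v\rangle_X}{\eta\in X^*,\ \psi^*(\eta)\leq \sigma},
\]
which exhibits $\mfb_\psi(\cdot,\sigma)$ as a supremum of continuous linear forms. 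The ``$\geq$'' bound here is (b5) together with the monotonicity in $\sigma$ proved above; the reverse inequality comes from writing $\psi=\psi^{**}$ inside $\tau\psi(v/\tau)$ and exchanging $\inf_{\tau>0}$ with $\sup_\eta$. This minimax exchange is the main technical obstacle of the plan: the integrand is only affine in each variable and $\psi$ may take the value $+\infty$, but Sion's minimax theorem applies once one checks that the partial supremum in $\eta$ is convex and lsc in $\tau$, and the boundary behavior as $\tau\downarrow 0$ is controlled by the coercivity established in the treatment of (b2).
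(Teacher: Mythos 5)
Your arguments for (b1), (b2), (b4), (b5), and for the $1$-homogeneity and convexity parts of (b3) are correct and essentially coincide with the paper's proof; the identification $\mfb_\psi(v,0)=\psi_\mathrm{ri}(v)$ via the monotonicity of the difference quotient $\gamma\mapsto\psi(\gamma v)/\gamma$ is in fact more explicit than the paper's one-line remark, and deriving the first claim of (b1) from the attainment in (b2) is a clean observation.

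The genuine gap is in the lower-semicontinuity part of (b3). You replace the paper's direct argument with the dual representation
\begin{equation*}
\mfb_\psi(v,\sigma)\;=\;\sup\bigset{\langle\eta,v\rangle_X}{\eta\in X^*,\ \psi^*(\eta)\leq\sigma},
\end{equation*}
whose ``$\geq$'' half you correctly get from (b5) and monotonicity in $\sigma$, but whose ``$\leq$'' half is exactly the minimax exchange
$\inf_{\tau>0}\sup_{\eta}\bigl\{\langle\eta,v\rangle+\tau(\sigma-\psi^*(\eta))\bigr\}\leq
\sup_{\eta}\inf_{\tau>0}\bigl\{\cdot\bigr\}$. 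You flag this as the main obstacle and invoke Sion's theorem, but Sion requires one of the two sets to be compact, and here both $(0,\infty)$ and $X^*$ are non-compact. The ``coercivity from (b2)'' you refer to shows that the infimum over $\tau$ is attained in a compact interval for each fixed $(v,\sigma)$ with $v\neq0$, $\sigma>0$, but that is a statement about the \emph{outer} infimum, not about the saddle-point problem; it does not hand you a compact set on which to apply Sion. If you instead truncate to $\tau\in[\epsilon,1/\epsilon]$ and apply Sion there, you get $\inf_{[\epsilon,1/\epsilon]}\sup_\eta=\sup_\eta\inf_{[\epsilon,1/\epsilon]}$, and the left side does converge to $\mfb_\psi(v,\sigma)$ as $\epsilon\to0^+$; however, the right side is a decreasing limit of suprema, and exchanging $\lim_{\epsilon\to0^+}$ with $\sup_\eta$ only gives the inequality in the wrong direction without an additional uniformity argument. (You also call the integrand ``affine in each variable'' -- it is affine in $\tau$ but only concave in $\eta$, though that part is harmless for Sion.) In short: the dual representation is true and would be elegant, but the proposal does not actually prove it, and proving it carefully is no easier than proving lsc directly.

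The paper's route is considerably more elementary and you already have all its ingredients in hand. For $v_j\weakto v_*$, $\sigma_j\to\sigma_*$ with $\liminf_j\mfb_\psi(v_j,\sigma_j)<\infty$: if $\sigma_*=0$, use $\mfb_\psi(v_j,\sigma_j)\geq\mfb_\psi(v_j,0)=\psi_\mathrm{ri}(v_j)$, and $\psi_\mathrm{ri}$ is weakly lsc as a supremum of linear functionals (this is the $1$-homogeneous lsc envelope); if $\sigma_*>0$ and $v_*\neq0$, the optimal $\tau_j=\tau_{v_j,\sigma_j}$ lie in a compact subinterval $[1/M,M]\subset(0,\infty)$ (uniformly in $j$ for large $j$, precisely because $\|v_j\|$ and $\sigma_j$ are bounded below), so one extracts $\tau_j\to\tau_\circ$ and uses the lsc of $\psi$ on $\tfrac1{\tau_j}v_j\weakto\tfrac1{\tau_\circ}v_*$. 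I recommend replacing the minimax step by this sequential argument.
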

\begin{proof} The main observation is that the function
  $g_{v,\sigma}: (0,\infty) \ni \tau \mapsto \tau \psi(\frac1\tau v) + \tau
  \sigma$ is convex  (cf.\ \eqref{miracle-HS})  and takes only
  nonnegative values. For $\sigma>0$ we have $g_{v,\sigma}(\tau) \to \infty$
  for $ \tau \to \infty$, and for $v\neq 0$ we have
  $g_{v,\sigma}(\tau)\to \infty$ for $\tau \to 0^+$  due to superlinearity
  of $\psi$. 

\underline{Part (b1):} If
$\mfb_\psi(v,\sigma) = \inf g_{v,\sigma}(\cdot)=0$, the infimum must either be
realized for $\tau \to 0^+$ or for $\tau \to \infty$. In the first case,
the value of $\sigma $ does not matter, but the superlinearity of $\psi$ gives
$\tau \psi(\frac1\tau v) \to \infty$, unless $v=0$. In the second case we
have $\tau \sigma \to \infty$, unless $\sigma=0$. The relation
$\mfb_\psi(0,\sigma)=0$ is obvious.

\underline{Part (b2):} The first  two  statements  follow because
$\mfb_\psi(v,\cdot)$ is the infimum of a family of functions that are 
increasing and concave in $\sigma$.  For $v\neq 0$ and $\sigma>0$ the
minimum of $g_{v,\sigma}(\tau)$ is achieved at a
$\tau_{v,\sigma} \in (0,\infty)$ as $g_{v,\sigma}(\tau) \to \infty$
on both sides  (i.e., as $\tau \to 0^+$ and $\tau \to \infty$). 
 Thus, $\mfb_\psi(v,\sigma)\geq \mfb_\psi(v,0)+ \sigma
\tau_{v,\sigma} >\mfb_\psi(v,0)$ as desired.  The relation $\mfb_\psi (v,0) =
\psi_\mathrm{ri}(v)$ follows easily from the convexity of $\psi$.  

\underline{Part (b3):} The positive 1-homogeneity $\mfb_\psi(\gamma v,
\sigma)=\gamma \mfb_\psi(v,\sigma)$ for all $\gamma>0$ follows by
replacing $\tau $ by $\tau \gamma$.  Convexity is obtained as
follows. For fixed $v_0,v_1\in X$, $\theta \in (0,1)$, and $\sigma\geq
0$, we choose $\eps>0$ and find $\tau_0,\tau_1>0 $ such that for
$j\in \{0,1\}$ we have 
\begin{equation}
  \label{eq:mfb.psi.eps}
  \tau_j\,\psi\big(\frac1{\tau_j} v_j,\sigma \big) + \tau_j \sigma \leq
\mfb_\psi(v_j,\sigma)+\eps.
\end{equation}
Here we assumed without loss of generality
$\mfb_\psi(v_j,\sigma)<\infty$ since otherwise there is nothing to be
shown. Now we set $v_\theta= (1{-}\theta) v_0 + \theta v_1 $
and $\tau_\theta =(1{-}\theta) \tau_0 + \theta \tau_1>0$.
Using the convexity \eqref{miracle-HS} of the functional
$\Bfu{\psi}{\cdot}{\cdot}{\sigma}$, 
we obtain
\begin{align*}
\mfb_\psi(v_\theta,\sigma) \leq \Bfu\psi{\tau_\theta}{v_\theta}\sigma
 & \leq  (1{-}\theta) \Bfu\psi{\tau_0}{v_0}\sigma +\theta  \Bfu\psi{\tau_1}{v_1}\sigma 
 \\
  & 
\leq (1{-}\theta)\mfb_\psi(v_0,\sigma) + \theta \mfb_\psi(v_1) + \eps,
 \end{align*} 
 with the last inequality due to \eqref{eq:mfb.psi.eps}. 
Since $\eps>0$ was arbitrary, this is the desired result. 

To prove lower semicontinuity, 
 we use the special way $\mfb_\psi$ is
constructed and that $\psi $ is lsc. For all sequences $v_j\to v_*$
 and $\sigma_j\to \sigma_*$ we have to show
\[
 \mfb_\psi (v_*,\sigma_*) \leq  \alpha:=\liminf_{j\to \infty}
 \mfb_\psi  (v_j,\sigma_j)
\]
We may assume $\alpha<\infty$ and $\mfb_\psi(v_*,\sigma_*)>0$, since otherwise
 the desired estimate is trivial. 

The case $\sigma_*=0$ is simple, as we have 
\[
\alpha=\liminf_{j\to \infty}  \mfb_\psi  (v_j,\sigma_j) \geq 
\liminf_{j\to \infty}  \mfb_\psi (v_j,0) \geq 
\liminf_{j\to \infty} \psi_\mathrm{ri}(v_j) \geq 
\psi_\mathrm{ri}(v_*) =\mfb_\psi(v_*,0)= \mfb_\psi(v_*,\sigma_*).
\]
It remains to consider the case $v_*\neq 0$ and $\sigma_*>0$. Since
$\|v_j\|\geq \|v_*\|/2>0$ and $\sigma_j\geq \sigma_*/2>0$ for sufficiently
large $j$, we see that the
optimal $\tau_j=\tau_{v_j,\sigma_j}$ lie in a set $[1/M,M]\Subset
(0,\infty)$. Thus, choosing a subsequence (not relabeled), we may
assume $\tau_j\to \tau_\circ$ and obtain lower semicontinuity
by using $\frac1{\tau_j}v_j \to \frac1{\tau_\circ}v_*$ as follows:
\[
\alpha=\liminf_{j\to \infty}  \mfb_\psi  (v_j,\sigma_j)= 
\liminf_{j\to \infty} \left( \tau_j\psi\big(\frac1{\tau_j}v_j\big) +
\tau_j \sigma_j \right) \geq \tau_\circ\psi\big(\frac1{\tau_\circ}v_*\big) +
\tau_\circ \sigma_* \geq \mfb_\psi(v_*,\sigma_*). 
\]

 \underline{Part (b4):} The formula for $\mfb_{\phi+\varphi}$ follows from a direct calculation.

\underline{Part (b5):} We have 
$g_\tau( v, \psi^*(\eta)) = \tau \Big(\psi\big(\frac1\tau
v\big) + \psi^*(\eta) \Big) \geq \tau\big(\langle
\eta,\frac1\tau v\rangle  \big) = \langle \eta,v\rangle,
$ 
and taking the infimum over $\tau>0$ gives the result. 
Thus, Proposition \ref{pr:VVCP} is proved.
\end{proof}

There is a canonical case in which $\mfb_\psi$ can be given explicitly, namely
the case that $\mfb_\psi(v)$ only depends on the Banach-space norm $\|v\|$.  In
that case we have an explicit expression for  $\mfb_\psi$ and  the
functional $ X \times X^* \ni (v,\eta) \mapsto \mfb_\psi(v,\psi^*(\eta))$.

\begin{lemma}[Dissipation potentials depending on the norm]
\label{le:psi.norm} Assume that $\psi$ is given in the form
$\psi(v)=\zeta(\|v\|)$, where $\zeta:[0,\infty)\to [0,\infty]$ satisfies
$\zeta(0)=0$ and is lsc, nondecreasing, convex, and superlinear. Setting 
$\zeta'(0)= \lim_{h\to 0^+} \frac1 h \zeta(h)$ we have the identities
\begin{equation}
\label{explicit-vvcp}
\begin{aligned}
\mfb_\psi(v,\sigma)&= \| v\| \kappa_\zeta(\sigma) \text{ with }
\kappa_\zeta(\sigma):=\inf\bigset{\tau \zeta(1/\tau)+\tau\sigma}{ \tau>0},
\\
 \mfb_\psi(v,\psi^*(\xi)) &= \| v\| \,\max\big\{ \zeta'(0)\, , \, \|\xi\|_* \big\}.
\end{aligned}
\end{equation}
\end{lemma}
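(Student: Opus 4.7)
The plan is to reduce both identities to elementary one-dimensional computations about $\zeta$, using the scaling structure of $\mfb_\psi$ for the first and a Young--Fenchel argument for the second.

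\textbf{First identity.} For $v=0$ both sides vanish (use property (b1)), so assume $v\neq 0$. In the definition
\[
\mfb_\psi(v,\sigma) = \inf_{\tau>0}\left\{\tau\,\zeta(\|v\|/\tau)+\tau\sigma\right\}
\]
I would perform the change of variable $\tau = \|v\|\,s$. This factors out $\|v\|$ and yields exactly $\|v\|\,\kappa_\zeta(\sigma)$. (This is also a direct consequence of the rescaling identity \eqref{eq:mfB.rescale} applied with $\delta=1/\|v\|$.)

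\textbf{Computing $\psi^*$.} For $\xi\in X^*$, write $v=tw$ with $t\ge 0$ and $\|w\|=1$. Since $\zeta$ depends only on $t=\|v\|$,
\[
\psi^*(\xi) = \sup_{t\ge 0}\sup_{\|w\|=1}\bigl(t\langle\xi,w\rangle-\zeta(t)\bigr) = \sup_{t\ge 0}\bigl(t\|\xi\|_*-\zeta(t)\bigr) =:\zeta^*(\|\xi\|_*),
\]
where $\zeta^*$ is the Legendre--Fenchel conjugate of $\zeta$ extended by $+\infty$ on $(-\infty,0)$. Combined with Step~1, the second identity reduces to proving
\[
\kappa_\zeta(\zeta^*(s)) = \max\{\zeta'(0),s\}\qquad\text{for every }s\ge 0.
\]
At this point I would substitute $r=1/\tau$ to rewrite $\kappa_\zeta(\sigma)=\inf_{r>0}\frac{\zeta(r)+\sigma}{r}$, which is nondecreasing in $\sigma$. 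The monotonicity of $r\mapsto\zeta(r)/r$ (a standard consequence of convexity of $\zeta$ with $\zeta(0)=0$) gives $\kappa_\zeta(0)=\inf_{r>0}\zeta(r)/r=\zeta'(0)$.

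\textbf{Lower bound $\kappa_\zeta(\zeta^*(s))\ge\max\{\zeta'(0),s\}$.} The bound $\kappa_\zeta(\zeta^*(s))\ge\kappa_\zeta(0)=\zeta'(0)$ follows from the monotonicity noted above, while the Young--Fenchel inequality $\zeta(r)+\zeta^*(s)\ge rs$ gives $\frac{\zeta(r)+\zeta^*(s)}{r}\ge s$ for every $r>0$, hence $\kappa_\zeta(\zeta^*(s))\ge s$.

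\textbf{Upper bound.} I split into two cases. If $s\le\zeta'(0)$, then $\zeta(r)\ge\zeta'(0)\,r\ge sr$ by the aforementioned monotonicity, so $\zeta^*(s)=0$ and $\kappa_\zeta(\zeta^*(s))=\kappa_\zeta(0)=\zeta'(0)$. If $s>\zeta'(0)$, I need to exhibit a competitor $r^*>0$ realising equality in Young--Fenchel, i.e.\ $s\in\partial\zeta(r^*)$. This is the main technical point: I would argue that, by the superlinearity of $\zeta$, the function $r\mapsto sr-\zeta(r)$ is eventually decreasing; by lower semicontinuity and $\zeta(0)=0$ it attains its (nonnegative) supremum at some $r^*\in[0,\infty)$; and the condition $s>\zeta'(0)=\inf_{r>0}\zeta(r)/r$ rules out $r^*=0$, so $r^*>0$. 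Then $\zeta(r^*)+\zeta^*(s)=sr^*$ gives $\kappa_\zeta(\zeta^*(s))\le s$, completing the proof.

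The main obstacle is precisely this attainment argument: it is delicate if $\zeta$ is allowed to take the value $+\infty$ beyond some threshold $r_0$, in which case the maximiser can be $r^*=r_0$ and one must use the left derivative. Once this is handled via standard convex analysis (the subdifferential of a superlinear lsc convex $\zeta$ with $\zeta(0)=0$ surjects onto $[\zeta'(0),\infty)$ restricted to $\{r:\zeta(r)<\infty\}$), the rest is direct.
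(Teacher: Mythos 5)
Your proof is correct and follows essentially the same route as the paper: factor out $\|v\|$ by rescaling, reduce $\psi^*(\xi)$ to $\zeta^*(\|\xi\|_*)$, then show $\kappa_\zeta\circ\zeta^* = \max\{\zeta'(0),\cdot\}$ via Young--Fenchel for the lower bound and attainment in the conjugate for the upper bound. The paper phrases the attainment condition as $1/\tau\in\partial\zeta^*(r)$ rather than your equivalent $s\in\partial\zeta(r^*)$, but the content and case split (below/above $\zeta'(0)$) are identical; you are slightly more explicit about ruling out $r^*=0$.
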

\begin{proof} The first statement is trivial for $v=0$. For $v\neq 0$ we can
replace $\tau$ by $\tau \|v\|$ and obtain the desired product form with
$\|v\|$ as the first factor. 

To obtain  the second statement in  \eqref{explicit-vvcp} we first
observe that $\psi^*(\xi)=\zeta^*(\|\xi\|_*)$ with
$\zeta^*(r)=\sup\bigset{r\rho-\zeta(\rho)}{\rho\geq 0}$. As $\zeta$ is
superlinear $\zeta^*(r)$ is finite for all $r\geq 0$, and $\zeta^*(r)=0$ for
$r\in [0,\zeta'(0)]$. Secondly, we characterize $\kappa_\zeta$ by using the
following estimate
\[
\kappa_\zeta(\zeta^*(r)) =\inf\bigset{\tau\big(\zeta(\tfrac1\tau) 
 {+} \zeta^*(r)\big)}{\tau>0}
\geq \inf\bigset{\tau\big( \tfrac1\tau \,r \big)}{\tau>0} = r. 
\] 
The inequality is even an identity if the infimum is attained, which is the
case of $\frac1\tau \in \pl  \zeta^*(r)$ for some $\tau$. Thus, we have
attainment whenever $\zeta^*(r)>0$, whereas for $r\in [0,\zeta'(0)]$, where
$\zeta^*(r)=0$, we have non-attainment but find $\kappa_\zeta(0)= \zeta'(0)$.
Together we arrive at $\kappa_\zeta(\zeta^*(r))= \max\{\zeta'(0),r\}$ (see also
\cite[Sec.\,2.3]{LiMiSa18OETP}), and
$ \mfb( v,\psi^*(\xi))=\| v\| \kappa_\zeta(\zeta^*(\|\xi\|_*))$ gives the
desired result.
\end{proof}

The above result shows that the estimate
$\mfb_\psi( v,\psi^*(\xi)) \geq \la \xi,v\ra$ in (b4) improves to
\begin{equation}
  \label{used-later-HS}
  \mfb_\psi( v,\psi^*(\xi)) \geq \|\xi\|_*\|v\|
\end{equation}
in certain cases, in particular in the metric approach used in \cite{RMS08,
  MRS09}. As some of the following examples show, the latter estimate is not
true in general, and that is why we will derive general lower bounds on the
vanishing-viscosity contact potential $\mfb_\psi$  in Section
\ref{su:LowerBounds}.

\begin{example}[The function $\mfb_\psi$ for  some special cases] 
\label{ex:VVCP}\slshape 
The following cases 
give some intuition about the  vanishing-viscosity contact potential  $\mfb_\psi$.  
\\[0.3em]
(A) Assume that $\psi$ is positively $p$-homogeneous with $p\in (1,\infty)$,
i.e.\ $\psi(\gamma v) = \gamma^p \psi(v)$ for all $\gamma>0$ and $v\in
X$. Then, we have
\begin{equation}
 \label{p-homo-mfb}
 \mfb_\psi(v,\sigma )=  \big(\psi(v)\big)^{1/p}\: \hat c_p\, \sigma^{1/p'}, \quad
\text{where } \hat c_p = p^{1/p} (p')^{1/p'}\text{ and }  \frac1p+\frac1{p'} =1.
\end{equation}
In particular, for $\psi(v) = \frac1p \|v\|^p$  we find
\[
 \mfb_\psi(v,\sigma ) = \|v\|\big(p'\sigma\big)^{1/p'} \quad \text{ and }\quad 
  \mfb_\psi(v,\psi^*(\eta)) = \|v\|\|\eta\|_*\, . 
\]
\\[0.3em]
(B)  On $X=\R^2$ consider $\psi(v)=\frac12(av_1^2+b v_2^2)$ with $a,b>0$. Then,
\[
\mfb_\psi(v,\sigma) = \big(av_1^2{+}b v_2^2\big)^{1/2} \:(2\sigma)^{1/2} \quad
\text{ and }\quad   \mfb_\psi(v,\psi^*(\xi))=\big(av_1^2{+}b
v_2^2\big)^{1/2}\big(\frac1a \xi_1^2{+}\frac1b \xi_2^2\big)^{1/2}.
\]
If $\R^2$ is equipped with the Euclidean norm $\|\cdot\|$, then $
\mfb_\psi(v,\psi^*(\xi)) \geq \big(\frac{\min\{a,b\}}{\max\{a,b\}}\big)^{1/2}
\| \xi\|_* \|v\|$, but estimate \eqref{used-later-HS} 
fails, while $ \mfb_\psi(v,\psi^*(\xi)) \geq \la \xi , v
\ra$ obviously holds.
\\[0.3em]
(C) Again for $X=\R^2$ consider $\psi(v)=\frac12v_1^2+\phi(v_2)$ with
\[
\phi(s)=\begin{cases} \frac12 s^2 &\text{for } |s|\leq 1,\\ 
  \frac14(|s|{+}1)^2 -\frac12 &\text{for }|s|\geq 1, \end{cases} \quad
\text{and} \quad \phi^*(r)=\begin{cases} \frac12 r^2 &\text{for } |r|\leq 1,\\ 
  r^2 -|r|  +\frac12 &\text{for }|r|\geq 1. \end{cases}
\]
An explicit calculation leads to the expression 
\[
\mfb_\psi(v,\sigma) = 
\begin{cases} \|v\|\sqrt{2\sigma} &\text{for } v_1^2 \geq (2\sigma{-}1) v_2^2,\\ 
 \frac12\sqrt{2v_1^2{+}v_2^2}\,\sqrt{4\sigma{-}1}+ \frac{|v_2|}2 
   &\text{for } \sigma\geq 1/2 \text{ and } v_1^2 \leq (2\sigma{-}1) v_2^2. 
\end{cases}
\]
Using $\psi^*(\xi)=\frac12\xi_1^2 +\phi^*(\xi_2)$ we find
$  \mfb_\psi(v, \psi^*(\xi))  =\|\xi\|_*\|v\|$ whenever
$\|\xi\|_*\leq 1$. However,  the explicit form of $\mfb_\psi$ shows that, in general,   $   \mfb_\psi(v,\psi^*(\xi)) $  cannot be expressed
in terms of $\|v\|$ and $\|\xi\|_*$ alone. With
$\psi^*(\xi)\geq \frac12\|\xi\|_*^2$ and $\mfb_\psi(v,\sigma)\geq
\frac12\big(\sqrt{4\sigma{-}1} +1)\|v\|$ for $\sigma\geq 1/2$ we obtain
$   \mfb_\psi(v,\psi^*(\xi))  \geq \|v\|\frac12\big(\sqrt{2\|\xi\|_*^2{-}1}+1\big)$ for
$\|\xi\|_*\geq 1$.  
\\[0.3em]
(D) We still look at the case $X=\R^2$ with the Euclidean norm
$\|v\|=\big(v_1^2{+} v_2^2 \big)^{1/2}$ and
\[
\psi(v)=\frac12\, v_1^2 + \frac14 \, v_2^4 \quad \text{and} \quad
\psi^*(\xi)=\frac12\, \xi_1^2 + \frac43 \, |\xi_2|^{4/3}.
\]
In principle, we can calculate $\mfb_\psi(v_1,v_2,\sigma)$ explicitly, however,
it suffices to use (A) giving
\[
\mfb_\psi(v_1,0,\sigma)= |v_1|(2\sigma)^{1/2}  \quad \text{and} \quad
\mfb_\psi(0,v_2,\sigma) = |v_2| \big(\tfrac43 \sigma\big)^{3/4}.
\]
Inserting $\sigma =\psi^*(\xi_1,\xi_2)$ and inserting the ``wrong directions''
with $\langle \xi, v \rangle =0$ we find
\[
 \mfb_\psi(v_1,0, \psi^*(\xi_1,\xi_2)) = \big(\tfrac83\big)^{1/2} |v_1|\,|\xi_2|^{2/3}
   \quad \text{and} \quad
\mfb_\psi(0,v_2, \psi^*(\xi_1,\xi_2))  
= \big(\tfrac23\big)^{3/4}
|v_2|\,|\xi_1|^{3/2}.
\]
Clearly, there cannot be a constant $c_0>0$ such that
$ \mfb_\psi(v,\psi^*(\xi)) \geq c_0 \| v\| \, \|\xi\|$ for all $v,\xi\in
\R^2$. Of course, the relations are compatible with (b4)  in Proposition
\ref{pr:VVCP},  i.e.\
$ \mfb_\psi(v,\psi^*(\xi)) \geq \langle \xi, v \rangle$.
\end{example}

As we will see, the vanishing-viscosity contact potentials $\mfb_\psi$, which
were developed for the case of two-rate problems  (with time scales 
$1$ and $\eps$) in \cite{MRS12}, are also relevant to describe the limiting
behavior of B-functions in the multi-rate case with  time scales  $1$,
$\eps$, and $\eps^\alpha$.  For this,  we will use  the concept of 
(sequential)  Mosco convergence, which we recall here  for a sequence
of functionals $\mathscr{F}_n: \scrX \to (-\infty,+\infty] $ defined in a
Banach space $\scrX$.

\begin{definition}[Mosco convergence]
\label{def:Mosco}
We say that $\mathscr{F}: {X} \to (-\infty,+\infty] $ is the \emph{Mosco limit} of the
functionals $(\mathscr{F}_n)_n$ as $n\to\infty$, and write
$ \mathscr{F}_n \overset{\rmM}\to \mathscr{F}$ in ${X}$, if the following two
conditions hold:
\begin{subequations}
 \label{Gamma-convergence}
\begin{align}
\label{Gamma-liminf-Fn}
&\Gamma\text{-}\liminf\ \mathrm{estimate:} \qquad x_n\weakto x \text{ weakly in } \scrX \ \ \Longrightarrow \ \ \mathscr{F}(x) \leq \liminf_{n\to\infty} \mathscr{F}_n(x_n);
\\ \nonumber
&\Gamma\text{-}\liminf\ \mathrm{estimate:}
\\
& \label{Gamma-limsup-Fn}
\qquad \qquad\forall\, x \in \scrX \ \exists\, (x_n)_n \subset \scrX: \ \  x_n\to x
\text{ strongly in } \scrX \ \text{ and }  \mathscr{F}(x) \geq
\limsup_{n\to\infty} \mathscr{F}_n(x_n). 
\end{align}
\end{subequations}
\end{definition}

\Subsection{Mosco convergence for the joint B-functions $\mfB_\eps^\alpha$}
\label{su:MutliRotePot}

In view of the vanishing-viscosity analysis of \eqref{van-visc-intro}, we now
work with two dissipation potentials $\psi_\sfu:\Spu\to [0,\infty]$ and
$\psi_\sfz:\Spz\to [0,\infty]$, with $\Spu$ and $\Spz$ the state spaces from
\eqref{intro-state_spaces}.  In Section \ref{su:ReJoMFcn}, we will indeed
confine the discussion to the choices $\psi_\sfu: = \disv u$ and
$\psi_{\sfz}: = \calR +\disv z$, but here we want to keep the discussion more
general and in particular allow for $\psi_\sfu$ to have a nontrivial
rate-independent part, too.

 When constructing the associated B-function 
 we have to take
care of the different scalings namely $\psi_\sfu^{\eps^\alpha}$ and
$\psi_\sfz^\eps$ in the sense of \eqref{eq:Def.Vx.lambda}, i.e.\
$\psi^\lambda(v)=\frac1\lambda \psi(\lambda v)$.  Indeed,   
 since the conjugate function
$(\psi^\lambda)^*$ satisfies the simple scaling law
$( \psi^\lambda)^*(\xi)= \frac1\lambda \psi^*(\xi)$, the B-function
$\calB_{\psi^\lambda}$ obeys the scaling relations
\begin{equation}
  \label{eq:rescaled.B.eps}
 \calB_{\psi^\lambda} (\tau,v, \frac1\lambda \sigma) = \calB_\psi(\frac\tau\lambda,
v, \sigma) = \frac1\lambda\, \calB_\psi( \tau, \lambda v, \sigma),
\end{equation} 
where we used \eqref{eq:mfB.rescale} for the last step.   Our definition of
 the 
associated B-function for the sum 
\[
  \Psi_{\eps,\alpha}  : \Spu{\ti} \Spz\to [0,\infty]; \quad 
  \Psi_{\eps,\alpha} 
  (u',z'): = \frac1{\eps^\alpha}\psi_\sfu (\eps^\alpha u') + \frac1\eps
  \psi_\sfz(\eps z')
\]
will be denoted by the symbol $\mfB_{\Psi_{\eps,\alpha}}$, see
\eqref{eq:Resc.Bae} below.  We emphasize that we deviate from the construction
set forth in \eqref{Bfunctions-2}, since \eqref{eq:Resc.Bae} applies
\eqref{eq:rescaled.B.eps} for each component individually.
Hence,  we introduce%
\begin{subequations}
  \label{eq:Resc.Bae}
\begin{align}
  \label{eq:Resc.Bae.a}
 \mfB_{\Psi_{\eps,\alpha}}(\tau,u',z',\sigmav u,\sigmav z)&:= 
 \frac1{\eps^\alpha}\,\Bfun_{\psi_\sfu} (\tau,\eps^\alpha u',\sigmav u) 
+ \frac1\eps\:\Bfun_{\psi_\sfz} (\tau,\eps z',\sigmav z) \\
  \label{eq:Resc.Bae.b}
 & \;= \;\Bfun_{\psi_\sfu} \big(\frac\tau{\eps^\alpha}, u',\sigmav u \big) 
+ \Bfun_{\psi_\sfz} \big(\frac\tau\eps , z',\sigmav z \big).
\end{align}
\end{subequations}
Subsequently,  we will use the short-hand notation $\mfB_\eps^\alpha$ in
place of  $\mfB_{\Psi_{\eps,\alpha}}$  and extend $\mfB_\eps^\alpha$ to allow for
the value $\tau=0$, defining  \emph{rescaled joint B-function} 
$\mfB_\eps^\alpha:[0,\infty)\ti \Spu\ti \Spz \ti [0,\infty)^2\to [0,\infty]$ via
\begin{equation}
  \label{eq:def.B.al.eps}
\RCBfu \eps\alpha{\tau}{u'}{z'}{\sigmav u}{\sigmav z}:=
\begin{cases}\displaystyle
  \frac\tau{\eps^\alpha}\, \psi_\sfu\Big(\frac{\eps^\alpha}\tau\,u'\Big) + 
  \frac\tau{\eps^\alpha} \,\sigmav u + 
\frac\tau{\eps} \,\psi_\sfz\Big(\frac{\eps}\tau\,z'\Big) + 
  \frac\tau{\eps} \,\sigmav z\  &\text{for }\tau>0,\\
 \infty&\text{for } \tau=0. 
 \end{cases} 
\end{equation}
We highlight that $\mfB_\eps^\alpha$ is 
relevant for the \emph{coupled} system \eqref{van-visc-intro}, hence the name 
rescaled \emph{joint} B-function. 

The next result shows that the Mosco limit $\mfB^\alpha_0$ of the
B-functions $(\mfB_\eps^\alpha)_\eps$ always exists and can be expressed in
terms of the potentials $\mfb_{\psi_\sfu}$, $\mfb_{\psi_\sfz}$, and
$\mfb_{\psi_\sfu{\oplus}\psi_\sfz}$.  We emphasize that
$(\tau,u',z')\mapsto \mfB^\alpha_0(\tau,u',z',\sigmav u,\sigmav z)$ is
1-homogeneous, which reflects the rate-independent character of the limiting
procedure.

\begin{proposition}[Mosco limit $\mfB^\alpha_0$ of the family $\mfB^\alpha_\eps$]
\label{pr:Mosco.Beps} 
Let $\psi_\sfu$ and $\psi_\sfz$ satisfy \eqref{ass-diss-pot-superl} and assume
$\alpha>0$. Then, $\mfB^\alpha_\eps$ Mosco converge to the limit
$\mfB^\alpha_0:[0,\infty)\ti \Spu\ti \Spz \ti [0,\infty)^2\to [0,\infty]$ that
is given as follows: {\upshape
\begin{align*}
\tau>0:\qquad &\RCBfu 0\alpha \tau{u'}{z'}{\sigmav u}{\sigmav z}= 
  \begin{cases} 
   \big(\psi_\sfu\big)_\mathrm{ri} (u')
    +\big(\psi_\sfz\big)_\mathrm{ri} (z') & \text{for } \sigmav u=\sigmav z=0, \\
    \infty& \text{otherwise};
  \end{cases}  
\\
\tau=0,\ \alpha>1{:}\ \
  &\RCBfu 0\alpha 0{u'}{z'}{\sigmav u}{\sigmav z}
   =\begin{cases} \big(\psi_\sfu\big)_\mathrm{ri}(u')+\mfb_{\psi_\sfz} 
            (z',\sigmav z)  &\text{for }\sigmav u=0, \\
      \mfb_{\psi_\sfu}(u',\sigmav u)& \text{for } \sigmav u>0 
         \text{ and }  z'=0,\\
     \infty& \text{otherwise};
 \end{cases}
\\
\tau=0, \ \alpha=1{:}\ \
   &\RCBfu 01 0{u'}{z'}{\sigmav u}{\sigmav z}\ = \ 
  \mfb_{\psi_\sfu \oplus \psi_\sfz}((u',z'),\sigmav u{+}\sigmav z);
\\
 \tau=0, \ \alpha<1{:}\ \
  &\RCBfu 0\alpha 0{u'}{z'}{\sigmav u}{\sigmav z}
   =\begin{cases} \mfb_{\psi_\sfu} (u',\sigmav u) + 
       \big(\psi_\sfz\big)_\mathrm{ri}(z') &\text{for }\sigmav z=0, \\
      \mfb_{\psi_\sfz}(z',\sigmav z)& \text{for } \sigmav z>0 \text{ and } u'=0,\\
     \infty& \text{otherwise},
 \end{cases}
\end{align*}}%
where $\psi: = \psi_\sfu \oplus  \psi_\sfz:(u',z')\mapsto \psi_\sfu(u') {+}
\psi_\sfz(z')$. Thus, the functional $\RCBfu 0\alpha
{\cdot}{\cdot}{\cdot}{\sigmav u}{\sigmav z}$ is convex and $1$-homogeneous 
for all $(\sigmav u, \sigmav z)  \in [0,\infty)^2$.  
\end{proposition}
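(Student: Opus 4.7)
My plan is to verify the two Mosco conditions \eqref{Gamma-liminf-Fn}--\eqref{Gamma-limsup-Fn} separately in each of the four regimes singled out by the statement. The starting point is the scaling identity \eqref{eq:Resc.Bae.b}, together with the observation that, for $\alpha = 1$, the two summands share the same rescaling parameter and can be merged:
\begin{equation}
\label{plan-identity}
\mfB_\eps^1(\tau, u', z', \sigmav u, \sigmav z) \,=\, \Bfu{\psi_\sfu\oplus\psi_\sfz}{\tau/\eps}{(u', z')}{\sigmav u{+}\sigmav z}.
\end{equation}
In every regime, liminf estimates rest on the elementary lower bound $\calB_\psi(\tau, v, \sigma) \geq \mfb_\psi(v, \sigma)$ (immediate from the definition \eqref{bipotentials-1}) combined with the joint lower semicontinuity of $\mfb_\psi$ in $(v, \sigma)$---a routine extension of Proposition~\ref{pr:VVCP}(b3), whose proof already handles the joint case. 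Recovery sequences are obtained by inserting an optimizer $\tau_* \in (0, \infty)$ from the definition of the relevant $\mfb_\psi$ (guaranteed by Proposition~\ref{pr:VVCP}(b2), up to trivial boundary cases) into the correct rescaling $\tau_\eps = \tau_* \eps$ or $\tau_\eps = \tau_* \eps^\alpha$.

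For the regime $\tau > 0$, both ratios $\tau_\eps/\eps^\alpha$ and $\tau_\eps/\eps$ diverge. Setting $\gamma_\eps = \eps^\alpha/\tau_\eps \to 0^+$, I use that $\gamma \mapsto \gamma^{-1}\psi_\sfu(\gamma v)$ is nondecreasing on $(0, \infty)$ with limit $(\psi_\sfu)_\mathrm{ri}(v)$ at $0^+$ (cf.~\eqref{eq:RI.part}); this pins down the $u$-term---via pointwise convergence for the recovery sequence $u'_\eps = u'$, and via weak lsc of $\psi_\sfu$ for the liminf. The same reasoning applies to the $z$-term, and the $\sigma$-contributions $(\tau_\eps/\eps^\alpha)\sigma_{\sfu,\eps}$ and $(\tau_\eps/\eps)\sigma_{\sfz,\eps}$ diverge as soon as the limit $\sigmav u$ or $\sigmav z$ is positive, yielding the dichotomy of the first line of the claim. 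The balanced case $\tau = 0$, $\alpha = 1$ then follows at once from \eqref{plan-identity}: the liminf uses $\calB_{\psi_\sfu\oplus\psi_\sfz} \geq \mfb_{\psi_\sfu\oplus\psi_\sfz}$ plus joint lsc, and the recovery sequence $\tau_\eps = \tau_* \eps$ with $\tau_*$ optimal for $\mfb_{\psi_\sfu\oplus\psi_\sfz}((u',z'), \sigmav u + \sigmav z)$ makes $\mfB_\eps^1$ equal to this target for every~$\eps$.

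The unbalanced case $\tau = 0$, $\alpha > 1$ (with $\alpha < 1$ entirely symmetric under swapping $u \leftrightarrow z$ and $\alpha \leftrightarrow 1/\alpha$) is where the two summands of $\mfB_\eps^\alpha$ see genuinely different scalings and require tailored recovery sequences. For the two finite subcases in the statement: when $\sigmav u = 0$ I take $\tau_\eps = \tau_* \eps$ with $\tau_*$ optimal for $\mfb_{\psi_\sfz}(z', \sigmav z)$, so that the $z$-term attains the target while $\tau_\eps/\eps^\alpha = \tau_* \eps^{1-\alpha} \to \infty$ drives the $u$-term to $(\psi_\sfu)_\mathrm{ri}(u')$; when $\sigmav u > 0$ and $z' = 0$ I take instead $\tau_\eps = \tau_* \eps^\alpha$ with $\tau_*$ optimal for $\mfb_{\psi_\sfu}(u', \sigmav u)$, so that the $u$-term attains the target while the $z$-term vanishes (since $z' = 0$ and $(\tau_\eps/\eps)\sigmav z = \tau_* \eps^{\alpha-1}\sigmav z \to 0$). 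The matching liminf bounds come again from $\calB_{\psi_\sfu} \geq \mfb_{\psi_\sfu}$ and $\calB_{\psi_\sfz} \geq \mfb_{\psi_\sfz}$ plus joint lsc.

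The hard point is the ``otherwise'' subcase, where I must prove that the liminf is $+\infty$. My plan is to pass to a subsequence along which $\tau_\eps/\eps^\alpha$ has a limit in $[0, \infty]$. If $\tau_\eps/\eps^\alpha \to \infty$, then $(\tau_\eps/\eps^\alpha)\sigma_{\sfu,\eps} \to \infty$ whenever $\sigmav u > 0$. If instead $\tau_\eps/\eps^\alpha$ stays bounded, then $\tau_\eps/\eps = (\tau_\eps/\eps^\alpha)\eps^{\alpha-1} \to 0$ thanks to $\alpha > 1$; from $z'_\eps \weakto z' \neq 0$ weak lsc of the norm gives $\|z'_\eps\|_\Spz \geq \|z'\|_\Spz/2$ eventually, so that $\|(\eps/\tau_\eps) z'_\eps\|_\Spz \to \infty$ and the superlinearity \eqref{ass-diss-pot-superl} forces $(\tau_\eps/\eps)\psi_\sfz(\eps z'_\eps/\tau_\eps) \to \infty$. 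Thus, the liminf can be finite only if $\sigmav u = 0$ or $z' = 0$, which is precisely the dichotomy of the two finite subcases. Finally, the closing assertion---convexity and positive $1$-homogeneity of $\mfB_0^\alpha(\cdot, \cdot, \cdot, \sigmav u, \sigmav z)$---is immediate from the explicit formulas, since each building block $(\psi_\sfu)_\mathrm{ri}$, $(\psi_\sfz)_\mathrm{ri}$, $\mfb_{\psi_\sfu}(\cdot, \sigma)$, and $\mfb_{\psi_\sfz}(\cdot, \sigma)$ has both properties in its first argument by \eqref{eq:RI.part} and Proposition~\ref{pr:VVCP}(b3).
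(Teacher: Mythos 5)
Your proof is correct and follows essentially the same route as the paper's: the same case splitting by $\tau$ and $\alpha$, the same recovery sequences ($\tau_\eps=\tau_*\eps$ or $\tau_\eps=\tau_*\eps^\alpha$ depending on which summand is to attain the optimum), and the same lower-bound mechanism $\calB_\psi\geq \mfb_\psi$ together with the (joint) lower semicontinuity of $\mfb_\psi$ extracted from the proof of Proposition~\ref{pr:VVCP}(b3). In fact you go one step beyond the paper in the unbalanced regimes $\tau=0$, $\alpha\neq 1$: where the paper simply states ``we only need to consider'' the two finite subcases, you explicitly verify that the $\Gamma$-liminf equals $+\infty$ in the ``otherwise'' subcase via the trichotomy on $\tau_\eps/\eps^\alpha$ and the superlinearity of $\psi_\sfz$ — a point well worth spelling out; one tiny wording issue is that in the $\tau>0$ regime the relevant lower semicontinuity is that of $(\psi_\sfu)_\mathrm{ri}$ (after using the monotonicity $\gamma^{-1}\psi_\sfu(\gamma v)\geq(\psi_\sfu)_\mathrm{ri}(v)$), not of $\psi_\sfu$ itself.
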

\begin{proof} \underline{Case $\tau>0$.} Using $\psi_\sfx(v)\geq
  \big(\psi_\sfx\big)_\mathrm{ri} (v)$ we have
\[
\RCBfu \eps\alpha{\tau}{u'}{z'}{\sigmav u}{\sigmav z} \geq  \big(\psi_\sfu 
 \big)_\mathrm{ri}(u') + \frac\tau{\eps^\alpha}\,\sigmav u+  \big(\psi_\sfz 
 \big)_\mathrm{ri}(z') + \frac\tau\eps\, \sigmav z,
\]
which easily provides the desired liminf estimate. The limsup
estimate follows with  the constant recovery sequence
$(u'_\eps,z'_\eps,\sigmave {u}{\eps},\sigmave z\eps)=(u',z',\sigmav u,\sigmav z)$. 

\underline{Case $\tau=0$ and $\alpha=1$.} 
By definition of $\mfb_\psi = \mfb_{\psi_\sfu {\oplus}  \psi_\sfz}$ we have
\[
  \RCBfu \eps1{\tau}{u'}{z'}{\sigmav u}{\sigmav z} = \frac\tau\eps
  \psi\big(\frac\eps{\tau} (u',z')\big) + \frac\tau\eps\,(\sigmav u{+}\sigmav
  z) \geq \mfb_\psi((u',z'),\sigmav u{+}\sigmav z) \qquad \text{for all }
  \tau>0.
\]
 Hence, the liminf estimate follows
from Proposition \ref{pr:VVCP}. 
 
For the limsup estimate for $\RCBfu 01{0}{u'}{z'}{\sigmav u}{\sigmav z}$ we
choose $\lambda_\eps$ such that
$\lambda_\eps\psi(\frac1{\lambda_\eps} (u',z')) + \lambda_\eps(\sigmav
u{+}\sigmav z) \to \mfb_\psi((u',z'),\sigmav u{+}\sigmav z)$, where we may
assume $\lambda_\eps \leq 1/\sqrt\eps$.  Then, it suffices to set
$\tau_\eps= \lambda_\eps \eps\to 0$ to conclude
$\RCBfu \eps1{\tau}{u'}{z'}{\sigmav u}{\sigmav z} \to \mfb_\psi((u',z'),\sigmav
u{+}\sigmav z)= \RCBfu 01{0}{u'}{z'}{\sigmav u}{\sigmav z}$.

\underline{Case $\tau=0$ and $\alpha>1$.} For the lower bound in the
liminf estimate we only need to consider the case $\sigmav u=0$ and  the 
case $\sigmav u>0$ and $z'=0$. In the latter situation we may drop the two
last terms in the definition of $\mfB^\alpha_\eps$ and the lower bound is
established by the lower semicontinuity of $\mfb_{\psi_\sfu}$. In the
case $\sigmav u=0$, we have the lower bound
\[
\RCBfu \eps\alpha{\tau}{u'}{z'}{\sigmav u}{\sigmav z}\geq
\big(\psi_\sfu\big)_\mathrm{ri}(u') + \mfb_{\psi_\sfz}(z',\sigmav z)
\]
and the liminf again follows by the lsc. 

For the limsup estimates we use the recovery sequence
$(\tau_\eps,u',z',\sigmav u,\sigmav z)$ converging strongly with
$\tau_\eps\to 0$, as in the previous case. For $\sigma=0$ we choose
$\tau_\eps=\lambda_\eps \eps$ where $\lambda_\eps$ realizes the infimum in
$\mfb_{\psi_\sfz}(z',\sigmav z)$. In the case $\sigmav u>0$ and $z'=0$ we
choose $\tau_\eps = \hat\lambda_\eps \eps^\alpha$, where $\hat\lambda_\eps$
realizes the infimum in $ \mfb_{\psi_\sfu}(u',\sigma)$. In the remaining case,
which has $\sigma>0$, we may choose $\tau_\eps=\eps$.

\underline{Case $\tau=0$ and $\alpha<1$.} This case is similar to the
case $\alpha>1$ if we interchange the role of $u'$ and $z'$. Thus,
Proposition \ref{pr:Mosco.Beps} is proved.  
\end{proof}

\Subsection{Lower bounds for the B-function $\mfB_\eps^\alpha$}
\label{su:LowerBounds}

In the subsequent convergence analysis for the vanishing-viscosity limit we
will need $\eps$-uniform a priori bounds for the time derivatives of the solutions
$q_\eps=(u_\eps,z_\eps)$.  They are derived by lower bounds for the
B-functions, however, we have already observed in Example \ref{ex:VVCP} that
the simple lower bound $\mfb_\psi(v,\psi^*(\xi))\geq \|\xi\|_*\|v\|$ in
\eqref{used-later-HS} cannot be expected. The following result provides
suitable surrogates of such estimate. They will play a crucial role in the
vanishing-viscosity analysis, specifically in controlling
  $\|z'\|$ along jump paths, see
Lemma \ref{new-lemma-Alex}.
For this it
will be important that the function $\varkappa$ occurring in
\eqref{eq:1LowBounds} is strictly increasing, which implies
$\varkappa(\sigma)>0$ for $\sigma>0$. 

\begin{lemma}
[Lower bound on $\mfB^\alpha_\eps$]
\label{le:LoBo.Bae} 
 Let $\psi_\sfu$ and $\psi_\sfz$ satisfy \eqref{ass-diss-pot-superl} and
 let $\mfB^\alpha_\eps$ be given as in \eqref{eq:def.B.al.eps}. Then, 
there exists a continuous, convex, nondecreasing, and superlinear function
$\varphi: [0,\infty)\to [0,\infty)$ such that
\begin{subequations}
 \label{eq:1LowBounds}
  \begin{align}
 \nonumber   &\hspace*{-1cm} 
  \forall\, \alpha>0\ \forall\, \eps\in [0,1] \ 
  \forall\, (\tau ,  u',z',\sigmav u,\sigmav z)\in 
     [0,\infty)\ti  \Spu\ti \Spz \ti[0,\infty)^2:
\\
    \label{eq:1LowBo.psi}
   &    \psi_\sfu(u')\geq \varphi(\|u'\|_\Spu) \ \text{ and } \  
    \psi_\sfz(z')\geq \varphi(\|z'\|_\Spz),
\\
    \label{eq:1LowBo.Bae}
 &    \RCBfu \eps\alpha{\tau}{u'}{z'}{\sigmav u}{\sigmav z}
    \geq \|u'\|_\Spu\,\varkappa(\sigmav u) + \|z'\|_\Spz \,\varkappa(\sigmav z), 
\end{align}
where $\varkappa \in \rmC([0,\infty);[0,\infty))$ is given by
$\varkappa(\sigma)= (\varphi^*)^{-1}(\sigma)$, is concave, and
strictly increasing with $\varkappa(0)=0$ and $\varkappa(\sigma)\to \infty$ for
$\sigma\to \infty$.  We  additionally have
\begin{align}
\alpha< 1:\quad    &   \label{eq:1LowBo.Bal1}
    \RCBfu \eps\alpha {\tau}{u'}{z'}{\sigmav u}{\sigmav z}
    \geq \|u'\|_\Spu \, \varkappa\big(\sigmav u{+}\sigmav z\big)  ,
\\
\alpha=1:\quad    &   \label{eq:1LowBo.B1eD}
    \RCBfu \eps1{\tau}{u'}{z'}{\sigmav u}{\sigmav z}
    \geq    \big(  \|u'\|_\Spu {+} \|z'\|_\Spz \big) \, 
     \varkappa\big( \frac12(\sigmav u{+}\sigmav z) \big)   ,
\\
\alpha\geq 1:\quad    &   \label{eq:1LowBo.Bag1}
    \RCBfu \eps\alpha {\tau}{u'}{z'}{\sigmav u}{\sigmav z}
    \geq \|z'\|_\Spz \, \varkappa\big(\sigmav u{+}\sigmav z\big)  .
\end{align}
\end{subequations}
\end{lemma}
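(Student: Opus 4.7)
The plan proceeds in three stages. First I need to isolate a single comparison function $\varphi:[0,\infty)\to[0,\infty)$ dominating both $\psi_\sfu$ and $\psi_\sfz$ in the sense of \eqref{eq:1LowBo.psi}, and tailor it so that $\varphi^*$ is finite, strictly increasing, and unbounded. For each $\mathsf{x}\in\{\sfu,\sfz\}$ the quantity $\beta_\sfx(r):=\inf\{\psi_\sfx(v):\|v\|_{\bfX}=r\}$ is nondecreasing (using convexity and $\psi_\sfx(0)=0$) and superlinear; replacing it by its lower-semicontinuous convex hull and then taking the minimum across $\sfx\in\{\sfu,\sfz\}$ yields a convex superlinear minorant $\tilde\beta$ of both $\beta_\sfx$. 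To force $\varphi'(0^+)=0$, which will in turn yield $\varkappa(0)=(\varphi^*)^{-1}(0)=0$, I set $\varphi(r):=\max\{0,\tilde\beta(r)-Cr\}$ for some $C$ exceeding the right slope of $\tilde\beta$ at $0$; this $\varphi$ is still continuous, convex, nondecreasing, superlinear, and dominated by $\tilde\beta$, hence satisfies \eqref{eq:1LowBo.psi} and vanishes on a full neighborhood of $0$. The claimed properties of $\varkappa=(\varphi^*)^{-1}$ (concavity, continuity, strict monotonicity, $\varkappa(0)=0$, $\varkappa\to\infty$) follow from convexity and strict monotonicity of $\varphi^*$, the latter being a consequence of $\varphi\equiv 0$ near the origin.

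The core analytical ingredient is the elementary Young-type identity
\begin{equation*}
  \varkappa(\sigma)\;=\;(\varphi^*)^{-1}(\sigma)
   \;=\;\inf_{s>0}\,\frac{\varphi(s)+\sigma}{s}\qquad(\sigma\ge 0),
\end{equation*}
where ``$\ge$'' comes from the Fenchel inequality $\varphi(s)+\varphi^*(t)\ge st$ applied with $t=\varkappa(\sigma)$, and ``$\le$'' from the attainment of $\sup_s[ts-\varphi(s)]=\varphi^*(t)$ guaranteed by superlinearity of $\varphi$. Substituting $s=(\eps^\alpha/\tau)\|u'\|_\Spu$ in the $u'$-summand of $\mfB_\eps^\alpha$ (resp.\ $s=(\eps/\tau)\|z'\|_\Spz$ in the $z'$-summand) and combining with $\psi_\sfx(\cdot)\ge\varphi(\|\cdot\|)$ converts each summand to its Young lower bound, e.g.
\[
  \tfrac{\tau}{\eps^\alpha}\bigl[\psi_\sfu\bigl(\tfrac{\eps^\alpha}{\tau}u'\bigr)+\sigmav u\bigr]
   \;\ge\; \|u'\|_\Spu\,\varkappa(\sigmav u),
\]
and summing the two per-component estimates yields the master bound \eqref{eq:1LowBo.Bae}.

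Finally I split the three refined bounds according to the value of $\alpha$. For $\alpha\ge 1$ and $\eps\in(0,1]$ one has $\eps^\alpha\le\eps$, so $\tfrac{\tau}{\eps^\alpha}\sigmav u\ge\tfrac{\tau}{\eps}\sigmav u$; dropping the nonnegative $\psi_\sfu$-contribution and transferring $\sigmav u$ into the $z'$-summand gives
\[
  \mfB_\eps^\alpha\;\ge\;\tfrac{\tau}{\eps}\bigl[\psi_\sfz\bigl(\tfrac{\eps}{\tau}z'\bigr)+\sigmav u+\sigmav z\bigr]\;\ge\;\|z'\|_\Spz\,\varkappa(\sigmav u{+}\sigmav z),
\]
which is \eqref{eq:1LowBo.Bag1}; the case $\alpha<1$ is strictly symmetric and gives \eqref{eq:1LowBo.Bal1}. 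For $\alpha=1$ both summands share the common factor $\tau/\eps$, so Jensen's inequality $\varphi(a)+\varphi(b)\ge 2\varphi((a{+}b)/2)$ applied to the convex $\varphi$, followed by the same Young substitution with $s=\eps(\|u'\|_\Spu{+}\|z'\|_\Spz)/(2\tau)$, produces the desired bound \eqref{eq:1LowBo.B1eD} with the symmetric argument $(\sigmav u{+}\sigmav z)/2$. I expect the main obstacle to be the first stage: $\varphi$ must simultaneously minorise both $\psi_\sfu$ and $\psi_\sfz$, retain their superlinearity (to make $\varkappa$ finite and divergent), and flatten enough at the origin (to guarantee $\varkappa(0)=0$); once $\varphi$ and the Young identity are in place, the three $\alpha$-cases reduce to the scaling rule \eqref{eq:mfB.rescale}, the sign comparison between $\tau/\eps$ and $\tau/\eps^\alpha$, and Jensen's inequality.
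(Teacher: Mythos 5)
Your proof follows the same three-step architecture as the paper's: construct a common radial, convex, superlinear minorant $\varphi$ for $\psi_\sfu$ and $\psi_\sfz$; exploit the conjugate relation $\varkappa=(\varphi^*)^{-1}=\inf_{s>0}(\varphi(s)+\cdot)/s$ to turn each summand of $\mfB_\eps^\alpha$ into a product lower bound $\|\cdot\|\,\varkappa(\cdot)$; and handle the three $\alpha$-ranges by transferring the $\sigma$-terms across the two summands using the sign comparison between $1/\eps^\alpha$ and $1/\eps$. The paper reaches the Young relation by constructing $\varphi(r)=\sup_K(Kr-S_K)$ directly from the definition of superlinearity and then invoking Lemma~\ref{le:psi.norm}; you repackage this as an explicit Young identity, which is equivalent. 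Your extra flattening step $\varphi(r):=\max\{0,\tilde\beta(r)-Cr\}$ is a genuine improvement: it forces $\varphi'(0^+)=0$ regardless of whether $\psi_\sfu,\psi_\sfz$ have vanishing subdifferential at the origin, so that $\varphi^*$ is strictly increasing on all of $[0,\infty)$ and $\varkappa(0)=0$, $\varkappa$ continuous and strictly increasing are automatic; the paper's $\sup_K(Kr-S_K)$ can inherit a positive slope at $0$ when a $\psi_\sfx$ has a $1$-homogeneous part, in which case $(\varphi^*)^{-1}$ is ill-defined near $\sigma=0$ (this is immaterial in the paper's concrete application since $\pl\disv u(0)=\{0\}$, but your version holds under the stated hypotheses). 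Your Jensen argument for $\alpha=1$ is also valid, though the paper's observation that $\mfB^1_\eps$ depends on $\sigmav u,\sigmav z$ only through the sum, followed by an application of the already-proven~\eqref{eq:1LowBo.Bae}, is shorter.

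Two small repairs. First, the order of operations when forming $\tilde\beta$ is wrong as written: you describe taking the lsc convex hull of each $\beta_\sfx$ and \emph{then} the minimum over $\sfx$, but the pointwise minimum of two convex functions is generally not convex. You should take $\min_\sfx\beta_\sfx$ first and then pass to the lsc convex hull; the result is still a lower bound for each $\beta_\sfx$ and remains superlinear since $\min_\sfx\beta_\sfx(r)/r\to\infty$. Second, the lemma is stated for $\eps\in[0,1]$, but $\mfB_0^\alpha$ is not given by the explicit formula~\eqref{eq:def.B.al.eps} — it is the Mosco limit of $(\mfB_\eps^\alpha)_\eps$ from Proposition~\ref{pr:Mosco.Beps}. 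Your argument covers only $\eps>0$; you need the additional remark (as the paper makes) that the estimates pass to the Mosco limit $\eps\to 0^+$ because they are closed under $\Gamma$-$\liminf$.
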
 
\noindent
\begin{proof} \STEP{1: Construction of $\varphi$.} Since $\psi_\sfu$ and
  $\psi_\sfz$ are superlinear, for each $K\geq 0$ there exists $S_K\geq 0$ such that 
\begin{equation*}
  \forall (u',z') \in \Spu\ti \Spz: \quad \psi_\sfu(u') \geq  K\|u'\|_\Spu - S_K
  \text{ and } \psi_\sfz(z') \geq  K\|z'\|_\Spz - S_K.
\end{equation*}
Hence, the estimates in \eqref{eq:1LowBo.psi} hold for the nonnegative, convex
function $\varphi$ given by 
\[
\varphi(r) := \sup\bigset{ Kr -S_K }{K\geq 0}.
\]
From $\varphi(0)=0$ and non-negativity we conclude that $\varphi$ is
nondecreasing. Moreover, it is superlinear by construction. 

\STEP{2: Lower bound on $\mfb_{\psi_\sfx}$.} In the definition of
$\mfb_\psi$ the dependence on $\psi$ is monotone (because of $\tau>0$)  so 
that  $\psi_1\leq \psi_2$ implies $\mfb_{\psi_1} \leq \mfb_{\psi_2}$. Setting
$\widetilde \varphi(v)=\varphi(\|v\|)$ we obtain $\mfb_{\psi_\sfx}\geq
\mfb_{\widetilde\varphi}$, and using Lemma \ref{le:psi.norm} and the definition
of $\varkappa$ yields 
\[
\mfb_{\psi_\sfx}(v,\sigma) \geq \|v\|\, \varkappa(\sigma) \qquad \text{for } \mathsf{x} \in \{\mathsf{u}, \mathsf{z}\}. 
\]

\STEP{3: Lower bound on $\mfB^\alpha_\eps$.} The definitions of
$\mfB^\alpha_\eps$ in \eqref{eq:Resc.Bae.b} and of $\mfb_\psi$ give, for
$\eps>0$,
\begin{align*}
\RCBfu \eps\alpha{\tau}{u'}{z'}{\sigmav u}{\sigmav z} &=
  \calB_{\psi_\sfu}(\frac\tau{\eps^\alpha} , u',\sigmav u) 
  +  \calB_{\psi_\sfz}(\frac\tau{\eps} , z',\sigmav z)  \\
 & \geq  \mfb_{\psi_\sfu}(u',\sigmav u) +  \mfb_{\psi_\sfz}(z',\sigmav z) 
 \geq \|u'\|_\Spu\,\varkappa(\sigmav u) + \|z'\|_\Spz \,\varkappa(\sigmav z),
\end{align*}
where Step 2 was  invoked for the last estimate. This proves
\eqref{eq:1LowBo.Bae}.

Estimate \eqref{eq:1LowBo.B1eD} follows from the simple observation that,
because of $\alpha=1$, the rescaled B-function $\mfB^1_\eps$ only depends
$\sigmav u {+}\sigmav z$, such that each of $\sigmav u$ and $\sigmav z$ can be
replaced by their arithmetic mean.

For $\alpha\geq 1$ and $\eps\in (0,1]$, we have $\tau/{\eps^\alpha} \geq
\tau/\eps$  so  that
\[
\RCBfu \eps\alpha{\tau}{u'}{z'}{\sigmav u}{\sigmav z}
  \geq \frac\tau{\eps^\alpha}\sigmav u 
        +\calB_{\psi_\sfz}(\frac\tau{\eps} , z', \sigmav z) 
  \geq \calB_{\psi_\sfz}(\frac\tau{\eps} , z', \sigmav u{+}\sigmav z)  
  \geq \|z'\|_\Spz\, \varkappa( \sigmav u{+}\sigmav z). 
\]
This shows estimate \eqref{eq:1LowBo.Bag1}, and
\eqref{eq:1LowBo.Bal1} follows similarly. 

All estimates remain true for $\eps=0$ because $\mfB^\alpha_0$ is the
Mosco limit of $\mfB^\alpha_\eps$.
\end{proof}

\Section{Setup and existence for the viscous system}
\label{s:setup}

In Section \ref{ss:2.1} we will introduce our basic conditions on the ambient
spaces, the energy, and the dissipation potentials, collected in Hypotheses
\ref{hyp:setup}, \ref{hyp:diss-basic}, \ref{hyp:1}, and \ref{h:closedness},
 which will be assumed throughout the paper.  Let us mention in
advance that we will often omit to explicitly recall these assumptions in the
various intermediate statements, with the exception of our main results in
Theorems \ref{thm:existBV}, \ref{thm:exist-enh-pBV}, \ref{thm:exist-trueBV},
and \ref{thm:exist-nonpar-enh}.

Then, in Section \ref{ss:ExistVisc} we will address the existence of solutions
to the viscous system \eqref{van-visc-intro}.  Its main result, Theorem
\ref{th:exist} shows that, under two additional conditions on the driving
energy functional, the existence result from \cite[Thm.\,2.2]{MRS2013} can be
applied to deduce the existence of solutions for the doubly nonlinear system
\eqref{van-visc-intro}. It will be crucial to our analysis that we are able to
show that these solutions satisfy the $(\Psi,\Psi^*)$ energy-dissipation
balance \eqref{EnDissBal-intro}.

\Subsection{Function spaces}
\label{ss:2.1}

Here we state our standing assumptions on the function spaces for the  energy
functionals and for the dissipation potentials.

\begin{hypothesis}[Function spaces] 
\label{hyp:setup}
\slshape In addition to conditions \eqref{intro-state_spaces} on the ambient spaces
$\Spu$ and $\Spz$, our (coercivity) conditions on the energy $\calE$
will involve two other \emph{reflexive} spaces   $\Spw$ and $\Spx$,  \ such
that
\[
  \Spw \subset \Spu \text{ continuously and densely, and } \Spx \Subset \Spz
  \text{ compactly and densely}.
\]
The subscript $\mathrm{e}$ refers to the fact that the latter are `energy
spaces'  relating to $\calE$. 
Furthermore, the $1$-homogeneous dissipation potential $\calR$ will be
in fact defined on a (separable) space   $\Spy$  (where the subscript  $\mathrm{ri}$ accordingly refers to rate-independence),   such that
\[
\Spz \subset \Spy  \text{ continuously and densely}.
\]
\end{hypothesis}

We refer to \eqref{spacesU} for some examples of relevant ambient
spaces. In what follows, we will often use the notation
\begin{equation}
\label{notation}
q:=(u,z) \in  \mathbf{Q} :  = \Spu \ti \Spz.
\end{equation}

\Subsection{Assumption on the dissipation potentials}
\label{su:Dissipation}

We will develop the general theory under the condition that the
viscous dissipation potentials $\disv u$ and $\disv z$ as well as the
$1$-homogeneous potential $\calR$ take only finite values in $[0,\infty)$ and are
thus continuous.  Recall that $\disv x^*$ is the Legendre-Fenchel conjugate
of $\disv x$, see Definition \ref{def:DissPotential}. 

\begin{hypothesis}[Conditions on $\disv u$, $\disv z$, $\calR$]
\label{hyp:diss-basic}
\slshape  Let $\disv u : \Spu \to [0,\infty)$ and
$\disv z : \Spz \to [0,\infty)$ be dissipation potentials with the following
additional conditions: 
\begin{subequations}
\label{hyp:visc-diss}
\begin{align}
\label{h:v-diss-1}
  & \lim_{\| v\|_\Spu \to \infty} \frac{\disv
    u(v)}{\| v\|_\Spu } = \lim_{\|  \mu  \|_{\Spu^*} \to \infty} \frac{\disv
    u^*( \mu) }{\|  \mu \|_{\Spu^*}} =\infty = 
 \lim_{\| \eta\|_\Spz \to \infty} \frac{\disv z(\eta)}{\| \eta\|_\Spz}
  = \lim_{\| \zeta\|_{\Spz^*} \to \infty}
  \frac{\disv z^*(\zeta)}{\| \zeta\|_{\Spz^*}},
\\
  \label{later-added}
  &\lim_{\lambda \to 0^+} \frac1{\lambda} \disv u (\lambda v) =0 \ \text{ for
    all } v \in \Spu,    \quad \text{and} \quad \lim_{\lambda \to 0^+}
  \frac1{\lambda} \disv z  (\lambda \eta) =0 \ \text{ for all } \eta \in \Spz\,. 
\end{align}
\end{subequations}
Let $\calR: \Spy \to  [0,\infty]$ be a  1-homogeneous dissipation potential,  i.e.
\begin{subequations}
\label{hyp:ri-diss}
\begin{equation}
\label{Rzero}
\calR(\lambda \eta) = \lambda \calR (\eta) \quad \text{for all } \eta \in \Spy
\text{ and }  \lambda > 0, 
\end{equation}
 that is additionally $\Spz$-bounded and  $\Spy$-coercive for $\Spz\subset \Spy$, i.e. 
\begin{equation}
\label{R-coerc}
\exists\, C_\calR,\, c_\calR>0:  \quad 
\left\{ 
\begin{array}{ll}
\forall\, \eta \in \Spz:   &  \calR(\eta) \leq  C_\calR \norm{\eta}{\Spz},
\\
 \forall\, \eta \in \Spy:  & \calR(\eta) \geq c_\calR \norm{\eta}{\Spy}\,.
\end{array}
\right.
\end{equation}
\end{subequations}
\end{hypothesis} 

Due to the superlinear growth of $\disv x$ and $\disv x^*$,
$\sfx \in \{ \sfu, \sfz\}, $ both
$\pl  \disv x : \mathbf{X} \rightrightarrows \mathbf{X}^*$ and
$\pl  \disv x^* : \mathbf{X}^* \rightrightarrows \mathbf{X}$,
$ \mathbf{X} \in \{ \Spu, \Spz\}$, are bounded operators, so that, ultimately,
both $\disv u$ and $\disv u^*$ are continuous. Likewise, $\calR$ is
continuous. Indeed, restricting our analysis to the case in which $\calR$
takes only finite values in $[0,\infty)$ excludes the direct application of our
results to systems modeling unidirectional processes in solids such as damage
or delamination. In those cases the existence theory (both for the
rate-dependent, `viscous' system and for $\BV$ solutions of the
rate-independent process) relies on additional estimates not considered here,
see e.g.\ \cite{KnRoZa17}.  Nevertheless, a broad class of models is still
described by \emph{continuous} dissipation functionals.  For instance, the
coercivity and growth conditions \eqref{R-coerc} are compatible with the
following example of dissipation potential, in the ambient spaces
$\Spy =\rmL^1(\Omega)$ and $\Spz=\rmL^2(\Omega)$ (with $\Omega\subset\R^d$ a bounded
domain):
\begin{equation}
 \label{ex:healing}
 \calR: \rmL^1(\Omega) \to [0,\infty]; \quad \calR(\eta): = 
  \begin{cases}
    \|\eta^+\|_{\rmL^2(\Omega)}+ \|\eta^-\|_{\rmL^1(\Omega)}& \text{ if } \eta^+ \in
    \rmL^2(\Omega), \\ \infty & \text{ otherwise.}
 \end{cases}
\end{equation}
Dissipation potentials with this structure occur, for instance, in models for
damage or delamination allowing for possible healing, cf.\ e.g.\
\cite[Sec.\,5.2.7]{MieRouBOOK} and Section \ref{s:appl-dam}. 

 Subsequently, $\pl  \disv u: \Spu\rightrightarrows \Spu^*$, $\pl  \disv
u: \Spz\rightrightarrows \Spz^*$, and $\pl  \calR: \Spz\rightrightarrows
\Spz^*$ will denote the convex subdifferentials of $\disv u$, $\disv z$, and
$\calR$,  respectively.  By the 1-homogeneity \eqref{Rzero} we have 
\begin{equation}
    \label{eq:subdiff.calR}
\pl \calR(\eta) = \bigset{ \omega \in \pl \Spz^*}{  
       \forall\,v{\in} \Spz{:}\; \calR(v) \geq 
     \calR(\eta){+}\!\pairing{}{\Spz\!}{\omega}{v{-}\eta}} 
 = \bigset{\omega \in \pl   \calR(0)}{
            \calR(\eta)=\langle \omega,\eta\rangle } . 
\end{equation}
Thanks to Hypothesis \ref{hyp:setup}, we have $\Spy^* \subset \Spz^*$ densely
and continuously.  As a consequence of \eqref{R-coerc} $\pl  \calR (0)$
turns out to be a bounded subset in $\Spz^*$,  viz.
\begin{equation}
  \label{eq:l:classic}
  \pl  \calR(\eta) \subset \pl  \calR(0) \quad \text{and} \quad 
  \overline{B}_{c_\calR}^{\Spy^*}(0) \subset \pl  \calR(0) \subset
\overline{B}_{C_\calR}^{\Spz^*}(0). 
\end{equation}

\Subsection{Assumptions on the energy $\calE$}
\label{su:Energy}

We now collect our basic requirements on the energy functional $\calE:
[0,T]\ti \Spu \ti \Spz \to (-\infty,\infty]$.  With slight abuse of
notation, we will often write $\calE(t,q) $ in place of $ \ene tuz$, in
accordance with \eqref{notation}.  Recall the embeddings
$\Spw\subset \Spu$ and $\Spx \Subset \Spz\subset \Spy$  and the choice
$\Spq= \Spu\ti \Spz$. 

\begin{hypothesis}[Lower semicontinuity, coercivity, time differentiability of $\calE$]
\label{hyp:1} 
The energy functional \linebreak[4]
$\calE: [0,T]\ti \Spu \ti \Spz \to (-\infty,\infty]$ has the proper domain
$\mathrm{dom}(\calE) = [0,T]\ti \domq $ with $\domq \subset \Spw\ti \Spx$.
Moreover, we require that
\begin{subequations}
\label{h:1}
\begin{equation}
\label{h:1.1}
\forall\, t \in [0,T]: \quad \text{the map }  
q \mapsto \en tq \text{ is weakly  lower semicontinuous on } \Spq,
\end{equation}
and $\calE$ is bounded from below:
\begin{equation}
\label{h:1.2}
\exists\, C_0>0 \ \ \forall\, (t,q) \in 
[0,T]\ti \domq\, : \qquad \en tq \geq C_0\,.
\end{equation}
We set $\mfE( q):= \sup_{t\in [0,T]} \en tq $ and require
that
\begin{equation}
\label{h:2}
\text{the map } q \mapsto \mfE(q)  + \| q \|_{\Spu\ti \Spy} 
 \text{ has sublevels bounded in } \Spw \ti \Spx.
\end{equation}
Finally, we require that $t \mapsto \en tq$
is differentiable for all $q\in \domq $ satisfying the power-control estimate 
\begin{align}
\label{h:1.3d}
&\exists\, C_\#>0 \ \forall\, (t,q)\in [0,T]\ti \domq:\quad 
 \left| \pet tq \right| \leq C_\# \en tq  . 
\end{align}
\end{subequations}
\end{hypothesis}

\noindent
Concerning our conditions on $\mathrm{dom}(\calE)$, the crucial requirement is
that $\mathrm{dom}(\calE(t,\cdot)) \equiv \domq$ is independent of time.  Let
us introduce the energy sublevels
\begin{equation}
\label{Esublevels}
\subl E: = \{ q\in \domq\, : \ \mfE(q) \leq E \} \qquad \text{for }E>0.
\end{equation}
Applying Gr\"onwall's lemma we deduce from \eqref{h:1.3d} that 
\[
\forall\,  (t,q) \in [0,T]\ti \domq \, : \qquad 
 \mfE(q) \leq  \rme^{C_\# T} \,  \calE(t,q)\,. 
\]
Hence, $\calE(t,q) \leq E$ for some $t\in [0,T]$ and $E>0$ guarantees
$q\in \subl {E'}$ with $E' =  \rme^{C_\# T} \,  E $.  Finally, observe
that \eqref{h:2} implies the separate coercivity properties of the functionals
$\mfE(\cdot,z)$ and $\mfE( u,\cdot)$, perturbed by the norm $\| \cdot\|_\Spu$ and 
 $\| \cdot\|_{\Spy}$,  respectively.

Since we are only requiring that $\Spw \subset \Spu$ continuously, our
analysis allows for the following two cases: (i) the energy
$\ene t{\cdot}z$ and the dissipation potential $\disv u$ have sublevels bounded
in the same space and (ii) the energy $\ene t{\cdot}z$ has sublevels compact in
the space $\Spu$ of the dissipation $\disv u$. 
To fix ideas, typical examples for the pairs $(\Spu,\Spw)$  and the triples
$(\Spx, \Spz,\Spy)$  are
\begin{align}
&
\label{spacesU}
\begin{aligned} 
&\text{(i) }\ \Spu =\Spw = \rmH^1(\Omega;\R^d) \quad  \text{ or \quad (ii) } \ 
\Spw = \rmH^1(\Omega;\R^d) \  \Subset  \ \Spu = \rmL^2(\Omega;\R^d), 
\\
&   \text{and} \quad \Spx= \rmH^1(\Omega;\R^m) \ \Subset \   \Spz=
\rmL^2(\Omega;\R^m) \ \subset \  \Spy = \rmL^1(\Omega;\R^d). 
\end{aligned}
\end{align}

As mentioned in the introduction, in our analysis we aim to allow for
nonsmoothness of the energy functional $q=(u,z)\mapsto \en tq$. Accordingly, we
will use the Fr\'echet subdifferential of $\calE$ with respect to\ the variable
$q$, i.e.\ the multivalued operator
$\frname q\calE : [0,T] \ti \Spq \rightrightarrows \Spq^*$ defined for
$(t,q)\in [0,T]\ti \domq$ via
\begin{equation}
\label{Frsubq}
\frsubq qtq :=\bigset{ \xi \in \Spq^*}{\en t{\hat{q}} \geq  \en tq  {+}
 \pairing{}{\Spq}{\eta}{\hat{q}{-}q}  + o(\norm{\hat{q}{-}q}{\Spq}) \text{ as }
\hat{q} \to q \text{ in } \Spq}  
\end{equation}
with domain $\mathrm{dom}(\frname q\calE) : = \bigset{ (t,q) \in [0,T] 
\ti \domq }{ \frsubq qtq \neq \emptyset } $.

Thus, our aim is to solve the subdifferential inclusion
\begin{equation}
\label{dne-q}
\pl \Psi_{\eps,\alpha}(q'(t)) + \pl_q \calE(t,q(t)) \ni 0 \qquad
\text{ in } \mathbf{Q}^* \ \foraa\, t \in (0,T) 
\end{equation}
where the scaled dissipation potential $\Psi_{\eps,\alpha} $ is defined in
\eqref{eq:def.Psi.e.a}.  

\begin{remark}[Partial Fr\'echet subdifferentials]
\label{rmk:Alex}
\slshape
Observe that 
\begin{equation}
\label{just-inclusion}
\frsubq qt{u,z} \subset \frsubq ut{u,z} \ti \frsubq zt{u,z}  \quad \text{for all } (t,q) =
(t,u,z)\in [0,T]\ti\domq, 
\end{equation}
where $\frsubq utq\subset \Spu^*$ and $\frsubq ztq\subset \Spz^*$ are
the `partial' Fr\'echet subdifferentials of $\calE$  with respect to
the variables $u $ and $z$, which are defined as Fr\'echet subdifferentials of  
$\ene t\cdot z:\Spu\to \R$ and $\ene tu\cdot:\Spz\to \R$, respectively. 
However, equality in \eqref{just-inclusion} is false, in general, e.g.\ for
$\Spu=\Spz=\R$ and $\calE(t,u,z)=|u{-}z|$.

In view of the inclusion \eqref{just-inclusion}, any curve
$t \mapsto q(t)=(u(t),z(t))$ solving \eqref{dne-q} also solves the system
\begin{subequations}
\label{DNE-system}
\begin{align}
\label{DNEu}
&
\pl  \disv {u}^{\epsalpha}(u'(t)) +\frsub u t{u(t)}{z(t)}   \ni 0 && \text{
  in } \Spu^*  && \foraa t \in (0,T),  
\\
\label{DNEz}
\pl  \calR (z'(t)) +{} &\pl  \disv {z}^\eps\,(z'(t)) \ + \, 
\frsub z t{u(t)}{z(t)}   \ni 0  &&  \text{ in } \Spz^*  && \foraa t \in (0,T).
\end{align}
\end{subequations}
Nonetheless, let us stress that the `reference viscous system' for the
subsequent discussion will be  the one with the smaller  solution set,
namely \eqref{dne-q}  or \eqref{enid.b} below. 
\end{remark}

The  existence  result from \cite{MRS2013} can be applied provided that
$\calE$ fulfills two further conditions, stated in the following Hypotheses
\ref{h:closedness} and \ref{h:ch-rule}.

\begin{hypothesis}[Closedness  of $(\pl_q\calE,\pl_t\calE)$ on sublevels] 
\label{h:closedness} 
For all sequences  $\big((t_n,q_n, \xi_n) \big)_{n\in \N} $ in the space
$[0,T] \ti \Spq \ti \Spq^*$ with  $t_n\to t$, $q_n \weakto q $ in $\Spq$,
$\xi_n \weakto \xi $ in $\Spq^*$, $\sup_n \mfE(q_n) < \infty $, and
$\xi_n\in \pl_q \en t{q_n}$, we have
\begin{equation}
 \label{h:1.3e}
  \xi \in \pl_q \en tq \quad \text{and} \quad 
 \pet {t_n}{q_n} \to \pet tq.   
\end{equation}
\end{hypothesis} 
\begin{remark}
\label{rmk:closedness-diminished}
\sl
For cases in which the energy space $\Spw$ is compactly embedded into $\Spu$,
the sequences $(q_n)_n$ fulfilling the conditions of Hypothesis
\ref{h:closedness} converge strongly in $\mathbf{Q}$ in view of the
coercivity \eqref{h:2}.  Therefore, in such cases Hypothesis
\ref{h:closedness} turns out to be a closedness condition on the graph of
$\pl_q \calE$ with respect to\ the \emph{strong-weak} topology of
$\mathbf{Q}\ti \mathbf{Q}^*$.

We also mention that, in contrast to what we did in \cite{MRS2013} (cf.\
(2.E$_5$) therein), here in Hypothesis \ref{h:closedness} we omit  the
requirement of energy convergence  $ \en {t_n}{q_n} \to \en tq $  along
the sequence $(t_n,q_n,\xi_n)_n$.  In fact, that additional property was
not strictly needed in the proof of the existence result
\cite[Thm.\,2.2]{MRS2013}, to which we will resort later on to conclude the
existence of solutions for our viscous system \eqref{dne-q}. Rather, in
\cite{MRS2013} the energy-convergence requirement was encompassed in the
closedness assumption in order to pave the way for a weakening of the
chain-rule condition, cf.\ the discussion in \cite[Rmk.\,4.6]{MRS2013}. Such a
weakening is outside the scope of this paper.
\end{remark}

Our final condition on $\calE$ is an abstract \emph{chain rule} that has a
twofold role: First, it is a crucial ingredient in the proof of Theorem
\ref{th:exist}, and secondly, it ensures the validity of the energy-dissipation
balance \eqref{enid}. The latter will be the starting point in the derivation
of our a priori estimates \emph{uniformly} with respect to\ the viscosity
parameter $\eps$.  We refer to Proposition \ref{prop:ch-ruleApp} in Appendix
\ref{s:app-CR} for a discussion of conditions on $\calE$ yielding the validity
of Hypothesis~\ref{h:ch-rule}.

\begin{hypothesis}[Chain rule]
\label{h:ch-rule}
For every absolutely continuous curve $q\in \AC ([0,T]; \mathbf{Q})$ and 
all    measurable selections $\xi: (0,T) \to \Spq^*$   with 
$\xi(t)\in \pl_q \calE(t, {q(t)})$ for a.a.\ $t\in (0,T)$,
 \begin{equation}
\label{conditions-1} \sup_{t \in (0,T)} |\calE(t,q(t))|<\infty,  \quad \text{and} \quad
\int_0^T \| \xi(t)\|_{\mathbf{Q}^*}  \| q'(t)\|_{\mathbf{Q}} 
    \dd t <\infty, 
\end{equation}
we have the following two properties:
\begin{equation}
    \label{eq:48strong}
    \begin{gathered}
     \text{the map  $t\mapsto \calE(t,q(t))$ is absolutely continuous
       on $[0,T]$ and}
     \\
    \frac \dd{\dd t}  \calE(t,q(t)) - \pl_t     \calE(t,q(t)) 
    =   \pairing{}{\bfQ}{\xi(t)}{q'(t)}  \quad 
    \text{for a.a.\ }t\in (0,T).
    \end{gathered}
  \end{equation}
\end{hypothesis}

\Subsection{An existence result  for the viscous problem}
\label{ss:ExistVisc}

We are now in the position to state our existence result for the viscous system
\eqref{dne-q}.  It is based on the 
$(\Psi,\Psi^*)$-formulation of the energy-dissipation balance (cf.\
\eqref{EnDissBal-intro} for the case 
$q\mapsto \calE(t,q)$ is smooth), which we now apply to \eqref{dne-q} using the
Fr\'echet subdifferential $\frsubq q tq$ and the scaled dissipation
potential $\Psi_{\eps,\alpha}$ defined in \eqref{eq:def.Psi.e.a}. The
Legendre-Fenchel conjugate is given by 
\begin{equation}
\label{def:conj}
\Psi_{\eps,\alpha}^*(\mu,\zeta) = \frac1{\eps^\alpha}\disv u^*(\mu) + \frac1\eps\conj z(\zeta) \ \text{ with
}\ \conj z(\zeta): = \min_{\sigma \in \pl \calR(0)} \disv z^*(\zeta{-}\sigma)
\qquad \text{for } \zeta \in  \Spz^*. 
\end{equation}
It  can be straightforwardly checked that the infimum in the definition of
$\calW^*_\sfz$ is attained. 
  
\begin{theorem}[Existence of viscous solutions]
\label{th:exist}
Assume Hypotheses \ref{hyp:diss-basic}, \ref{hyp:1}, \ref{h:closedness}, and 
\ref{h:ch-rule}. Then,  for every
$\eps \in (0,1]$ and  $q_0 = (u_0,z_0)\in \domq$ there exists a curve
$q=(u,z) \in \AC ([0,T];\Spq)$ and  a function $\xi=(\mu,\zeta) 
\in \rmL^1(0,T;\Spu^* \ti \Spz^*)$  fulfilling the initial condition $q(0) = q_0$,
solving the generalized gradient system \eqref{dne-q}  in the sense that
 for a.a.\ $r\in (0,T)$
\begin{subequations}
\label{enid}
\begin{equation}
\label{enid.b}
  (\mu(r),  \zeta(r)) \in \frsubq q r{q(r)} \  \text{ and }  \ 
   \left\{ \begin{array}{l@{\,}l}
              -\mu(r)  &\in  \pl  \disv {u}^{\epsalpha}(u'(r)),
              \\
            -\zeta (r) &\in \pl  \calR (z'(r)) {+} \pl  \disv {z}^\eps(z'(r)),
\end{array}
\right.
\end{equation}
Moreover, for $0\leq s < t \leq T$, these functions satisfy
the energy-dissipation balance
\begin{align}
\label{enid.a}
\en t{q(t)} & + \int_s^t \Big( \disve u{\eps^\alpha}
 (\eps^\alpha u'(r)) + \calR(z'(r)) +  \disve z\eps (\eps\,z'(r)) 
\Big) \dd r 
\\ \nonumber
 & { +  \int_s^t \Big( \frac1{\eps^\alpha}  \disv
u^*  ({-}\mu(r))  +  \frac1\eps  \conj z({-}\zeta(r))\Big)  \dd r 
 = \en s{q(s)}+ \int_s^t\pl_t \en r{q(r)} \dd r.}
\end{align}
\end{subequations}
where $\disve x\lambda$ is defined in 
\eqref{eq:Def.Vx.lambda}. 
\end{theorem}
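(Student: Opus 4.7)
The plan is to invoke the abstract existence result \cite[Thm.\,2.2]{MRS2013} for the generalized gradient system $(\Spq,\calE,\Psi_{\eps,\alpha})$, then to split the resulting single gradient-flow inclusion into the coupled system in $u$ and $z$, and finally to promote the differential inclusion to the full $(\Psi,\Psi^*)$-energy-dissipation balance via Hypothesis \ref{h:ch-rule}. The argument reduces the theorem to checking that (i) the abstract hypotheses are in force for $\Psi_{\eps,\alpha}$, and (ii) the Fenchel--Young identity combined with the chain rule upgrades the inclusion to the full balance.

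For step (i), I would verify that Hypotheses \ref{hyp:diss-basic}--\ref{h:ch-rule} imply the abstract assumptions of \cite[Thm.\,2.2]{MRS2013} applied to the single dissipation potential $\Psi_{\eps,\alpha}$ on $\Spq$. For fixed $\eps\in(0,1]$, the rescaling $\disve{x}{\lambda}(w)=\tfrac{1}{\lambda}\disv{x}(\lambda w)$ preserves convexity, lower semicontinuity, and superlinearity on account of \eqref{h:v-diss-1}, so $\Psi_{\eps,\alpha}$ is a proper, convex, lsc, superlinear dissipation potential on $\Spq$ with $\Psi_{\eps,\alpha}(0)=0$. The structural conditions on $\calE$ and on its Fr\'echet subdifferential are exactly what Hypotheses \ref{hyp:1} and \ref{h:closedness} provide. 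Thus \cite[Thm.\,2.2]{MRS2013} yields $q=(u,z)\in\AC([0,T];\Spq)$ with $q(0)=q_0$ and a selection $\xi=(\mu,\zeta)\in\rmL^1(0,T;\Spq^*)$ such that, for a.a.\ $t\in(0,T)$, $\xi(t)\in\pl_q\calE(t,q(t))$ and $-\xi(t)\in\pl\Psi_{\eps,\alpha}(q'(t))$. Since $\Psi_{\eps,\alpha}$ is separable in the two components,
\[
\pl\Psi_{\eps,\alpha}(u',z') = \pl\disve{u}{\eps^\alpha}(u')\ti\big(\pl\calR(z')+\pl\disve{z}{\eps}(z')\big),
\]
and the coupled system \eqref{enid.b} follows at once from the inclusion $-\xi\in\pl\Psi_{\eps,\alpha}(q')$.

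For step (ii), I would apply the chain rule (Hypothesis \ref{h:ch-rule}) to $q$ and $\xi$, whose integrability requirements \eqref{conditions-1} follow from the a priori energy bound provided by \cite[Thm.\,2.2]{MRS2013} together with the Fenchel--Young inequality $\|\xi\|_{\Spq^*}\|q'\|_{\Spq}\geq-\langle\xi,q'\rangle=\Psi_{\eps,\alpha}(q')+\Psi_{\eps,\alpha}^*(-\xi)$. The chain rule then gives absolute continuity of $t\mapsto\calE(t,q(t))$ and the pointwise identity $\tfrac{d}{dt}\calE(t,q(t))=\pl_t\calE(t,q(t))+\langle\xi(t),q'(t)\rangle$. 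Combining this with the Fenchel--Young \emph{equality} $-\langle\xi(t),q'(t)\rangle=\Psi_{\eps,\alpha}(q'(t))+\Psi_{\eps,\alpha}^*(-\xi(t))$, which holds because $-\xi(t)\in\pl\Psi_{\eps,\alpha}(q'(t))$, and using the explicit form of $\Psi_{\eps,\alpha}^*$ computed in \eqref{def:conj}, integration over $[s,t]$ yields exactly \eqref{enid.a}. The main obstacle I expect is not conceptual but structural: verifying that the time-discrete variational scheme underlying \cite[Thm.\,2.2]{MRS2013} applies uniformly with the rescaled potential $\Psi_{\eps,\alpha}$ and that the $\rmL^1$-regularity of $\xi$ is preserved in the limit of that scheme. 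Once $q$, $\xi$, and the inclusion are secured and the chain rule is available, the passage to the balance is purely algebraic.
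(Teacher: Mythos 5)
Your overall strategy matches the paper's: both reduce the theorem to \cite[Thm.\,2.2]{MRS2013} applied to the generalized gradient system with dissipation potential $\Psi_{\eps,\alpha}$, then split the inclusion componentwise and read off the energy-dissipation balance via Fenchel--Young equality. (Your step (ii) is partly redundant: the cited theorem already delivers the balance, which one then decodes in terms of the $u$- and $z$-components using $\Psi_{\eps,\alpha}^*$ as in \eqref{def:conj} — but the chain-rule argument you give is essentially the mechanism that produces it.)

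There is, however, a genuine gap in your verification of step (i). You assert that the structural hypotheses ``are exactly what'' \cite[Thm.\,2.2]{MRS2013} requires, but two of that theorem's assumptions do \emph{not} follow from the Hypotheses you cite and must be handled separately. First, the technical condition (2.$\Psi_3$) on the dissipation potential is not implied by Hypothesis \ref{hyp:diss-basic} and may fail for a general $\Psi_{\eps,\alpha}$; the way around this is to observe that (2.$\Psi_3$) enters the proof of \cite[Thm.\,2.2]{MRS2013} only through the so-called De Giorgi lemma (Lemma 6.1 therein), and to invoke the improved version of that lemma (as in \cite{Bach21NADN} or \cite{MieRos20?DL}) which replaces (2.$\Psi_3$) by reflexivity of the underlying Banach space --- available here by Hypothesis \ref{hyp:setup}. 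You flag ``verifying that the time-discrete variational scheme applies'' as your expected obstacle, which is the right worry, but you do not resolve it. Second, the chain-rule condition (2.E$_4$) of \cite{MRS2013} is not word-for-word identical to Hypothesis \ref{h:ch-rule}; one must check, by inspecting the proof, that Hypothesis \ref{h:ch-rule} suffices as a replacement, as the paper notes. Without addressing these two points the appeal to \cite[Thm.\,2.2]{MRS2013} is not yet justified.
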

\begin{proof} Since we are in the simple setting of \cite[Sec.\,2]{MRS2013},
where the dissipation potential $\Psi_{\eps,\alpha}$ does not depend on the
state $q$, we can appeal to \cite[Thm.\,2.2]{MRS2013}. Thus, it suffices to
check the assumptions (2.$\Psi_1$)--(2.$\Psi_3$), (E$_0$), and
(2.E$_1$)--(2.E$_4$) therein.  Our Hypothesis \ref{hyp:diss-basic} clearly
implies (2.$\Psi_1$) and (2.$\Psi_2$). Hypothesis \ref{hyp:1} implies the
assumptions (E$_0$) via \eqref{h:1.1} and \eqref{h:1.2}, and assumption
(2.E$_1$) follows via \eqref{h:2} and Hypotheses \ref{hyp:diss-basic}.
Assumption (2.E$_2$) follows from Hypothesis \ref{h:closedness} via
\cite[Prop.\,4.2]{MRS2013}.  Assumption (2.E$_3$) equals \eqref{h:1.3d} in
Hypothesis \ref{hyp:1}. Finally, leaving out the energy-convergence requirement
assumption (2.E$_5$) follows from Hypothesis \ref{h:closedness}.

Thus, all assumptions are satisfied except for (2.$\Psi_3$) and (2.E$_4$).
Concerning (2.$\Psi_3$), we observe that this technical condition was used for
the proof of \cite[Thm.\,2.2]{MRS2013} only in one place, namely in the proof
of Lemma 6.1 there. In  \cite[Thm.\,3.2.3]{Bach21NADN}  or in
\cite{MieRos20?DL} it is shown that Lemma 6.1, which is also called ``\emph{De
  Giorgi's lemma}'', is also valid if the condition
\cite[Eqn.\,(2.$\Psi_3$)]{MRS2013} is replaced by the condition that the
underlying space Banach space $\Spq$ is reflexive, but this is true by our
Hypothesis \ref{hyp:setup}.  As for the chain rule \cite[(2.E$_4$)]{MRS2013}, a
close perusal of the proof of \cite{MRS2013} shows that our Hypothesis
\ref{h:ch-rule} can replace it, allowing us to conclude the existence
statement.
\end{proof}

\begin{remark}[Energy-dissipation inequality]\slshape 
\label{rmk:GS-used-later}
The analysis from \cite{MRS2013} in fact reveals that, under the chain
rule in Hypothesis \ref{h:ch-rule}, a curve $q\in \AC([0,T];\Spq)$ fulfills
\eqref{enid.b} \emph{if and only if} the  pair $(q,\xi)$  satisfies the
energy-dissipation balance \eqref{enid.a} which, again by the chain rule, is in
turn equivalent to the upper energy-dissipation estimate $\leq$. This
characterization of the viscous system will prove handy for the analysis of the
delamination system from Section \ref{s:appl-dam}.
\end{remark}

\Subsection{Properties of the generalized slopes}
\label{su:GenSlopes}

For the further analysis it is convenient to introduce the
\emph{generalized slope functionals}   $\slovname{x}: [0,T]\ti \domq \to [0,\infty]$,
$\mathsf{x} \in \{\mathsf{u}, \mathsf{z}\}$ via
\begin{equation}
 \label{def:GeneralSlope}
 \begin{aligned}
 & \slov utq: = \inf\bigset{\,\disv u^* ({-}\mu)\, }{(\mu,\zeta) \in \frsubq qtq}
 \quad \text{and}
 \\
 & \slov ztq: =\inf\bigset{\conj z({-}\zeta)}{(\mu,\zeta) \in \frsubq qtq},
 \end{aligned}
\end{equation}
 where the infimum over the empty set is always $+\infty$. These functionals
play the same key role as (the square of) the metric slope for metric gradient
systems, hence  from now on we shall refer to $\slovname u$ and $\slovname z$ as
\emph{generalized slopes}.  Clearly, energy balance \eqref{enid.a} entails the
validity of the following energy-dissipation estimate featuring the slopes
$\slovname u$ and $\slovname z$:
\begin{equation}
  \label{enid-ineq}
  \begin{aligned}  
    \eneq t{q(t)}+ \int_s^t \!\!\Big(  \disve u{\eps^\alpha}(  u'(r)) 
    {+}  \calR(z'(r)) {+}  \disve z\eps(  z'(r)) +
    \frac{\slov  ur{q(r)}}{\eps^\alpha}   +  \frac{\slov  zr{q(r)}}\eps
      \Big)    \dd r  \\   \leq 
    \eneq s{q(s)}
    + \int_s^t\pl_t \eneq r{q(r)}
    \dd r \qquad \text{for all } 0 \leq s \leq t \leq T\,.
  \end{aligned}
\end{equation}
Note that \eqref{enid-ineq} is weaker than \eqref{enid.a}, but it has the
advantage that the selections  $\xi=(\mu,\zeta)$  in \eqref{enid.b} are
no longer needed. Moreover, \eqref{enid-ineq} will be still strong enough to
handle the limit passage $\eps \to 0^+$.  For this, we will assume that the
infima in \eqref{def:GeneralSlope} are attained. We set
\begin{align*}
  &\mathrm{dom}(\pl_q \calE):= \bigset{(t,q) \in [0,T]\ti \Spq}{
    \pl_q\calE(t,q)\neq \emptyset } \quad \text{and} \\
  &\mathrm{dom}(\pl_q \calE(t,\cdot)):= \bigset{q\in \Spq}{
    \pl_q\calE(t,q)\neq   \emptyset } .
\end{align*}
In fact, it can be checked (e.g.\ by resorting to \cite[Prop.\,4.2]{MRS2013}),
that $\mathrm{dom}(\pl_q \calE(t,\cdot))$ is dense in $\domq$.

\begin{hypothesis}[Attainment and lower semicontinuity] 
\label{hyp:Sept19}
For every  $(t,q) \in \mathrm{dom}(\pl_q \calE)$ the
infima   in \eqref{def:GeneralSlope} are attained, namely  
\begin{equation}
  \label{not-empty-mislo}
    \argminSlo utq: = \mathop{\mathrm{Argmin}}\limits_{(\mu,\zeta) \in \frsubq
      qtq} \disv u^* ({-}\mu) \neq \emptyset
 \quad \text{and} \quad \
    \argminSlo ztq: = \mathop{\mathrm{Argmin}} \limits_{(\mu,\zeta) \in
       \frsubq qtq}  \conj z({-}\zeta) \neq \emptyset,
\end{equation} 
where $\conj z$ is defined in \eqref{def:conj}.  Furthermore, for all sequences
 $(t_n,q_n)_n\subset [0,T] \ti \Spq$  with $t_n\to t$, $q_n \weakto q$
in $\Spq$, and $\sup_{n\in \N} \mfE(q_n) \leq C<\infty$ there holds
\begin{align}
\label{liminf-diss-V-W}
\liminf_{n\to\infty} \slov x {t_n}{q_n}\geq \slov x tq 
\qquad \text{for } \mathsf{x} \in \{\mathsf{u}, \mathsf{z}\}\,. 
\end{align}
\end{hypothesis}

 We are going to show in Lemma \ref{l.4.13} below that 
a sufficient condition for Hypothesis \ref{hyp:Sept19} 
is that \eqref{just-inclusion} improves to an equality, namely
\begin{equation}
\label{it-is-product}
\frsubq qtq = \frsubq utq \ti \frsubq ztq  \quad \text{for all } (t,q) =
(t,u,z)\in [0,T]\ti\domq. 
\end{equation}
Observe that \eqref{it-is-product} does hold if, for instance, $\calE$ is of
the form
\begin{align*}
    & \en tq : = \calU(t,u) + \calZ(t,z) + \calF(t,u,z) \quad \text{for all }
    (t,q) = (t,u,z) \in [0,T]\ti \Spq  
   \\ 
   & \text{with } 
  \calU(t,\cdot): \Spu \to (-\infty,\infty] \text{ and } \calZ(t,\cdot): \Spz
  \to (-\infty,\infty] \text{ proper and lsc},
    \\
    &\text{and } \ \, \calF(t,\cdot): \Spu\ti \Spz \to \R \text{ Fr\'echet differentiable}. 
\end{align*}
\begin{lemma}
\label{l.4.13}
Assume Hypotheses \ref{hyp:diss-basic}, 
\ref{hyp:1}, \ref{h:closedness}, as well as  \eqref{it-is-product}.  Then, 
\begin{equation}
  \label{it-is-equality}
  \slov  utq = \inf_{\mu \in \frsubq utq}  \disv u^*  ({-}\mu) 
  \ \text{ and } \   
  \slov  ztq = \inf_{\zeta \in \frsubq ztq}    \conj z({-}\zeta) 
\end{equation}
for all $(t,q) \in [0,T]\ti  \mathrm{dom}(\pl_q \calE) $, and properties
\eqref{not-empty-mislo} and \eqref{liminf-diss-V-W} hold. 
\end{lemma}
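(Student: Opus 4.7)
The three conclusions are largely independent; the structural assumption \eqref{it-is-product} is what lets us reduce infima and closedness statements about the joint subdifferential $\frsubq qtq$ to the corresponding statements for the partial subdifferentials $\frsubq utq$ and $\frsubq ztq$.

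I would first verify \eqref{it-is-equality}: by \eqref{it-is-product} the pair $(\mu,\zeta)$ ranges over $\frsubq utq \ti \frsubq ztq$ as $(\mu,\zeta)$ ranges over $\frsubq qtq$, so since $\disv u^*(-\mu)$ depends only on $\mu$ and $\conj z(-\zeta)$ only on $\zeta$, the infima in \eqref{def:GeneralSlope} collapse to the infima over $\frsubq utq$ and $\frsubq ztq$ respectively. In particular, for every $(t,q)\in \mathrm{dom}(\pl_q\calE)$ both partial subdifferentials are non-empty.

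For the attainment \eqref{not-empty-mislo} I would apply the direct method. Let $(\mu_n)\subset \frsubq utq$ be a minimising sequence for $\disv u^*(-\,\cdot\,)$; the superlinearity of $\disv u^*$ in \eqref{h:v-diss-1} forces $(\mu_n)$ to be bounded in the reflexive space $\Spu^*$, so $\mu_n\weakto \mu_*$ along a subsequence. Fix any $\zeta_*\in \frsubq ztq$. By \eqref{it-is-product}, $(\mu_n,\zeta_*)\in \frsubq qtq$ for every $n$, and invoking Hypothesis \ref{h:closedness} with the \emph{constant} sequence $(t_n,q_n)\equiv (t,q)$ and $\xi_n=(\mu_n,\zeta_*)\weakto (\mu_*,\zeta_*)$ delivers $(\mu_*,\zeta_*)\in \frsubq qtq$, whence $\mu_*\in \frsubq utq$ by \eqref{it-is-product}. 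The weak lower semicontinuity of $\disv u^*$ then gives $\disv u^*(-\mu_*)\leq \slov utq$, so $\mu_*\in \argminSlo utq$. The argument for $\argminSlo ztq$ is identical once one observes that $\conj z$ is superlinear and weakly lower semicontinuous---the former being inherited from $\disv z^*$ via the boundedness of $\pl \calR(0)$ in \eqref{eq:l:classic} and the infimal-convolution representation \eqref{def:conj}.

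For the liminf estimate \eqref{liminf-diss-V-W} I would treat $\mathsf{x}=\mathsf{u}$, the other case being symmetric. If $\liminf_n\slov u{t_n}{q_n}=\infty$ there is nothing to prove; otherwise, I pass to a subsequence realising $L:=\liminf_n\slov u{t_n}{q_n}<\infty$. Attainment (applied at each index $n$) together with \eqref{it-is-equality} yields $\mu_n\in \frsubq u{t_n}{q_n}$ with $\disv u^*(-\mu_n)=\slov u{t_n}{q_n}\to L$; superlinearity forces $(\mu_n)$ bounded in $\Spu^*$, hence $\mu_n\weakto \mu_*$ up to a further subsequence. By \eqref{it-is-product} I lift $\mu_n$ to $\xi_n=(\mu_n,\zeta_n)\in \frsubq q{t_n}{q_n}$ for any selected $\zeta_n\in \frsubq z{t_n}{q_n}$; once a weakly convergent extraction $\zeta_n\weakto \zeta_*$ is secured, Hypothesis \ref{h:closedness} (whose energy bound is given) gives $(\mu_*,\zeta_*)\in \frsubq qtq$, and hence $\mu_*\in \frsubq utq$ by \eqref{it-is-product}; the weak lower semicontinuity of $\disv u^*$ then concludes $\slov utq\leq \disv u^*(-\mu_*)\leq L$. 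The main technical step---and the obstacle I expect to be the delicate one---is producing the bounded selection $(\zeta_n)$: the natural choice $\zeta_n\in \argminSlo z{t_n}{q_n}$ is bounded exactly when $\sup_n \slov z{t_n}{q_n}<\infty$ (by superlinearity of $\conj z$), and securing this joint boundedness---either from the compactness/coercivity properties of $\frsubq ztq$ on energy sublevels, or, in the applications, from the uniform a priori estimates along viscous trajectories of Section \ref{su:AprioViscSol}---is where the argument concentrates its effort.
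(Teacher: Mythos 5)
Your proposal follows essentially the same route as the paper's proof: reduce the joint infima via \eqref{it-is-product}, extract a weakly convergent minimising sequence for the partial subdifferential using the superlinearity in \eqref{h:v-diss-1} and the bound on $\pl\calR(0)$ from \eqref{eq:l:classic}, invoke Hypothesis~\ref{h:closedness} to keep the limit in the subdifferential, and conclude by weak lower semicontinuity of $\disv u^*$ resp.\ $\conj z$. Your attainment argument—fixing $\zeta_*$ and using the constant sequence $(t_n,q_n)\equiv(t,q)$—is a clean way to make Hypothesis~\ref{h:closedness} applicable, and it is what the paper implicitly does when it says the attainment follows ``with similar arguments.''

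The obstacle you flag in the liminf step is a real one and, notably, is also present in the paper's proof without being addressed. The paper (treating the $\sfz$ case) writes ``By Hypothesis~\ref{h:closedness} we also have $\zeta\in\frsubq ztq$'' after extracting $\zeta_n\weakto\zeta$ with $\zeta_n\in\frsubq z{t_n}{q_n}$—but Hypothesis~\ref{h:closedness} is stated for the joint subdifferential $\pl_q\calE$, so this step tacitly requires selecting some $\mu_n\in\frsubq u{t_n}{q_n}$ with a weakly convergent subsequence, which is never produced. (Your symmetric version, needing a bounded selection $\zeta_n\in\frsubq z{t_n}{q_n}$, is the same issue.) The remark immediately after the paper's proof acknowledges a related boundedness requirement, framed around $\pl\calR(0)$, but does not resolve the partner-component bound that is actually used. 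So your hesitation is not a defect in your argument relative to the paper's: both proofs rest on the same implicit assumption that the ``other'' partial subdifferential admits a uniformly bounded selection along the sequence. In the paper's framework this is ultimately justified in the applications (where $\frsubq utq$ and $\frsubq ztq$ are explicit and bounded on energy sublevels), and arguably the cleanest repair at the abstract level is to state Hypothesis~\ref{h:closedness} separately for each partial Fréchet subdifferential when \eqref{it-is-product} is assumed. With that reading, your proof is complete and coincides with the paper's.
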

\begin{proof}
 Obviously, for $(t,q)\in  \mathrm{dom}(\pl_q \calE)$ we have
\eqref{it-is-equality} as a consequence of 
\eqref{it-is-product}. 
We will just check the attainment \eqref{not-empty-mislo} and the lower
semicontinuity \eqref{liminf-diss-V-W} for $\slovname z$, as the properties for
$\slovname u$ follow by the same arguments.

Suppose that  $(t_n,q_n) \weakto (t,q)$ and 
$\liminf_{n\to\infty} \slov z{t_n}{q_n} <\infty$. Using
\eqref{it-is-equality}, up to a subsequence, there exist
$(\zeta_n) \subset \Spz^*$ with $\zeta_n \in \frsubq z{t_n}{q_n} $
and $(\sigma_n)_n \subset \pl \calR(0) \subset 
\Spz^*$  for all $n$ with 
\[
\lim_{n\to\infty}  \disv z^*({-}\zeta_n{-}\sigma_n)
= \lim_{n\to\infty}    \slov z{t_n}{q_n} \leq C\,.
\] 
It follows from \eqref{hyp:visc-diss} that the sequence
$(\sigma_n{+}\zeta_n)_n$ is bounded in $\Spz^*$. Since, in view of
\eqref{eq:l:classic}, $(\sigma_n)_n$ is bounded in $\Spz^*$, $(\zeta_n)_n$
turns out to be bounded in $\Spz^*$, too.  Then, up to a subsequence we have
$\sigma_n\weakto \sigma $ in $\Spz^*$ and $\zeta_n\weakto \zeta$ in
$\Spz^*$. Since $\pl  \calR(0)$ is sequentially weakly closed in $\Spz^*$,
we find $\sigma \in \pl  \calR(0)$. By Hypothesis
\ref{h:closedness} we also have $\zeta \in \frsubq ztq $, hence
\begin{align*}
  \lim_{n\to\infty} \slov z{t_n}{q_n} 
& 
  = \lim_{n\to\infty}  \disv z^*({-}\zeta_n{-}\sigma_n) 
\geq \disv z^*({-}\zeta{-}\sigma) 
\\
&  \geq \conj z({-}\zeta)  \geq 
  \inf_{\widetilde\zeta \in \frsubq ztq} \conj z({-}\widetilde\zeta) = \slov ztq,
\end{align*}
 which is the desired lsc \eqref{liminf-diss-V-W} for $\slovname z$. 

With similar arguments we deduce the attainment \eqref{not-empty-mislo}. 
\end{proof}

In the above proof we have used in an essential way that
$\pl  \calR (0)$ is bounded in $\Spz^*$ by our assumption \eqref{R-coerc}.
Without this property, the argument still goes through provided that, given a
sequence $(q_n)_n\subset \Spq $ as in  Hypothesis \ref{hyp:Sept19},   all
sequences $(\zeta_n)_n$ with $\zeta_n\in \argminSlo z{t_n}{q_n}$ for all
$n\in\N$ happen to be bounded in $\Spu^* \ti \Spz^*$, which can be, of course,
an additional property of the subdifferential $\frname z \calE$.\medskip

Throughout the rest of this paper, we will always tacitly assume the validity
of Hypotheses \ref{hyp:setup}, \ref{hyp:diss-basic}, \ref{hyp:1},
\ref{h:closedness},  \ref{h:ch-rule},  and \ref{hyp:Sept19} and omit
 any explicit mentioning of  them in most of the upcoming results (with
the exception of our main existence theorems).

\Subsection{A priori estimates for the viscous solutions}
\label{su:AprioViscSol}

Let $(q_\eps)_\eps $  be
a family of solutions to the viscously regularized systems
\eqref{van-visc-intro} in the stricter sense of \eqref{enid}, which includes
the energy-dissipation balance   \eqref{enid.a}. 
By Theorem \ref{th:exist} the existence of solutions $q_\eps =(u_\eps,z_\eps)$
is guaranteed, and in this subsection we discuss some a priori
estimates on $(u_\eps,z_\eps)_\eps$ that are uniform with respect to the
parameter $\eps$ and that form the core of our vanishing-viscosity analysis.

The starting point is the energy-dissipation estimate \eqref{enid-ineq} that
follows directly from   \eqref{enid.a}.   Recalling the constant $C_\#$ from
\eqref{h:1.3d} in Hypothesis \ref{hyp:1} and $c_\calR$ in Hypotheses
\ref{hyp:diss-basic}, we see that  the following \emph{basic a priori
  estimates}, are valid under the \emph{sole} assumptions of Hypotheses
\ref{hyp:diss-basic} and  \ref{hyp:1}.

\begin{lemma}[Basic a priori estimates]
  \label{l:1}
 For all $\eps>0$ and all solutions
$q_\eps=(u_\eps,z_\eps):[0,T] \to \Spq=\Spu \ti \Spz$  of \eqref{enid} with
$\calE(0,q_\eps(0))< \infty$ we have
the a priori estimates
\begin{subequations}
  \label{est-quoted-later}
  \begin{align}
   \label{est-quoted.a}
    &  \int_0^T\! \Big( \frac{1}{\eps^\alpha} \disv u
      (\eps^\alpha u_\eps'(t)) + \calR(z_\eps'(t)) + \frac{1}{\eps} \disv z
      (\eps z_\eps'(t)) \\
& \hspace{7em} \nonumber
      + \frac{\slov u {t}{q_\eps(t)}}{\eps^\alpha}  + 
      \frac{\slov z {t}{q_\eps(t)}} \eps  \Big) \dd t \leq \mathrm
      e^{C_\#T}\calE(0,q_\eps(0)), 
    \\
   \label{est-quoted.b}
    & 0 \leq  \eneq t{q_\eps(t)} \leq \mathrm e^{C_\# t} \calE(0,q_\eps(0)) \text{
      for all } t\in [0,T]. 
  \end{align}
\end{subequations}
whence, in particular,
\begin{equation}
  \label{est1}
   \| z_\eps'\|_{\rmL^1(0,T; \Spy)} \leq \frac{\mathrm e^{C_\# T}} 
  {c_\calR}\,   \calE(0,q_\eps(0)) 
  \quad \text{and} \quad 
  \sup_{t\in [0,T]} \mfE(q_\eps(t)) 
  \leq  \mathrm e^{2C_\# T}   \calE(0,q_\eps(0))\,. 
\end{equation}
\end{lemma}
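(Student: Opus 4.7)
The plan is to deduce all three estimates from the $(\Psi,\Psi^*)$ energy-dissipation balance \eqref{enid.a}, which is guaranteed by Theorem \ref{th:exist}, via two separate Gr\"onwall arguments, both exploiting the non-negativity of every integrand on the left-hand side of \eqref{enid.a} together with the power-control estimate \eqref{h:1.3d}. I first target the energy bound \eqref{est-quoted.b}: fixing $t\in[0,T]$, taking $s=0$ in \eqref{enid.a}, and discarding the entire non-negative dissipation integral yields
\begin{equation*}
  \calE(t,q_\eps(t)) \;\leq\; \calE(0,q_\eps(0)) + \int_0^t |\pl_t \calE(r,q_\eps(r))|\,\dd r \;\leq\; \calE(0,q_\eps(0)) + C_\#\!\int_0^t \calE(r,q_\eps(r))\,\dd r,
\end{equation*}
and the classical Gr\"onwall lemma produces at once the exponential bound \eqref{est-quoted.b}. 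The non-negativity $\calE(t,q_\eps(t))\geq 0$ used here is a consequence of \eqref{h:1.2}, where $C_0\geq 0$ may be assumed without loss of generality after adding a constant to $\calE$.

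Next, to obtain the integrated dissipation estimate \eqref{est-quoted.a}, I return to \eqref{enid.a} with $s=0$ and $t=T$, drop the non-negative left-hand term $\calE(T,q_\eps(T))$, and insert the pointwise bound $\calE(r,q_\eps(r))\leq \mathrm e^{C_\# r}\calE(0,q_\eps(0))$ just established into the power integral on the right. Since $\int_0^T C_\# \mathrm e^{C_\# r}\,\dd r = \mathrm e^{C_\# T}-1$, this produces exactly the prefactor $\mathrm e^{C_\# T}$ on the right-hand side of \eqref{est-quoted.a}. The left-hand side of \eqref{enid.a} already contains the sum $\disve u{\eps^\alpha}(u'_\eps) + \calR(z'_\eps) + \disve z\eps(z'_\eps)$ which, via the identity $\disve x\lambda(w)=\tfrac{1}{\lambda}\disv x(\lambda w)$ from \eqref{eq:Def.Vx.lambda}, coincides with the three kinetic terms in \eqref{est-quoted.a}; the two slope contributions $\tfrac{1}{\eps^\alpha}\slov{u}{r}{q_\eps(r)}$ and $\tfrac{1}{\eps}\slov{z}{r}{q_\eps(r)}$ appear through the pointwise inequalities $\disv u^*(-\mu_\eps(r)) \geq \slov{u}{r}{q_\eps(r)}$ and $\conj z(-\zeta_\eps(r)) \geq \slov{z}{r}{q_\eps(r)}$, which follow directly from the definition \eqref{def:GeneralSlope} of the generalized slopes together with the subdifferential inclusion $(\mu_\eps,\zeta_\eps)\in \pl_q\calE(\cdot,q_\eps)$ from \eqref{enid.b}.

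The two consequences in \eqref{est1} are then almost immediate. The $\rmL^1$-bound on $z'_\eps$ in $\Spy$ follows by combining \eqref{est-quoted.a} with the coercivity $\calR(\eta)\geq c_\calR \|\eta\|_{\Spy}$ from \eqref{R-coerc}. For the uniform bound $\sup_{t} \mfE(q_\eps(t)) \leq \mathrm e^{2C_\# T}\calE(0,q_\eps(0))$ I would apply Gr\"onwall a second time, this time in the \emph{time parameter} $s$ with the state $q$ held fixed: from \eqref{h:1.3d} we have $\calE(s,q) \leq \mathrm e^{C_\#|s-t|}\calE(t,q)\leq \mathrm e^{C_\# T}\calE(t,q)$ for every $s,t\in[0,T]$, so taking the supremum over $s$ gives $\mfE(q)\leq \mathrm e^{C_\# T}\calE(t,q)$; applied at $q=q_\eps(t)$ and combined with \eqref{est-quoted.b}, this produces the claimed factor $\mathrm e^{2C_\# T}$.

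I do not foresee any genuine obstacle in this argument: the whole chain of estimates is a tight but mechanical consequence of the variational balance \eqref{enid.a} and the exponential power-control \eqref{h:1.3d}. The only points requiring mild care are the correct order of the two Gr\"onwall applications (first state-wise along the curve $t\mapsto q_\eps(t)$, then parameter-wise at fixed $q$) and the book-keeping needed to reduce $\disv u^*(-\mu_\eps)$ and $\conj z(-\zeta_\eps)$ to the slope functionals $\slovname u$ and $\slovname z$ via \eqref{def:GeneralSlope}.
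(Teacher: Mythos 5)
Your proof is correct and follows essentially the same route as the paper's: Gr\"onwall applied once along the trajectory $t\mapsto q_\eps(t)$ via the energy-dissipation balance \eqref{enid.a} to get \eqref{est-quoted.b}, then feeding the exponential bound back into the power integral to get \eqref{est-quoted.a}, and a second Gr\"onwall in the time parameter at fixed $q$ (which the paper in fact records already, right after Hypothesis~\ref{hyp:1}, as $\mfE(q)\leq \rme^{C_\# T}\calE(t,q)$) for the last part of \eqref{est1}. One small remark: your aside about adding a constant to make $C_0\geq 0$ is unnecessary, since \eqref{h:1.2} already asserts $C_0>0$; the paper simply invokes $\calE(T,q_\eps(T))\geq C_0>0$ directly.
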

\begin{proof} The proof follows  as in  the purely rate-independent
  case treated in \cite[Cor\,3.3]{Miel05ERIS}.  We  start from
  \eqref{enid.a} and  drop the nonnegative
  dissipation to obtain
\[
\calE(t,q_\eps(t))\leq \calE(0,q_\eps(0))+ \int_0^t \pl_s\calE(s,q_\eps(s))\dd
s \leq \calE(0,q_\eps(0))+ \int_0^t C_\# \calE(s,q_\eps(s))\dd s ,
\]
where we used \eqref{h:1.3d}.  
Thus, Gr\"onwall's estimate gives \eqref{est-quoted.b} and this we find 
\[
\calE(0,q_\eps(0))+ \int_0^T \pl_s\calE(s,q_\eps(s))\dd
s \leq \calE(0,q_\eps(0))+ \int_0^T C_\# \mathrm e^{C_\# s} \calE(0,q_\eps(0))\dd
s = \mathrm e^{C_\# T} \calE(0,q_\eps(0))
\] 
and \eqref{est-quoted.a} is established as well, as $\calE(T;q_\eps(T))\geq C_0>0$
by \eqref{h:1.2}.

Since $\disv x$ and $\slovname x$ are nonnegative, assumption \eqref{R-coerc} leads
to the first estimate in \eqref{est1}. The last assertion follows from
\eqref{est-quoted.b} and applying  \eqref{h:1.3d} once again. 
\end{proof}

Clearly, \eqref{est1} provides a uniform bound on the total variation of the
solution component $z_\eps$ in the space $\Spy$.  A similar bound cannot be
expected for the components $u_\eps$, unless we add further assumptions.  To
see the problem consider $\Spu = \R^2$ and the ordinary differential
equation
\[
\eps^\alpha u'_\eps(t) + \rmD \varphi(u_\eps(t)) = z_\eps(t)= a\binom{\cos(\omega
  t)}{\sin(\omega t )},  \quad \text{where } \varphi(u)=\frac{\lambda}2|u|^2+
\frac12\max\{|u|{-}1,0\}^2
\]
with $\lambda\geq 0$. Note that $\varphi$ is uniformly coercive for all
$\lambda \geq 0$. However, the equation is linear for $|u|\leq 1$ and
has an exact periodic solution of the form 
\[
 u(t) = \big(\mathrm{Re}\, U(t), \mathrm{Im}\, U(t)\big)  \quad \text{with }
 U(t)= \frac a{\lambda {+}\mathrm i\,\omega \eps^\alpha} \,\mathrm e^{\mathrm
   i\,\omega t}\in \mathbb C,
\]
as long as $|U(t)|\leq 1$, i.e.\ $a^2\leq \lambda^2{+}\omega^2\eps^{2\alpha}$.  
In this case, the derivatives satisfy the following $\rmL^1$-estimates 
\[
\|u'_\eps\|_{\rmL^1(0,T)} = |\omega|T \,\| u_\eps\|_{\rmL^\infty}= 
\big|\frac{a\omega}{\lambda {+}\mathrm i\,\omega
  \eps^\alpha}\big| \,T = \frac1{(\lambda^2 {+}\omega^2 \eps^{2\alpha})^{1/2}}
  \,\|z'_\eps\|_{\rmL^1(0,T)} . 
\]
For $\lambda>0$ we thus obtain a bound on $\|u'_\eps\|_{\rmL^1(0,T)}$ from a 
bound on $\|z'_\eps\|_{\rmL^1(0,T)}$ as in \eqref{est1}. However, in the case
$\lambda=0$ the value $\|u'_\eps\|_{\rmL^1(0,T)}$ may blow up while
$\|z'_\eps\|_{\rmL^1(0,T)}$ remains bounded (or even tends to $0$) and $a^2 \leq
\omega^2\eps^{2\alpha}$, e.g.\ choosing $\omega=\eps^{-\alpha/2}$ and
$a=\eps^{2\alpha/\alpha}$. 

In the main part of this subsection,  we provide 
sufficient conditions for the validity of a  uniform bound on
$\|u'_\eps\|_{\rmL^1(0,T;\Spu)} $. In the spirit of the above ODE example we
assume that $u \mapsto \en t {u,z}$ is uniformly convex (i.e.\ $\lambda>0$) and
that $z \mapsto \rmD_u\en t {u,z}$ is Lipschitz. Moreover, we need to assume
that $\disv u$ is quadratic. More precisely, we have to  confine the
discussion to a special setup given by conditions \eqref{structure-diss}
and \eqref{eq:E=E1+E2cond}: 

\noindent (1) the dissipation potential $\disv u $ is quadratic: 
\begin{equation}
\label{structure-diss}
 \Spu \text{ is a Hilbert space \quad  and }\disv u(v): =
\frac12\|v \|_{\Spu}^2= \frac12\langle \mathbb V_\sfu v,v\rangle ,
\end{equation}
where $\mathbb V_\sfu:\Spu\to \Spu^*$ is Riesz' norm isomorphism; 

\noindent (2) the energy functional $\calE$ 
 has domain $\domq = \domene u \ti \domene z$  and 
admits the decomposition
\begin{subequations}
 \label{eq:E=E1+E2cond}
\begin{align}
  &\ene tuz = \calE_1(u) + \calE_2(t,u,z) \ \text{ with }
 \\[0.3em] 
  \label{E1-unif-cvx}
  &\exists\,\Lambda>0:\quad \calE_1 \text{ is  $\Lambda$-convex},
 \\[0.3em]
 & \label{fr-diff-E2}
  \forall\, (t,z ) \in [0,T]\ti \domene z:\quad  u\mapsto
   \calE_2(t,u,z) \text{ is Fr\'echet differentiable on } \domene u,
 \\[0.3em] \label{fr-Lip-E2}
   &\begin{aligned}  
   &\exists\, C_\sfu \in (0,\Lambda) \  \forall\, E>0 \ 
   \exists\, C_E>0 \  \forall\, t_1, t_2 \in [0,T] \ \forall\, (u_1,z_1), 
   (u_2,z_2) \in \subl{E} :
    \\
   & \hspace{6em} \norm{\mathrm{D}_\sfu \calE_2(t_1,u_1,z_1){-}
     \mathrm{D}_\sfu \calE_2(t_2,u_2,z_2)}{\Spu^*}
   \\ &\hspace{9em}  \leq C_E \left(
     |t_1{-}t_2| + \norm{ z_1{-}z_2}{\Spy} \right)+ C_\sfu
   \|u_1{-}u_2\|_\Spu
 \end{aligned}
\end{align}
\end{subequations}
where $\subl E$ denotes the sublevel of $\mfE$, cf.\ \eqref{Esublevels}.

Hence, the possibly nonsmooth, but \emph{uniformly convex} functional $\calE_1$
is perturbed by the smooth, but possibly nonconvex, functional
$u\mapsto \calE_2 (t,u,z)$. However, by $C_\sfu<\Lambda$ the mapping
$u\mapsto \calE(t,u,z)$ is still uniformly convex.

Unfortunately condition \eqref{eq:E=E1+E2cond} is rather restrictive,
because in concrete examples  the driving  energy functional features  a coupling between
the variables $u$ and $z$ that is more complex. Nevertheless the desired a
priori estimate derived in Proposition \ref{l:3.2} 
may still be valid. Indeed, for 
our delamination model examined in Section \ref{s:appl-dam} we establish 
the corresponding estimate via an \emph{ad hoc} approach for the specific
system.

The proof of the following results follows the technique for the a priori
estimate developed in \cite[Prop.\,4.17]{Miel11DEMF}. We emphasize that the two
additional assumptions \eqref{structure-diss} and \eqref{eq:E=E1+E2cond} yield
 that the solution $u_\eps$ for $\mathbb V_\sfu u' + \pl 
\calE_1(u) + \rmD \calE_1(t,u,z_\eps(t)) \ni 0$ is unique as long as
$z_\eps$ is kept fixed, since it is a classical Hilbert-space gradient flow for
a time-dependent, convex functional. 

\begin{proposition}[$\rmL^1$ bound on $u'_\eps$]
\label{l:3.2}
In addition to 
 Hypotheses \ref{hyp:diss-basic} and \ref{hyp:1} 
assume \eqref{structure-diss} and
\eqref{eq:E=E1+E2cond} and consider initial conditions
 $(q_\eps^0)_\eps$ 
 such that 
\[
\exists\, C_\mathrm{init}>0\ \forall\, \eps\in (0,1):\quad
\calE(0,q_\eps^0)+\eps^{-\alpha} \| \pl ^0_\sfu
\calE(0,q_\eps^0)\|_{\Spu^*} \leq C_\mathrm{init} <\infty,
\] 
where $\pl ^0_\sfu \calE(0,q_\eps^0)\subset \Spu^*$ denotes the unique element
of minimal norm in $\pl_\sfu \calE(0,q_\eps^0)\subset \Spu^*$.  Then, 
there exists a  $C>0$ such that for all $\eps \in (0,1)$ all
solutions $q_\eps = (u_\eps, z_\eps)$ of system \eqref{enid} with
$q_\eps(0)=q_\eps^0$ satisfy 
\begin{equation}
  \label{est2}
\begin{aligned}           \| u_\eps'\|_{\rmL^1(0,T; \Spu)} 
  &\leq \frac1{\Lambda{-}C_\sfu} \Big(
    C_\mathrm{init} + C_ET+ C_E \|z'_\eps\|_{\rmL^1(0,T;\Spy)} \Big) \\
&\leq 
\frac1{\Lambda{-}C_\sfu} \Big(
  C_\mathrm{init} + C_ET+ \frac{C_EC_\mathrm{init}}{c_\calR} \,\mathrm
  e^{C_\#T}  \Big). 
\end{aligned}
\end{equation}
\end{proposition}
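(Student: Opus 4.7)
The plan is to exploit the $\Lambda$-strong convexity of $\calE_1$ in the $u$-variable from \eqref{E1-unif-cvx} to turn the viscous $u$-subinclusion from \eqref{enid.b} into a quantitative contraction estimate on forward differences of $u_\eps$. Using the decomposition in \eqref{eq:E=E1+E2cond} the $u$-equation takes the form
\[
\eps^\alpha \mathbb V_\sfu u_\eps'(t) + \xi_\eps(t) + \rmD_\sfu \calE_2(t,u_\eps(t),z_\eps(t)) = 0 \quad \text{with } \xi_\eps(t) \in \pl \calE_1(u_\eps(t)) \text{ for a.a.\ }t.
\]
For $h>0$ small, set $v_h(t) := u_\eps(t{+}h) - u_\eps(t)$ on $[0,T{-}h]$, subtract the equations at $t{+}h$ and $t$, and test against $v_h(t)\in\Spu$. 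The $\Lambda$-monotonicity of $\pl\calE_1$ (inherited from \eqref{E1-unif-cvx}), the Lipschitz bound \eqref{fr-Lip-E2}, and the Hilbert-space identity $\langle \mathbb V_\sfu(u_\eps'(t{+}h){-}u_\eps'(t)), v_h(t)\rangle = \tfrac12\tfrac{d}{dt}\|v_h(t)\|_\Spu^2$ then give, for a.a.\ $t\in(0,T{-}h)$,
\[
\tfrac{\eps^\alpha}{2}\tfrac{d}{dt}\|v_h(t)\|_\Spu^2 + (\Lambda{-}C_\sfu)\|v_h(t)\|_\Spu^2 \leq C_E\bigl(h + \|z_\eps(t{+}h){-}z_\eps(t)\|_\Spy\bigr)\|v_h(t)\|_\Spu.
\]

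Divide by $\|v_h(t)\|_\Spu$ where it is positive (the time-derivative of $\|v_h\|$ vanishes a.e.\ on its zero set, so nothing is lost), integrate the resulting absolutely continuous inequality on $[0,T{-}h]$, drop the nonnegative endpoint $\eps^\alpha\|v_h(T{-}h)\|_\Spu$, and use the Fubini estimate $\int_0^{T-h}\|z_\eps(t{+}h){-}z_\eps(t)\|_\Spy\dd t \leq h\|z_\eps'\|_{\rmL^1(0,T;\Spy)}$ to obtain
\[
(\Lambda{-}C_\sfu)\int_0^{T-h}\|v_h(t)\|_\Spu\dd t \leq \eps^\alpha\|v_h(0)\|_\Spu + C_E h(T{-}h) + C_E h\|z_\eps'\|_{\rmL^1(0,T;\Spy)}.
\]
Divide by $h$ and let $h\to 0^+$. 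Fatou's lemma bounds $\|u_\eps'\|_{\rmL^1(0,T;\Spu)}$ from above by $\liminf_h h^{-1}\int_0^{T-h}\|v_h\|_\Spu\dd t$, while the initial-data assumption combined with the classical regularity theory for Hilbert-space gradient flows of the $(\Lambda{-}C_\sfu)$-strongly convex functional $u\mapsto \calE_1(u)+\calE_2(0,u,z_\eps(0))$ yields $\eps^\alpha\|u_\eps'(0^+)\|_\Spu = \|\pl^0_\sfu\calE(0,q_\eps^0)\|_{\Spu^*} \leq \eps^\alpha C_\mathrm{init}$, so that $\eps^\alpha\|v_h(0)\|_\Spu/h \to \eps^\alpha\|u_\eps'(0^+)\|_\Spu \leq C_\mathrm{init}$ (using $\eps\leq 1$). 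This produces the first inequality in \eqref{est2}; the second follows by inserting the bound $\|z_\eps'\|_{\rmL^1(0,T;\Spy)} \leq c_\calR^{-1}\rme^{C_\# T}C_\mathrm{init}$ from \eqref{est1}.

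The only genuinely delicate point is the control of the boundary term $\eps^\alpha\|v_h(0)\|_\Spu$ as $h\to 0^+$: since $u_\eps'$ is only $\rmL^1$ in time, no naive Lebesgue-point argument supplies an $\eps$-uniform bound on $\|v_h(0)\|_\Spu/h$. One must appeal instead to the regularizing property of strongly convex Hilbert-space gradient flows in order to convert the $\eps$-weighted initial estimate $\|\pl^0_\sfu\calE(0,q_\eps^0)\|_{\Spu^*}\leq \eps^\alpha C_\mathrm{init}$ into a pointwise Lipschitz control of $u_\eps$ near $t=0$; once this is secured, the rest is a routine monotonicity-plus-Fubini/Fatou calculation.
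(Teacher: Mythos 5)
Your proof is correct and follows essentially the same route as the paper's: form the forward difference $v_h(t)=u_\eps(t+h)-u_\eps(t)$, test the difference of the viscous $u$-equations against $v_h$, use the $\Lambda$-convexity of $\calE_1$ together with the Lipschitz bound \eqref{fr-Lip-E2} (matching up the $\pl\calE_1$-selections so that only the $\rmD_\sfu\calE_2$-increment needs controlling) to arrive at the differential inequality
$\tfrac{\eps^\alpha}{2}\tfrac{\rmd}{\rmd t}\|v_h\|_\Spu^2+(\Lambda{-}C_\sfu)\|v_h\|_\Spu^2\leq C_E\bigl(h+\|z_\eps(\cdot{+}h){-}z_\eps\|_\Spy\bigr)\|v_h\|_\Spu$, integrate, divide by $h$, and send $h\to0^+$ using Fatou and the Br\'ezis right-differentiability of the strongly convex Hilbert-space gradient flow to control the boundary term via the initial-data assumption. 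The only cosmetic differences from the paper are that the authors insert an intermediate subgradient $\widetilde\mu_h(t)\in\pl_\sfu\calE(t,u(t{+}h),z(t))$ and invoke the $\kappa=\Lambda{-}C_\sfu$ uniform convexity of the full $\calE(t,\cdot,z)$ directly, and they handle the division by $\|v_h\|$ through the smooth surrogate $\nu_h=\sqrt{\varrho_h^2+\delta}$ rather than your Stampacchia-type argument on the zero set; these are equivalent. You also correctly flag the Br\'ezis regularity step as the one genuinely delicate point, which is exactly where the paper cites \cite[Thm.\,3.1]{Brez73OMMS}.
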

\begin{proof} By Lemma \ref{l:1} all curves $q_\eps:[0,T] \to \Spq$ lie in
$\calS_E=\bigset{q\in \Spq}{\mfE(q)\leq E}$ for $E=\mathrm e^{2C_\# T} 
C_\mathrm{init}$. Throughout the rest of this proof we drop the subscripts 
$\eps$ at $q_\eps=(u_\eps,z_\eps)$, but keep all constant explicitly to 
emphasize that they do not depend on $\eps$.

Setting $\kappa= \Lambda{-}C_\sfu>0$, the uniform convexity of
$\calE(t,\cdot, z)$ gives $\langle \mu_1{-}\mu_2 , u_1{-}u_2 \rangle 
\geq \kappa \|u_1{-}u_2\|_\Spu^2 $ for all $\mu_j \in \pl_\sfu
\calE(t,u_j,z)$.  We write the equation for $u$ in the
form $0=\eps^\alpha\mathbb V_\sfu u'(t)+ \mu(t) $ with
$\mu(t)\in \pl_\sfu \calE(t,u(t),z(t))$. For small $h>0$ and
$t\in [0,T{-}h]$ we find
\begin{align*}
\frac{\eps^\alpha}2\frac{\rmd}{\rmd t} &\| u(t{+}h){-}u(t)\|^2_\Spu
 = \big\langle \eps^\alpha\mathbb V_\sfu (u'(t{+}h)-u'(t)), u(t{+}h)-u(t)
 \big\rangle 
\\
&= - \langle \mu(t{+}h) - \mu(t), u(t{+}h)-u(t)\rangle 
\\
&\leq  - \langle \widetilde \mu_h(t) - \mu(t), u(t{+}h)-u(t)\rangle  +
\|\widetilde \mu_h(t) {-} \mu(t{+}h) \|_{\Spu^*} \| u(t{+}h)-u(t)\|_\Spu,  
\end{align*}
where $\widetilde \mu_h(t) \in \pl_\sfu \calE(t, u(t{+}h),z(t))$. The
uniform convexity and \eqref{fr-Lip-E2} give
\[
 \frac{\eps^\alpha}2\frac{\rmd}{\rmd t} \varrho_h(t)^2 \leq 
- \kappa \varrho_h(t)^2 + C_E\big( h +
\|z(t{+}h){-}z(t)\|_{\Spy}\big) \varrho_h(t),
\]
where $\varrho_h(t):= \| u(t{+}h){-}u(t)\|_\Spu$. Choosing $\delta>0$
and setting $\nu_h(t):=\varrho_h(t)^2{+}\delta$ yields
\begin{align*}
\eps^\alpha \dot \nu_h &= \frac{\eps^\alpha \tfrac{\rmd}{\rmd t}\varrho_\rmH^2}{2
  \nu_h} \leq - \kappa \frac{\nu_n^2- \delta}{\nu_h} + C_E\big(h + 
\|z(\cdot\,{+}h){-}z\|_{\Spy}\big) \frac{\varrho_h}{\nu_h}\\
&\leq - \kappa \nu_h + \kappa \delta^{1/2} + C_E\big(h + 
\|z(\cdot\,{+}h){-}z\|_{\Spy}\big). 
\end{align*} 
Integrating this inequality in time we arrive at 
\[\textstyle
  \kappa \int\limits_0^{T-h} \varrho_h(t) \dd t 
  \leq \kappa \int\limits_0^{T-h} \nu_h(t) \dd t 
  \leq \eps^\alpha \nu_h(0) + \delta^{1/2} T + C_E
 h T + C_E\int_0^{T-h}\|z(t{+}h){-}z(t)\|_{\Spy} \dd t.
\]
Taking the limit $\delta \to 0^+$, dividing by $h>0$, and using
$\|z(t{+}h){-}z(t)\|_{\Spy}\leq \int_t^{t+h} \|z'(s)\|_{\Spy} \dd s$ gives
\[
\kappa \int_0^{T-h} \!\! \big\|\frac1h\big(u(t{+}h){-}u(t)\big) \big\|_\Spu \dd
t \leq \eps^\alpha\big\|\frac1h\big(u(0{+}h){-}u(0)\big) \big\|_\Spu + 
C_E T + C_E \int_0^T \| z'(t) \|_{\Spy} \dd t. 
\] 
Since the equation for $u$ is a Hilbert-space gradient flow we can apply
\cite[Thm.\,3.1]{Brez73OMMS}, which shows that $\frac1h(u(h){-}u(0)) \to
\pl ^0_\sfu \calE(0,u(0),z(0))$ for $h\to 0^+$. Thus,  in the limit
$\eps\to 0^+$  we find 
\[
\textstyle
\kappa \int\limits_0^T \|u'(t)\|_\Spu\dd t = \lim\limits_{h\to 0^+} 
\kappa \int\limits_0^{T\!-h}\!
\big\|\frac1h\big(u(t{+}h){-}u(t)\big) \big\|_\Spu \dd t \leq C_\mathrm{init} +
C_E T + C_E \int_0^T \| z'(t) \|_{\Spy} \dd t,
\]
which is the desired result, when recalling $\kappa = \Lambda - C_\sfu$. 
\end{proof}

The above result is valid for all solutions of the viscous system \eqref{enid},
but it relies on the rather strong assumptions \eqref{structure-diss} and
\eqref{eq:E=E1+E2cond}.  While the uniform convexity of $u \mapsto \en tuz$ in
\eqref{eq:E=E1+E2cond} seems to be fundamental, it is expected that 
the rather strong assumption that $\disv u $ is the square of a
Hilbert space norm, see \eqref{structure-diss}, can be relaxed, but then the
solution $u_\eps$ may no longer be uniquely determined for fixed $z_\eps$. In
that case it may be helpful to restrict the analysis to specific solution
classes satisfying better a priori estimates, e.g.\ to minimizing movements
obtained via time-incremental minimization problems as in
\cite[Thm.\,3.23]{MRS13} or to solutions obtained as limit of Galerkin
approximations as in \cite[Def.\,4.3 \& Thm.\,4.13]{Mielke-Zelik}.
 We also refer to our delamination model in Section \ref{s:appl-dam} for a
derivation of the  additional a priori estimate \eqref{est2} in a more difficult
case.

\Section{Parametrized Balanced-Viscosity solutions} 
\label{s:4+}

In this section we will give the definition of Balanced-Viscosity solution to
the rate-independent system $\RIS$ in a \emph{parametrized version}.  For this,
we study instead of the viscous solutions $q_\eps:[0,T] \to \Spq$ suitable
reparametrizations
$(\sft_\eps,\sfq_\eps):[0,\mathsf S_\eps] \to [0,T]\ti \Spq$, i.e.,
$\sfq_\eps(s)=q_\eps(\sft_\eps(s))$, see Section \ref{su:ReJoMFcn}. While quite
general reparametrizations are possible, we will perform the
vanishing-viscosity limit $\eps\to 0^+$ for the one given in terms of the
energy-dissipation arclength $s=\sfs_\eps(t)$ defined in terms of the \RJMF\
$\mename \eps\alpha$ arising from the rescaled joint B-function
$ \mfB^\alpha_\eps$, see \eqref{arclength-est1-2}.  The $\Gamma$-limit
$\mename 0\alpha$ of $\mename \eps\alpha$, which is called the \emph{limiting
  \RJMF}, will then be used, to introduce the concept of \emph{admissible
  parametrized curves}, see Definition \ref{def:adm-p-c} in Section
\ref{ss:4.1bis}.  This is the basis of our notion of for \emph{parametrized
  Balanced-Viscosity} ($\pBV$) \emph{solutions}, defined in Section
\ref{ss:4.2}.  Theorem \ref{thm:existBV} states our main existence result for
$\pBV$ solutions, which is based on the convergence  in  the
vanishing-viscosity limit $\eps\to 0^+$.  However, we emphasize that the notion
of `$\pBV$ solutions' is independent of the limiting procedure.  Finally, in
Section \ref{ss:6.3-diff-charact} we provide a characterization of (enhanced)
$\pBV$ solutions showing that they are indeed solutions of  the
time-rescaled  generalized gradient system \eqref{e:diff-char-intro}.

\Subsection{Reparametrization and rescaled joint M-functions}
\label{su:ReJoMFcn}

This subsection revolves around the concept and the properties of the limiting
\RJMF\ $\mename 0\alpha$ that will be introduced in the Definition
\ref{defM0}. First, we will prove that $\mename 0\alpha$ is the $\Gamma$-limit
of the family of M-functions $(\mename \eps\alpha)_\eps$ that appear naturally
in the reparametrized version of the energy-dissipation estimate
\eqref{enid-ineq} and that are given  by  a composition of the rescaled joint
B-function $\mfB_\eps^\alpha$ and the slopes $\mathscr S_\sfx^*$. Namely, the
\emph{\RJMF s} are defined by
\begin{equation}
  \label{def-Me}
  \begin{aligned}
    & \mename \eps \alpha: [0,T] \ti \domq \ti [0,\infty) \ti \mathbf{Q} \to
    [0,\infty],
    \\
    & \mathfrak{M}_\eps^{\alpha}(t,q,t',q'): =
    \begin{cases}
      \mfB_\eps^\alpha(t', u',z', \slov utq, \slov ztq)
      & \text{ for } \frsubq qtq \neq \emptyset,\\
      \infty & \text{otherwise};
    \end{cases}
  \end{aligned}
\end{equation}
where $ \mfB_\eps^\alpha$ is the rescaled joint B-function from
\eqref{eq:def.B.al.eps} associated with the dissipation potentials
$\psi_\sfu = \disv u$ and $\psi_{\mathsf{z}} =\calR{+} \disv z$.

The basis for the construction of parametrized BV solutions is the
reparametrization of the the viscous solutions $q_\eps:[0,T]\to \Spq$ in the
form $\sfq_\eps(s)=q_\eps(\sft_\eps(s))$ such that the behavior of the function
$(\sft_\eps,\sfq_\eps):[0,\sfS_\eps]\to [0,T]\ti \Spq$ is advantageous. In
particular, the formation of jumps in $q_\eps$ with
$\|q'_\eps(t)\|\approx 1/\eps$ can be modeled by a plateau-like behavior of
$\sft_\eps$ with $\sft'_\eps(s)\approx \eps$ and a soft transition of
$\sfq_\eps$ with $\|\sfq'_\eps(s)\|\approx 1$. The first usage of such
reparametrizations for the vanishing-viscosity limit goes back to
\cite{EfeMie06RILS}, but here we stay close to \cite[Sec.\,4.1]{MRS13} in using
an `energy-based time reparametrization'.  Hence, for a family
$(q_\eps)_\eps =(u_\eps, z_\eps)_\eps$ of solutions to \eqref{van-visc-intro}
for which the estimates from Lemma \ref{l:1} hold, as well as  the
additional a priori estimate \eqref{est2} on $\int_0^T\|u'_\eps\|_\Spu \dd t$,
  we reparametrize the functions $q_\eps$ using the
\emph{energy-dissipation arclength}
$\mathsf{s}_\eps: [0,T]\to [0,\mathsf{S}_\eps]$ with
$\mathsf{S}_\eps:= \mathsf{s}_\eps(T)$ (cf.\ \cite[(4.3)]{MRS13}) defined by
\begin{equation}
\label{arclength-est1-2}
\begin{aligned}
  \mathsf{s}_\eps(t): = \int_0^t \Big( 1 & +
  \disve u{\eps^\alpha}( u_\eps'(t)) + \calR(z_\eps'(t)) +\disve z \eps(z_\eps'(t))
  \\
  & + \frac{\slov u {t}{q_\eps(t)} }{\eps^\alpha}  +
  \frac{\slov z {t}{q_\eps(t)}} \eps   + \|u_\eps'(t)\|_{\Spu} \Big) \dd t\,,
 \end{aligned}
\end{equation}
such that estimates  \eqref{est-quoted-later} and \eqref{est2} yield
that $\sup_{\eps>0} \mathsf{S}_\eps \leq C$.  Below we consider the
reparametrized curves
$(\mathsf{t}_\eps, \mathsf{q}_\eps) : [0,\mathsf{S}_\eps] \to [0,T]\ti \Spq $
defined by $\mathsf{t}_\eps: = \mathsf{s}_\eps^{-1}$,
$\mathsf{q}_\eps : = q_\eps \circ \sft_\eps$ and show in Section \ref{ss:4.2}
that they have an absolutely continuous limit $(\sft,\sfq)$, up to choosing a
subsequence.

We first remark that the quantities involved in the definition
of $\mathsf{s}_\eps$ rewrite as
\begin{equation}
    \label{just4clarity}
    \calR(z_\eps')  +  \disve u{\eps^\alpha} (  u_\eps') +  
     \disve z\eps (  z_\eps')
    + \frac{\slov u {t}{q_\eps}}{\eps^\alpha}  + 
    \frac{\slov z{t}{q_\eps}} \eps  \:=\: \mathfrak{M}_\eps^{\alpha}(t,q_\eps,1,q_\eps').
\end{equation}
With this, the energy-dissipation estimate \eqref{enid-ineq} 
can be rewritten in terms of the parametrized curves
$(\mathsf{t}_\eps, \mathsf{q}_\eps) $ in the form (for all $0\leq s_1< s_2 \leq
\mathsf S_\eps$) 
\begin{equation}
  \label{reparam-enineq}
  \begin{aligned}
    & \eneq {\sft_\eps(s_2)}{\sfq_\eps(s_2)} +\int_{s_1}^{s_2}
    \mathfrak{M}_\eps^\alpha (\sft_\eps(\sigma), \sfq_\eps(\sigma),
    \sft_\eps'(\sigma), \sfq_\eps'(\sigma)) \,\dd \sigma \\ & \leq \eneq
    {\sft_\eps(s_1)}{\sfq_\eps(s_1)} +\int_{s_1}^{s_2} \pl_t \eneq
    {\sft_\eps(\sigma)}{\sfq_\eps(\sigma)} \,\sft'_\eps(\sigma) \,\dd \sigma\,.
  \end{aligned}
\end{equation}    
Moreover, the definition of $\sfs_\eps$ in \eqref{arclength-est1-2} is
equivalent to the normalization condition
\begin{equation}
  \label{normal-cond}
  \sft_{\eps}'(s) + \mathfrak{M}_\eps^{\alpha}(\sft_\eps(s), 
  \sfq_\eps(s),\sft_\eps'(s),\sfq_\eps'(s))
  + \|  \sfu_\eps'(s)\|_{\Spu} =1 \qquad \foraa\, s \in (0,\mathsf{S}_\eps)\,.
\end{equation}
Of course, the reparametrized solutions $\sfq_\eps$ inherit the energy estimate
\eqref{est1}, namely 
\begin{equation}
  \label{en-est-param}
  \sup_{s\in [0,\mathsf{S}_\eps]} \mfE (\sfq_\eps(s))
  \leq \mathrm e^{2C_\# T} \sup_{\eps\in (0,1)}\calE(0,\sfq_\eps(0))\,.
\end{equation}

The a priori estimates \eqref{normal-cond} and \eqref{en-est-param} for the
reparametrized curves $(\sft_\eps,\sfq_\eps)_\eps $ will be strong enough to
ensure their convergence along a subsequence, 
as $\eps \to 0^+$, to a curve $(\sft,\sfq) : [0,\mathsf{S}]\to [0,T]\ti \Spq $,
with $\mathsf{S}= \lim_{\eps \to 0^+}\mathsf{S}_\eps$.  The basic properties of
$(\sft,\sfq)$ are fixed in the concept of \emph{admissible parametrized
  curve}, see Definition \ref{def:adm-p-c}.

For studying the limit $\eps \to 0^+$, we need to bring into play the
limiting \RJMF\ $ \mename{0}{\alpha}$.

\begin{definition}
\label{defM0}
We define $ \mename{0}{\alpha}
: 
[0,T]\ti \domq \ti [0,\infty) \ti \Spq \to [0,\infty] $ via
\begin{equation}
 \label{mename-0}
 \mename{0}{\alpha}(t,q,t',q') : =
 \begin{cases}
   \mfB_0^\alpha(t',u',z', \slov u tq, \slov ztq)\hspace*{-2em} & 
   \hspace*{2em}\text{ for } \frsubq
   qtq \neq \emptyset, 
   \\[0.3em]
   0 & \text{for } t'=0, \, q'=0 \text{ and } 
   \\ & (t,q) \in   \overline{\mathrm{dom}(\frname q\calE)}^{\mathrm{w,S}}
    {\setminus}\mathrm{dom}(\frname q\calE), 
   \\[0.3em]
   \infty & \text{otherwise},
 \end{cases}
\end{equation}
where  $\mfB_0^\alpha$ is defined in Proposition \ref{pr:Mosco.Beps}  and 
 $\overline{\mathrm{dom}(\frname q\calE)}^{\mathrm{w,S}} $ is the
weak closure of $ \mathrm{dom}(\frname q\calE)$ confined to energy sublevels:
\begin{equation}
  \label{closure-domain-subdifferential}
  \!\overline{\mathrm{dom}(\frname q\calE)}^{\mathrm{w,S}} \! := \bigset{ (t,q) }{
  \exists\, (t_n,q_n)_n \subset \mathrm{dom}(\frname q\calE){:} \ (t_n,q_n)
  \weakto (t,q), \ \sup_{n} \mfE (q_n) <\infty }\,. 
\end{equation}
\end{definition}
It follows from Proposition \ref{pr:Mosco.Beps} that 
\begin{equation}
\label{important-for-later}
(t',q') \mapsto  \mename{0}{\alpha}(t,q,t',q') \text{ is convex and $1$-homogeneous for all $(t,q) \in [0,\infty) \ti \Spq $}.
\end{equation}

Relying on Proposition \ref{pr:Mosco.Beps} and Hypothesis \ref{hyp:Sept19}, we
are ready to prove the following  $\Gamma$-convergence  result, which
straightforwardly gives that $ \mename{0}{\alpha}$ is (sequentially) lower
semicontinuous with respect to the weak topology of $\R\ti\Spq \ti \R\ti\Spq$
along sequences with bounded energy.

\begin{proposition}[Weak $\Gamma$-convergence of M-functions]
\label{pr:Mosco.Meps}
The limiting M-function \\
$\mename 0\alpha : [0,T]\ti \domq \ti [0,\infty) \ti \Spq \to [0,\infty]$ is
the $\Gamma$-limit of the M-functions $(\mename{\eps}{\alpha})_\eps$, with
respect to the weak topology, \emph{along sequences with bounded energy},
namely the following assertions hold: \smallskip

\noindent 
 (a)  $\Gamma$-$\liminf$ estimate: 
\begin{subequations}
\label{Gamma-convergence-concrete}
\begin{equation}
\label{Gamma-liminf}
\begin{aligned}
&
\begin{aligned}
\Big(     (t_\eps,q_\eps,{t_\eps'}, q_\eps')\weakto (t,q,t', q') \text{ in } 
\R\ti\Spq \ti \R\ti\Spq  \text{ as } \eps \to 0^+ \ \text{ with }
   \sup_{\eps>0} \mfE(q_\eps)< \infty \Big)
  \end{aligned}
\\
&\qquad  \Longrightarrow \quad 
    \meq{0}{\alpha} tq{t'}{q'} \leq \liminf_{\eps \to 0^+} 
        \meq{\eps}{\alpha} {t_\eps}{q_\eps}{t_\eps'}{q_\eps'};
        \end{aligned}
\end{equation}

\noindent 
 (b) 
$\Gamma$-$\limsup$ estimate: 
  \begin{equation}
    \label{Gamma-limsup}
    \begin{aligned}
      & \forall\, (t,q,t',q') \in [0,T]\ti \domq \ti [0,\infty) \ti \Spq \
      \exists\, (t_\eps,q_\eps,t_\eps',q_\eps')_\eps \text{ such that }
      \\
      & \quad \text{\upshape\;\ (i)}\quad (t_\eps,q_\eps, t_\eps', q_\eps') \weakto  (t,q,t',q') \text{ in }
      \R\ti\Spq \ti \R\ti\Spq \text{ as } \eps \to 0^+, 
      \\
      & \quad \text{\upshape\ (ii)}\quad   \sup\nolimits_{\eps>0}
      \mfE(q_\eps) < \infty, \text{ and }
       \\
     & \quad \text{\upshape(iii)}\quad   \meq{0}\alpha tq{t'}{q'} \geq
     \limsup\nolimits_{\eps \to 0^+} \meq{\eps}\alpha 
      {t_\eps}{q_\eps}{t_\eps'}{q_\eps'}\,.
    \end{aligned}
  \end{equation}
\end{subequations}
\end{proposition}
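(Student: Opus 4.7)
The proof splits into the $\Gamma$-$\liminf$ and $\Gamma$-$\limsup$ estimates, both of which will follow from three ingredients already at our disposal: the Mosco convergence $\mfB^\alpha_\eps \overset{\rmM}\to \mfB^\alpha_0$ established in Proposition \ref{pr:Mosco.Beps}, the weak lower semicontinuity of the generalized slopes $\slovname u$ and $\slovname z$ on energy sublevels ensured by Hypothesis \ref{hyp:Sept19}, and the lower bounds \eqref{eq:1LowBounds} from Lemma \ref{le:LoBo.Bae}. A crucial algebraic fact used repeatedly is that both $\mfB^\alpha_\eps$ and the limit $\mfB^\alpha_0$ are non-decreasing in the slope variables $(\sigmav u,\sigmav z)$, which is immediate from Proposition \ref{pr:VVCP}(b2) and from the piecewise-explicit formulas of Proposition \ref{pr:Mosco.Beps}.

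For the $\Gamma$-$\liminf$ estimate \eqref{Gamma-liminf}, I may assume that $\ell := \liminf_\eps \meq{\eps}{\alpha}{t_\eps}{q_\eps}{t_\eps'}{q_\eps'} <\infty$ and extract a subsequence realizing $\ell$ along which $\pl_q\calE(t_\eps,q_\eps)\neq \emptyset$. Writing $\sigma_\sfx^\eps := \slov{x}{t_\eps}{q_\eps}$ reduces the $\rmM$-function to $\mfB^\alpha_\eps(t_\eps',u_\eps',z_\eps',\sigma_\sfu^\eps,\sigma_\sfz^\eps)$, and after a further extraction $\sigma_\sfx^\eps \to \sigma_\sfx \in [0,+\infty]$ with $\sigma_\sfx \geq \slov{x}{t}{q}$ by Hypothesis \ref{hyp:Sept19}. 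When $(t,q)\in\mathrm{dom}(\pl_q\calE)$ and both $\sigma_\sfu,\sigma_\sfz$ are finite, applying the Mosco $\liminf$ of Proposition \ref{pr:Mosco.Beps} to the weakly convergent quintuple $(t_\eps',u_\eps',z_\eps',\sigma_\sfu^\eps,\sigma_\sfz^\eps)$ gives $\ell \geq \mfB^\alpha_0(t',u',z',\sigma_\sfu,\sigma_\sfz)$, and the monotonicity of $\mfB^\alpha_0$ in the slope variables then delivers $\ell \geq \meq{0}{\alpha}{t}{q}{t'}{q'}$. If some $\sigma_\sfx$ equals $+\infty$ — which in particular covers the case $(t,q)\notin\mathrm{dom}(\pl_q\calE)$, where by convention both slopes at $(t,q)$ are $+\infty$ — the lower bound \eqref{eq:1LowBo.Bae} together with $\varkappa(\sigma_\sfx^\eps)\to +\infty$ and the weak lower semicontinuity of the norm forces the corresponding weak limit $u'$ or $z'$ to vanish whenever $\ell<\infty$, and the explicit formulas for $\mfB^\alpha_0$ in the three regimes $\alpha>1,\ \alpha=1,\ \alpha<1$ then close the estimate. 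The only residual scenario is the trivial one $(t',q')=(0,0)$ with $(t,q)\notin\mathrm{dom}(\pl_q\calE)$, in which $\meq{0}{\alpha}{t}{q}{0}{0}=0$ and the estimate is automatic.

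For the $\Gamma$-$\limsup$ estimate \eqref{Gamma-limsup}, the case $\meq{0}{\alpha}{t}{q}{t'}{q'}=+\infty$ is trivial. When $(t,q)\in\mathrm{dom}(\pl_q\calE)$ I take the \emph{constant} recovery $(t_\eps,q_\eps)\equiv (t,q)$, so that the slope arguments of $\mfB^\alpha_\eps$ are frozen at the finite values $\sigma_\sfx^\circ := \slov{x}{t}{q}$; a direct inspection of the proof of Proposition \ref{pr:Mosco.Beps} reveals that in each of its cases ($\tau>0$, and $\tau=0$ with $\alpha\gtreqless 1$) the recovery sequence modifies only the $\tau$-component — typically as $\tau_\eps=\lambda_\eps\eps$ or $\lambda_\eps\eps^\alpha$ — while $(u',z',\sigmav u,\sigmav z)$ may be kept fixed, which yields $\mfB^\alpha_\eps(t_\eps',u_\eps',z_\eps',\sigma_\sfu^\circ,\sigma_\sfz^\circ)\to \mfB^\alpha_0(t',u',z',\sigma_\sfu^\circ,\sigma_\sfz^\circ) = \meq{0}{\alpha}{t}{q}{t'}{q'}$. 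The remaining sub-case is $(t,q)\in \overline{\mathrm{dom}(\pl_q\calE)}^{\mathrm{w,S}}\setminus \mathrm{dom}(\pl_q\calE)$ with $(t',q')=(0,0)$ and $\meq{0}{\alpha}{t}{q}{0}{0}=0$: by the very definition of the weak-$\mathrm{S}$ closure I pick $(t_n,q_n)\in \mathrm{dom}(\pl_q\calE)$ with $(t_n,q_n)\weakto(t,q)$ and $\sup_n\mfE(q_n)<\infty$, set $(u_\eps',z_\eps')=(0,0)$, and choose $t_\eps'>0$ small enough that both $t_\eps'\to 0^+$ and $t_\eps'(\sigma_\sfu^\eps/\eps^\alpha+\sigma_\sfz^\eps/\eps)\to 0$ hold — for instance $t_\eps':=(1+M_\eps)^{-2}$ with $M_\eps := \sigma_\sfu^\eps/\eps^\alpha+\sigma_\sfz^\eps/\eps$, possibly after a diagonal extraction in $n$. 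A direct computation from \eqref{eq:def.B.al.eps}, using $\psi_\sfu(0)=\psi_\sfz(0)=0$, then yields $\mfB^\alpha_\eps(t_\eps',0,0,\sigma_\sfu^\eps,\sigma_\sfz^\eps) = t_\eps' M_\eps \to 0$, as required.

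The main obstacle, in my view, is precisely the construction in this last $\limsup$ sub-case: no a priori control on the rate of blow-up of the slopes $\sigma_\sfx^\eps$ along the approximating sequence is available, and the choice of $t_\eps'$ must balance the two competing requirements $t_\eps'\to 0^+$ and $t_\eps' M_\eps \to 0$ through a careful diagonal extraction. A secondary source of bookkeeping is the $\liminf$ scenario where some $\sigma_\sfx=+\infty$, which forces one to exploit the distinct explicit structure of $\mfB^\alpha_0$ in each of the three regimes $\alpha>1,\ \alpha=1,\ \alpha<1$ of Proposition \ref{pr:Mosco.Beps}.
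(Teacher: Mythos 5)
Your proposal follows the paper's own route in both directions: for the $\Gamma$-$\liminf$ you pass to a subsequence realizing the $\liminf$, combine the Mosco convergence of $\mfB^\alpha_\eps$ from Proposition~\ref{pr:Mosco.Beps} with the lower semicontinuity of the slopes from Hypothesis~\ref{hyp:Sept19} and the monotonicity of $\mfB^\alpha_0$ in the slope arguments, and invoke the coercivity bounds of Lemma~\ref{le:LoBo.Bae} to force $u'=0$ or $z'=0$ when a slope diverges; for the $\Gamma$-$\limsup$ you use the constant recovery sequence when $\frsubq qtq\neq\emptyset$, and a diagonal construction through a weakly-$\mathrm S$-approximating sequence $(t_n,q_n)\subset\mathrm{dom}(\frname q\calE)$ in the remaining edge case. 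This is essentially the paper's argument.

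One concrete slip worth fixing: in the edge case of the $\limsup$, your suggested choice $t_\eps'=(1{+}M_\eps)^{-2}$ with $M_\eps = \sigma_\sfu^\eps/\eps^\alpha + \sigma_\sfz^\eps/\eps$ does \emph{not} always satisfy $t_\eps'\to 0^+$. If the slopes evaluated along the diagonal $(t_k,q_k)$ decay at least as fast as $\eps_k^\alpha$ (for instance, if $\slov u{t_k}{q_k}=\slov z{t_k}{q_k}=0$ for all $k$), then $M_{\eps_k}$ stays bounded and $t_{\eps_k}'$ is bounded away from zero, so the recovery sequence fails to converge to $(t,q,0,0)$. The paper avoids this by imposing the separate constraint $t'_{\eps_k,n}\leq\eps_k$ in addition to the two slope-weighted bounds. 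Since you correctly identify both competing requirements ($t_\eps'\to0^+$ and $t_\eps'M_\eps\to 0$), this is a fault in the illustrative formula only; a choice such as $t_\eps':=\min\{\eps,(1{+}M_\eps)^{-1}\}$ repairs it.
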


\noindent
\begin{proof} Concerning (a), let
$ (t_\eps,q_\eps,{t_\eps'}, q_\eps')_\eps$ be a sequence as in
\eqref{Gamma-liminf}. Of course we can suppose that 
$\liminf_{\eps \to 0^+} \meq{\eps}{\alpha}
{t_\eps}{q_\eps}{t_\eps'}{q_\eps'}<\infty$, and thus that
$\sup_\eps \meq{\eps}{\alpha} {t_\eps}{q_\eps}{t_\eps'}{q_\eps'}<\infty$. Then,
there exists $\bar\eps>0$ such that for all $\eps \in (0,\bar\eps)$ there holds
$t_\eps' > 0$, 
the Fr\'echet subdifferential 
$\frsubq q{t_\eps}{q_\eps} $ is non-empty,   and
$\meq {\eps}{\alpha} {t_\eps}{q_\eps}{t_\eps'}{q_\eps'} =
\mfB_\eps^\alpha(t_\eps', u_\eps',z_\eps', \slov u{t_\eps}{q_\eps}, \slov
z{t_\eps}{q_\eps}) $. 
In order to apply  Proposition \ref{pr:Mosco.Beps} we now need to discuss the boundedness of the slopes 
$ (\slov u{t_\eps}{q_\eps})_\eps$,  $(\slov
z{t_\eps}{q_\eps})_\eps$. Indeed,  
If $t'>0$, then $t_\eps'\geq c>0$ for all $\eps \in (0,\bar\eps)$ (up to choosing a smaller $\bar\eps$), so that, 
by the definition \eqref{eq:rescaled.B.eps} of $\mfB_\eps^\alpha$ we infer that $ \slov u{t_\eps}{q_\eps} \leq C \eps^\alpha$ and 
$ \slov z{t_\eps}{q_\eps} \leq C \eps$ for all  $\eps \in (0,\bar\eps)$. In the case $t'=0$, suppose e.g.\ that 
$\liminf_{\eps \to 0}\slov u{t_\eps}{q_\eps}=+\infty$ while $\liminf_{\eps \to 0}\slov z{t_\eps}{q_\eps}<+\infty$.  Then, from 
the coercivity estimate \eqref{eq:1LowBo.Bae} we deduce (up to extracting a not relabeled subsequence) that 
$u_\eps'\to 0$. Thus,
\[
\begin{aligned}
\liminf_{\eps\to 0}\mfB_\eps^\alpha(t_\eps', u_\eps',z_\eps', \slov u{t_\eps}{q_\eps}, \slov
z{t_\eps}{q_\eps})  & \geq  \liminf_{\eps\to 0}\calB_{\disv z}\big( \frac{t_\eps'}{\eps}, z_\eps', \slov
z{t_\eps}{q_\eps}\big)  \\ & \geq \mfB_0^\alpha (0,0,z', \slov utq, \slov ztq)\,, 
\end{aligned}
\]
with the latter estimate due to Proposition \ref{pr:Mosco.Beps}, Hypothesis
\ref{hyp:Sept19}, and the monotonicity of
$\mfB^\alpha_0(\tau,q',\sigma_\sfu,\sigma_\sfz)$ in $\sigma_\sfu$ and
$\sigma_\sfz$.  We may argue similarly in the case
$\liminf_{\eps \to 0}\slov u{t_\eps}{q_\eps}<+\infty$ and
$\liminf_{\eps \to 0}\slov z{t_\eps}{q_\eps}=+\infty$ and when both limits are
finite.

The $\Gamma$-$\limsup$ estimate (b) is trivial for all
$ (t,q,t',q') \in [0,T]\ti \domq \ti [0,\infty) \ti \Spq $ with
$\meq 0\alpha tq{t'}{q'}=\infty$.  If
$\meq 0\alpha tq{t'}{q'} = \mfB_0^\alpha(t',u',z',\slov utq, \slov
ztq)<\infty$, then the $\limsup$ estimate immediately follows via the constant
recovery sequence $(t_\eps, q_\eps, t_\eps', q_\eps') \equiv (t,q,t',q')$ with
the same arguments as in the proof of Proposition \ref{pr:Mosco.Beps}. Let us
now suppose that $(t',q')=(0,0) $ with
$ (t,q) \in \overline{\mathrm{dom}(\frname q\calE)}^{\mathrm{w,S}} {\setminus}
\mathrm{dom}(\frname q\calE)$, so that $\meq 0\alpha tq{t'}{q'} =0$. We observe
that there exists a sequence $(t_n,q_n)_n \subset \mathrm{dom}(\frname q\calE)$
with $(t_n,q_n) \weakto (t,q)$ and $\sup_{n} \mfE(q_n)<\infty$.  We will show
that for every null sequence $(\eps_k)_{k\in \N}$ there exists a recovery
sequence for $(t,q,0,0)$. For this, we first fix $n\in \N$ and associate with
$(t_n,q_n, 0, 0)$ the recovery sequence
$(t_{\eps_k,n}, q_{\eps_k,n}, t_{\eps_k,n}', q_{\eps_k,n}') =
(t_n,q_n,t_{\eps_k,n}', 0)$, where we choose $ t'_{\eps_k,n}>0$ such that
\[
t'_{\eps_k,n} \leq \eps_k \, ,\quad 
 \frac{t_{\eps_k,n}'}{\eps_k^\alpha} \slov u{t_n}{q_n} \leq \eps_k \, ,
  \quad \text{and} \quad
 \frac{t_{\eps_k,n}'}{\eps_k} \slov z{t_n}{q_n} \leq \eps_k\,.
\]
Setting $n=k$ we obtain the sequence 
$(\tilde{t}_{\eps_k}, \tilde{q}_{\eps_k}, \tilde{t}_{\eps_k}',
\tilde{q}_{\eps_k}')= (t_k,q_k, t'_{\eps_k,k}, 0)  \weakto  (t,q,0,0)$, which gives
(i). By construction we also have $\sup_{k\in \N}\mfE( \tilde{q}_{\eps_k})\leq
\sup_{n\in \N}\mfE( q_n) <\infty$, which gives (ii). Moreover, because of $
\tilde{t}_{\eps_k}'> 0$ and $\tilde q'_{\eps_k}=0$ we have 
\begin{align*}
&\meq{\eps_k}\alpha
{\tilde{t}_{\eps_k}}{\tilde{q}_{\eps_k}}{\tilde{t}_{\eps_k}'}{\tilde{q}_{\eps_k}'}
= \mfB^\alpha_{\eps_k}(\tilde{t}_{\eps_k}', 0 ,  \slov  u{t_k}{q_k}, \slov
z{t_k}{q_k})  \\
&=   \frac{t_{\eps_k,k}'}{\eps_k^\alpha} \slov u{t_k}{q_k} +
\frac{t_{\eps_k,k}'}{\eps_k} \slov z{t_k}{q_k} \leq 2\eps_k\to 0  = \meq 0\alpha
tq{0}{0}\, .   
\end{align*} 
Thus,  condition (iii) in  \eqref{Gamma-limsup} holds as well. With this, Proposition
\ref{pr:Mosco.Meps} is established. 
\end{proof}

For later use we also introduce the \emph{`reduced' \RJMF} 
\begin{equation}
\label{decomposition-M-FUNCTION}
  \mredname{0}{\alpha} : [0,T]\ti \domq \ti [0,\infty)  \ti \Spq \to
  [0,\infty], \quad \mredq 0\alpha tq{t'}{q'} : = \meq  0\alpha
  tq{t'}{q'}-\calR(z'). 
\end{equation}
We observe that the dissipation potentials $\psi_\sfu: = \disv u$ and
$\psi_{\mathsf{z}}: =\calR{+} \disv z$ have rate-independent parts null and
equal to $\calR$, respectively, and that
$\mfb_{\psi_{\mathsf{z}}} = \calR + \mfb_{\disv z}$ thanks to
\eqref{later-added}. Thus, from \eqref{mename-0} and  Proposition \ref{pr:Mosco.Beps} we infer that
the following representation formula for $  \mredname{0}{\alpha}$:
\begin{subequations}
\label{l:partial}
\begin{align}
\nonumber
\text{for } \frsubq qtq \neq \emptyset \text{ we have }\hspace*{-2em}
\\
\qquad 
 \nonumber
   t'>0:\qquad &  \mredq 0\alpha tq{t'}{q'} = 
  \begin{cases} 
    0 & \text{ for } \slov utq=\slov ztq=0, \\
    \infty& \text{ otherwise};
  \end{cases}  
\\ \nonumber
 t'=0, \ \alpha>1{:}\ \
  & \mredq 0\alpha tq{0}{q'}
   =\begin{cases}  \mfb_{\disv z}
            (z',\slov ztq)  &\text{for }\slov utq=0, \\
      \mfb_{\disv u}(u',\slov utq)& \text{for } \slov utq>0, \ z'=0,\\
     \infty& \text{otherwise};
 \end{cases}
\\  \label{l:partial-1}
 t'=0, \ \alpha=1{:}\ \
   & \mredq 0{\,1\,} tq{0}{q'}= \mfb_{\disv u \oplus
    \disv z}(q',\slov utq{+}\slov ztq)
\\ \nonumber
 t'=0,\ \alpha<1{:}\ \
& \mredq 0\alpha tq{0}{q'}
   =\begin{cases} \mfb_{\disv u} (u',\slov utq) 
        &\text{for } \slov ztq=0, \\
      \mfb_{\disv z}(z',\slov ztq)& \text{for } \slov ztq>0, \  u'=0,\\
     \infty& \text{otherwise},
 \end{cases}
\\
 \nonumber
\text{for } \frsubq qtq = \emptyset  \text{ we have }\hspace*{-2em}
\\
  &\hspace*{-6em}
\label{l:partial-2}
 \mredq 0\alpha tq{t'}{q'} = 
 \begin{cases}
   0 & \text{for } t'=0, \, q'=0 \text{ and }   (t,q) \in   \overline{\mathrm{dom}(\frname q\calE)}^{\mathrm{w,S}}
    {\setminus}\mathrm{dom}(\frname q\calE), 
   \\[0.3em]
   \infty & \text{otherwise}.
 \end{cases}
\end{align}
\end{subequations} 
The expressions in \eqref{l:partial} reflect the fact that $\mename
\eps\alpha$  only depends on the three cases given by  $\alpha \in (0,1)$,
$\alpha=1$, or $\alpha>1$.  

We emphasize that  $ \mredname 0\alpha$ depends on $\calR$ as well, namely
through $\slovname z$ which is defined via $\conj z$. 
In particular, for $t'>0$ finiteness of $ \mredq 0\alpha {t}q{t'}{q'}$ enforces
that $0=\slov u tq = \slov ztq $ and hence, taking into account Hypothesis
\ref{hyp:Sept19},
\begin{equation}
\label{station+loc-stab-forced}
\left\{
\begin{array}{llc}
 \text{the stationarity of $u$:}
& \exists\, (\mu,\zeta) \in \frsubq qtq\, : & 
\mu =0 \, ,
\\[0.3em]
\text{the local stability of  $z$:} 
& \exists\,  (\widetilde\mu,\widetilde\zeta) \in \frsubq qtq\, : & 
\widetilde\zeta \in \pl  \calR (0)\,.
\end{array}
\right.
\end{equation}
In the specific cases of dissipation potentials $\disv u$ and $\disv z$
considered in Example \ref{ex:VVCP}, we even have the explicit expression of
the respective contact potentials $\mfb_{\disv u}$ and $\mfb_{\disv z}$, and
thus of the (reduced) \RJMF\  $\mredname 0\alpha$. In particular, let us
revisit the $p$-homogeneous case:

\begin{example}[The $p$-homogeneous case]\slshape
\label{ex:p-homog}
Suppose that the dissipation potentials $\disv u$ and $\disv z$ are
positively $p$-homogeneous with the \emph{same} $p\in (1,\infty)$. Then,
combining \eqref{p-homo-mfb} with \eqref{l:partial} we conclude that for  $t'=0$ and $\frsubq qtq \neq \emptyset$ 
we have (where $\hat{c}_p = p^{1/p} (p')^{1/p'}$)
\begin{align}
\nonumber
 \alpha>1{:}\ \
  & \mredq 0\alpha tq{0}{q'}
   =\begin{cases} 
   \hat{c}_p  \left( \disv z(z')\right)^{1/p} \left( \slov ztq \right)^{1/p'}
   &\text{for }\slov utq=0, \\ 
       \hat{c}_p  \left( \disv u(u')\right)^{1/p} \left( \slov utq
       \right)^{1/p'} & \text{for } \slov utq>0 , \ z'=0,\\ 
     \infty& \text{otherwise};
 \end{cases}
\\ \label{RJMF-p-hom}
\alpha=1{:}\ \
   & \mredq 0\alpha tq{0}{q'}=   \hat{c}_p   \left( \disv u(u'){+} \disv z(z')
   \right)^{1/p} \left(\slov utq {+} \slov ztq  \right)^{1/p'} 
\\ \nonumber
\alpha<1{:}\ \
& \mredq 0\alpha tq{0}{q'}
   =\begin{cases}  \hat{c}_p  \left( \disv u(u')\right)^{1/p} \left( \slov utq
     \right)^{1/p'} &\text{for } \slov ztq=0, \\
      \hat{c}_p  \left( \disv z(z')\right)^{1/p} \left( \slov ztq
      \right)^{1/p'} & \text{for } \slov ztq>0, \ u'=0,\\
     \infty& \text{otherwise}.
 \end{cases}
\end{align}
\end{example}

The  M-functions $ \mename \eps{\alpha}$ enjoy
suitable coercivity properties that will play a key role in the compactness
arguments for proving the existence of $\BV$ solutions.   These estimates
are direct consequences of the the lower bounds on $\mfB^\alpha_\eps$ derived
in Lemma \ref{le:LoBo.Bae} and the definition of $\mename \eps\alpha$. The
importance here is the uniformity in $\eps\in [0,1]$.

We also emphasize that we are stating a result that is focusing on $z'$ and
ignoring $u'$, which reflects the fact that we always assume the bound on
$\|u'_\eps\|_{\rmL^1(0,T;\Spu)}$ whereas for $z'_\eps$ we only have a bound in
$\rmL^1(0,T;\Spy)$, but we need the derivative $\sfz'(s)\in \Spz$ at least in
points where $\slov z{\sft(s)}{\sfq(s)}>0$.

\begin{lemma}
  \label{new-lemma-Alex}
The following estimates hold for all $c>0$ and
$\eps\in [0,1]$ with  $\varkappa$ from Lemma \ref{le:LoBo.Bae}: 
\begin{subequations}
 \label{est-Alex}
 \begin{align}
 \label{est-Alex-1}
 \qquad\alpha\in (0,1):\quad & \slov ztq \geq c \quad &\Longrightarrow \quad 
  \|z'\|_{\Spz}\: \leq \: \frac{\mename \eps\alpha(t,q,t',q')}{\varkappa(c)}, \qquad
\\
 \label{est-Alex-1-bis}
\alpha\geq 1:\quad   &\slov utq{+} \slov ztq \geq c &\Longrightarrow \quad 
  \|z'\|_{\Spz} \: \leq \: 
\frac{\mename \eps{\alpha}(t,q,t',q')}{\varkappa(c)}. \qquad
\end{align}
\end{subequations}
\end{lemma}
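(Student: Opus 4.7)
The claim is essentially a direct consequence of the lower bounds for $\mfB_\eps^\alpha$ established in Lemma \ref{le:LoBo.Bae}, combined with the monotonicity of $\varkappa$. The plan is to reduce each case to the appropriate line of \eqref{eq:1LowBounds} and then use that $\varkappa$ is nondecreasing.

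First I would observe that in both implications we may assume the right-hand side is finite, i.e.\ $\mename \eps\alpha(t,q,t',q') < \infty$, since otherwise the bound on $\|z'\|_\Spz$ is trivially true. By the definition \eqref{def-Me} of the \RJMF, finiteness forces $\frsubq qtq \neq \emptyset$ and
\[
  \mename \eps\alpha(t,q,t',q') \;=\; \mfB_\eps^\alpha\bigl(t', u', z', \slov utq, \slov ztq\bigr),
\]
so that all lower bounds from Lemma \ref{le:LoBo.Bae} are applicable to the right-hand side.

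For \eqref{est-Alex-1}, in the case $\alpha \in (0,1)$, I would apply the general bound \eqref{eq:1LowBo.Bae}, which yields
\[
  \mename \eps\alpha(t,q,t',q') \;\geq\; \|u'\|_\Spu\,\varkappa(\slov utq) + \|z'\|_\Spz\,\varkappa(\slov ztq) \;\geq\; \|z'\|_\Spz\,\varkappa(\slov ztq),
\]
the first term being nonnegative. Using that $\varkappa$ is nondecreasing together with $\slov ztq \geq c$ gives $\varkappa(\slov ztq) \geq \varkappa(c) > 0$ (strict positivity follows since $\varkappa$ is strictly increasing and $c>0$). Dividing by $\varkappa(c)$ yields the assertion.

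For \eqref{est-Alex-1-bis}, in the case $\alpha \geq 1$, I would use the sharper bound \eqref{eq:1LowBo.Bag1}, namely
\[
  \mename \eps\alpha(t,q,t',q') \;\geq\; \|z'\|_\Spz\,\varkappa\bigl(\slov utq + \slov ztq\bigr),
\]
and again invoke monotonicity of $\varkappa$ together with the hypothesis $\slov utq + \slov ztq \geq c$ to conclude. No real obstacle is expected: the heavy lifting sits entirely in Lemma \ref{le:LoBo.Bae}, and the present lemma is just a rearrangement of those inequalities exploiting that $\varkappa(c)>0$ for $c>0$.
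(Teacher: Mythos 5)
Your proof is correct and coincides with the paper's own argument (which is itself a one-line remark pointing to \eqref{eq:1LowBo.Bae} and \eqref{eq:1LowBo.Bag1} of Lemma~\ref{le:LoBo.Bae}): read off $\mename\eps\alpha$ via \eqref{def-Me} as $\mfB_\eps^\alpha(t',u',z',\slov utq,\slov ztq)$, drop the $u'$-term in the $\alpha\in(0,1)$ case, use the $\sigmav u{+}\sigmav z$ bound in the $\alpha\geq 1$ case, and conclude by monotonicity of $\varkappa$.

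One small remark for completeness: your reduction ``finiteness of $\mename\eps\alpha$ forces $\frsubq qtq\neq\emptyset$'' invokes \eqref{def-Me}, which is the definition only for $\eps>0$; the lemma is stated for $\eps\in[0,1]$. For $\eps=0$, the definition \eqref{mename-0} has an extra finite branch in which $\frsubq qtq=\emptyset$ but $t'=0$, $q'=0$ and $\mename 0\alpha=0$. In that branch $z'=0$, so $\|z'\|_\Spz=0\leq 0/\varkappa(c)$ and the estimate is vacuous. With this trivial case noted, your argument covers all of $\eps\in[0,1]$, exactly as the paper (implicitly) intends; everything else is identical.
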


\noindent
 The proof of \eqref{est-Alex-1} and \eqref{est-Alex-1-bis} follows directly
from the definition of $\mename \eps\alpha$ and the corresponding estimates
\eqref{eq:1LowBo.Bae} and 
\eqref{eq:1LowBo.Bag1} for $\mfB^\alpha_\eps$ in Lemma \ref{le:LoBo.Bae},
respectively.

The following result is an immediate consequence of the definition of
$\mename \eps\alpha$ and of  Proposition \ref{pr:VVCP}(b5), if we 
recall the definitions of $\mathfrak A_\sfx^{*,0}$ from
\eqref{not-empty-mislo}.

\begin{lemma}
 \label{new-lemma-Ricky}
For all $\alpha>0$ and all $\eps\in [0,1]$ we have that
\begin{equation}
  \label{coerc-ric}
    \mename \eps\alpha (t,q,t',q') \geq - \pairing{}{\Spu}{{  \mu }}{ u'}
     - \pairing{}{\Spz}{\zeta}{z'}
\end{equation}
for all $(t,q,t',q') \in [0,T]\ti \Spq \ti [0,\infty) \ti \Spq$ and all
$\xi  = (\mu,\zeta) \in \argminSlo utq, \, \zeta \in \argminSlo ztq$.
\end{lemma}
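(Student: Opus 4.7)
The plan is to derive \eqref{coerc-ric} from a two-fold application of the Fenchel--Young inequality, combined with the definition of $\conj z$ and the $1$-homogeneity of $\calR$; the cases $\eps>0$ and $\eps=0$ will be treated in parallel. First I would note that if $\argminSlo utq$ is empty there is nothing to prove, so I fix $\mu\in\Spu^*$ and $\zeta\in\Spz^*$ with $\mu\in\argminSlo utq$ and $\zeta\in\argminSlo ztq$, and invoke the attainment built into \eqref{def:conj} to produce some $\sigma_\calR\in\pl\calR(0)$ such that $\slov ztq=\disv z^*(-\zeta-\sigma_\calR)$, while $\slov utq=\disv u^*(-\mu)$ holds directly.

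For $\eps>0$ the case $t'=0$ is trivial since $\meq\eps\alpha tq0{q'}=\infty$ by \eqref{eq:def.B.al.eps}; for $t'>0$ I would insert the two slope identities into the explicit formula for $\mfB_\eps^\alpha$ in \eqref{eq:def.B.al.eps}. The $1$-homogeneity of $\calR$ converts the factor $\tfrac{t'}{\eps}\calR(\tfrac\eps{t'}z')$ into $\calR(z')$, after which Fenchel--Young applied separately to the $\disv u$-bracket and to the $\disv z$-bracket yields the lower bounds $-\langle\mu,u'\rangle$ and $-\langle\zeta,z'\rangle-\langle\sigma_\calR,z'\rangle$ respectively. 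The spurious cross term $-\langle\sigma_\calR,z'\rangle$ is then cancelled by the classical inequality $\calR(z')\geq\langle\sigma_\calR,z'\rangle$, which holds since $\sigma_\calR\in\pl\calR(0)$ and $\calR(0)=0$.

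For $\eps=0$ the same mechanism works, but routed through the vanishing-viscosity contact potentials. Definition \ref{defM0} together with Proposition \ref{pr:Mosco.Beps} and Proposition \ref{pr:VVCP}(b4) expresses $\meq 0\alpha tq{t'}{q'}$, in each of the sub-cases distinguished by $t'$, by $\alpha\lessgtr 1$, and by vanishing of the individual slopes, as the sum of $\calR(z')$ with one of $\mfb_{\disv u}(u',\slov utq)$, $\mfb_{\disv z}(z',\slov ztq)$ or $\mfb_{\disv u\oplus\disv z}((u',z'),\slov utq{+}\slov ztq)$, whose ``slope''-arguments are precisely $\disv u^*(-\mu)$, $\disv z^*(-\zeta{-}\sigma_\calR)$, or their sum. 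Proposition \ref{pr:VVCP}(b5) applied to each such $\mfb_\psi$ then produces the desired linear lower bound, up again to a $\langle\sigma_\calR,z'\rangle$-correction that is absorbed by $\calR(z')\geq\langle\sigma_\calR,z'\rangle$. In the degenerate sub-cases where a slope vanishes, the superlinearity assumption \eqref{h:v-diss-1} forces $\mu=0$ or $-\zeta=\sigma_\calR\in\pl\calR(0)$, and the inequality is immediate.

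The only point that requires care is the bookkeeping of the $\pl\calR(0)$-correction introduced by the definition of $\conj z$: the Fenchel--Young step unavoidably generates a term $\langle\sigma_\calR,z'\rangle$ unrelated to the right-hand side of \eqref{coerc-ric}, and the entire argument hinges on absorbing it into the rate-independent contribution $\calR(z')$ which is present in every case (explicitly for $\eps>0$ after extracting it from $\tfrac{t'}{\eps}(\calR{+}\disv z)(\tfrac\eps{t'}z')$, and via the rate-independent part $(\calR{+}\disv z)_{\mathrm{ri}}=\calR$ for $\eps=0$).
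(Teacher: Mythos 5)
Your proof is correct, but it takes a genuinely longer route than the paper's intended one-liner. The paper applies Proposition~\ref{pr:VVCP}(b5) \emph{directly} with $\psi=\psi_\sfz=\calR+\disv z$, whose Legendre conjugate is exactly $\conj z$; since for $\zeta\in\argminSlo ztq$ one has $\slov ztq=\psi_\sfz^*(-\zeta)$, (b5) gives $\mfb_{\psi_\sfz}(z',\slov ztq)\geq -\langle\zeta,z'\rangle$ in a single step, with no auxiliary $\sigma_\calR$ and no compensation term (and $\calB_{\psi_\sfz}(\cdot)\geq\mfb_{\psi_\sfz}$ by the infimum definition handles the $\eps>0$ case). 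You instead unfold $\conj z(-\zeta)=\disv z^*(-\zeta-\sigma_\calR)$, run Fenchel--Young for $\disv z$ alone, and absorb the resulting $\langle\sigma_\calR,z'\rangle$ via $\calR(z')\geq\langle\sigma_\calR,z'\rangle$, using in the $\eps=0$ case the (b4)-decomposition $\mfb_{\psi_\sfz}=\calR+\mfb_{\disv z}$ to make the absorbing term available. What this buys you is transparency---the $\pl\calR(0)$-shift implicit in $\conj z$ is made explicit and handled by hand, with only the elementary Fenchel--Young inequality for the ``viscous'' potentials---at the price of some bookkeeping that the direct (b5) route avoids. Both routes cover all the sub-cases correctly (you are right that when $\frsubq qtq=\emptyset$ the claim is vacuous, and the degenerate sub-cases with vanishing slope fall out). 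One small citation slip: where you invoke the superlinearity condition \eqref{h:v-diss-1} to deduce $\mu=0$ from $\disv u^*(-\mu)=0$, the hypothesis actually doing the work is \eqref{later-added}, i.e.\ $\pl\disv u(0)=\{0\}$ (equivalently $(\disv u)_{\mathrm{ri}}\equiv 0$); superlinearity of $\disv u$ is what makes $\disv u^*$ finite everywhere, not what pins down its zero set.
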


\Subsection{Admissible parametrized curves}
\label{ss:4.1bis}
 
The concept of \emph{admissible parametrized curve} is tailored in such a way
that it is able to describe limiting curves
$(\sft,\sfq):[a,b]\to [0,T]\ti \Spq$ of a family of parametrized viscous curves
$(\sft_\eps,\sfq_\eps)_{\eps}$ satisfying
\[
\sup_{\eps\in (0,1)} \int_a^b  \meq\eps{\alpha}
{\sft_\eps(s)}{\sfq_\eps(s)} {\sft'_\eps(s)} {\sfq'_\eps(s)} \dd s \ < \ \infty.
\]
Since Proposition \ref{pr:Mosco.Meps} guarantees that 
$\mename0{\alpha}$ is the $\Gamma$-limit of $\mename\eps{\alpha}$ it seems natural
that such curves can be characterized by the condition 
\begin{equation}
  \label{eq:M0a.sft.sfq}
 \int_a^b  \calR(z'(s)) \dd s +
 \int_a^b  \mredq 0{\alpha}
{\sft(s)}{\sfq(s)} {\sft'(s)} {\sfq'(s)} \dd s  <  \infty. 
\end{equation}
However, this expression is not well-defined, since we are not able to define
the derivatives $\sfq'(s)=(\sfu'(s),\sfz'(s))$ almost everywhere. To 
 reformulate \eqref{eq:M0a.sft.sfq} in a proper way, 
 we take advantage of the special form of
$\mathfrak{M}^{\alpha,\mathrm{red}}_0$ given in \eqref{l:partial} by observing that
$\sfz'(s)$ is only needed on the special sets $\SetG\alpha\sft\sfq$ to be
defined below.  Hence, condition   \eqref{eq:M0a.sft.sfq} can be replaced by
\eqref{summability} in Definition \ref{def:adm-p-c}   ahead, which relies on 
the fact
that absolutely continuous curves $z$ with values in (the possibly
non-reflexive space) $\Spy$ need not be differentiable with respect to time.
Therefore, the pointwise derivative $z'$ is replaced by a scalar surrogate,
cf.\ \eqref{calR-mder} below, whose definition involves the dissipation
potential $\calR$ and generalizes the concept of \emph{metric derivative} from
the theory of gradient flows in metric spaces \cite{AGS08}.
\begin{enumerate}
\item We say that a curve $z: [a,b]\to \Spy$ is $\calR$-absolutely continuous
  if there exists a nonnegative function $m \in \rmL^1(a,b)$ such that
  \[
    \calR( z(s_2) {-} z(s_1) ) \leq \int_{s_1}^{s_2} m(s) \dd s \qquad \text{for
      every } a\leq s_1\leq s_2 \leq b,
  \]
  and we denote by $ \mathrm{AC}([a,b]; \Spy, \calR)$ the space of
  $\calR$-absolutely continuous curves.
\item
  For a curve $z \in \mathrm{AC}([a,b]; \Spy, \calR)$   we set 
  \begin{equation}
    \label{calR-mder}
    \calR [z'](s): = \lim_{h\to 0} \calR \Big( \frac1h \big( z(s{+}h)\;\!{-} \!\;
        z(s)\big) \Big)  \qquad \foraa\, s \in (a,b).
  \end{equation}
\end{enumerate}

We are now in a position to give our definition of admissible parametrized
curve, which adapts \cite[Def.\ 4.1]{MRS13} to the present multi-rate
system. We recall that the slope functions $\slovname x$ are lsc according to
Hypothesis \ref{hyp:Sept19}. Hence, along continuous curves
$(\sft,\sfq):[a,b]\to [0,T]\ti \Spq$ the following sets are relatively  open: 
\begin{equation}
  \label{setGalpha}
  \SetG\alpha\sft\sfq : = 
    \begin{cases}
      \bigset{ s\in [a, b] }{    \slov  u{\sft(s)}{\sfq(s)} {+}
      \slov  z{\sft(s)}{\sfq(s)} >0 }  & \text{for } \alpha \geq 1,
      \\
      \bigset{ s\in [a, b] }{ \slov  z{\sft(s)}{\sfq(s)}
      >0 }     & \text{for } \alpha\in (0,1).
     \end{cases}
\end{equation}
The difference  between the cases  $\alpha > 1$  and
$\alpha \in (0,1)$ in the definition of the set $ \SetG\alpha\sft\sfq$ is
commented  after the following definition.

\begin{definition}[${\mathscr{A}([a,b];[0,T]\ti \Spq)}$] 
\label{def:adm-p-c}
A curve $(\sft,\sfq) = (\sft,\sfu,\sfz): [a,b] \to [0,T]\ti \Spq$ is called an
\emph{admissible parametrized curve} if
\begin{enumerate}
\item $\sft$ is non-decreasing,  $\sft \in \AC([a,b];\R)$, 
  $\sfu \in \AC([a,b];\Spu)$ and $\sfz \in \mathrm{AC}([a,b];\Spy,\calR)$;
\item $\slov u{\sft}{\sfq}  = 0$ and $\slov z {\sft}{\sfq}  = 0$ 
   on the   set $\bigset{ s \in (a,b) }{ \sft'(s) >0 } $;
\item $\sfz$ is locally $\Spz$-absolutely continuous on the open set
  $ \SetG\alpha {\sft}{\sfq}$, and $\sft$ is constant on every connected
  component of $\SetG\alpha \sft\sfq$;
\item $  \sup_{s\in [a,b]}\mfE (\sfq(s))  \leq E$ for some $E>0$;
\item there holds
  \begin{equation}
   \label{summability}
   \int_{a}^{b}  \calR [\sfz'](s) \dd s + \int_{\SetG\alpha\sft\sfq}
   \mredname 0\alpha (\sft(s),\sfu(s),\sfz(s),0,\sfu' (s),\sfz' (s)) 
   \dd s <\infty\,.
 \end{equation}
\end{enumerate}

We will denote by $\mathscr{A}([a,b];[0,T]\ti \Spq)$ the collection of all
admissible parametrized curves from $[a,b]$ to $[0,T]\ti \Spq$.  Furthermore,
we say that $(\sft,\sfq) \in \mathscr{A}([a,b];[0,T]\ti \Spq)$ is
\begin{itemize}
\item[\textbullet] \emph{non-degenerate}, if
  \begin{equation}
    \label{non-degeneracy}
   \sft'(s) +  \calR [\sfz'](s) + \| \sfu'(s)\|_\Spu >0 \quad \text{ for a.a.\
     $s\in (a,b)$;} 
   \end{equation}
\item[\textbullet] \emph{surjective}, if $\sft(a) =0$ and $\sft(b)=T$.
\end{itemize}

Finally, in the case in which the function $\sft$, defined on the canonical
interval $[0,1]$, is constant with $\sft(s) \equiv t$ for some $t\in [0,T]$, we
call $\sfq$ an \emph{admissible transition curve between $q_0:=q(0)$ and
$q_1: = q(1)$  at time $t$}, and we will use the notation 
\[
\admtcq t{q_0}{q_1}: = \{ \sfq: [0,1]\to \Spq\, : \ (\sft, \sfq) \in
\mathscr{A}([0,1];[0,T]\ti \Spq), \ \sft(s) \equiv t, \ \sfq(0) = q_0, \,
\sfq(1) = q_1 \}. 
\]
\end{definition}

The requirement that $\sfz$ has to be locally $\Spz$-absolutely
continuous on the set $\SetG\alpha \sft\sfq$, and the different definition of
$\SetG\alpha \sft\sfq$ in the cases $\alpha \geq 1$ and $\alpha \in (0,1)$, are
clearly motivated by properties \eqref{est-Alex} (which, in turn, derive from
Lemma \ref{le:LoBo.Bae}).  Indeed, in the case $\alpha \in (0,1)$, in view of
\eqref{est-Alex-1}, once $\meq 0\alpha {\sft}{\sfq}{\sft'}{\sfq'}$ is estimated
and $\slov z{\sft}{\sfq}$ is strictly positive, then
$\meq 0\alpha {\sft}{\sfq}{\sft'}{\sfq'}$ provides a control on
$\|\sfz'\|_{\Spz}$.  Because of this, parametrized curves are required to be
absolutely continuous on the set $\slov z{\sft}{\sfq}>0$. In the case
$\alpha \geq 1$, in view of estimate \eqref{est-Alex-1-bis}, the $z$-component
of admissible parametrized curves is expected to be absolutely continuous on
the larger set where $\slov u{\sft}{\sfq}+\slov z{\sft}{\sfq}>0$.

Hence, on the one hand, in \eqref{summability} we integrate only over the set
$\SetG\alpha\sft\sfq$, because it is in $\SetG\alpha\sft\sfq$ where the
pointwise derivative \ $\sfz' \in \Spz$ exists,  which  makes the term
$\mredq 0\alpha{\sft(s)}{\sfq(s)}{0}{\sfq'(s) }$ well defined.  On the
other hand, the specific form of $\mredname{0}{\alpha}$ in \eqref{l:partial}
and the fact that $\mfb_\psi(v,0)=0$ for all $v$ show that $z' \in \Spz$ is
only needed on the set $\SetG\alpha\sft\sfq $. 

 Hereafter,  along an admissible parametrized curve $(\sft,\sfq)$ we shall use
the notation
\begin{equation}
  \label{short-hand-M0}
  \mathfrak{M}_0^\alpha [\sft,\sfq,\sft',\sfq'] (s): = \calR[\sfz'](s) +
  \indic_{\SetG\alpha\sft\sfq}(s) \mredq  0\alpha{\sft(s)}{\sfq(s)}{0}{\sfq'(s)} 
\end{equation}
with $ \calR[\sfz'] $ from \eqref{calR-mder}, and 
$\indic_{\SetG\alpha\sft\sfq}$ is the indicator function of the set
$\SetG\alpha\sft\sfq$.  Let us stress that the above notation makes sense only
along an admissible curve.  If the admissible curve $(\sft,\sfq)$ has the
additional property $\sfz \in \AC([a,b];\Spz)$ 
and thus $\sfz'(s)$ is well
defined as an element of $\Spz\subset\Spy$ for almost all $s\in (a,b)$, then
$ \calR[z'](s)= \calR(z'(s)) $ a.e.\ in $(a,b)$. Hence, for an admissible curve with
$\sfz \in \AC([a,b];\Spz)$ we have
$ 
  \mathfrak{M}_0^\alpha [\sft,\sfq,\sft',\sfq'] (s) =   \mathfrak{M}_0^\alpha 
  (\sft(s),\sfq(s),\sft'(s),\sfq'(s)) \  \foraa s \in (a,b).
$ 

\Subsection{Definition of parametrized Balanced-Viscosity solutions}
\label{ss:4.2}

We are now in a position to precisely define parametrized Balanced-Viscosity
($\pBV$) solutions to the rate-independent system $\RIS$, see Definition
\ref{def:pBV}.  At the core of this concept there lies a (parametrized)
chain-rule inequality, cf.\ Hypothesis \ref{h:ch-rule-param} that will be
imposed as an additional property of the rate-independent system,  while
 Proposition \ref{prop:better-chain-rule-MOexpl} will provide sufficient
conditions for the validity of Hypothesis \ref{h:ch-rule-param}.

We will also introduce an \emph{enhanced} version
of the $\pBV$ concept, in which we additionally require $z$ to be absolutely
continuous with values in $\Spz$.  In \cite[Sec.\,4.2]{MRS13} this notion had
been already introduced, using a different terminology that might create slight
confusion in the present multi-rate context and has thus been changed here.  We
believe the enhanced concept to be significant as well because, for some
examples (cf.\ e.g.\ the applications discussed in Section \ref{s:appl-dam}), the
vanishing-viscosity analysis will directly lead to enhanced BV solutions.

The definition of $\pBV$ solutions relies  on the validity of the following
assumption on the rate-independent system $\RIS$.

\begin{hypothesis}[Chain rule along admissible parametrized curves]
\label{h:ch-rule-param}
For every admissible parametrized curve
$(\sft,\sfq) \in \mathscr{A} ([a,b];[0,T]\ti \Spq)$
 \begin{equation}
 \label{ch-rule}
 \begin{aligned}
   &\text{the map $s\mapsto \eneq {\sft(s)}{\sfq(s)} $ is absolutely continuous
     on $[a,b]$ and}
   \\
   &\frac{\rmd}{\rmd s} \eneq{\sft(s)}{\sfq(s)} - \pl_t
   \eneq{\sft(s)}{\sfq(s)} \sft'(s) \geq - \mathfrak{M}_0^\alpha
   [\sft,\sfq,\sft',\sfq'](s) \ \foraa\, s \in (a,b).
\end{aligned}
\end{equation}
\end{hypothesis}

\begin{remark}
\label{rmk:ptfw-chain-rule}
\slshape In general, the chain-rule inequality \eqref{ch-rule} along a given
admissible parametrized curve $(\sft,\sfq)$ does not follow from
the chain rule of Hypothesis \ref{h:ch-rule}, because for these 
curves the pointwise derivative $\sfz'$ exists as an
element in $\Spz$ only on the set $\SetG\alpha\sft\sfq$ from \eqref{setGalpha}.
That is why, Proposition \ref{prop:better-chain-rule-MOexpl} provides a
sufficient condition under which Hypothesis \ref{h:ch-rule} ensures the 
validity of Hypothesis \ref{h:ch-rule-param}, albeit restricted to admissible curves
satisfying additionally $\sfz \in \AC ([a,b];\Spz)$.  
\end{remark} 

We are now  ready to introduce  the exact notion of $\pBV$  solutions.

\begin{definition}[$\pBV$  and enhanced $\pBV$ solutions]
\label{def:pBV}
In addition to Hypotheses \ref{hyp:setup}, \ref{hyp:diss-basic},
\ref{hyp:1}, \ref{h:closedness}, and \ref{hyp:Sept19}, let the rate-independent
system $\RIS$ satisfy Hypothesis \ref{h:ch-rule-param}. We call a curve
$(\sft,\sfq) \in \mathscr{A} ([a,b];[0,T]\ti \Spq)$ a \emph{parametrized
  Balanced-Viscosity} ($\pBV$) solution to the rate-independent system $\RIS$
if $(\sft,\sfq)$ satisfies the \emph{parametrized energy-dissipation balance}
\begin{equation}
\label{def-parBV}
\begin{aligned}
  \eneq{\sft(s_2)}{\sfq(s_2)} +\int_{s_1}^{s_2} \mathfrak{M}_0^\alpha
  [\sft,\sfq, \sft',\sfq'](s) \dd s = \eneq{\sft(s_1)}{\sfq(s_1)}
  +\int_{s_1}^{s_2} \pl_t \eneq{\sft(s)}{\sfq(s)} \sft'(s) \dd s
\end{aligned}
\end{equation}
for every $ a \leq s_1 \leq s_2\leq b$,  where $\mathfrak{M}_0^\alpha$ is
defined in  \eqref{short-hand-M0}.

A $\pBV$ solution $(\sft,\sfq)=(\sft,\sfu,\sfz)$ is called \emph{enhanced $\pBV$
  solution},  if 
additionally $\sfz\in \AC ([a,b];\Spz)$.
\end{definition}

For an enhanced $\pBV$ solution $(\sft,\sfq)$ we have
$\sfq \in \AC ([a,b];\Spq)$, since $\sfq \in \mathscr{A} ([a,b];[0,T]\ti \Spq)$
already implies $\sfu \in \AC ([a,b];\Spu)$.  As a consequence of the
chain-rule inequality \eqref{ch-rule} from Hypothesis \ref{h:ch-rule-param}, we
have the following characterization.

\begin{lemma}[Characterization of $\pBV$ solutions]
  \label{l:characterizBV}  Let Hypothesis \ref{h:ch-rule-param} hold
  additionally. Then for  an admissible parametrized curve
  $(\sft,\sfq) \in \mathscr{A} ([a,b];[0,T]\ti \Spq)$, the following three
  properties are equivalent:
\begin{enumerate}
\item $(\sft,\sfq) $ is a $\pBV$ solution of the rate-independent system
  $\RIS$;
\item $(\sft,\sfq) $ fulfills the \emph{upper energy estimate} $\leq$ in
  \eqref{def-parBV} on for $s_1=a$ and $s_2=b$;
\item $(\sft,\sfq) $ fulfills the pointwise identity for a.a.\ $s\in (a,b)$
  \begin{equation}
    \label{ptw-ident}
    \begin{aligned}
      \frac{\rmd}{\rmd s} \eneq{\sft(s)}{\sfq(s)} - \pl_t
      \eneq{\sft(s)}{\sfq(s)} \sft'(s) =- \mathfrak{M}_0^\alpha
      [\sft,\sfq,\sft',\sfq'](s) \,. 
    \end{aligned}
  \end{equation}
\end{enumerate}
\end{lemma}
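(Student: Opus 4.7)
The plan is to prove the chain of implications (1)$\Rightarrow$(2)$\Rightarrow$(3)$\Rightarrow$(1), using Hypothesis \ref{h:ch-rule-param} as the key ingredient bridging the global energy balance with its pointwise counterpart.

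\emph{Implication (1)$\Rightarrow$(2)} is immediate: by choosing $s_1=a$ and $s_2=b$ in \eqref{def-parBV}, the equality trivially implies the corresponding $\leq$ inequality.

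\emph{Implication (2)$\Rightarrow$(3)}: Since $(\sft,\sfq)\in \mathscr{A}([a,b];[0,T]\ti\Spq)$, Hypothesis \ref{h:ch-rule-param} gives that $s\mapsto \calE(\sft(s),\sfq(s))$ is absolutely continuous on $[a,b]$, together with the pointwise lower bound
\[
\frac{\rmd}{\rmd s}\calE(\sft(s),\sfq(s)) - \pl_t\calE(\sft(s),\sfq(s))\sft'(s) \geq -\mathfrak{M}_0^\alpha[\sft,\sfq,\sft',\sfq'](s)
\]
for a.a.\ $s\in(a,b)$. Integrating this inequality over $[a,b]$ and using the fundamental theorem of calculus for absolutely continuous functions yields the reverse inequality $\geq$ in \eqref{def-parBV} with $s_1=a$, $s_2=b$. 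Combined with the hypothesis $\leq$, we obtain the global equality on $[a,b]$. Setting
\[
f(s):= \frac{\rmd}{\rmd s}\calE(\sft(s),\sfq(s)) - \pl_t\calE(\sft(s),\sfq(s))\sft'(s) + \mathfrak{M}_0^\alpha[\sft,\sfq,\sft',\sfq'](s),
\]
we see $f\geq 0$ a.e.\ on $(a,b)$ while $\int_a^b f(s)\,\dd s =0$, which forces $f=0$ a.e., i.e.\ the pointwise identity \eqref{ptw-ident}.

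\emph{Implication (3)$\Rightarrow$(1)}: The absolute continuity of $s\mapsto \calE(\sft(s),\sfq(s))$ on $[a,b]$ provided by Hypothesis \ref{h:ch-rule-param} ensures that this function equals the integral of its pointwise derivative on every subinterval. Thus, integrating \eqref{ptw-ident} from $s_1$ to $s_2$ with $a\le s_1\le s_2\le b$ yields
\[
\calE(\sft(s_2),\sfq(s_2)) - \calE(\sft(s_1),\sfq(s_1)) = \int_{s_1}^{s_2}\pl_t\calE(\sft(s),\sfq(s))\sft'(s)\dd s - \int_{s_1}^{s_2}\mathfrak{M}_0^\alpha[\sft,\sfq,\sft',\sfq'](s)\dd s,
\]
which is precisely \eqref{def-parBV}. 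This yields (1).

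The argument is essentially routine once Hypothesis \ref{h:ch-rule-param} is available; there is no substantial obstacle since the chain-rule inequality plus absolute continuity of the energy along $(\sft,\sfq)$ are exactly the ingredients needed to pass between the global balance and its pointwise form. The nontrivial content is hidden in Hypothesis \ref{h:ch-rule-param} itself, which is an assumption rather than something to be shown here.
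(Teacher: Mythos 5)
The proposal is correct and follows exactly the standard route: (1)$\Rightarrow$(2) is trivial; (2)$\Rightarrow$(3) integrates the chain-rule inequality from Hypothesis~\ref{h:ch-rule-param} to get the reverse ``$\geq$'' on $[a,b]$, combines with the assumed ``$\leq$'' to force the integrand to vanish a.e.; (3)$\Rightarrow$(1) integrates the pointwise identity over any subinterval. The paper omits this proof with a reference to \cite[Prop.\,5.3]{MRS12} and \cite[Cor.\,3.5]{MRS13}, which carry out precisely this argument, so your proposal matches the intended approach.
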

   
\noindent
The proof is a simple adaptation of the arguments for
\cite[Prop.\,5.3]{MRS12} and \cite[Cor\,3.5]{MRS13} and is thus omitted.

\Subsection{Existence results  for $\pBV$ solutions}
\label{su:Exist.pBV}

Our first main result states that any family
$(\sft_\eps,\sfu_\eps,\sfz_\eps)_\eps$ obtained by suitably rescaling (cf.\
Remark \ref{rmk:arbitrary-param} ahead) a family of solutions to the viscous
system \eqref{van-visc-intro} converges along a subsequence, as $\eps\to 0^+$,
to a parametrized solution of the rate-independent system $\RIS$.

\begin{theorem}[Existence of $\pBV$ solutions]
\label{thm:existBV}
Under Hypotheses \ref{hyp:setup}, \ref{hyp:diss-basic}, \ref{hyp:1},
\ref{h:closedness}, \ref{hyp:Sept19}, and \ref{h:ch-rule-param}, let
$(q_{\eps_k})_k = (u_{\eps_k}, z_{\eps_k})_k \subset \AC ([0,T]; \Spq)$ be a
sequence of solutions to the generalized viscous gradient system
\eqref{van-visc-intro} with $(\eps_k)_k \subset (0,\infty)$ a null sequence.
Suppose that
\begin{equation}
    \label{init-data-cv}
    q_{\eps_k}(0) \to q_0 \text{ in } \Spq \ \text{ and }  \ \eneq
    0{q_{\eps_k}(0)} \to \eneq 0{q_0} \qquad \text{as $k\to\infty$},   
\end{equation}
for some $q_0=(u_0,z_0) \in \domq$. Let $\sft_{\eps_k}: [0,\sfS]\to
[0,T]$ be non-decreasing surjective time-rescalings such that 
$\sfq_{\eps_k} = (\sfu_\epsk,\sfz_\epsk)$ defined via
$\sfq_\epsk(s)=q_{\eps_k}(\sft_{\eps_k}(s))$ satisfies 
\begin{equation}
\label{condition-4-normali}
\begin{aligned}
  & \exists\, C>0 \ \forall\, k \in \N \ \ \foraa s \in (0,\sfS): \\ &
  \sft_\epsk'(s) + \calR (\sfz_\epsk'(s)) + \mredq {\epsk} \alpha
  {\sft_\epsk(s)}{\sfq_\epsk(s)} {\sft_\epsk'(s)}{\sfq_\epsk'(s)} +
  \|\sfu_\epsk'(s)\|_{\Spu} \leq C.
 \end{aligned}
\end{equation}
Then, there exist a (not relabeled) subsequence and a curve $(\sft,\sfq) \in
\mathscr{A}([0,\sfS]; [0,T]\ti \Spq)$ such that
\begin{enumerate}
\item
 \mbox{}\vspace{-1em}
\begin{equation}
\label{continuity-properties}
\begin{aligned}
  & \sft \in \rmC_{\mathrm{lip}}^0 ([0,\sfS]; [0,T]), \quad \sfq = (\sfu,\sfz)
  \in \rmC_{\mathrm{weak}}^0([0,\sfS]; \Spw\ti \Spx) ,
  \\
  &\sfu \in \rmC_{\mathrm{lip}}^0 ([0,\sfS];\Spu), \quad \sfz \in 
  \rmC_{\mathrm{lip}}^0 ([0,\sfS];\Spy) \cap  \rmC^0([0,\sfS];\Spz);
 \end{aligned}
\end{equation}
\item
the following convergences  hold as $k\to\infty$
\begin{subequations}
\label{cvs-eps}
\begin{align}
&
\label{cvs-eps-t}
\sft_{\eps_k} \to \sft \text{ in }   \rmC^0([0,\sfS]), 
\\
\label{cvs-eps-u-z-added}
\mbox{}\quad&  \sfu_\epsk \weaksto \sfu \text{ in }  W^{1,\infty} (0,\sfS;\Spu),
&& 
\sfz_\epsk \to \sfz \text{ in }  \rmC^0 ([0,\sfS];\Spz), 
\\
\label{cvs-eps-u-z}
&  \sfu_\epsk(s) \weakto \sfu(s) \text{ in } \Spw \ \text{and}  && 
\sfz_\epsk(s) \weakto \sfz(s) \text{ in } \Spx  \qquad \text{ for all } s \in
[0,\sfS]; \quad \mbox{}
\end{align}
\item $(\sft,\sfq)$ fulfills the upper energy-dissipation estimate $
  \leq$ in \eqref{def-parBV} on $[0,\sfS]$, hence $(\sft,\sfq)$ is a
  $\pBV$ solution to the rate-independent system $\RIS$.
\end{subequations}
\end{enumerate}
Moreover, $(\sft,\sfu,\sfz)$ is surjective and there hold the additional convergences
\begin{subequations}
\label{eq:E.M.cvg.PBV}
\begin{align}
&
\label{cvs-eps-energy}
\eneq{\sft_{\eps_k}(s)}{\sfq_{\eps_k}(s)} \to \eneq{\sft(s)}{\sfq(s)} \qquad
\text{for all } s \in [0,\sfS],  
\\
&
\label{cvs-eps-M}
\int_{s_1}^{s_2} \mathfrak{M}_{\eps_k}^\alpha (\sft_{\eps_k}(\sigma)
,\sfq_{\eps_k}(\sigma),\sft'_{\eps_k}(\sigma), \sfq'_{\eps_k}(\sigma)) \dd
\sigma \to 
\int_{s_1}^{s_2}  \mathfrak{M}_{0}^\alpha [\sft_,\sfq,\sft',\sfq'](\sigma)  \dd\sigma 
\end{align}
\end{subequations}
for all  $0\leq s_1\leq s_2 \leq \sfS$.
\end{theorem}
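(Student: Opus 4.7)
The plan is to prove Theorem \ref{thm:existBV} in three stages: (i) extract compactness from the normalization \eqref{condition-4-normali} and identify a candidate limit $(\sft,\sfq)$; (ii) pass to the liminf in the parametrized energy-dissipation balance using the $\Gamma$-convergence of the M-functions from Proposition \ref{pr:Mosco.Meps}, and verify the admissibility conditions of Definition \ref{def:adm-p-c}; (iii) invoke the chain-rule Hypothesis \ref{h:ch-rule-param} to upgrade the inequality to the identity \eqref{def-parBV}.

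For the compactness step, \eqref{condition-4-normali} yields $|\sft_\epsk'|\leq C$ and $\|\sfu_\epsk'\|_\Spu \leq C$ uniformly, while $\calR(\sfz_\epsk'(s))\leq C$ combined with the coercivity \eqref{R-coerc} forces $\|\sfz_\epsk'\|_\Spy \leq C/c_\calR$. The bound \eqref{est-quoted.b} together with \eqref{init-data-cv} gives a uniform $\mfE$-bound along the curves, hence \eqref{h:2} confines $\sfq_\epsk(s)$ to a fixed weakly compact subset of $\Spw\ti\Spx$, with $\Spx\Subset\Spz$. A refined Ascoli argument (as in \cite[Prop.\,4.1]{MRS13}) then produces a subsequence such that $\sft_\epsk\to\sft$ in $\rmC^0$, $\sfu_\epsk \weaksto\sfu$ in $W^{1,\infty}(0,\sfS;\Spu)$, and $\sfz_\epsk\to\sfz$ in $\rmC^0([0,\sfS];\Spz)$ with $\sfz\in\rmC^0_{\mathrm{lip}}([0,\sfS];\Spy)$; the pointwise weak convergences \eqref{cvs-eps-u-z} follow from the sublevel bound, yielding \eqref{continuity-properties}. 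Monotonicity of $\sft_\epsk$ and the endpoint conditions pass to the limit, giving surjectivity $\sft(0)=0$, $\sft(\sfS)=T$.

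For the liminf step, we start from the viscous energy identity \eqref{enid.a}, which after reparametrization reads \eqref{reparam-enineq} with equality. The power integrand $\pet{\sft_\epsk(s)}{\sfq_\epsk(s)}\sft_\epsk'(s)$ passes to the limit by Hypothesis \ref{h:closedness}, the energy-power bound \eqref{h:1.3d}, and dominated convergence. The left-hand endpoint energy converges by \eqref{init-data-cv}, while the right-hand endpoint satisfies $\liminf_{k} \eneq{\sft_\epsk(s)}{\sfq_\epsk(s)} \geq \eneq{\sft(s)}{\sfq(s)}$ by the weak lsc \eqref{h:1.1}. For the M-function integral, I combine the pointwise $\Gamma$-liminf \eqref{Gamma-liminf} (along subsequences with bounded energy, which is our case) with the Ioffe-type integral liminf used in the vanishing-viscosity literature (compare \cite[Thm.\,5.1]{MRS13}), yielding
\begin{equation*}
\liminf_{k\to\infty}\int_{s_1}^{s_2}\meq \epsk\alpha{\sft_\epsk}{\sfq_\epsk}{\sft_\epsk'}{\sfq_\epsk'} \dd s \;\geq\; \int_{s_1}^{s_2}\mathfrak{M}_0^\alpha[\sft,\sfq,\sft',\sfq'](s)\dd s.
\end{equation*}
Finiteness of the limit integral and the structure of $\mename 0\alpha$ in \eqref{mename-0} force $\slov u{\sft(s)}{\sfq(s)}=\slov z{\sft(s)}{\sfq(s)}=0$ whenever $\sft'(s)>0$, which is condition (2) of admissibility. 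On the open set $\SetG\alpha\sft\sfq$ where at least one slope is positive, the coercivity estimates \eqref{est-Alex} of Lemma \ref{new-lemma-Alex} and the $\Gamma$-liminf bound show that $\sfz_\epsk$ is bounded in $\rmL^1_{\mathrm{loc}}(\SetG\alpha\sft\sfq;\Spz)$ with weak limit $\sfz'$; combined with $\sft_\epsk'\to 0$ on this set (forced by the finiteness of the M-integral for $\alpha\neq 0$ as $\eps\to 0$), this gives $\sfz\in\AC_{\mathrm{loc}}(\SetG\alpha\sft\sfq;\Spz)$ with $\sft$ constant on each connected component, completing admissibility (3)--(5).

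Combining the three convergences above in \eqref{reparam-enineq} yields the upper energy-dissipation estimate $\leq$ in \eqref{def-parBV}. By Hypothesis \ref{h:ch-rule-param} and Lemma \ref{l:characterizBV} this upper estimate is equivalent to the full identity, so $(\sft,\sfq)$ is a $\pBV$ solution. The additional convergences \eqref{eq:E.M.cvg.PBV} then follow: writing the upper estimate at time $s$ against the lsc of $\calE$ and the $\Gamma$-liminf of the M-integral on $[0,s]$ and $[s,\sfS]$ separately forces both to be identities pointwise in $s$, giving \eqref{cvs-eps-energy}, and substituting back into the energy balance on an arbitrary $[s_1,s_2]$ delivers \eqref{cvs-eps-M}. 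The main obstacle is the second step: the pointwise $\Gamma$-liminf of Proposition \ref{pr:Mosco.Meps} must be upgraded to an integral form even though $\sfz'$ is \emph{a priori} only an $\Spy$-valued measure, which demands the delicate interplay between the lower bounds of Lemma \ref{new-lemma-Alex} (yielding $\Spz$-regularity precisely where $\slovname z$ is active) and the fact that $\sft_\epsk' \to 0$ a.e.\ on the limit set $\SetG\alpha\sft\sfq$, so that one actually encounters $\mfB_0^\alpha$ with $\tau=0$ in the limit.
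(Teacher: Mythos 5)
Your proposal is essentially correct and follows the same architecture as the paper's proof: compactness via Ascoli--Arzel\`a and an Ehrling-type interpolation between $\Spx$ and $\Spy$, lower semicontinuity of the M-integral via Ioffe's theorem (Proposition~\ref{prop:Ioffe}) combined with the $\Gamma$-liminf of Proposition~\ref{pr:Mosco.Meps}, the chain-rule characterization (Lemma~\ref{l:characterizBV}) to upgrade the upper estimate to the energy balance, and the sandwich argument for the additional convergences \eqref{eq:E.M.cvg.PBV}. The paper packages the compactness and admissibility argument into a separate Proposition~\ref{prop:compactness-param}, but the content is the same.

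Two imprecisions worth flagging. First, \eqref{reparam-enineq} is an \emph{inequality}, not an identity as you claim: the transition from the conjugate dissipation $\Psi^*_{\eps,\alpha}({-}\xi)$ in the balance \eqref{enid.a} to the slopes $\slovname{u},\slovname{z}$ is a minimization in \eqref{def:GeneralSlope}, so one only retains $\leq$. This is harmless since the inequality suffices. Second, your statement that ``the coercivity estimates ... show that $\sfz_\epsk$ is bounded in $\rmL^1_{\mathrm{loc}}(\SetG\alpha\sft\sfq;\Spz)$'' is weaker than what Lemma~\ref{new-lemma-Alex} actually delivers: on any $[\varsigma,\beta]\Subset\SetG\alpha\sft\sfq$, the uniform convergence $\sfq_\epsk\to\sfq$ together with lsc of the slopes (Hypothesis~\ref{hyp:Sept19}) gives a lower bound $\slov z{\sft_k}{\sfq_k}\geq c/2$ for large $k$ (or the sum of slopes for $\alpha\geq 1$), and then \eqref{est-Alex} combined with the $\eps$-uniform bound \eqref{condition-4-normali} yields $\|\sfz_\epsk'\|_\Spz\leq\bar C$ uniformly \emph{pointwise}, i.e.\ a local Lipschitz bound. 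This is cleaner: the local $\Spz$-Lipschitz regularity of the limit $\sfz$ on $\SetG\alpha\sft\sfq$ follows immediately, without detours through measure-valued derivatives. Finally, your heuristic ``$\sft_\epsk'\to 0$ on $\SetG\alpha$'' is not literally what is needed or proved: one only obtains $\sft'=0$ a.e.\ on $\SetG\alpha\sft\sfq$ for the limit, via the structure of $\mfB_0^\alpha$ (whose value is $+\infty$ when $\tau>0$ and the slopes are positive), which is precisely admissibility condition~(2).
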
 

We postpone the proof of Theorem \ref{thm:existBV} to Section
\ref{ss:8.2},  but point out here that the  core of the limit passage in the
parametrized energy-dissipation estimate 
\eqref{reparam-enineq}, leading to \eqref{def-parBV},  lies in  the following
straightforward consequence of Ioffe's theorem \cite{Ioff77LSIF} (see
also \cite[Thm.\,21]{Valadier90}). A `metric version' of Proposition
\ref{prop:Ioffe} below was proved in \cite[Lemma 3.1]{MRS09}.

\begin{proposition}
\label{prop:Ioffe}
Let $\Iof$ be a weakly closed subset of $\Spq$, and let
$(\mathscr{M}_\eps)_{\eps}, \, \mathscr{M}_0: \R \ti \Iof \ti \R \ti \Spq \to
[0,\infty]$ be measurable and weakly lower semicontinuous functionals
fulfilling the $\Gamma$-$\liminf$ estimate
\begin{equation}
\label{Gamma-liminf-weak}
\begin{aligned}
\Big(     (t_\eps,q_\eps,{t_\eps'}, q_\eps')\weakto (t,q,t', q') \text{ in } 
\R\ti\Iof \ti \R\ti\Spq  \text{ as } \eps \to 0^+ \Big)
 \Longrightarrow  
\mathscr{M}_0(t,q,t',q') \leq \liminf_{\eps \to 0^+} 
\mathscr{M}_\eps(t_\eps,q_\eps, t_\eps', q_\eps')\,.
        \end{aligned}
\end{equation}
Suppose that, for $\eps \geq 0$, the functional $\mathscr{M}_\eps(t,q,\cdot,\cdot)$ is convex for every $(t,q) \in \R \ti \Iof$. 
Let $(\sft_\eps, \sfq_\eps), \, (\sft,\sfq) \in \AC([a,b]; \R\ti\Iof)$ fulfill 
\begin{equation}
\label{ideal-convergences}
\sft_\eps(s) \to \sft(s), \quad \sfq_\eps(s) \weakto \sfq(s) \text{ for all } s\in [a,b], \qquad (\sft'_\eps, \sfq_\eps') \weakto (\sft',\sfq') \text{ in } \rmL^1(a,b; \R \ti \Spq)\,. 
\end{equation}
Then,
\begin{equation}
\label{thesis-Ioffe}
\liminf_{\eps \to 0^+} \int_a^b \mathscr{M}_\eps(\sft_\eps(s), \sfq_\eps(s),
\sft_\eps'(s),\sfq_\eps'(s)) \dd s \geq \int_a^b \mathscr{M}_0(\sft(s),
\sfq(s), \sft'(s),\sfq'(s)) \dd s\,. 
\end{equation}
\end{proposition}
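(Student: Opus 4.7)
Proof plan: The proposition is a variant of Ioffe's classical integral lower-semicontinuity theorem \cite{Ioff77LSIF} (cf.\ also \cite[Thm.\,21]{Valadier90} and the metric analog \cite[Lemma 3.1]{MRS09}), extended to a $\Gamma$-converging sequence of integrands $(\mathscr{M}_\eps)_\eps$. The argument proceeds in three steps.

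The plan is first to extract a subsequence $\eps_n\to 0$ realizing the $\liminf$ in \eqref{thesis-Ioffe}, which may be assumed finite (otherwise the claim is trivial). Since $(\sft'_{\eps_n},\sfq'_{\eps_n})\weakto (\sft',\sfq')$ in $\rmL^1(a,b;\R\ti\Spq)$, Dunford-Pettis yields equi-integrability of the derivatives, and, up to a further subsequence, the generation of a Young measure $\nu=(\nu_s)_{s\in [a,b]}$ on $\R\ti \Spq$ (equipped with its weak topology, which is metrizable on norm-bounded sets thanks to the separability and reflexivity of $\Spq$) with barycenter $\int_{\R\ti\Spq}\xi \dd\nu_s(\xi)=(\sft'(s),\sfq'(s))$ for a.e.\ $s$. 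Combining the pointwise convergence $(\sft_{\eps_n}(s),\sfq_{\eps_n}(s))\weakto (\sft(s),\sfq(s))$ in $\R\ti\Spq$ with the $\Gamma$-$\liminf$ estimate \eqref{Gamma-liminf-weak} and the joint weak lower semicontinuity of $\mathscr{M}_0$, the Young-measure lower-semicontinuity inequality in the Balder/Valadier form then delivers
\[
\liminf_{n\to\infty}\!\int_a^b\!\mathscr{M}_{\eps_n}(\sft_{\eps_n}(s),\sfq_{\eps_n}(s),\sft'_{\eps_n}(s),\sfq'_{\eps_n}(s)) \dd s \;\ge\; \int_a^b\!\!\int_{\R\ti\Spq}\!\!\mathscr{M}_0(\sft(s),\sfq(s),\xi) \dd\nu_s(\xi) \dd s.
\]

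Finally, the assumed convexity of $\mathscr{M}_0(t,q,\cdot,\cdot)$ allows to apply Jensen's inequality slicewise to each $\nu_s$, using the barycenter identity, which gives
\[
\int_{\R\ti\Spq}\!\mathscr{M}_0(\sft(s),\sfq(s),\xi) \dd\nu_s(\xi) \;\ge\; \mathscr{M}_0\Big(\sft(s),\sfq(s),\!\int\! \xi \dd\nu_s(\xi)\Big)=\mathscr{M}_0(\sft(s),\sfq(s),\sft'(s),\sfq'(s)).
\]
Integrating over $[a,b]$ and chaining with the previous display yields \eqref{thesis-Ioffe}.

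The main technical obstacle lies in the Young-measure step, where the infinite-dimensionality of $\Spq$ requires working with the weak topology on norm-bounded subsets; an elementary alternative is available via Mazur's lemma, which extracts from $(\sft'_{\eps_n},\sfq'_{\eps_n})$ a sequence of convex combinations converging strongly in $\rmL^1$ (hence a.e.\ along a further subsequence) to $(\sft',\sfq')$, after which one exploits the convexity of $\mathscr{M}_\eps(t,q,\cdot,\cdot)$ in the last two arguments, the $\Gamma$-$\liminf$ on states, and Fatou's lemma. The convexity assumption imposed on the \emph{whole family} $(\mathscr{M}_\eps)_\eps$, and not merely on $\mathscr{M}_0$, is what makes this Mazur-style bypass feasible, since the convex combinations unavoidably mix values of the integrand at different $\eps$'s and one needs a pointwise estimate from below that remains valid uniformly in $\eps$.
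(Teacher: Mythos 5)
Your approach differs genuinely from the paper's, which is a one-step reduction to Ioffe's theorem: augment the integrand with $\eps$ as an extra \emph{state} variable, defining
\[
\bar{\mathscr{M}}(t,q,t',q',\eps):= \begin{cases}\mathscr{M}_\eps(t,q,t',q')&\text{for }\eps>0,\\ \mathscr{M}_0(t,q,t',q')&\text{for }\eps=0.\end{cases}
\]
The $\Gamma$-$\liminf$ estimate \eqref{Gamma-liminf-weak} together with the weak lower semicontinuity of each $\mathscr{M}_\eps$ is then exactly the statement that $\bar{\mathscr{M}}$ is sequentially weakly lower semicontinuous on $\R\ti\Iof\ti\R\ti\Spq\ti[0,\infty)$; since $\bar{\mathscr{M}}$ is still convex in $(t',q')$ for fixed $(t,q,\eps)$, Ioffe's theorem applied to the curves $s\mapsto(\sft_\eps(s),\sfq_\eps(s),\eps)$ (pointwise convergence) and $s\mapsto(\sft'_\eps(s),\sfq'_\eps(s))$ (weak $\rmL^1$-convergence) yields \eqref{thesis-Ioffe} immediately. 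Your Young-measure plan is essentially a re-derivation of Ioffe's theorem rather than an invocation of it, and it leaves a real lacuna: the Balder--Valadier Young-measure inequality you cite is for a \emph{fixed} normal integrand, whereas you need to pass through a $\Gamma$-converging \emph{family} $(\mathscr{M}_{\eps_n})$ simultaneously with pointwise-weakly-converging slow variables. You do not say how these two features enter the Young-measure inequality; the cleanest way to make your argument precise is in fact the augmentation trick above (treat $\eps_n$ as a constant-in-$s$ extra slow component, observe that the joint Young measure factors as $\delta_0\otimes\nu_s$ in the $\eps$-direction, and then apply the single-integrand result to $\bar{\mathscr{M}}$), at which point your proof collapses to the paper's.

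The Mazur-lemma alternative you propose has a genuine gap. Mazur produces convex combinations $w_k(s)=\sum_n\lambda^k_n(\sft'_{\eps_n}(s),\sfq'_{\eps_n}(s))\to(\sft'(s),\sfq'(s))$ a.e., but the corresponding combination
\[
\sum_n\lambda^k_n\,\mathscr{M}_{\eps_n}\bigl(\sft_{\eps_n}(s),\sfq_{\eps_n}(s),\sft'_{\eps_n}(s),\sfq'_{\eps_n}(s)\bigr)
\]
mixes \emph{different} integrands $\mathscr{M}_{\eps_n}$ evaluated at \emph{different} slow arguments $(\sft_{\eps_n}(s),\sfq_{\eps_n}(s))$. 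Jensen's inequality in the convex variable $(t',q')$ cannot be applied across this sum, since convexity only helps when the integrand and its first two arguments are held fixed. The $\Gamma$-$\liminf$ \eqref{Gamma-liminf-weak} is a sequential statement, not a pointwise lower bound on $\mathscr{M}_\eps$, so it does not convert the mixed sum into a single-integrand expression either. This mixing is precisely the obstruction that the augmentation of the integrand with $\eps$ is designed to remove, and without it the Mazur-style bypass does not go through.
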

\begin{proof}
  It is sufficient to introduce the functional
  $\bar{\mathscr{M}} :\R \ti \Iof \ti \R \ti \Spq \ti [0,\infty] \to
  [0,\infty]$ defined by
\[
\bar{\mathscr{M}}(t,q,t',q',\eps): = \left\{
\begin{array}{ll}
\mathscr{M}_\eps(t,q,t',q') & \text{if } \eps>0,
\\
\mathscr{M}_0(t,q,t',q') & \text{if } \eps=0,
\end{array}
\right.
\]
and to observe that $\mathscr{M}$ is lower semicontinuous with respect to the weak
topology of $\R \ti \Iof \ti \R \ti \Spq \ti [0,\infty]$ and convex for every
$(t,q) \in \R \ti \Spq$ and $\eps\geq 0$. Then, by Ioffe's theorem we conclude
that
\[
\liminf_{\eps \to 0^+} \int_a^b \bar{\mathscr{M}}(\sft_\eps(s), \sfq_\eps(s),
\sft_\eps'(s),\sfq_\eps'(s),\eps) \dd s \geq \int_a^b
\bar{\mathscr{M}}(\sft(s), \sfq(s), \sft'(s),\sfq'(s),0) \dd s\,, 
\]
i.e., \eqref{thesis-Ioffe}  is established. 
\end{proof}

\begin{remark}
\label{rmk:non-deg}
\slshape
 Theorem \ref{thm:existBV}  does not guarantee that the $\pBV$ solution is
\emph{non-degenerate} even if the quantity in \eqref{condition-4-normali} has a
uniform positive lower bound.   Nonetheless, any (possibly degenerate) 
solution $(\sft,\sfq)$ can be reparametrized to a \emph{non-degenerate} one
$(\widetilde\sft,\widetilde\sfq): [0,\widetilde\sfS]\to [0,T]\ti \Spq$, fulfilling
\begin{equation}
\label{normalization-gained}
\widetilde\sft'(\sigma) +  \calR [\widetilde\sfz'](\sigma) + \|
\widetilde\sfu'(\sigma)\|_\Spu =1  \qquad \foraa\, \sigma \in (0,\widetilde\sfS). 
\end{equation}
For this, we proceed as in 
\cite{EfeMie06RILS} and associate with $(\sft,\sfq)$ the rescaling
function $\widetilde\sigma$ 
defined by
$\widetilde\sigma(s) = \int_0^s ( \sft'(r) {+} \calR [\sfz'](r) {+} \|
\sfu'(r)\|_\Spu) \dd r $ and set $\widetilde\sfS = \sigma(\sfS)$.  We then
define $(\widetilde\sft(\sigma),\widetilde\sfq(\sigma)): = (\sft(s),\sfq(s)) $
for $\sigma = \widetilde\sigma(s)$.  The very same calculations as in 
\cite[Lem.\,4.12]{Miel11DEMF}  (or based on the reparametrization result
\cite[Lem.\,1.1.4]{AGS08}), yield \eqref{normalization-gained}.
\end{remark}

Our next result, whose proof is omitted (cf.\ also Remark
\ref{rmk:arbitrary-param}), addresses the existence of \emph{enhanced} $\pBV$
solutions.

\begin{theorem}[Existence of  enhanced $\pBV$ solutions]
\label{thm:exist-enh-pBV}
Assume Hypotheses \ref{hyp:setup}, \ref{hyp:diss-basic}, \ref{hyp:1},
\ref{h:closedness}, \ref{hyp:Sept19}, and \ref{h:ch-rule-param}. Suppose that
there exist rescaled solutions $(\sft_\epsk,\sfq_\epsk)_k$ to the viscous
system \eqref{van-visc-intro}$_{\eps_k}$ such that, in addition to
\eqref{condition-4-normali}, there also holds the estimate
\begin{equation}
\label{condition-4-normali-enhn}
\exists\, C>0 \ \forall\, k \in \N \quad  \foraa\, s \in (0,\sfS)\,: \qquad
\|\sfz_\epsk'(s)\|_{\Spz} \leq C\,. 
\end{equation}
Then, up to a (not relabeled) subsequence the curves $(\sft_\epsk,\sfq_\epsk)_k$
converge to an admissible parametrized curve
$(\sft,\sfq) \in \mathscr{A}([0,\sfS]; [0,T]\ti \Spq)$ such that
\eqref{continuity-properties}, \eqref{cvs-eps}, \eqref{eq:E.M.cvg.PBV}
 hold and additionally $ \sfz \in \rmC_{\mathrm{lip}}^0 ([0,\sfS];\Spz)$,
 i.e.,  $(\sft,\sfq)$ is an \emph{enhanced} $\pBV$ solution
to the rate-independent system $\RIS$.
\end{theorem}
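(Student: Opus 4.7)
The plan is to bootstrap Theorem \ref{thm:existBV}: since the hypothesis \eqref{condition-4-normali} is identical to the one required there, I would first apply that result verbatim to the sequence $(\sft_\epsk,\sfq_\epsk)_k$. This immediately furnishes a (not relabeled) subsequence and an admissible parametrized curve $(\sft,\sfq)\in\mathscr{A}([0,\sfS];[0,T]\ti\Spq)$ satisfying the regularity \eqref{continuity-properties}, the convergences \eqref{cvs-eps}, the energy and M-function convergences \eqref{eq:E.M.cvg.PBV}, and realizing $(\sft,\sfq)$ as a $\pBV$ solution in the sense of Definition \ref{def:pBV}. In particular, the parametrized energy-dissipation balance \eqref{def-parBV} is already in place; the only remaining task is to upgrade the regularity of the $\sfz$-component from $\rmC^0([0,\sfS];\Spz)$ to $\rmC^0_{\mathrm{lip}}([0,\sfS];\Spz)$.

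For this upgrade I would exploit \eqref{condition-4-normali-enhn} in the most direct way. Integrating the pointwise bound $\|\sfz_\epsk'(s)\|_\Spz\leq C$ yields the uniform Lipschitz estimate
\begin{equation*}
\|\sfz_\epsk(s_2)-\sfz_\epsk(s_1)\|_\Spz \leq C\,|s_2-s_1|
\qquad\text{for all } 0\leq s_1\leq s_2\leq\sfS,\ k\in\N.
\end{equation*}
The strong convergence $\sfz_\epsk\to\sfz$ in $\rmC^0([0,\sfS];\Spz)$ provided by \eqref{cvs-eps-u-z-added} then allows me to pass to the limit $k\to\infty$ pointwise in $s_1,s_2$, giving $\|\sfz(s_2)-\sfz(s_1)\|_\Spz\leq C|s_2-s_1|$ and hence $\sfz\in\rmC^0_{\mathrm{lip}}([0,\sfS];\Spz)$. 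Alternatively, the reflexivity of $\Spz$ from Hypothesis \ref{hyp:setup} would permit the same conclusion via weak-$*$ compactness of $(\sfz_\epsk)_k$ in $W^{1,\infty}(0,\sfS;\Spz)$ combined with the identification of the limit through \eqref{cvs-eps-u-z-added}; but the pointwise passage above is more elementary and avoids any appeal to Radon--Nikod\'ym-type duality.

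Since Lipschitz continuity on a compact interval trivially yields $\sfz\in\AC([0,\sfS];\Spz)$, the curve $(\sft,\sfq)$ meets the additional requirement in Definition \ref{def:pBV} for an \emph{enhanced} $\pBV$ solution, and the proof is complete. I do not foresee any substantive obstacle: the only non-routine step is the Lipschitz passage, which is a standard soft-analysis argument, while the $\pBV$-structure together with the M-function convergence \eqref{cvs-eps-M} is transferred verbatim from Theorem \ref{thm:existBV}. Accordingly, the argument is a direct enhancement of that of Theorem \ref{thm:existBV}, requiring no new machinery beyond the exploitation of the extra a priori bound \eqref{condition-4-normali-enhn}.
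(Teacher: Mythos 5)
Your proposal is correct and takes essentially the same route the paper intends: the paper omits the proof of Theorem~\ref{thm:exist-enh-pBV}, but Remark~\ref{rmk:arbitrary-param} indicates exactly this bootstrap of Theorem~\ref{thm:existBV} using the enhanced compactness from \eqref{condition-4-normali-enhn}. Your concrete realization — integrate the $\Spz$-bound on $\sfz'_\epsk$ to get a uniform Lipschitz modulus, then pass to the limit in it pointwise via the $\rmC^0([0,\sfS];\Spz)$-convergence already supplied by \eqref{cvs-eps-u-z-added}, and finally note that Lipschitz implies absolutely continuous so the curve is enhanced per Definition~\ref{def:pBV} — is exactly the argument the paper is alluding to, so no further commentary is needed.
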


\begin{remark}
\label{rmk:arbitrary-param}
\slshape In the statement of Theorem \ref{thm:existBV}, the reparametrization
$t=\sft_\epsk(s)$ yielding the rescaled solutions $\sfq_\epsk$ can be chosen
arbitrarily, provided it guarantees the Lipschitz bound
\eqref{condition-4-normali}. Under Hypotheses \ref{hyp:setup},
\ref{hyp:diss-basic} and \ref{hyp:1}, \emph{all} viscous solutions
$(u_\epsk,z_\epsk)$ satisfy the \ uniform bound
$\|z'_\epsk\|_{\rmL^1(0,T;\Spy)}\leq C$, see \eqref{est1}. If, additionally
 $\|u'_\epsk\|_{\rmL^1(0,T;\Spy)}\leq C$ holds (i.e.\ \eqref{est2} from
Corollary \ref{l:3.2}), then a reparametrization yielding
\eqref{condition-4-normali} is easily obtained, for instance, by  using the
energy-dissipation arclength in \eqref{arclength-est1-2}.

Similarly,  under the stronger a priori estimate 
\begin{equation}
  \label{enhanced-z-estimate}
  \exists\, C>0 \ \forall\, k \in \N \, : \qquad \big\| z'_\epsk
  \big\|_{\rmL^1(0,T;\Spz)} = \int_0^T 
  \|z_\epsk'(t)\|_\Spz \dd t \leq C,
\end{equation}
one easily obtains rescaled solutions  satisfying the stronger Lipschitz
bound \eqref{condition-4-normali-enhn}. Hence, one gains enhanced
compactness information for the sequence $(\sfz_\epsk)_k$, and the  proof
of Theorem \ref{thm:existBV} immediately yields a proof of Theorem 
\ref{thm:exist-enh-pBV}.
\end{remark}

We conclude this section with  some sufficient conditions  for the 
validity of (a stronger form of) the parametrized chain rule in Hypothesis
\ref{h:ch-rule-param}.  It will be derived  as a consequence  of the
non-parametrized chain rule in Hypothesis \ref{h:ch-rule}.

\begin{proposition}[Sufficient conditions for parametrized chain rule]
\label{prop:better-chain-rule-MOexpl}
Assume that \linebreak[3] Hypothesis \ref{h:ch-rule} holds and that 
the vanishing-viscosity contact potentials associated with 
$\disv u$
and $\disv z$ satisfy
\begin{equation}
\label{coercivity-VVCP} \exists\, c_{\mathsf{x}}>0 \, \ \forall\,  (v,\eta)  \in \mathbf{X}\times \mathbf{X}^*\, : \qquad 
\mfb_{\disv x}(v,\disv x^*(\eta)) \geq c_{\mathsf{x}} \| v\|_{\mathbf{X}} \| \eta\|_{\mathbf{X}^*} 
\end{equation}
for $\mathsf{x}\in \{ \sfu,\sfz\}$ and $\mathbf{X}\in \{ \Spu,\Spz\}$.

Then, the parametrized chain rule \eqref{ch-rule} holds along all admissible
curves $ (\sft,\sfq) \in \mathscr{A} ([a,b];[0,T]\ti \Spq)$ with
$\sfq \in \AC ([a,b];\Spq)$. In particular, we have
\begin{equation}
\label{better-chain-rule-MOexpl}
 \frac{\rmd}{\rmd s} \eneq{\sft}{\sfq}-  \pl_t \eneq{\sft}{\sfq} \sft'   =
 \pairing{}{\Spu}{\mu}{\sfu'} + \pairing{}{\Spz}{\zeta}{\sfz'} \geq  -
 \mathfrak{M}_0^\alpha (\sft,\sfq,\sft',\sfq') \quad  \aein (a,b) 
\end{equation}
for  all measurable selections $ (a,b) \ni s  \mapsto (\mu(s),\zeta(s) 
) \in \Spu^*\ti \Spz^*$ satisfying
for almost all $s\in (a,b)$
 $ (\mu(s),\zeta(s) 
) \in \argminSlo u{\sft(s)}{\sfq(s)} \ti \argminSlo z{\sft(s)}{\sfq(s)} $.
\end{proposition}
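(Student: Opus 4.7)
The strategy is to combine a parametrized variant of the chain rule (Hypothesis \ref{h:ch-rule}) with the pointwise duality bound provided by Lemma \ref{new-lemma-Ricky}. The inequality in \eqref{better-chain-rule-MOexpl} is the direct part: since $\sfq\in\AC([a,b];\Spq)$ forces $\sfz\in\AC([a,b];\Spz)$, one has $\calR[\sfz'](s)=\calR(\sfz'(s))$ for a.a.\ $s$, so a case-check using \eqref{l:partial} shows that the short-hand $\mathfrak{M}_0^\alpha[\sft,\sfq,\sft',\sfq'](s)$ coincides a.e.\ with the functional value $\mename 0\alpha(\sft(s),\sfq(s),\sft'(s),\sfq'(s))$ wherever the latter is finite, and Lemma \ref{new-lemma-Ricky} applied with $\eps=0$ then yields $\langle\mu,\sfu'\rangle+\langle\zeta,\sfz'\rangle\geq -\mathfrak{M}_0^\alpha[\sft,\sfq,\sft',\sfq']$.

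The substantive task is to establish the identity in \eqref{better-chain-rule-MOexpl}. The plan is to invoke a parametrized variant of Hypothesis \ref{h:ch-rule}: for $(\sft,\sfq)\in\AC([a,b];[0,T]\ti\Spq)$ and any measurable selection $\xi(s)\in\pl_q\calE(\sft(s),\sfq(s))$ with $\sup_s|\calE(\sft(s),\sfq(s))|<\infty$ and $\int_a^b\|\xi(s)\|_{\Spq^*}\|\sfq'(s)\|_\Spq\,\rmd s<\infty$, one has absolute continuity of $s\mapsto\calE(\sft(s),\sfq(s))$ together with the chain-rule identity $\frac{\rmd}{\rmd s}\calE(\sft,\sfq)-\pl_t\calE(\sft,\sfq)\sft'=\langle\xi,\sfq'\rangle$ a.e. Such a parametrized statement is deducible from Hypothesis \ref{h:ch-rule} via a standard reparametrization and approximation argument (in the spirit of Proposition \ref{prop:ch-ruleApp} in the appendix), the key point being that the Fr\'echet $q$-subdifferential is intrinsic to the dependence on $q$ and therefore unaffected by the time-reparametrization. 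The uniform bound $\sup_s\mfE(\sfq(s))<\infty$ from admissibility condition (4) gives the first integrability prerequisite directly.

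The main obstacle is verifying the second integrability condition $\int_a^b\|\xi\|_{\Spq^*}\|\sfq'\|_\Spq\,\rmd s<\infty$, which is precisely where the coercivity assumption \eqref{coercivity-VVCP} enters. I would split the analysis across the sets $\{\sft'>0\}$ and $\{\sft'=0\}$. On $\{\sft'>0\}$, admissibility condition (2) enforces $\slov u{\sft}{\sfq}=\slov z{\sft}{\sfq}=0$; combined with \eqref{later-added} this yields $\mu(s)=0$, while $-\zeta(s)\in\pl\calR(0)$ is uniformly bounded in $\Spz^*$ by $C_\calR$ through \eqref{eq:l:classic}, so $|\langle\zeta,\sfz'\rangle|\leq C_\calR\|\sfz'\|_\Spz$ is integrable by $\sfz\in\AC([a,b];\Spz)$. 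On $\{\sft'=0\}$, the explicit representation \eqref{l:partial} of $\mredname 0\alpha$, combined with \eqref{coercivity-VVCP} and the optimality relations $\disv u^*(-\mu)=\slov u{\sft}{\sfq}$ and $\disv z^*(-(\zeta{+}\sigma_\mathrm{opt}))=\slov z{\sft}{\sfq}$ (with $\sigma_\mathrm{opt}\in\pl\calR(0)$ the optimizer in \eqref{def:conj}), produces, case by case according to whether $\alpha>1$, $\alpha=1$, or $\alpha<1$, the pointwise bound
\[
   c_\sfu\|\mu\|_{\Spu^*}\|\sfu'\|_\Spu+c_\sfz\|\zeta{+}\sigma_\mathrm{opt}\|_{\Spz^*}\|\sfz'\|_\Spz \leq C\, \mredq 0\alpha{\sft(s)}{\sfq(s)}{0}{\sfq'(s)},
\]
whose integral over $(a,b)$ is finite thanks to \eqref{summability}. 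Together with the uniform $\Spz^*$-bound on $\sigma_\mathrm{opt}$, this closes the argument and yields the identity; combining with the pointwise bound discussed in the first paragraph produces \eqref{better-chain-rule-MOexpl} and, in particular, \eqref{ch-rule}.
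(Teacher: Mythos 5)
Your proof is correct and follows essentially the same route as the paper's: establish the pointwise coercivity estimate on $\mredname 0\alpha$ (via \eqref{coercivity-VVCP}, the explicit form \eqref{l:partial}, and the optimality relations $\disv u^*(-\mu)=\slov u{\sft}{\sfq}$, $\disv z^*(-\zeta-\sigma)=\slov z{\sft}{\sfq}$), use it together with the bound $\|\sigma\|_{\Spz^*}\leq C_\calR$ and $\sfz\in\AC([a,b];\Spz)$ to verify the integrability condition \eqref{conditions-1}, then invoke Hypothesis \ref{h:ch-rule} for the identity and Lemma \ref{new-lemma-Ricky} for the inequality. The only cosmetic difference is that the paper packages the coercivity as a single estimate \eqref{stronger-than-true40} valid for all $t'\geq 0$ rather than splitting $\{\sft'>0\}$ from $\{\sft'=0\}$, and it applies Hypothesis \ref{h:ch-rule} to the parametrized curve without the explicit remark that this requires a (routine) extension to the variable $(\sft(s),\sfq(s))$; your flag on this point is reasonable.
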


\noindent  
The proof will be carried out in  Appendix  \ref{s:app-CR}.

\Subsection{Differential characterization of enhanced $\pBV$ solutions} 
\label{ss:6.3-diff-charact} 

The main result of this section  is Theorem \ref{thm:diff-charact}, which
 provides a further characterization of \emph{enhanced $\pBV$ solutions} in
terms of solutions of a system of subdifferential inclusions,  see
\eqref{param-subdif-incl}. This differential form  has the very same
structure as the viscous system \eqref{DNE-system}, except that the small
parameters $\eps^\alpha$ and $\eps$ multiplying the viscous terms are replaced
by coefficients $\thn{u}$ and $\thn z$ satisfying the switching conditions
\eqref{eq:SwitchCond}.  For this, we use the optimality in the
energy-dissipation balance. 

In Lemma \ref{new-lemma-Ricky} we have established the estimate 
\begin{equation}
  \label{eq:M.a.0.ineq}
   \mename 0\alpha (t,q,t',q') \geq - \pairing{}{\Spu}{\mu}{u'} -
 \pairing{}{\Spz}{\zeta}{z'} \quad \text{for all } 
(\mu,\zeta) \in \argminSlo utq {\times} \argminSlo ztq, 
\end{equation}
which is  valid for all $(t,q,t',q') \in [0,T]\ti \Spq \ti [0,\infty) \ti
\Spq$ and  which is a generalization of the classical Young--Fenchel
inequality $\psi(v)+ \psi^*({-}\xi) \geq - \pairing{}{}{\xi}{v}$. 
With the first result of this section  we will show that, 
in analogy to
the characterization of generalized gradient-flow equations via the
energy-dissipation principle, we are able to characterize $\pBV$ solutions via
the optimality condition that estimate \eqref{eq:M.a.0.ineq} holds as an
equality. Thus, we define the \emph{contact set} $\Ctc_\alpha$ (cf.\
\cite[Def.\,3.6]{MRS2013}) via 
\begin{align}
  \label{ctc-set}
    \Ctc_\alpha: = \Big\{\, (t,q,t',q') \in [0,T] \ti \Spq \ti [0,\infty) \ti
    \Spq \; \Big|\; &  \exists\, (\mu,\zeta) \in \argminSlo utq {\times}
    \argminSlo ztq:
 \\ \nonumber
 & \meq{0}\alpha tq{t'}{q'} = -\pairing{}{\Spu}{\mu}{u'} - 
    \pairing{}{\Spz}{\zeta}{z'} \, \Big\}. 
\end{align}
Proposition \ref{pr:char-eBV} below makes the relation between enhanced $\pBV$
solutions and the contact set $ \Ctc_\alpha$ rigorous. We emphasize here that
we need to exploit the stronger version \eqref{better-chain-rule-MOexpl} of the
parametrized chain rule from Hypothesis \ref{h:ch-rule-param}, in addition to
Hypotheses \ref{hyp:setup}, \ref{hyp:diss-basic}, \ref{hyp:1},
\ref{h:closedness}, and \ref{hyp:Sept19}, always tacitly assumed. Recall that a
sufficient condition for such a chain rule is provided by Proposition
\ref{prop:better-chain-rule-MOexpl}.

\begin{proposition}[Enhanced $\pBV$ solutions lie in $\Ctc_\alpha$]
\label{pr:char-eBV}
Suppose that the parametrized chain rule \eqref{better-chain-rule-MOexpl} holds
along all admissible curves
$ (\sft,\sfq) \in \mathscr{A} ([a,b];[0,T]\ti \Spq)$ with
$\sfq \in \AC ([a,b];\Spq)$.  Then, a curve
$(\sft,\sfq) \in \mathscr{A} ([a,b];[0,T]\ti \Spq)$ is an enhanced\/ $\pBV$
solution of\/ $\RIS$ if and only if\/ $\sfq \in \AC([a,b];\Spq)$ and
$(\sft,\sfq,\sft',\sfq')\in \Ctc_\alpha$ a.e.\ in $(a,b).$
\end{proposition}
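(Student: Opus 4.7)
\textbf{Plan for Proposition \ref{pr:char-eBV}.} The strategy is to combine the parametrized energy-dissipation balance \eqref{def-parBV} with the chain-rule identity \eqref{better-chain-rule-MOexpl}, reducing the $\pBV$ condition to a pointwise equality in the Young--Fenchel-type inequality \eqref{eq:M.a.0.ineq} of Lemma \ref{new-lemma-Ricky}. A preliminary step is to observe that for an admissible curve with $\sfq \in \AC([a,b];\Spq)$ (in particular $\sfz\in \AC([a,b];\Spz)$) the bracketed functional satisfies $\mathfrak{M}_0^\alpha[\sft,\sfq,\sft',\sfq'](s)=\meq 0\alpha{\sft(s)}{\sfq(s)}{\sft'(s)}{\sfq'(s)}$ for a.a.\ $s$, as noted after \eqref{short-hand-M0}: on $\SetG\alpha\sft\sfq$ condition (2) of Definition \ref{def:adm-p-c} forces $\sft'(s)=0$, and off $\SetG\alpha\sft\sfq$ the slopes vanish, so the explicit formulas in Proposition \ref{pr:Mosco.Beps} (together with $(\disv u)_{\mathrm{ri}}=0$ and $(\calR{+}\disv z)_{\mathrm{ri}}=\calR$) reduce $\mename 0\alpha$ to $\calR(\sfz'(s))$ in that regime.

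\textbf{From $\pBV$ to $\Sigma_\alpha$.} Let $(\sft,\sfq)$ be an enhanced $\pBV$ solution. Using a standard measurable selection theorem applied to the weakly closed set-valued map $s\mapsto \argminSlo u{\sft(s)}{\sfq(s)}\times \argminSlo z{\sft(s)}{\sfq(s)}$ (well-defined by Hypothesis \ref{hyp:Sept19}), we pick measurable $(\mu(s),\zeta(s))$. Applying \eqref{better-chain-rule-MOexpl} along $(\sft,\sfq)$, integrating on $[a,b]$, and subtracting the equality \eqref{def-parBV} yields
\begin{equation*}
\int_a^b\!\Big(\mathfrak{M}_0^\alpha[\sft,\sfq,\sft',\sfq'](s)+\pairing{}{\Spu}{\mu(s)}{\sfu'(s)}+\pairing{}{\Spz}{\zeta(s)}{\sfz'(s)}\Big)\dd s=0.
\end{equation*}
By Lemma \ref{new-lemma-Ricky} the integrand is pointwise non-negative, so it vanishes for a.a.\ $s\in(a,b)$, which is exactly the condition $(\sft(s),\sfq(s),\sft'(s),\sfq'(s))\in\Sigma_\alpha$.

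\textbf{From $\Sigma_\alpha$ to $\pBV$.} Conversely, assume $\sfq\in \AC([a,b];\Spq)$ and $(\sft,\sfq,\sft',\sfq')\in\Sigma_\alpha$ a.e. For each such $s$ there exist $(\mu(s),\zeta(s))\in \argminSlo u{\sft(s)}{\sfq(s)}\times \argminSlo z{\sft(s)}{\sfq(s)}$ realizing equality in \eqref{eq:M.a.0.ineq}; a measurable selection again exists by Hypothesis \ref{hyp:Sept19}. Inserting this selection into \eqref{better-chain-rule-MOexpl} gives
\begin{equation*}
\frac{\rmd}{\rmd s}\calE(\sft(s),\sfq(s))-\pl_t\calE(\sft(s),\sfq(s))\,\sft'(s)=-\mathfrak{M}_0^\alpha[\sft,\sfq,\sft',\sfq'](s)\quad\text{a.e.,}
\end{equation*}
and integration between $s_1$ and $s_2$ yields the energy-dissipation identity \eqref{def-parBV}, showing that $(\sft,\sfq)$ is an enhanced $\pBV$ solution.

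\textbf{Main obstacle.} The chief technical point is the existence of \emph{jointly} measurable selections $s\mapsto(\mu(s),\zeta(s))$ with values in $\argminSlo u{\sft(s)}{\sfq(s)}\times\argminSlo z{\sft(s)}{\sfq(s)}$ to which the strong chain rule \eqref{better-chain-rule-MOexpl} can be applied. This is handled by the weak-sequential closedness of the graph of $\partial_q\calE$ on energy sublevels (Hypothesis \ref{h:closedness}), the coercivity of $\disv u^*$ and $\conj z$ (Hypothesis \ref{hyp:diss-basic}), and the lower semicontinuity in Hypothesis \ref{hyp:Sept19}, which yield measurable graph assumptions allowing the application of a classical Kuratowski--Ryll-Nardzewski-type selection theorem. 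All remaining steps are direct manipulations of the energy balance and pointwise inequality.
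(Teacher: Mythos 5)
Your proposal is correct and takes essentially the same route as the paper: the paper invokes Lemma~\ref{l:characterizBV} to pass from the integral energy balance to the pointwise identity \eqref{ptw-ident}, and then combines the equality in the chain rule \eqref{better-chain-rule-MOexpl} with Lemma~\ref{new-lemma-Ricky} to read off membership in $\Sigma_\alpha$, while you re-derive the ``integral zero $\Rightarrow$ integrand zero a.e.'' step directly from nonnegativity rather than quoting the lemma. One small imprecision: in the converse direction the measurable selection achieving the contact equality is not an immediate consequence of Hypothesis~\ref{hyp:Sept19} alone -- it requires a Filippov-type selection argument of the kind the paper deploys in Appendix~\ref{appendix-measurability} for Theorem~\ref{thm:diff-charact} -- but the paper's own proof is equally terse here, and the substance of your argument matches it.
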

\begin{proof}
  Let us consider an admissible parametrized curve
  $(\sft,\sfq) \in \mathscr{A} ([a,b];[0,T]\ti \Spq)$ with
  $\sfq \in \AC ([a,b];\Spq)$.  By the characterization provided in Lemma
  \ref{l:characterizBV}, $(\sft,\sfq)$ is a $\pBV$ solution if and only if
  $ - \mathfrak{M}_0^\alpha (\sft,\sfq,\sft',\sfq') = \frac{\rmd}{\rmd s}
  \eneq{\sft}{\sfq}- \pl_t \eneq{\sft}{\sfq} \sft' $ almost everywhere in
  $(a,b)$.  Combining this with the chain-rule inequality
  \eqref{better-chain-rule-MOexpl} we in fact conclude that
\[
  \frac{\rmd}{\rmd s} \eneq{\sft}{\sfq}- \pl_t \eneq{\sft}{\sfq} \sft' =
  \pairing{}{\Spu}{\mu}{\sfu'}  + \pairing{}{\Spz}{\zeta}{\sfz'} = -
  \mathfrak{M}_0^\alpha (\sft,\sfq,\sft',\sfq') \quad \aein (a,b),
\]
\emph{for all} measurable selections $  \xi =(\mu,\zeta): (a,b)  \to 
 \argminSlo u{\sft(s)}{\sfq(s)} \ti \argminSlo z{\sft(s)}{\sfq(s)} $, hence
$(\sft,\sfq,\sft',\sfq')\in \Ctc_\alpha$ a.e.\ in $(a,b).$
The converse implication follows by the same argument. 
\end{proof}

The final step in relating enhanced $\pBV$ solutions to the solutions of the
subdifferential system \eqref{param-subdif-incl} is obtained by 
 analyzing the structure of $\Ctc_\alpha$.   For this, we exploit   the
exact form on $\mename0\alpha$ and use   the definition of the set
$\argminSlo x t q $ in terms of the Fr\'echet subdifferential $\frsubq x t q$,
$\mathsf{x}\in \{\mathsf{u}, \mathsf{z}\}$.  To formulate this properly, we
recall the definition of the rescaled viscosity potentials
$\disve x \lambda$ and their subdifferentials $\pl  \disve x \lambda$ from
\eqref{eq:Def.Vx.la} for $\lambda \in [0,\infty]$. In particular, we have
\begin{equation}
\label{convention-recall}
\pl  \disve x \lambda(v)= 
\pl  \disv x (\lambda v) \text{ for all  $\lambda\in [0,\infty)$, 
 and  }
 \pl 
\disve x \infty(v)=
\left\{
\begin{array}{ll}
\mathbf{X}^* &  \text{for $v=0$},
\\ 
\emptyset & \text{otherwise}. 
\end{array}
\right.
\end{equation}
Observe that, thanks to \eqref{later-added} we have $\pl \disv x (0) = \{ 0 \}$
for $\mathsf{x}\in \{\sfu,\sfz\}$.

We now consider the system of subdifferential inclusions for the quadruple
$(t,q,t',q')= (t,u,z, t',u',z')$  including the two parameters
$\thn u, \thn z \in [0,\infty]$: 
\begin{subequations}
\label{static-tq}
\begin{align}
\label{subdiff-stat.u}
\pl  \disve u{\thn u } (u')  &+\frsubq u{t}{q} \ni 0 && \text{in } \Spu^*,
\\
\label{subdiff-stat.z}
\qquad \qquad\pl \calR(z')  +  \pl  \disve z{\thn z} ( z') &+\frsubq
z{t}{q} \ni 0 && 
\text{in } \Spz^*, \qquad \qquad
\\
\label{switch-1-A}
t'\,\frac{\thn u}{1{+} \thn u} &= t'\, \frac{\thn z}{1{+}\thn z }  =0 \,.
\end{align}
\end{subequations}
Here we use the usual convention $\infty/(1{+}\infty)=1$ and emphasize that, at
this stage, system \eqref{static-tq} is not to be  understood  as a
system of subdifferential inclusions.  Instead, 
$(t',q')\in [0,\infty){\times} \Spq$ are treated as independent variables.
With this we are able to introduce the following subsets of
$[0,T]\ti \Spq \ti [0,\infty) \ti \Spq$, called  \emph{evolution regimes},
 thus providing a basis for the 
informal discussion at the end of Section \ref{s:EDI}:  
\begin{equation}
\label{regime-sets}
\begin{aligned}
\rgs Eu  & :=  \bigset{  (t,q,t',q')  }{ \exists\, \thn z \in [0,\infty]{:}
  \text{ \eqref{static-tq} holds with } \thn u =0 },
\\
\rgs Rz  & := \bigset{  (t,q,t',q') }{ \exists\, \thn u \in [0,\infty]{:} 
   \text{ \eqref{static-tq} holds with } \thn z =0 },
\\
\rgs Vu  & := \bigset{  (t,q,t',q') }{ \exists\, \thn z \in [0,\infty]{:}
  \text{ \eqref{static-tq} holds with } \thn u  \in (0,\infty)}  ,
\\
\rgs Vz  & := \bigset{  (t,q,t',q') }{ \exists\, \thn u \in [0,\infty]{:}
  \text{ \eqref{static-tq} holds with } \thn z  \in (0,\infty)}  ,
\\
\rgs V{uz} & : = \bigset{  (t,q,t',q') }{ \text{ \eqref{static-tq} holds with }
     \thn u = \thn z   \in (0,\infty)},  
\\
\rgs Bu &  := \bigset{  (t,q,t',q') }{  \exists\, \thn z \in [0,\infty]{:} 
            \text{ \eqref{static-tq} holds with }\thn u =\infty} ,
\\
\rgs Bz &  := \bigset{  (t,q,t',q') }{  \exists\, \thn u \in [0,\infty]{:} 
            \text{ \eqref{static-tq} holds with }\thn z =\infty} .
\end{aligned}
\end{equation}
The letters $\mathrm{E}, \, \mathrm{R},\, \mathrm{V},\, \mathrm{B} $, stand for
\emph{Equilibrated}, \emph{Rate-independent}, \emph{Viscous}, and
\emph{Blocked}, respectively.  We will discuss the meaning of the names of
 the evolution  regimes below. It will be efficient to use the notation 
\[
\rgs Au \rgs Cz  := \rgs Au \cap \rgs Cz\quad \text{ for } 
\mathrm{A \in \{E,V,B\}} \text{ and }  \mathrm{C \in \{R,V,B\} };
\]
nonetheless, note that the set $ \rgs V{uz} $ is different from (indeed,
strictly contained in) $\rgs Vu \rgs Vz $.  We also remark that any set
involving `V' of `B' is necessarily restricted to the subspace with $t'=0$
because of \eqref{switch-1-A}. With this, we are now in a position to state our
result for the contact sets $\Ctc_\alpha$, under the additional condition
\eqref{it-is-product} on the product form of the Fr\'echet subdifferential
$\pl_q\calE$. Proposition \ref{pr:charact-Ctc-set} below will be proven in
Section \ref{su:pr:char-Ctc-set}.

\begin{proposition}[$\Ctc_\alpha$ and evolution regimes]
\label{pr:charact-Ctc-set}
If, in addition, the Fr\'echet subdifferential $\pl_q \calE$ has the product
structure \eqref{it-is-product}, then we have the following inclusions for the
contact set $ \Ctc_\alpha $:
\begin{subequations} 
\label{eq:CtcSet.Incl}
\begin{align}
\alpha>1:\quad  &\label{eq:CtcSet.Incl.g1}
\Ctc_\alpha \ \subset \ 
  \rgs Eu \rgs Rz\ \cup \ \rgs Eu\rgs Vz \ \cup \ \rgs Bz ,
\\
\alpha=1:\quad  &\label{eq:CtcSet.Incl.e1}
\Ctc_{1 \,}\  \subset \  
\rgs Eu \rgs Rz \ \cup \ \rgs V{uz} \ \cup \ \rgs Bu \rgs Bz,
\\
\alpha\in (0,1):\ \ &\label{eq:CtcSet.Incl.l1}
\Ctc_\alpha \ \subset  \ 
\rgs Eu\rgs Rz \ \cup \ \rgs Vu \rgs Rz \ \cup \  \rgs Bu  \,,
\end{align}
\end{subequations}
where in all cases the three sets on the right-hand side are disjoint. 
\end{proposition}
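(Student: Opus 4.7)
The plan is to argue pointwise: fix $(t,q,t',q') \in \Ctc_\alpha$ with attained selections $(\mu,\zeta) \in \argminSlo utq \times \argminSlo ztq$ realizing equality in \eqref{eq:M.a.0.ineq}; thanks to the product structure \eqref{it-is-product}, these satisfy $\mu \in \frsubq utq$ and $\zeta \in \frsubq ztq$ separately. I will then reverse-engineer suitable multipliers $\lambda_\sfu, \lambda_\sfz \in [0,\infty]$ solving \eqref{static-tq} by using the explicit form \eqref{l:partial} of $\mredname 0\alpha$ together with the structural properties of $\mfb_\psi$ from Proposition \ref{pr:VVCP}. The natural dichotomy is between $t'>0$ and $t'=0$.

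First, suppose $t'>0$. Finiteness of $\meq 0\alpha tq{t'}{q'}$ forces $\slov utq = \slov ztq = 0$ via \eqref{l:partial}; by Hypothesis \ref{hyp:Sept19} together with the superlinearity \eqref{h:v-diss-1} and \eqref{later-added} this yields $\mu=0$ and $-\zeta \in \pl \calR(0)$. The contact equality $\meq 0\alpha tq{t'}{q'} = \calR(z') = \langle -\zeta, z'\rangle$ together with \eqref{eq:subdiff.calR} then upgrades this to $-\zeta \in \pl \calR(z')$. Choosing $\lambda_\sfu = \lambda_\sfz = 0$, conditions \eqref{subdiff-stat.u}--\eqref{switch-1-A} are all satisfied (using $\pl \disv x^0(v) = \pl \disv x(0) = \{0\}$), placing $(t,q,t',q') \in \rgs Eu \rgs Rz$.

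Second, suppose $t'=0$; I treat the three cases $\alpha\gtreqless 1$ via \eqref{l:partial}. The key step is that the contact equality, together with the Young--Fenchel bound (b5) of Proposition~\ref{pr:VVCP}, forces equality in $\mfb_\psi(v,\psi^*(\xi)) \geq \langle \xi, v\rangle$ for the relevant potential $\psi$; by the attainment property (b2), equality at some $\tau_* \in (0,\infty)$ is equivalent to $-\xi \in \pl \psi(v/\tau_*)$, and I set $\lambda_\sfx = 1/\tau_*$. For $\alpha>1$, the three sub-cases of \eqref{l:partial} map respectively to $\rgs Eu \rgs Rz$ (when the relevant slope vanishes, $\tau_*=\infty$, $\lambda_\sfz=0$), to $\rgs Eu \rgs Vz$ (when $\slov utq=0$ and $\mfb_{\disv z}$ is optimized at a finite $\tau_* \in (0,\infty)$, giving $\lambda_\sfz \in (0,\infty)$), and to $\rgs Bz$ (when $\slov utq>0$ forces $z'=0$, corresponding to $\lambda_\sfz=\infty$). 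For $\alpha=1$, formula \eqref{l:partial-1} involves the joint potential $\disv u \oplus \disv z$, so optimization produces a common $\tau_*$, giving the coupled regime $\rgs V{uz}$ with $\lambda_\sfu = \lambda_\sfz \in (0,\infty)$, and the boundary degenerations $\tau_*=\infty$ (yielding $\rgs Eu \rgs Rz$) and $\tau_*=0$ (yielding $\rgs Bu \rgs Bz$, enforced by attainment only at $u'=z'=0$). The case $\alpha<1$ is handled symmetrically by interchanging the roles of $u$ and $z$. In every sub-case, the Young--Fenchel equality together with the decomposition $-\zeta = \sigma + (-\zeta-\sigma)$ with $\sigma \in \pl \calR(z')$ (obtained from the equality $\calR(z') = \langle \sigma, z'\rangle$) yields the inclusion \eqref{subdiff-stat.z}.

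The main obstacle will be a careful bookkeeping of attainment in the three regimes of the contact potential $\mfb_\psi$: finite $\tau_* \in (0,\infty)$ corresponds to the viscous regime, the limit $\tau_*\to 0^+$ (which forces the argument to vanish, by superlinearity of $\psi$) corresponds to the blocked regime with $\lambda=\infty$, and the limit $\tau_*\to\infty$ (forcing the dual slope to vanish) corresponds to the equilibrium/rate-independent regime with $\lambda=0$. Disjointness of the three sets on the right-hand side of \eqref{eq:CtcSet.Incl} then follows from this case analysis: for $\alpha\neq 1$, the three regimes are separated by the mutually exclusive conditions $\lambda_\sfz \in \{0\}$, $\lambda_\sfz \in (0,\infty)$, $\lambda_\sfz = \infty$ (respectively $\lambda_\sfu$, for $\alpha<1$), while for $\alpha=1$ the common value $\lambda_\sfu = \lambda_\sfz$ falls in exactly one of $\{0\}$, $(0,\infty)$, $\{\infty\}$.
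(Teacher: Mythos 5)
Your proposal is correct and follows essentially the same route as the paper's proof: split on $t'>0$ versus $t'=0$, use the explicit form \eqref{l:partial} of $\mredname 0\alpha$, invoke Proposition~\ref{pr:VVCP}(b2) to get an attaining $\tau_*\in(0,\infty)$ in the non-degenerate sub-cases, split the contact equality into a $\calR$-part (giving $\sigma\in\pl\calR(z')$ by $1$-homogeneity) and a Fenchel equality for $\disv z$ (giving $-\zeta-\sigma\in\pl\disve z{1/\tau_*}(z')$), and set $\lambda_\sfx=1/\tau_*$ with the degenerations $\tau_*\to\infty$ and $\tau_*\to0^+$ giving $\lambda_\sfx=0$ and $\lambda_\sfx=\infty$ respectively. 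The only caveat is that your ``three sub-cases of \eqref{l:partial} $\mapsto$ three regimes'' correspondence is not one-to-one (e.g.\ for $\alpha>1$ the sub-case $\slov utq=0$, $\slov ztq>0$, $z'=0$ yields $\rgs Eu\rgs Bz\subset\rgs Bz$, not $\rgs Eu\rgs Vz$), but all such blocked variants still land in the claimed right-hand side, so the inclusion is unaffected.
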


\begin{remark}\slshape 
\label{rm:???}
In the characterization of (enhanced) $\pBV$ solution provided by Proposition
\ref{pr:char-eBV}, the contact condition
$\meq{0}\alpha tq{t'}{q'} = -\pairing{}{\Spu}{\mu}{u'} -
\pairing{}{\Spz}{\zeta}{z'}$ holds \emph{for all}
$ (\mu,\zeta) \in \argminSlo utq {\times} \argminSlo ztq$.  Hence, it seems
possible to define a smaller contact set $\widetilde\Sigma_\alpha$ by replacing
``$\exists$'' in \eqref{ctc-set} by ``$\forall$''. Because of
$\widetilde\Sigma_\alpha \subset \Ctc_\alpha$  inclusions
\eqref{eq:CtcSet.Incl}  would remain true. However, using our  larger
set $\Ctc_\alpha$ is sufficient to deduce that $\pBV$ solutions satisfy
the system of subdifferential inclusions \eqref{param-subdif-incl} ahead.
\end{remark}

The different evolution regimes characterized by the right-hand sides in
\eqref{eq:CtcSet.Incl} can be visualized by considering the three real
parameters $(t',\thn u, \thn z)\in [0,\infty)\ti [0,\infty]^2$, since the
rate-independent regimes $\rgs Eu$ and $\rgs Rz$ are given by $\thn u=0$ and
$\thn z=0$ respectively. Similarly, the viscous regimes $\rgs Vx$,
$\mathsf{x} \in \{ \mathsf{u}, \mathsf{z}\}$, are defined via
$\thn x\in (0,\infty)$, and the blocking regime $\rgs Bx$ is determined by
$\thn x=\infty$. The sets on the right-hand sides in \eqref{eq:CtcSet.Incl} are
then defined in terms of the switching conditions
\begin{equation}
\label{eq:Switch11} 
\text{\eqref{switch-1-A} holds \quad  and } \quad  
\begin{cases}  
 \thn u=0 \text{ or } \thn z=\infty&\text{ for } \alpha>1,\\
\thn u=\thn z \in [0,\infty] & \text{ for }\alpha=1,\\
\thn u=\infty \text{ or }\thn z =0 & \text{ for }\alpha \in (0,1).
\end{cases} 
\end{equation}
The corresponding sets in the $(t',\thn u,\thn z)$ space are depicted in Figure
\ref{fig:EqBlViRa}.

\begin{figure}
\mbox{}\hfill\begin{minipage}{0.38\textwidth}
\caption{} \label{fig:EqBlViRa}\raggedright
The switching conditions and the different regimes are displayed in the space
for $(t',\thn u,\thn z) \in [0,\infty]^3$. 

For $\sft'>0$  the only admissible regime is given by 
the intersection $\rgs Eu \rgs Rz=\rgs Eu \cap \rgs Rz$.

For $\sft'=0$ the different admissible regimes depend on $\alpha>0$:
 
$\ \ \ \alpha>1$: \ \  $\rgs Eu \cup \rgs Bz$

$\ \ \ \alpha=1$: \  \ $\rgs Eu \rgs Rz \cup \rgs V{uz}\cup \rgs Bu \rgs Bz$

$\alpha\in (0,1)$: $\rgs Vu \rgs Rz \cup \rgs Bu$ 
\end{minipage}
\hfill
\begin{minipage}{0.5\textwidth}
\newlength{\AMlength}
\setlength{\AMlength}{2.95em}
\begin{tikzpicture}[scale =1.60]
\draw[fill=gray!20] (-1,-0.5)--(0,0)--(0,2)--
                    (-1,1.5) node[left]{$\rgs Rz$}--(-1,-0.5);
\draw[fill=gray!20] (-1,-0.5)--(0,0)--(2,0)-- 
                  (1,-0.5) node[below]{$\rgs Eu$}--(-1,-0.5);
\draw[->, thick] (-0.4,0)--(2.5,0)
   node[right]{$\frac{\lambda_\sfz}{1{+}\lambda_\sfz}$}; 
\draw[->, thick] (0,-0.4)--(0,2.5)
   node[left]{$\frac{\lambda_\sfu}{1{+}\lambda_\sfu}$}; 
\draw[->, thick] (0.4,0.2)--(-1.3,-0.65)
   node[pos=1.1, above]{$\frac{\sft'}{1{+}\sft'}$}; 
\draw[fill=black] (2,2) circle (0.05);
\draw (0.2,2)--(-0.6,2) node[left] {$\rgs Bu $};
\draw (2,0.2)--(2,-0.4) node[below] {$\rgs Bz $};
\node[right] at (2.1,1) 
  {$\left.\rule[-\AMlength]{0pt}{2\AMlength}\right\} \rgs Vu$};
\draw (0.0,0.0)--(1.96,1.96) node[above, rotate=45,pos=0.5] {$\rgs V{uz} $};
\node[above] at (1,2.2) 
  {$\overbrace{\rule{2\AMlength}{0pt}}^{\displaystyle \rgs Vz}$};
\draw[line width=3 , opacity=0.9, color=green!70!blue] 
   (-1,-0.44) -- (0,0.06) -- (0,2)--(1.95,2)
 node[pos=0.4,below]{$\alpha \in (0,1)$};
\draw[line width=3 , opacity=0.9, color=red] 
 (-1,-0.5) -- (0.0,0.0) --(1.97,1.97)
 node[pos=0.5,below, rotate=45]{$\alpha =1$};
\draw[line width=3 , opacity=0.9, color=blue] 
  (-0.9,-0.5) -- (0.1,0.0) -- (2,0) node[pos=0.75,above]{$\alpha>1$} --(2,1.95);
%
\fill[color = white] (-1,-0.5) circle (0.1);
\draw (-1,-0.5) circle (0.1);
\end{tikzpicture}
\end{minipage}
\hfill\mbox{}
\end{figure}

The inclusions \eqref{eq:CtcSet.Incl} that relate  the contact sets to
the different  evolution regimes  $\rgs Au \rgs Cz$ have a clear and immediate
interpretation in terms of the  evolutionary behavior  of an enhanced
$\pBV$ solution $(\sft, \sfq)$:
\begin{compactitem}
\item[\textbullet] $\rgs Eu$ encodes the regime where
  $u=\sfu(s)$ stays in \emph{equilibria},  which may depend on $s$. Indeed, 
   inserting $\thn u(s)=0$ in \eqref{subdiff-stat.u} leads to the equilibrium
  condition $0 \in \frsubq u{\sft(s)}{\sfq(s)}$. This means that $\sfu(s)$ follows
  $\sfz(s)$ that may evolve rate-independently when $t'>0$, and may follow a
  viscous jump path, or may be blocked, when  $\sft'(s)=0$.

\item[\textbullet] $\rgs Rz$ denotes the rate-independent evolution  for $\sfz(s)$,
  where $\thn z(s)=0$. The component $\sfu(s)$  either follows staying in
  equilibria, evolves viscously, or is blocked. 
  
\item[\textbullet] In the case $t'>0$ only the rate-independent regime
  $\rgs Eu\rgs Rz$ is admissible. This is the regime with $\thn u=\thn z=0$
  where the viscous dissipation potentials $\disv u$ and $\disv z$ do not come
  into action.

\item[\textbullet] In the regime $\rgs Vx$, the variable $\sfx(s)$ evolves
  viscously with $\thn x(s)\in (0,\infty)$, and necessarily $\sft'(s)=0$. 

\item[\textbullet] $\rgs V{uz}$ is the special case occurring only for
  $\alpha=1$, where $\thn u(s)=\thn z(s)\in (0,\infty)$, i.e.\ both components
  have a synchronous viscous phase. 

\item[\textbullet] The blocked regime $\rgs Bx$, occurring when $\sft'(s)=0$,
  encodes the situation that $\thn x(s)=\infty$, which means that on the
  given time scale the viscosity is so strong that the $\mathsf{x}$-component
  cannot move, i.e.\ it is blocked with $\sfx'(s)=0$.
  
\item[\textbullet] $\rgs B{uz} = \rgs Bu \rgs Bz$ means that both components
  are blocked, namely $\sfq'(s)=0$. This can occur, for instance, if we set
  $(\sft(s), \sfq(s))=(t_*,q_*)$ for $s\in (s_1,s_2)$. Then,
  $\thn u(s)=\thn z(s)=\infty$ still gives a trivial, constant solution. 
  Such a behavior may occur after taking a limit like $\eps\to 0^+$, but  of
  course the interval can be cut out by defining a $\pBV$ solution on
  $[0,\sfS{-}s_2{+}s_1]$.  
\end{compactitem}

We are now in a position to prove a characterization of \emph{enhanced} $\pBV$
solutions in terms of the following system of subdifferential inclusions
\begin{equation}
  \label{param-subdif-incl}
  \begin{aligned}
    \pl \disve u{\thn u(s)} ( \sfu'(s)) + \frsub u{\sft(s)}{\sfu(s)}{\sfz(s)}
    \ni 0 & \quad\text{in } \Spu^*,
    \\
    \pl \calR(\sfz'(s)) + \pl \disve z{\thn z(s)} ( \sfz'(s)) + \frsub
    z{\sft(s)}{\sfu(s)}{\sfz(s)} \ni 0 &\quad \text{in } \Spz^*,
  \end{aligned}
\end{equation}  
where the balanced interplay of viscous and rate-independent behavior in the
equations for $u$ and $z$, respectively, is determined by the
(arclength-dependent) parameters $\thn u(s)$ or $\thn z(s)$.  We emphasize that
the so-called \emph{switching conditions} for $t'\geq 0$ and
$\thn u, \,\thn z \in [0,\infty]$, cf.\ \eqref{eq:SwitchCond} below, are
different for the three cases $\alpha>1$, $\alpha=1$, and $\alpha\in (0,1)$.

\begin{theorem}[Differential characterization of enhanced $\pBV$ solutions]
\label{thm:diff-charact} 
Assume Hypotheses \ref{hyp:setup}, \ref{hyp:diss-basic}, \ref{hyp:1},
\ref{h:closedness}, and \ref{hyp:Sept19} and let the parametrized chain rule
\eqref{better-chain-rule-MOexpl} hold.  In addition, suppose that the Fr\'echet
subdifferential $\pl_q \calE$ has the product structure from
\eqref{it-is-product}.  Let
$(\sft,\sfq) \in \mathscr{A}([0,\sfS];[0,T]\ti \Spq)$ be an admissible
parametrized curve with $\sfq \in \AC ([0,\sfS];\Spq)$.
\begin{enumerate}
\item If $(\sft,\sfq):(0,\sfS)\to \Spq$ is a enhanced $\pBV$ solution of $\RIS$, then
  there exist measurable functions $(\thn u,\thn z): (0,\sfS)\to [0,\infty]^2 $
  and $\xi=(\mu,\zeta): (0,\sfS)\to \Spu^*\ti \Spz^*$ with 
\begin{subequations}
  \label{e:diff-char}
  \begin{equation}
  \label{exist-meas-select}
  \mu(s) \in  \frsub u{\sft(s)}{\sfu(s)}{\sfz(s)}
  \ \text{ and } \ 
  \zeta(s) \in  \frsub z{\sft(s)}{\sfu(s)}{\sfz(s)}
  \quad \foraa\, s \in (0,\sfS)
\end{equation}  
satisfying for almost all $s\in (0,\sfS)$ the subdifferential inclusions
\begin{equation}
  \label{param-subdif-incl-selections}
  \begin{aligned}
    -\mu(s) \in \pl \disve u{\thn u(s)} ( \sfu'(s))\hspace{5em} 
    & \quad\text{in } \Spu^*,
    \\
    - \zeta(s) \in \pl \calR(\sfz'(s)) + \pl \disve z{\thn z(s)} ( \sfz'(s))
    &\quad \text{in } \Spz^*,
  \end{aligned}
\end{equation} 
and the switching conditions
\begin{equation}
  \label{eq:SwitchCond}
  \sft'(s)\,\frac{\thn u(s)}{1{+}\thn u(s) } = 0 = 
  \sft'(s)\,\frac{\thn z(s)} {1{+}\thn z(s)}  
  \quad \text{and } \quad 
     \begin{cases} 
      \thn u(s)\,\frac1{1{+}\thn z(s)} =0 & \text{for }\alpha>1, 
      \\
       \thn u(s) = \thn z(s)& \text{for }\alpha=1, 
      \\ 
       \frac1{1{+}\thn u(s)}\, \thn z(s)=0& \text{for } \alpha\in (0,1). 
      \end{cases}
    \end{equation}
  \end{subequations}
\item Conversely, if there exist measurable functions
  $(\thn u,\thn z): (0,\sfS)\to [0,\infty]^2 $  and
  $\xi=(\mu,\zeta): (0,\sfS)\to \Spu^*\ti \Spz^*$  satisfying
  \eqref{e:diff-char} and, in addition,
  \begin{equation}
    \label{hyp-4-chr}
    \sup_{s \in (0,\mathsf{S})} |\calE(\sft(s),\sfq(s))|<\infty,  \quad \text{and} \quad
    \int_0^{\mathsf{S}} \!\big(  \| \mu(s)\|_{\Spu^*}  \| \sfu'(s)\|_{\Spu} {+}
      \| \zeta(s)\|_{\Spz^*}  \| \sfz'(s)\|_{\Spz}  \big) \dd s <\infty, 
  \end{equation}
  then $(\sft,\sfq)$ is an enhanced $\pBV$ solution.
\end{enumerate}
\end{theorem}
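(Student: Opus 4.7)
The plan is to combine Proposition \ref{pr:char-eBV}, which identifies enhanced $\pBV$ solutions as admissible curves lying almost everywhere in the contact set $\Ctc_\alpha$, with the explicit decomposition of $\Ctc_\alpha$ into pairwise disjoint evolution regimes provided by Proposition \ref{pr:charact-Ctc-set}. For the forward direction (1), I would start from the membership $(\sft(s),\sfq(s),\sft'(s),\sfq'(s))\in \Ctc_\alpha$ guaranteed by Proposition \ref{pr:char-eBV}. Since the three subsets on the right-hand side of \eqref{eq:CtcSet.Incl} are pairwise disjoint, they induce a Borel partition of $(0,\sfS)$; on each piece of this partition the very definition of the associated regime $\rgs Au\rgs Cz$ yields values $\thn u(s),\thn z(s)\in [0,\infty]$ and selections $(\mu(s),\zeta(s))\in \frsub u{\sft(s)}{\sfu(s)}{\sfz(s)}\ti \frsub z{\sft(s)}{\sfu(s)}{\sfz(s)}$ solving the static system \eqref{static-tq}. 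But \eqref{static-tq} is exactly \eqref{param-subdif-incl-selections}, and the regime constraints on $(\thn u,\thn z)$ are precisely the switching conditions \eqref{eq:SwitchCond}. A Castaing--Valadier type measurable selection theorem applied to the involved weakly closed-valued multifunctions then produces measurable $\thn u,\thn z,\mu,\zeta$.

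For the converse direction (2), the strategy is to check that $(\sft,\sfq,\sft',\sfq') \in \Ctc_\alpha$ almost everywhere, so that Proposition \ref{pr:char-eBV} again applies. Using $\pl \disve x{\lambda}(v)=\pl \disv x(\lambda v)$ for $\lambda\in [0,\infty)$ and the Fenchel reciprocity identity, the inclusions \eqref{param-subdif-incl-selections} translate into the pointwise identities
\begin{equation*}
\disve u{\thn u}(\sfu') + (\disve u{\thn u})^*(-\mu) = -\langle \mu,\sfu'\rangle_\Spu,
\end{equation*}
and, after splitting $-\zeta=\omega+\widetilde\zeta$ with $\omega\in \pl\calR(\sfz')$ (so that $\langle \omega,\sfz'\rangle=\calR(\sfz')$) and $\widetilde\zeta\in \pl \disve z{\thn z}(\sfz')$,
\begin{equation*}
\calR(\sfz') + \disve z{\thn z}(\sfz') + (\disve z{\thn z})^*(\widetilde\zeta) = -\langle \zeta,\sfz'\rangle_\Spz,
\end{equation*}
with the usual conventions when $\thn x\in \{0,\infty\}$. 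The integrability \eqref{hyp-4-chr} allows a parametrized version of the chain rule of Hypothesis \ref{h:ch-rule}, applied to the specific selection $(\mu,\zeta)$, to produce $\frac{\rmd}{\rmd s}\calE(\sft,\sfq)-\pl_t\calE(\sft,\sfq)\sft' = \langle \mu,\sfu'\rangle+\langle \zeta,\sfz'\rangle$ almost everywhere. A case-by-case inspection driven by the switching conditions \eqref{eq:SwitchCond}, combined with the branches of $\mredname 0\alpha$ listed in \eqref{l:partial}, then identifies the sum of the two left-hand sides above with $\mename 0\alpha(\sft,\sfq,\sft',\sfq')$; integration yields the energy-dissipation balance \eqref{def-parBV}.

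The hard part will be the case analysis in (2). For each of the three cases $\alpha>1$, $\alpha=1$, $\alpha\in (0,1)$, and each admissible choice of $(\thn u,\thn z)$ compatible with the switching conditions (including the boundary values $0$ and $\infty$), one has to check that the two Fenchel identities exactly reproduce the corresponding branch of \eqref{l:partial}. The delicate point is that the selections $(\mu(s),\zeta(s))$ produced by the subdifferential system must \emph{automatically} minimize $\disv u^*(-\,\cdot\,)$ and $\conj z(-\,\cdot\,)$ over $\frsubq u{\sft(s)}{\sfq(s)}$ and $\frsubq z{\sft(s)}{\sfq(s)}$, i.e.\ lie in $\argminSlo u{\sft(s)}{\sfq(s)}\ti \argminSlo z{\sft(s)}{\sfq(s)}$, so that the slope values $\slov u{\sft(s)}{\sfq(s)}$ and $\slov z{\sft(s)}{\sfq(s)}$ appearing in \eqref{l:partial} match $\disv u^*(-\mu(s))$ and $\disv z^*(\widetilde\zeta(s))$. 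This is transparent when $\thn x=0$ (where $\mu=0$ or $-\zeta\in \pl\calR(0)$ trivially achieves the minimum) and when $\thn x=\infty$ (which forces $\sfx'=0$, collapsing the contact identity on that variable); but in the genuinely viscous regimes $\thn x\in (0,\infty)$, and especially in the synchronous regime $\rgs V{uz}$ for $\alpha=1$ where the common value $\lambda=\thn u=\thn z$ must realize the infimum defining $\mfb_{\disv u\oplus\disv z}((\sfu',\sfz'),\slov u{\sft}{\sfq}+\slov z{\sft}{\sfq})$, the identification must be extracted directly from the structural form of the coupled subdifferential inclusion and the optimality characterization of $\mfb_\psi$ established in Proposition \ref{pr:VVCP}.
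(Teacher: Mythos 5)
Your Part (1) matches the paper's argument: cite Propositions \ref{pr:char-eBV} and \ref{pr:charact-Ctc-set}, partition $(0,\sfS)$ according to the disjoint regimes $\rgs Au\rgs Cz$, and handle measurability by a selection theorem (the paper defers this to an appendix, using von Neumann--Aumann and a Filippov-type theorem).

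Your Part (2), however, takes a different route and has a genuine gap. You propose to verify $(\sft,\sfq,\sft',\sfq')\in\Ctc_\alpha$ a.e.\ and then re-invoke Proposition \ref{pr:char-eBV}. But membership in $\Ctc_\alpha$ requires the existence of $(\mu,\zeta)\in\argminSlo u{\sft}{\sfq}\times\argminSlo z{\sft}{\sfq}$ realizing the contact identity, and the selections produced by \eqref{param-subdif-incl-selections} are \emph{not} automatically minimizers of $\disv u^*(-\cdot)$ and $\conj z(-\cdot)$ over the Fr\'echet subdifferentials. In the regimes $\thn x\in(0,\infty)$ the inclusion $-\mu\in\pl\disv u(\thn u\sfu')$ pins down the Fenchel pair $(\thn u\sfu',-\mu)$ for $\disv u$, but says nothing about whether $\mu$ minimizes $\disv u^*(-\cdot)$ over $\frsubq u{\sft}{\sfq}$; and in the blocking regime $\thn x=\infty$ (so $\sfx'=0$) the element $\mu$ can be essentially arbitrary in $\frsubq u{\sft}{\sfq}$. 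You flag this as a ``delicate point'' to be ``extracted from the structural form'' and from the optimality in Proposition \ref{pr:VVCP}, but that proposition only characterizes the optimal time-rescaling parameter inside $\mfb_\psi$, not the optimal dual variable inside the Fr\'echet subdifferential, so it does not close the gap. The paper avoids the issue entirely: starting from the Fenchel identity $(\calR+\disve z{\thn z})(\sfz')+\tfrac1{\thn z}\conj z(-\zeta)=\langle-\zeta,\sfz'\rangle$, it uses only the \emph{inequality} $\conj z(-\zeta)\geq\slov z{\sft}{\sfq}$ (valid for every $\zeta\in\frsubq z{\sft}{\sfq}$, by definition of $\slov z$ as an infimum) together with the defining infimum of $\mfb_{\disv z}$ to conclude $\int(-\langle\mu,\sfu'\rangle-\langle\zeta,\sfz'\rangle)\,\rmd s\geq\int\mename 0\alpha\,\rmd s$. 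Combining this with the chain rule \eqref{better-chain-rule-MOexpl} (justified by \eqref{hyp-4-chr}) yields the \emph{upper} energy-dissipation estimate on $[0,\sfS]$, which by Lemma \ref{l:characterizBV}(2) is already equivalent to being an enhanced $\pBV$ solution. So the correct strategy for (2) is to aim for the one-sided estimate of Lemma \ref{l:characterizBV}(2), not for the pointwise contact identity.
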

\begin{proof} Part (1) basically follows from combining the characterization 
of enhanced $\pBV$ solutions from Proposition \ref{pr:char-eBV} in terms of 
the contact set, with Proposition \ref{pr:charact-Ctc-set}.  Only the
measurability of the coefficients $\thn u,\, \thn z: [0,\sfS]\to [0,\infty] $
and of the selections $\xi=(\mu,\zeta):(0,\sfS) \to \Spu^* \ti \Spz^*$ deserves
some discussion that is postponed to Appendix \ref{appendix-measurability}.

Let us now carry out the proof of Part (2).  After cutting out possible
intervals where $(\sft,\sfq)$ may be constant (i.e.\ in the blocking regime
$\rgs Bu \rgs Bz$), we may  suppose that the admissible parametrized curve
$(\sft,\sfq)$ fulfills the non-degeneracy condition \eqref{non-degeneracy}.  In
what follows, we will use the short-hand notation
\begin{equation}
  \label{short-hand-regimes}
  (0,\sfS) \cap \rgs Au \rgs Cz: = \{ s \in (0,\sfS)\, : \  (\sft(s),\sfq(s),
  \sft'(s), \sfq'(s) ) \in  \rgs Au \rgs Cz  \} 
\end{equation}
for $\mathrm{A \in \{E,V,B\}} $ and $ \mathrm{C \in \{R,V,B\} }$. We will
discuss at length the case $\alpha>1$; the very same arguments yield the thesis
also in the cases $\alpha=1$ and $\alpha \in (0,1)$.  It follows from the
switching conditions \eqref{eq:SwitchCond} that the integral
$I:=\int_0^\sfS \big( \pairing{}{\Spu}{{-}\mu}{\sfu'}{+}
\pairing{}{\Spu}{{-}\zeta}{\sfz'}\big) \dd s$ decomposes as
\begin{align}
& \label{initial-step}
I= I_1+I_2+I_3 \quad \text{with }
I_1:= \int_{(0,\sfS) {\cap}  \rgs Eu \rgs Rz}  \!\! \big(
   \pairing{}{\Spu}{{-}\mu(s)}{\sfu'(s)}{+}
   \pairing{}{\Spz}{{-}\zeta(s)}{\sfz'(s)}  \big) \dd s\,, 
\\
& \nonumber
  I_2:= \int_{(0,\sfS) {\cap}   \rgs Eu\rgs Vz}  \!\! \big(
    \pairing{}{\Spu}{{-}\mu}{\sfu'}{+}
    \pairing{}{\Spz}{{-}\zeta}{\sfz'}  \big) \dd s, \text{ and }
  I_3:= \int_{(0,\sfS) {\cap}  \rgs Bz}  \!\!\big(
    \pairing{}{\Spu}{{-}\mu}{\sfu'}{+}
    \pairing{}{\Spz}{{-}\zeta}{\sfz'}  \big) \dd s\,,
\end{align}
 where we use that the three regimes  $ \rgs Eu \rgs Rz$,
$ \rgs Eu\rgs Vz$, and $\rgs Bz$ are disjoint.  Now, on
$(0,\sfS) \cap \rgs Eu \rgs Rz$ we have that $\mu(s) \equiv 0$, while
$\zeta(s) \in \pl \calR(\sfz'(s))$, so that
\begin{equation*}
I_1=  \int_{(0,\sfS) {\cap}  \rgs Eu \rgs Rz} \calR(\sfz'(s))  =
\int_{(0,\sfS) {\cap}  \rgs Eu \rgs Rz} \meq 0\alpha
{\sft(s)}{\sfq(s)}{\sft'(s)}{\sfq'(s)} \dd s  
\end{equation*}
where we used \eqref{decomposition-M-FUNCTION} and \eqref{l:partial}, taking
into account 
$\slov u {\sft(s)}{\sfq(s)} = \slov z {\sft(s)}{\sfq(s)} \equiv 0$ on
$(0,\sfS) \cap \rgs Eu \rgs Rz$.  On $(0,\sfS) \cap \rgs Eu\rgs Vz$ we 
have  $\slov u {\sft(s)}{\sfq(s)} \equiv 0$  and the $z$-equation in
 \eqref{param-subdif-incl} holds with $\thn z(s)>0$, so that
\[
\begin{aligned}
I_2    & =   \int_{(0,\sfS) {\cap}   \rgs Eu\rgs Vz}   \frac1{\thn
  z(s)}\pairing{}{\Spu}{{-}\zeta(s)}{\thn z(s) z'(s)}  \dd s
\\
 & \stackrel{(1)}{=} 
  \int_{(0,\sfS) {\cap}   \rgs Eu\rgs Vz}  
\frac1{\thn z(s)} \left( 
  \calR \big( \thn z (s) \sfz'(s) \big) 
            {+} \disv z \big( \thn z (s) \sfz'(s)\big) 
            {+}   \conj z({-}\zeta(s))  \right) \dd s 
      \\
 &    \stackrel{(2)}{\geq}          
   \int_{(0,\sfS) {\cap}   \rgs Eu\rgs Vz}  
    \frac1{\thn z(s)}    \left( 
  \calR \big( \thn z (s) \sfz'(s) \big) 
            {+} \disv z \big( \thn z (s) \sfz'(s)\big) 
            {+}   \slov z {\sft(s)}{\sfq(s)}  \right) \dd s 
\\
& 
  \stackrel{(3)}{\geq}     
    \int_{(0,\sfS) {\cap}   \rgs Eu\rgs Vz}  \!\!\left(   
   \calR \big(  \sfz'(s) \big)  {+}    \mfb_{\disv z}(z'(s),\slov z
   {\sft(s)}{\sfq(s)})   \right)  \dd s  
   \stackrel{(4)}{=}    \int_{(0,\sfS) {\cap}   \rgs Eu\rgs Vz} \!\!\meq 0\alpha
   {\sft(s)}{\sfq(s)}{\sft'(s)}{\sfq'(s)} \dd s , 
    \end{aligned}        
\]
where {\footnotesize (1)} follows from \eqref{param-subdif-incl-selections} via
Fenchel-Moreau conjugation, {\footnotesize (2)} is a consequence of the
definition of $\slov z{\sft}{\sfq}$, {\footnotesize (3)} is due to the
definition of $ \mfb_{\disv z}$, and {\footnotesize (4)} again ensues from
\eqref{decomposition-M-FUNCTION} and \eqref{l:partial}.  Finally, with the very
same arguments we find that
\[
  \begin{aligned}
    I_3 & = \int_{(0,\sfS) {\cap} \rgs Bz}
    \pairing{}{\Spu}{{-}\mu(s)}{\sfu'(s)}
 \ = \ \int_{(0,\sfS) {\cap} \rgs Bz} \frac1{\thn u(s)}
    \pairing{}{\Spu}{{-}\mu(s)}{\thn u(s)\sfu'(s)} 
\\ 
& \geq \int_{(0,\sfS) {\cap}
      \rgs Bz} \mfb_{\disv u}(\sfu(s),\slov u {\sft(s)}{\sfq(s)}) \dd s
\  = \  \int_{(0,\sfS)  {\cap} \rgs Bz} \meq 0\alpha
 {\sft(s)}{\sfq(s)}{\sft'(s)}{\sfq'(s)} \dd  s\,.
   \end{aligned}
\]
Combining the above estimates with \eqref{initial-step} and with the chain-rule
\eqref{eq:48strong} (which applies thanks to \eqref{hyp-4-chr}), we ultimately
conclude that
\[
  \begin{aligned}
 \eneq {\sft(0)}{\sfq(0)}+  \!\int_0^\sfS \! \pl_t \calE(\sft(s),\sfq(s))\dd s
    & \geq \eneq {\sft(\sfS)}{\sfq(\sfS)} + \int_{0}^\sfS \!\big(
      \pairing{}{\Spu}{{-}\mu(s)}{\sfu'(s)}{+}
      \pairing{}{\Spz}{{-}\zeta(s)}{\sfz'(s)} \big) \dd s 
    \\
    & \geq \eneq {\sft(\sfS)}{\sfq(\sfS)} + \int_0^\sfS \meq 0\alpha
    {\sft(s)}{\sfq(s)}{\sft'(s)}{\sfq'(s)} \dd s \,,
   \end{aligned}
\]
namely $(\sft,\sfq)$ fulfills the upper energy-dissipation estimate.
Therefore, by Lemma \ref{l:characterizBV} we conclude that
$(\sft,\sfq)$ is an (enhanced) $\pBV$ solution.
\end{proof}

\Section{True Balanced-Viscosity solutions} 
\label{ss:4.3}
 
This section is devoted to the the concept of true Balanced-Viscosity ($\BV$)
solutions, i.e.\ solutions defined on the original time interval $[0,T]$
instead via the artificial arc length $s \in [0,\sfS]$.  This concept will be
introduced in Section \ref{ss:5.1} in Definition \ref{def:trueBV}. The central
ingredient in this notion is a Finsler-type  transition  cost that
measures the energy dissipated at jumps of the curve $(u,z)$, see Definition
\ref{def:jump}.  In Section \ref{ss:5.2} we will gain further insight into the
fine properties of true $\BV$ solutions,  while Section \ref{su:Exist.BV}
states  our two existence results, Theorems \ref{thm:exist-trueBV} and
\ref{thm:exist-nonpar-enh}, in which $\BV$ solutions to the rate-independent
system $\RIS $ are obtained by taking the vanishing-viscosity limit of system
\eqref{van-visc-intro} in the real process time, \emph{without
  reparametrization}.  Section \ref{ss:5.3} addresses the non-parametrized
counterpart of enhanced $\pBV$ solutions  called \emph{enhanced $\BV$
  solutions}, and Section \ref{su:Comp.pBV.BV} provides how parametrized and
true $\BV$ solutions are related. 

We start with some notations for functions having well-defined jumps. 

\begin{notation}[Regulated functions]
\label{not:1.1}
\slshape
Given a Banach space $\mathbf{B}$, we denote by
\begin{equation}
\label{def:regulated}
\begin{aligned}
\Reg 0T{\mathbf{B}}: =\Big\{\: f: [0,T] \to \mathbf{B}\; \Big| \; \forall\,
t\in [0,T]: \  &\llim ft: =
\lim_{s\to t^-} f(s) \text{ exists in }\mathbf{B},  \\ 
& \rlim ft: = \lim_{r\to t^+}f(r) \text{ exists in }\mathbf{B}  \; \Big\} 
\end{aligned}
\end{equation}
the space of (everywhere defined) \emph{regulated functions} on $[0,T]$ with
values in $\mathbf{B}$, where we use $\llim f0:= f(0)$
and $\rlim fT: = f(T)$. The symbol $\rmB\rmV([0,T];\mathbf{B})$ denotes the space
of everywhere defined functions of bounded $\mathbf{B}$-variation such
that   $\rmB\rmV([0,T];\mathbf{B}) \subset \Reg 0T{\mathbf{B}}$ with continuous
embedding. 
\end{notation}

\noindent  Note that for $f \in \Reg 0T{\mathbf{B}}$ the three values
$f(t^-)$, $f(t)$, $f(t^+)$ may all be different for $t\in (0,T)$, and that
distinguishing these values will be crucial for our notion of $\BV$
solutions.  

For a given $z\in \rmB\rmV([0,T];\Spy)$ we also  introduce the $\calR$-variation
\begin{equation}
\label{R-variation}
\Var_{\calR}(z;[a,b]): = \sup\Bigset{ \sum_{i=1}^N \calR(z(t_i){-}z(t_{i-1}))}
   {N\in \N,\   a=t_0<t_1<\ldots<t_{N}=b }  
\end{equation}
for $ [a,b]\subset [0,T]$, and we  observe that 
\begin{equation}
\label{cited-VarR-later}
\Var_{\calR}(z;[a,b]) = \int_a^b \calR[z'](t) \dd t 
   \quad \text{ for } z \in \AC ([a,b];\Spy,\calR). 
\end{equation}
We mention in advance that true $\BV$ solutions are curves $q=(u,z)$, with
$u\in \rmB\rmV([0,T];\Spu)$ and $z\in\Reg 0T{\Spz} \cap \rmB\rmV([0,T];\Spy)$.  For such
$q=(u,z)$ we introduce the \emph{jump set}
\begin{equation}
  \label{def:jumpset}
  \mathrm{J}[q] = \mathrm{J}[u] \cup  \mathrm{J}[z] \text{ with } \mathrm{J}[w]
  : = \bigset{ t \in [0,T] }{ \llim wt \neq w(t) \text{ or } \rlim wt \neq w(t) };
\end{equation}
we record that $  \mathrm{J}[q]$ consists of at most countably many points. 
Note that for $ \mathrm{J}[z]$ the left and the right limits are considered with
respect to the norm topology of $\Spz$. For later use, we finally  observe that
  \begin{equation}
  \label{4later-use-embed}
  \rmL^\infty(0,T;\Spx) \cap \rmB\rmV([0,T];\Spy) \subset \Reg 0T{\Spz},
  \end{equation}
  which can be easily checked exploiting
 the (compact) embeddings $\Spx\Subset \Spz \subset \Spy$.

\Subsection{Definition of true $\BV$ solution}
\label{ss:5.1}

The (possibly asymmetric) Finsler cost function is obtained by minimizing an
`infinitesimal cost', depending on the fixed process time $t\in [0,T]$ and
defined in terms of the \RJMF\ $ \mename{0}{\alpha}$, along \emph{admissible
  transition curves} $\sfq : [0,1] \to \Spq$.  From now on, for better clarity
we will denote a generic transition curve by $\serifTeta$ in place of $\sfq$.

\begin{definition}[Admissible transition curves, Finsler cost]
\label{def:jump}
For given $t\in [0,T]$ and $q_0=(u_0,z_0), q_1=(u_1,z_1) \in \Spu\ti \Spz$, we
define the Finsler cost induced by $\mename 0\alpha$ by
\begin{equation}
  \label{Finsler-cost}
 \costq{\mename 0\alpha}{t} {q_0}{q_1}
  : = \inf_{\serifTeta \in  \admtcq t{q_0}{q_1}}
 \int_0^1 \mename 0\alpha [t, \serifTeta, 0, \serifTeta'] \dd r 
\end{equation}
with the short-hand notation $\mename 0\alpha [\cdot,\cdot,\cdot,\cdot]$ from
\eqref{short-hand-M0} and $\admtcq t{q_0}{q_1}$ the set of all admissible
transition curves at time $t$ between $q_0$ and $q_1$,  see Definition
\ref{def:adm-p-c}. 
\end{definition}
Thanks to the $1$-positive homogeneity of the functional 
$\serifTeta' \mapsto \mename 0\alpha [t,\serifTeta,0,\serifTeta']$, we
observe that it is not restrictive to suppose that all transition curves are
defined on $[0,1]$.

We are now ready to define a new variation called the $\mename 0 \alpha$-total
  variation of a curve $q=(u,z):[0,T]\to \Spq$. It consists, cf.\
\eqref{Finsler-variation} below, of the $\calR$-variation of $z$ as
defined in \eqref{R-variation} plus extra contributions at jump points
$t_*\in \mathrm{J}(q)$ that may arise through rate-independent or viscous
transition costs between $\llim q{t_*}$, $q(t_*)$, and $\rlim q{t_*}$.  These
extra contributions are given by the Finsler cost \eqref{Finsler-cost}, from
which the $\calR$-variation is subtracted to avoid that it is counted twice in
the $\mename 0 \alpha$-variation.  The resulting terms are positive because we
always have
$\costq{\mename 0\alpha}{t} {(u_0,z_0)}{(u_1,z_1)} \geq \calR(z_1{-}z_0)$ since
$ \mename 0\alpha [t,q,0,q'] \geq \calR(z')$ (using
$\mredq 0\alpha t{q}{0}{q'}\geq 0$).

\begin{definition}[$\mename 0\alpha$-variations]
\label{def:m.a.Variation}
Let $q =(u,z):[0,T] \to \Spq$ with $u \in \rmB\rmV([0,T];\Spu) $ and 
$z\in \mathrm{R}([0,T];\Spz) {\cap} $ $  \rmB\rmV([0,T];\Spy) )$
be a curve with $\sup_{t\in [0,T]} \mfE (q(t)) \leq E <\infty$ and jump set
$\mathrm{J}[q]$.  For closed subintervals $[a,b]\subset [0,T]$ we  define
\begin{enumerate}
\item the   \emph{extra Viscous Jump Variation} of $q$ induced by
  $\mename 0\alpha$
  on $[a,b]$ via
  \begin{equation}
    \label{eq:def.EVJC}
    \begin{aligned}
      \mathrm{eVJV}_{\mename 0\alpha}(q;[a,b])& : = \big(
     \costq{\mename0\alpha}{a}{q(a)}{\rlim q a} -\calR(\rlim z a{-}
      z(a))\big)   \\[0.3em]
      &\ \quad + \!\! \sum_{t \in \mathrm{J}[q]\cap (a,b)} \!\!
            \big(\costq{\mename 0\alpha}{t}{\llim q t}{q(t)}
      -\calR(z(t){-} \llim zt) \\[-0.7em]
      &\ \qquad \qquad \qquad +\costq{\mename0\alpha}{t}{q(t) }{\rlim q t }
      -\calR(\rlim z t{-} z(t))\big)
      \\[0.3em]
      &\ \quad + \big(\costq{\mename0\alpha}{b}{\llim qb}{q(b)}
      -\calR(z(b){-} \llim zb)\big) \, ;
    \end{aligned}
  \end{equation}
\item the $\mename 0\alpha$-total variation
  \begin{equation}
  \label{Finsler-variation}
    \Var_{\mename 0\alpha}(q;[a,b]): =    
    \mathrm{Var}_{\calR}(z;[a,b]) + \mathrm{eVJV}_{\mename 0\alpha}(q;[a,b])\,.
  \end{equation}
\end{enumerate}
\end{definition}

\noindent With slight abuse of notation, here we will use the symbol
$ \Var_{\mename 0\alpha}$ for the $\mename 0\alpha$-total variation, although this is
not a standard form of total variation, cf.\ \cite[Rem.\,3.5]{MRS12}.
    
Just like for its parametrized counterpart, our definition of (true) $\BV$
solutions will rely on a suitable chain-rule requirement, enhancing Hypothesis
\ref{h:ch-rule} to curves $q=(u,z)$ having just a $\mathrm{BV}$-time
regularity.  For consistency, we will formulate this $\mathrm{BV}$-chain rule
as a {hypothesis}.

\begin{hypothesis}[Chain rule in BV]
\label{hyp:BV-ch-rule}
For every curve  $q=(u,z):[0,T]\to \Spq$ with $u \in \rmB\rmV([0,T];\Spu)$
and $z \in \mathrm{R}([0,T];\Spz) {\cap}$ 
$ \rmB\rmV([0,T];\Spy)$ and satisfying 
\[
 \slov ut{q(t)} + \slov zt{q(t)} = 0  \quad \text{for all } \ t \in
 [0,T]\setminus \mathrm{J}[q]
\]
the following chain-rule estimate holds, for all closed subset
$[t_0,t_1] \subset [0,T]$:
\begin{equation}
  \label{BV-ch-rule}
  \begin{aligned}
    & \text{the map  } t\mapsto \eneq t{q(t)} \text{ belongs to } 
      \rmB\rmV([0,T]) \text{ and}
    \\
    & \eneq {t_1}{q(t_1)} - \eneq {t_0}{q(t_0)} - \int_{t_0}^{t_1}\pl_t
    \eneq{s}{q(s)} \dd s \geq - \Variq{\mename 0\alpha}q{t_0}{t_1} .
  \end{aligned}
 \end{equation}
\end{hypothesis} 

In Lemma \ref{l:nice-implication} in Appendix \ref{s:app-CR} we 
to show that the parametrized chain rule from Hypothesis \ref{h:ch-rule-param}
also guarantees the validity of Hypothesis \ref{hyp:BV-ch-rule}. Hence,
subsequently we will directly assume Hypothesis \ref{h:ch-rule-param}.

Let us now give our definition of $\BV$ solutions 
$q:[0,T]\to \Spu\ti \Spz$, i.e.\ $\BV$ solutions without parametrization.  We
sometimes use the word `true $\BV$ solution' to distinguish $\BV$ solutions from
`parametrized $\BV$ solutions', hence there is no difference between $\BV$
solutions and true $\BV$ solutions. Definition \ref{def:trueBV} below is a natural extension of the concept of
$\BV$ solutions  introduced in \cite[Def.\,3.10]{MRS13}, now taking care of the
equilibrium condition \eqref{stationary-u} for $u$ corresponding to the regime $\rgs
Eu$, the local stability condition \eqref{loc-stab} for $z$ corresponding to
the regime $\rgs Rz$, and an energy-dissipation balance
\eqref{enid-strictBV}. Hence, all  jump behavior is compressed into the
definition of the Finsler cost $\costname{\mename 0\alpha}$, the total
$\mename 0\alpha$-variation, and the validity of the energy-dissipation balance.

\begin{definition}[$\BV$ solutions]
\label{def:trueBV}
Let the rate-independent system $\RIS$ fulfill Hypothesis
\ref{hyp:BV-ch-rule}. A curve
$q=(u,z):[0,T]\to \Spq$ is called a \emph{Balanced-Viscosity solution} to $\RIS$
if satisfies the following conditions:
\begin{subequations}
\label{eq:def.BVsol}
\begin{itemize}
\item[\textbullet] $u \in \mathrm{BV}([0,T];\Spu)$ and $z \in  \mathrm{R}([0,T];\Spz) \cap
                  \rmB\rmV([0,T];\Spy) $; 
\item[\textbullet] the stationary equation
  \begin{equation}
   \label{stationary-u}
   \slov ut{q(t)}=0  \quad \text{for all } t \in [0,T]\setminus \mathrm{J}[q];
  \end{equation}
\item[\textbullet] the  local stability condition 
  \begin{equation}
    \label{loc-stab}
    \slov zt{q(t)}=0  
    \quad \text{for all } t \in [0,T]\setminus \mathrm{J}[q];
  \end{equation}
\item[\textbullet] the energy-dissipation balance
  \begin{equation}
    \label{enid-strictBV}
    \eneq t{q(t)} +    \Variq{\mename 0\alpha}qst = \eneq s{q(s)}
    + \int_s^t \pl_t \eneq r{q(r)}  \dd r \quad \text{for }
    0\leq s\leq t  \leq T\,.
  \end{equation}
\end{itemize}
\end{subequations}
\end{definition}

We postpone to Section \ref{ss:5.3} a result comparing parametrized and true
$\BV$ solutions.  With the exception of our existence results Theorems
\ref{thm:exist-trueBV} and \ref{thm:exist-nonpar-enh}, in the following
statements we will omit to explicitly recall the assumptions of Section
\ref{s:setup}; we will only invoke the chain rule from Hyp.\
\ref{h:ch-rule-param}.

\Subsection{Characterization and fine properties of $\BV$ solutions}
\label{ss:5.2}

In the same way as for their parametrized version, thanks to the chain rule
\eqref{BV-ch-rule} we have a characterization of $\BV$ solutions in terms of
the upper energy estimate $\leq $ in \eqref{enid}, on the \emph{whole} interval
$[0,T]$.  We also have a second characterization in terms of  a simple
energy-dissipation balance  like for energetic solutions as in 
\cite{MieThe99MMRI,DM-Toa2002,Miel05ERIS,MieRouBOOK},   combined with jump
conditions that balance  the different dissipation mechanics that may be
active  at a jump point. The proof of Proposition \ref{prop:BV-charact}
follows, with minimal changes, from the arguments for \cite[Cor.\,3.14,
Thm.\,3.15]{MRS13}, to which the reader is referred.

\begin{proposition}
\label{prop:BV-charact}
Let the rate-independent system $\RIS$ fulfill Hypothesis
\ref{h:ch-rule-param}.  For a curve
$q=(u,z) \in \rmB\rmV([0,T];\Spu) \ti ( \mathrm{R}([0,T];\Spz) {\cap}
\rmB\rmV([0,T];\Spy) ) $ fulfilling the stationary equation \eqref{stationary-u},
and the local stability \eqref{loc-stab}, the following  three assertions are
equivalent:
\begin{enumerate}
\item $q$ is a $\BV$ solution of system $\RIS$;
\item $q$ fulfills 
  \begin{equation}
    \label{enineq-sBV0T}
    \eneq T{q(T)} +    \Var_{\mename 0\alpha}(q;[0,T]) \leq \eneq 0{q(0)} +
    \int_0^T \pl_t \eneq r{q(r)} \dd r; 
  \end{equation}
\item $q $ fulfills the $\calR$-energy-dissipation inequality
  \begin{equation}
    \label{enineq-ene-st}
    \eneq t{q(t)} +    \Var_{\calR}(q;[s,t]) \leq \eneq s{q(s)} + \int_s^t
    \pl_t \eneq r{q(r)} \dd r 
  \end{equation}   
  with $\Var_\calR$ from \eqref{R-variation}, \emph{and} the jump conditions at
  every $t\in   \mathrm{J}[q]\, :$
  \begin{equation}
    \label{jumpBV}
    \begin{aligned}
       \eneq t{\llim qt} - \eneq t{q(t)}  & = 
    \costq{\mename0\alpha}{t}{\llim qt}{q(t)},
      \\
        \eneq t{q(t)}  - \eneq t{\rlim qt}& =  
     \costq{\mename0\alpha}{t}{q(t)}{\rlim qt}.
    \end{aligned}
  \end{equation}
\end{enumerate}
\end{proposition}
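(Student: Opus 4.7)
The strategy is to exploit the additivity of $\Var_{\mename 0\alpha}$ on disjoint subintervals together with the BV chain rule (Hypothesis \ref{hyp:BV-ch-rule}), which is available here because the stationary equation \eqref{stationary-u} and the local stability \eqref{loc-stab} provide exactly the hypothesis $\slov ut{q(t)}+\slov zt{q(t)}=0$ outside $\mathrm{J}[q]$ needed to invoke \eqref{BV-ch-rule}.

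The implication (1)$\Rightarrow$(2) is immediate upon choosing $s=0$, $t=T$ in \eqref{enid-strictBV}. For (2)$\Rightarrow$(1), I would fix $0\le s\le t\le T$ and apply \eqref{BV-ch-rule} on each of the three subintervals $[0,s]$, $[s,t]$, $[t,T]$, obtaining three lower estimates
\[
  \eneq\sigma{q(\sigma)}-\eneq\tau{q(\tau)}-\int_\tau^\sigma\pl_t\eneq r{q(r)}\dd r\;\ge\;-\Variq{\mename0\alpha}q\tau\sigma.
\]
Adding them and using the additivity $\Var_{\mename 0\alpha}(q;[0,T])=\sum\Var_{\mename 0\alpha}(q;[\cdot,\cdot])$ yields an inequality reverse to \eqref{enineq-sBV0T}. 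Combined with the hypothesis (2) this forces equality on the whole of $[0,T]$ \emph{and} on each subinterval, since a sum of $\ge$-inequalities that saturates globally must saturate term by term. In particular \eqref{enid-strictBV} holds for the arbitrary $s\le t$ chosen.

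For the equivalence (1)$\Leftrightarrow$(3) I would use the decomposition
\[
  \Variq{\mename 0\alpha}qst = \Var_{\calR}(z;[s,t]) + \mathrm{eVJV}_{\mename 0\alpha}(q;[s,t]),
\]
and the fundamental estimate $\costq{\mename 0\alpha}{\sigma}{q_0}{q_1}\ge\calR(z_1{-}z_0)$, which holds because $\mename 0\alpha[\sigma,\serifTeta,0,\serifTeta']\ge\calR[\serifTeta_\sfz']$ along any admissible transition. Thus each summand in $\mathrm{eVJV}_{\mename 0\alpha}$ is non-negative, so $\Var_{\mename 0\alpha}\ge\Var_\calR$, and (1)$\Rightarrow$(3) by extracting the $\calR$-part of \eqref{enid-strictBV} as an inequality. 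The jump conditions \eqref{jumpBV} then follow by evaluating \eqref{enid-strictBV} on shrinking intervals $[t{-}\delta,t]$ and $[t,t{+}\delta]$ around each $t\in\mathrm{J}[q]$: in the limit $\delta\to 0^+$ the $\calR$-variation on these intervals reduces to $\calR(z(t){-}\llim zt)$ and $\calR(\rlim zt{-}z(t))$, while the extra jump cost isolates the single Finsler term, after canceling $\calR$-contributions; the time-integral $\int\pl_t\calE\,\dd r$ vanishes. Conversely, (3)$\Rightarrow$(1) is obtained by observing that the $\calR$-energy-dissipation inequality \eqref{enineq-ene-st} combined with the jump equalities \eqref{jumpBV} at every $t\in\mathrm{J}[q]\cap[s,t]$ reproduces \eqref{enineq-sBV0T} for the subinterval $[s,t]$, upon reassembling the $\calR$-variation and the extra jump contributions into $\Var_{\mename 0\alpha}(q;[s,t])$. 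Then the already-proved implication (2)$\Rightarrow$(1) (applied on every subinterval) closes the loop.

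The delicate point I would expect to grind is the passage to the limit at an isolated jump: one must argue that the $\calR$-variation of $z$ on $[t{-}\delta,t{+}\delta]\setminus\{t\}$ vanishes as $\delta\to 0^+$ (using $z\in\mathrm{R}([0,T];\Spz)\cap\mathrm{BV}([0,T];\Spy)$ and \eqref{4later-use-embed}), so that only the three single-point Finsler contributions from $\mathrm{eVJV}_{\mename 0\alpha}$ survive. For this one must also ensure that $\int_{t-\delta}^{t+\delta}\pl_t\calE(r,q(r))\dd r\to 0$, which follows from \eqref{h:1.3d} and the energy bound \eqref{est1}. Once this local analysis is in place, reassembling into the global statement is formal and follows \cite[Cor.\,3.14, Thm.\,3.15]{MRS13} verbatim.
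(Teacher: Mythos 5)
The paper does not supply its own proof of Proposition \ref{prop:BV-charact}; it simply refers the reader to \cite[Cor.\,3.14, Thm.\,3.15]{MRS13}. Your sketch reconstructs that argument and agrees with it in structure: the implication (1)$\Rightarrow$(2) is by inspection, (2)$\Rightarrow$(1) uses additivity of $\Var_{\mename 0\alpha}$ together with the BV chain rule and a saturation argument, and (1)$\Leftrightarrow$(3) isolates the jump contributions by shrinking intervals. The (1)$\Leftrightarrow$(2) part of your sketch is complete and correct.

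For (1)$\Rightarrow$(3), however, you flag only the vanishing of the local $\calR$-variation and of $\int\pl_t\calE$, and tacitly identify $\lim_{\delta\to 0^+}\calE(t{-}\delta,q(t{-}\delta))$ with $\calE(t,\llim qt)$ (and likewise at $t^+$). This identification is the genuine crux and does not follow just from passing to the limit in \eqref{enid-strictBV}. Weak lower semicontinuity of $\calE$ gives $\lim_{\delta\to 0^+}\calE(t{-}\delta,q(t{-}\delta))\geq \calE(t,\llim qt)$, and the parametrized chain rule (Hypothesis \ref{h:ch-rule-param} applied to an admissible transition curve with $\sft'\equiv 0$) gives $\calE(t,\llim qt)-\calE(t,q(t))\leq \costq{\mename 0\alpha}t{\llim qt}{q(t)}$; at the right limit these two combine with the energy balance to close the argument, but at the left limit they point in the \emph{same} direction, so equality is not forced. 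One way to close the gap is to apply the BV chain rule \eqref{BV-ch-rule} to the modified curve $\tilde q$ that coincides with $q$ off $t$ and equals $\llim qt$ at $t$ (a legitimate BV curve satisfying \eqref{stationary-u}--\eqref{loc-stab} outside $\mathrm{J}[\tilde q]$, since the slopes are lsc); on $[t{-}\delta,t]$ this yields $\calE(t,\llim qt)-\lim_{\delta\to 0^+}\calE(t{-}\delta,q(t{-}\delta))\geq 0$, which combined with the energy balance identity $\lim_{\delta\to 0^+}\calE(t{-}\delta,q(t{-}\delta))=\calE(t,q(t))+\costq{\mename 0\alpha}t{\llim qt}{q(t)}$ yields the missing inequality. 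You should supply this (or an equivalent) step; without it, the sketch proves only half of the first jump condition in \eqref{jumpBV}.
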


Conditions \eqref{jumpBV} provide a fine description of the behavior of $\BV$
solutions $(u,z)$ at jumps. However, the $\inf$ in the definition of
$\costname{\mename 0\alpha}$ need not be attained, as the functional
$\mename 0\alpha$ does not control the norm of the space where we look for the
$\tht u$-component of admissible transition curves.  Nonetheless, in certain
situations (cf.\ the proof of Theorem \ref{th:pBV.v.BVsol} below) the existence
of transitions attaining the optimal cost will play a key role. In fact, it
will be sufficient to require the existence of these curves in cases in which
the Finsler cost equals the energy release, which happens at the jump points of
a true $\BV$ solution as in \eqref{jumpBV}. That is why, hereafter we will
refer to such transitions as \emph{optimal jump transitions}, a notion
that will be made precise in  Definition \ref{def-OT}. Therein we restrict
to transition curves, defined on $[0,1]$, connecting points $q_-=(u_-,z_-)$ and
$q_+=(u_+,z_+)$ such that the $u$-components $u_-$ and $u_+$ are at
equilibrium, and the $z$-components $z_-$ and $z_+$ are locally stable.

\begin{definition}
\label{def-OT}
Given $t\in [0,T]$ and $q_-=(u_-,z_-), \, q_+=(u_+,z_+) \in \Spq$
fulfilling $\slov ut{q_\pm} = \slov zt{q_\pm}=0$, we call an admissible curve
$\serifTeta \in  \admtcq t{q_-}{q_+} $ an \emph{optimal transition}
between $q_-$ and $ q_+ $ at time $t$ if it fulfills
\[
  \eneq t{q_-} - \eneq t{q_+} = \costq{\mename 0\alpha}{t}{q_-}{q_+} = \mename
  0\alpha [t,\serifTeta, 0, \serifTeta'] \quad \aein\, (0,1)\,. 
\]
Furthermore, we say that $\serifTeta =  (\db\serifteta u,\db\serifteta z)$ is of
\begin{itemize}
\item[\textbullet] \emph{sliding type} if
  $\slov ut{\serifTeta(r)}=\slov zt{\serifTeta(r)}=0$ for all $r\in [0,1]$;
\item[\textbullet] \emph{viscous type}
  $\slov ut{\serifTeta(r)}+\slov zt{\serifTeta(r)}>0$ for all $r\in (0,1)$.
\end{itemize}
\end{definition}

\noindent
Observe that an optimal transition of \emph{viscous} type can be governed by
viscosity either in $u$, or in $z$, or in both variables. With
the very same argument as for the proof of \cite[Prop.\,3.19]{MRS13}, to which
we refer for all details, we can also show that every optimal transition can be
decomposed in a canonical way into an (at most) countable collection of
\emph{sliding} and \emph{viscous} transitions.  We also refer to
\cite[Sec.\,2.3]{RiScVe21TSSN} for the  concept   of so-called \emph{two-speed
  solutions},  which are defined in terms of slow rate-independent parts
connected by jumps which themselves a concatenation of at most countable 
`jump resolution maps'.

\subsection{Existence of $\BV$ solutions}
\label{su:Exist.BV}
A most interesting feature of $\BV$ solutions, already observed in
\cite{MRS13}, is that it is possible to prove their existence by directly
taking the vanishing-viscosity limit of the viscous system \eqref{dne-q},
\emph{without} reparametrization.  In the following result, we take a
slightly different viewpoint and in fact prove that every limit point $q$ (in
the sense of pointwise weak convergence) of a sequence of viscous solutions
$(q_\epsk)_k = (u_\epsk,z_\epsk)_k$, starting from well-prepared initial data
and such that the $\rmB\rmV([0,T];\Spu)$-norm of $(u_\epsk)_k$ is \emph{a priori
  bounded} (cf.\ \eqref{estBV} below), is in fact a true $\BV$ solution.  In
fact, the existence of limit points can be proved, based on the energy
estimates from Lemma \ref{l:1} and on \eqref{estBV}, via a standard compactness
argument and the Helly Theorem.

The statement of Theorem \ref{thm:exist-trueBV} below mirrors that of Theorem
\ref{thm:existBV}: 
\begin{compactitem}
\item First, \eqref{estBV} corresponds exactly to the a priori estimate
  for $\|\sfu_{\epsk}'\|_{\Spu} $ in \eqref{condition-4-normali}, and to
  estimate \eqref{est2} established in Proposition \ref{l:3.2}.  Sufficient
  conditions for this estimate have been discussed in Section
  \ref{su:AprioViscSol}; alternatively, in concrete examples this estimate
  could be verified by direct calculations.
\item Secondly, in the same way as with \eqref{eq:E.M.cvg.PBV} for parametrized
  solutions, with \eqref{cvs-BV-b}--\eqref{cvs-BV-c} ahead we are stating the
  convergence of the left-hand side terms in the viscous energy-dissipation
  estimate \eqref{enid-ineq} - in particular, \eqref{cvs-BV-c} ensures the
  convergence for $\eps_k \to 0^+$  of
\begin{equation}
\label{expl-me-1}
\begin{aligned}
  & \int_s^t \meq \epsk{\alpha}{r}{q_\epsk(r)}1{q_\epsk'(r)} \dd r
  \\
  & = \int_s^t \left( \disve {u}{\eps_k^\alpha}( u_\epsk'(r)) {+}
    \calR(z_\epsk'(r)) {+} \disve z\epsk( z_\epsk'(r)) {+} \frac{\slov
      ur{q_\epsk(r)}}{\epsk^\alpha} {+} \frac{\slov zr{q_\epsk(r)}}\epsk
  \right) \dd r
\end{aligned}
\end{equation}
to the corresponding terms in the energy-dissipation balance
\eqref{enid-strictBV}. We emphasize here that, for \eqref{cvs-BV-c} to hold it
is crucial that the definition of the total variation functional
$\Var_{\mename 0\alpha}$, in the general closed subinterval
$[s,t] \subset [0,T]$, takes into account the appropriate contributions at the
jump points. In particular, we point out that, by \eqref{eq:def.EVJC}, also the
jumps occurring at the extrema $s$ and $t$ are  taken into account exactly,
in the sense that $\costq{\mename0\alpha} s {q(s)}{q(s^+)}=\lim_{\sigma \to
  s^+} \lim_{\eps_k\to 0} \int_0^\sigma \mename{\eps_k}\alpha(\cdot) \dd r $. 
\end{compactitem}

\begin{theorem}[Convergence to $\BV$ solutions]
\label{thm:exist-trueBV} 
Let the rate-independent system $\RIS$ fulfill Hypotheses \ref{hyp:setup},
\ref{hyp:diss-basic}, \ref{hyp:1}, \ref{h:closedness}, \ref{hyp:Sept19}, and
\ref{h:ch-rule-param}. For any null sequence $(\eps_k)_k$ let
$(q_\epsk)_k = (u_{\eps_k},z_{\eps_k})_k \subset \AC ([0,T]; \Spq)$ be a
sequence of solutions to the generalized gradient system \eqref{dne-q}, such
that convergences \eqref{init-data-cv} to a pair $(u_0,z_0)\in \domq$ hold at
the initial time $t=0$, and such that, in addition,
\begin{equation}
  \label{estBV}
\widehat S =  \sup_k \| u_\epsk\|_{\rmB\rmV([0,T];\Spu)} <\infty.
\end{equation}
Let $q:[0,T] \to \Spq$ be such that, along a not relabeled subsequence, there
holds as $k\to\infty$
\begin{equation}
\label{ptw-weak-limit}
q_\epsk(t) \weakto q(t) \quad \text{in } \Spq \quad \text{for all } t \in [0,T]
\end{equation} 
(every sequence in the above conditions possesses at least one limit point in the
sense of \eqref{ptw-weak-limit}).  Then,
\begin{enumerate}
\item $q=(u,z) \in \rmB\rmV([0,T];\Spu) \ti (\Reg 0T{\Spz} \cap \BV ([0,T];\Spy))$,
  and $q$ is a true $\BV$ solution to the rate-independent system $\RIS$;
\item there hold the additional convergences as $k\to\infty$
  \begin{subequations}
    \label{cvs-BV}
    \begin{align}
      &
      \label{cvs-BV-a}
      u_\epsk(t)\weakto u(t) \text{ in } \Spw, \qquad z_\epsk(t)\weakto z(t)
      \text{ in } \Spx  && \text{for all } t \in [0,T],
      \\
    \label{cvs-BV-b}
    &
    \eneq t{q_\epsk(t)} \to \eneq t{q(t)}  \quad \text{for all } t \in [0,T],
    \\
    \label{cvs-BV-c}
    & \lim_{k\to\infty}  \int_s^t    
       \meq \epsk{\alpha}{r}{q_\epsk(r)}1{q_\epsk'(r)}  \dd r 
    = \Var_{\mename 0\alpha}(q;[s,t])  
    && \text{for all  $0\leq s
      \leq t \leq T$.} 
    \end{align}
  \end{subequations}
\end{enumerate}
\end{theorem}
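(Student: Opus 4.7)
The strategy is to reduce the theorem to the parametrized existence result (Theorem~\ref{thm:existBV}) and then transfer the conclusion back to the real-time variable, exploiting the uniform $\rmB\rmV$ bound \eqref{estBV} on $u_\epsk$ which is the missing ingredient preventing collapse to a single point.

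First I would use the basic a priori estimates of Lemma~\ref{l:1} to obtain $\sup_k \mfE(q_\epsk(\cdot)) \leq C$ and $\|z'_\epsk\|_{\rmL^1(0,T;\Spy)}\leq C$, which combined with \eqref{estBV} yields a uniform bound on the energy-dissipation arclength
\[
\sfs_\epsk(t):=\int_0^t\Big(1+\meq \epsk\alpha{r}{q_\epsk(r)}{1}{q'_\epsk(r)} + \|u'_\epsk(r)\|_\Spu\Big) \dd r,
\]
so that $\sfS_\epsk=\sfs_\epsk(T)\leq \widehat{\sfS}$. Setting $\sft_\epsk:=\sfs_\epsk^{-1}$ and $\sfq_\epsk:=q_\epsk\circ\sft_\epsk$ (extended by stationarity to a common interval $[0,\widehat\sfS]$), the normalization \eqref{normal-cond} and the hypothesis \eqref{condition-4-normali} of Theorem~\ref{thm:existBV} hold; applying that theorem I would extract a subsequence along which $(\sft_\epsk,\sfq_\epsk)\to(\sft,\sfq)$ with the convergences \eqref{cvs-eps}--\eqref{eq:E.M.cvg.PBV}, and $(\sft,\sfq)$ is a parametrized $\BV$ solution of $\RIS$.

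Next I would identify the pointwise weak limit $q$ given by \eqref{ptw-weak-limit} with the ``unparametrized'' version of $(\sft,\sfq)$. By Helly's selection theorem applied to the nondecreasing functions $\sfs_\epsk$, up to a further subsequence there is a nondecreasing $\sigma:[0,T]\to[0,\widehat{\sfS}]$ with $\sfs_\epsk(t)\to\sigma(t)$ for every $t$. Since $\sft_\epsk$ is $1$-Lipschitz, the uniform convergence \eqref{cvs-eps-t} gives $\sft(\sigma(t))=t$ for all $t\in[0,T]$, whence for $t\notin\mathrm{J}[q]$ (corresponding to single-point preimages of $\sft$) the identity $q(t)=\sfq(\sigma(t))$ follows from \eqref{cvs-eps-u-z}; the jump points $t_*\in\mathrm{J}[q]$ correspond to nontrivial plateaus $\sft^{-1}(t_*)=[\sigma(t_*^-),\sigma(t_*^+)]$, on which the restriction of $\sfq$ is an admissible transition curve connecting $\llim q{t_*}$ to $\rlim q{t_*}$ (with an intermediate value $q(t_*)$ at some interior point). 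The $\rmB\rmV$-regularity of $q$ follows directly from \eqref{estBV}, from the uniform $\rmL^1(0,T;\Spy)$-bound on $z'_\epsk$, and Helly; the regulated property of $z$ from \eqref{4later-use-embed} given the sublevel bound \eqref{h:2}. The stationarity \eqref{stationary-u} and local stability \eqref{loc-stab} at $t\notin\mathrm{J}[q]$ transfer from the parametrized conditions of Definition~\ref{def:adm-p-c}(ii), applied at any $s=\sigma(t)$ where $\sft'(s)>0$.

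For the energy-dissipation balance \eqref{enid-strictBV} I would slice the parametrized balance \eqref{def-parBV} over $[\sigma(s),\sigma(t)]$ and recognize
\[
\int_{\sigma(s)}^{\sigma(t)} \!\!\mathfrak{M}_0^\alpha[\sft,\sfq,\sft',\sfq'](\rho)\dd\rho \;=\; \Var_\calR(z;[s,t]) \;+\; \mathrm{eVJV}_{\mename 0\alpha}(q;[s,t]) \;=\; \Var_{\mename 0\alpha}(q;[s,t]),
\]
where the contribution on plateaus $\sft^{-1}(t_*)$ bounds from above the Finsler cost $\costq{\mename 0\alpha}{t_*}{\cdot}{\cdot}$ by definition, giving the upper estimate in \eqref{enid-strictBV}; the opposite inequality follows from Hypothesis~\ref{hyp:BV-ch-rule} (which in turn is derivable from Hypothesis~\ref{h:ch-rule-param} as in Lemma~\ref{l:nice-implication}). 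Finally, \eqref{cvs-BV-a} follows from \eqref{ptw-weak-limit} and the coercivity \eqref{h:2}, while \eqref{cvs-BV-b}--\eqref{cvs-BV-c} are obtained by combining the lower-semicontinuity estimates (from \eqref{h:1.1} and Ioffe's theorem via Proposition~\ref{pr:Mosco.Meps}) with the attained equality in the limiting balance: both inequalities would then have to be equalities, and the liminf/limsup coincide.

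\textbf{Main obstacle.} The delicate step is the identification of the jump contributions in Step~3: one must show that along the plateau $\sft^{-1}(t_*)$ the restriction of $\sfq$ realizes (or at least matches) the Finsler infimum $\costq{\mename 0\alpha}{t_*}{\llim q{t_*}}{q(t_*)}$ and symmetrically for $q(t_*)\to \rlim q{t_*}$, with no loss of dissipation in the limit $\eps_k\to0^+$. This requires carefully tracking that the arclength $\sfs_\epsk$ does not ``compress'' time near jumps in a way that would let dissipation escape; the normalization \eqref{normal-cond} together with the strong convergence $\sfz_\epsk\to\sfz$ in $\rmC^0([0,\widehat\sfS];\Spz)$ from \eqref{cvs-eps-u-z-added} ensures that the full limiting variation is captured, but matching the extra jump contributions at the endpoints $s$ and $t$ of an arbitrary subinterval demands the precise asymptotic formula $\costq{\mename0\alpha}{s}{q(s)}{\rlim qs}=\lim_{\sigma\to s^+}\lim_{k\to\infty}\int_s^\sigma \mename{\epsk}\alpha(\cdot)\dd r$, which is the genuinely nontrivial content of \eqref{cvs-BV-c}.
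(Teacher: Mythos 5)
Your proposal is correct and follows essentially the same route as the paper: reparametrize by the energy-dissipation arclength, invoke Theorem~\ref{thm:existBV} to obtain a $\pBV$ limit $(\sft,\sfq)$, then transfer the conclusion back to real time. Two remarks on where you diverge, and where the paper saves you work.

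First, the identification of $q$ with the projection of $(\sft,\sfq)$: you apply Helly to $\sfs_\epsk$ to produce a nondecreasing $\sigma$ and then argue $q(t)=\sfq(\sigma(t))$. The paper does something slightly leaner: for fixed $t_*$ it picks $s_k$ with $\sft_\epsk(s_k)=t_*$, extracts $s_k\to s_*$, and uses the uniform $\rmC^0([0,\sfS];\R\ti\Spq_{\mathrm{weak}})$ convergence from \eqref{cvs-eps} together with $q_\epsk(t_*)\weakto q(t_*)$ to conclude $q(t_*)=\sfq(s_*)$, i.e.\ $q(t)\in\{\sfq(s):\sft(s)=t\}$ for every $t$. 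Both work; the paper's version avoids the extra diagonalization, and it is all one needs to invoke Theorem~\ref{th:pBV.v.BVsol}(1). Your restriction to $t\notin\mathrm{J}[q]$ when establishing $q(t)=\sfq(\sigma(t))$ is actually unnecessary — the same convergence argument applies at jump times since $q(t)$ is already fixed by \eqref{ptw-weak-limit}; you recover this implicitly by saying the intermediate value lies on the transition curve, but the cleaner formulation of the identity holds for all $t$.

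Second, and more importantly: you go on to re-derive the $\BV$-solution properties of $q$ (stationarity, local stability, energy balance, variation identity) directly, whereas the paper simply applies Theorem~\ref{th:pBV.v.BVsol}(1), whose proof — carried out in Section~\ref{su:Comp.pBV.BV} — already establishes exactly \eqref{equality-between-variations} and that the projection of a non-degenerate surjective $\pBV$ solution is a true $\BV$ solution. The ``main obstacle'' you flag (matching the Finsler cost along plateaus with no loss of dissipation at the endpoints of arbitrary $[s,t]$) is not an obstacle given the tools at hand: the enhanced convergence \eqref{cvs-eps-M} from Theorem~\ref{thm:existBV} already gives convergence (not merely $\liminf$) of the parametrized dissipation integrals, and \eqref{equality-between-variations} converts these into exact $\Var_{\mename 0\alpha}$ values on $[\sft(s_0),\sft(s_1)]$. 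The paper's Step~3 then proves \eqref{cvs-BV-b} and \eqref{cvs-BV-c} by the same sandwich argument you sketch. So the content is fine, but citing the bridge theorem would shorten your proof substantially and would make the treatment of the endpoint jump contributions in \eqref{eq:def.EVJC} automatic rather than ``the genuinely nontrivial'' part.
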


\noindent The {proof} will be carried out in Section \ref{ss:8.2}.

\begin{remark}[Vanishing-viscosity approximation versus $\BV$ solutions]
\label{rm:VVAvsBV} We emphasize that the concept of $\BV$ solutions  enjoys    better
closedness properties than defining solutions simply as all the limiting points
in the vanishing-viscosity approximation. Such solutions are called
`\emph{approximable}' in \cite{Miel11DEMF} and there, in  Examples 2.5 and 2.6,
it is shown in a simple model with $\Spz=\R$ that there  are  more $\BV$ solutions
than approximable solutions.  It is also made apparent that,    for systems with $\delta$-dependent energy
$\calE_\delta$,  approximable solutions $q^\delta:[0,T]\to \R$ may have a
limit $q^{\delta_*} $for $\delta\to \delta_*$ that is no longer an approximable
solution, but $q^{\delta_*}$ is still a $\BV$ solution. Thus, $\BV$ solutions
seem to have better stability properties, see e.g.\ \cite[Thm.\,4.8]{MRS2013}.    
\end{remark}

\begin{remark}[Existence of $\BV$ solutions by time discretization]
\label{rm:BVSol.viaTD}\slshape
Another interesting features of true $\BV$ solutions is that they
can be obtained as limits of discrete solutions of the time-incremental scheme
\begin{equation}
\label{tim-q}
q_{\tau,\eps}^n \in
\mathop{\mathrm{Argmin}}\limits_{q\in \mathbf{Q}} 
{\Big\{\tau  \Psi_{\eps,\alpha}\Big(\frac{q{-}q_{\tau,\eps}^{n-1}}{\tau} \Big)
  + \calE(t_\tau^n,q)\Big\}}, \quad n =1,\,\ldots \,, N_\tau
\end{equation}
with $\Psi_{\eps,\alpha}$ from \eqref{eq:def.Psi.e.a}, as the viscosity
parameter $\eps$ \emph{and} the time-step $\tau$ jointly tend to $0$. (Of
course, \emph{fixing} $\eps>0$ and letting $\tau\to 0^+$ in \eqref{tim-q} gives
rise to solutions  $q_\eps:[0,T]\to \Spq$  of the generalized gradient
system \eqref{GGS-structure}).  This alternative construction of $\BV$
solutions in the joint discrete-to-continuous and vanishing-viscosity limit of
the time-incremental scheme for viscous solutions was carefully explored in
 \cite[Thm.\,4.10]{MRS12} and \cite[Thm.\,3.12]{MRS13}. Following these
lines it is possible to show convergence to $\BV$ solutions along (a
subsequence of) any sequence $(\tau_k,\eps_k)$ as long as $\tau_k$ tends to $0$
faster than the time scales in our system, i.e.\ 
\begin{equation}
   \label{cond-alpha-eps/tau} 
      \lim_{k\to\infty} \frac{\tauk}{\min\{ \eps_k^\alpha,\epsk\}}=0\,.
\end{equation}
To avoid overburdening of the exposition here we refrain from  giving 
a precise convergence statement, but refer  to \cite[Thm.\
3.12]{MRS13}, which can adapted to our setup using
condition \eqref{cond-alpha-eps/tau}. The same applies to the convergence of
time-discrete solutions to {enhanced} $\BV$ solutions  introduced below. 
\end{remark}

\Subsection{Enhanced $\BV$ solutions}
\label{ss:5.3}   

This solution concept is to be compared with the notion introduced in
\cite[Def.\ 3.21]{MRS13} and, of course, with enhanced $\pBV$ solutions. In
particular, recall that for an enhanced $\pBV$ solution
$(\sft,\sfq) = (\sft, \sfu,\sfz)$ we required the additional regularity
$\sfz \in \AC ([0,\sfS];\Spz)$.  Accordingly, an \emph{enhanced $\BV$ solution}
$q=(u,z)$ is required to fulfill $z\in \rmB\rmV([0,T];\Spz)$. Moreover,
enhanced $\BV$ solutions enjoy the additional regularity property that at all
jump points the left and right limits are connected by optimal transitions with
finite length in $\Spu{\ti}\Spz$, such that the total length of the connecting
paths $\teta =(\tht u,\tht z)$ is finite. In contrast, for general $\BV$
solutions it is only required that length of the $\tht u$-component of
an optimal jump transition is finite in $\Spu$.
    
\begin{definition}[Enhanced $\BV$ solutions]
\label{def:enhanced-strict-BVsols}
A curve $q:(u,z):[0,T]\to \Spq$ is called an \emph{enhanced  $\BV$ solution} of
$\RIS$, if  it is a $\BV$ solution and it satisfies the following additional
properties: 
\begin{itemize}
\item[\emph{(i)}] $q\in \rmB\rmV([0,T];\Spq) $; 
\item[\emph{(ii)}] for all $t \in \mathrm{J}[q]$ there exists an optimal jump
  transition $\teta^t = (\tht u^t,\tht z^t) \in \admtcq t{\llim qt}{\rlim qt}
  $ such that $\teta^t \in \AC ([0,1];\Spq)$ and $q(t)=\teta^t(\hat
  r_t)$ for some $\hat r_t \in [0,1]$; 
\item[\emph{(iii)}]
  $\sum_{t\in \mathrm{J}[q]} \int_0^1 \|(\teta^t)'(r)\|_{\Spq} \dd r = \sum_{t\in
    \mathrm{J}[q]} \int_0^1 \left(\|(\tht u^t)'(r)\|_{\Spu} {+} \|(\tht
    z^t)'(r)\|_{\Spz} \right) \dd r <\infty$.
\end{itemize}
\end{definition}

Our existence result for enhanced $\BV$ solutions can be again proved by our
vanishing-viscosity approach without reparametrizing the trajectories, by
taking the vanishing-viscosity limit of viscous solutions that satisfy an
additional estimate on $\sup_{k \in \N} \| z_\epsk\|_{\rmB\rmV([0,T];\Spz)} $.

\begin{theorem}[Convergence of viscous solutions to enhanced $\BV$ solutions]
\label{thm:exist-nonpar-enh}
Assume Hypotheses \ref{hyp:setup}, \ref{hyp:diss-basic}, \ref{hyp:1},
\ref{h:closedness}, \ref{hyp:Sept19}, and \ref{h:ch-rule-param}.  Let
$(q_\epsk)_k 
\subset \AC ([0,T]; \Spq)$ be a sequence of solutions to the generalized
gradient system \eqref{van-visc-intro} such that convergences
\eqref{init-data-cv} hold at $t=0$, as well as
\begin{equation}
  \label{estBV-u}
  \exists\, S>0 \ \forall\, k \in \N \,: \qquad \| q_\epsk\|_{\rmB\rmV([0,T];\Spq)}
  \leq \widehat S.
\end{equation}
Let $q:[0,T]\to \Spq$ be a limit point for $(q_\epsk)_k$ in the sense of
\eqref{ptw-weak-limit}. Then, $q$ is an \emph{enhanced $\BV$ solution} of
$\RIS$, and  the additional convergences  \eqref{cvs-BV} hold.
\end{theorem}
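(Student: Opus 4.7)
\medskip
\noindent
\textbf{Proof plan for Theorem \ref{thm:exist-nonpar-enh}.}
The first step is to observe that the stronger BV estimate \eqref{estBV-u} implies in particular the bound \eqref{estBV} on the $u$-component. Hence Theorem \ref{thm:exist-trueBV} applies directly: along a (not relabeled) subsequence we already obtain that the pointwise weak limit $q = (u,z)$ is a true $\BV$ solution of the rate-independent system $\RIS$, and that all the additional convergences \eqref{cvs-BV} hold. So the only task is to upgrade $q$ to an \emph{enhanced} $\BV$ solution by establishing properties (i)--(iii) of Definition \ref{def:enhanced-strict-BVsols}.

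Property (i), namely $q\in\rmB\rmV([0,T];\Spq)$, will be obtained by a Helly-type selection argument. From \eqref{estBV-u} the sequence $(q_{\epsk})_k$ is uniformly bounded in $\rmB\rmV([0,T];\Spq)$; in view of the energy bounds of Lemma \ref{l:1}, the sublevels of $\mfE$ are relatively compact in $\Spq$ (by Hypothesis \ref{hyp:setup}), so I would apply the vector-valued Helly theorem to extract a further subsequence converging pointwise in $\Spq$ to some $q^\star \in\rmB\rmV([0,T];\Spq)$. Combining with \eqref{ptw-weak-limit} identifies $q^\star = q$, giving (i).

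The construction of the optimal transitions required in (ii)--(iii) is the main point; the strategy is to go through the enhanced parametrized theory and then to project back onto the real time axis. For each $k$, I would rescale $q_{\epsk}$ by the augmented energy-dissipation arclength
\[
 \sfs_{\epsk}(t):=\int_0^t \Bigl(1 + \mename{\epsk}{\alpha}(r,q_{\epsk}(r),1,q_{\epsk}'(r)) + \|u_{\epsk}'(r)\|_{\Spu} + \|z_{\epsk}'(r)\|_{\Spz}\Bigr)\,\rmd r,
\]
setting $\sft_{\epsk}:=\sfs_{\epsk}^{-1}$ and $\sfq_{\epsk}:=q_{\epsk}\circ\sft_{\epsk}$ on $[0,\sfS_{\epsk}]$. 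Thanks to \eqref{estBV-u} and Lemma \ref{l:1}, $\sup_k\sfS_{\epsk}<\infty$, and by construction both \eqref{condition-4-normali} and the enhanced bound \eqref{condition-4-normali-enhn} (for $\|\sfz_{\epsk}'\|_{\Spz}$) hold. After a minor affine reparametrization bringing all trajectories onto a common interval $[0,\sfS]$, Theorem \ref{thm:exist-enh-pBV} yields a further subsequence converging to an enhanced $\pBV$ solution $(\sft,\sfq)=(\sft,\sfu,\sfz)$ with $\sfu\in \rmC^0_{\rm lip}([0,\sfS];\Spu)$ and $\sfz\in \rmC^0_{\rm lip}([0,\sfS];\Spz)$, and in particular $\sfq\in\AC([0,\sfS];\Spq)$.

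At this point I would connect the two limits. By \eqref{cvs-eps-t}--\eqref{cvs-eps-u-z} and \eqref{ptw-weak-limit}, $\sft$ is non-decreasing and surjective from $[0,\sfS]$ onto $[0,T]$, and for every $t\in[0,T]\setminus\mathrm{J}[q]$ one has $\sfq(s)=q(t)$ for all $s\in\sft^{-1}(t)$. For each jump point $t_*\in\mathrm{J}[q]$ the preimage $\sft^{-1}(t_*)=[s_*^-,s_*^+]$ is a nondegenerate interval on which $\sft\equiv t_*$, the restriction $\sfq|_{[s_*^-,s_*^+]}$ takes values $\llim q{t_*}$ and $\rlim q{t_*}$ at the endpoints, and hits $q(t_*)$ at some intermediate point $\hat s_{t_*}$ (this last fact uses that $q_{\epsk}(t_*)\weakto q(t_*)$ and the uniform convergence of $\sft_{\epsk}$). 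Since $(\sft,\sfq)$ is an enhanced $\pBV$ solution and $\sft'\equiv 0$ on $[s_*^-,s_*^+]$, the pointwise characterization of Lemma \ref{l:characterizBV} combined with Proposition \ref{prop:BV-charact}\,(iii)--\eqref{jumpBV} shows that the affine reparametrization of $\sfq|_{[s_*^-,s_*^+]}$ onto $[0,1]$ is an admissible transition curve in $\admtcq {t_*}{\llim q{t_*}}{\rlim q{t_*}}$ whose $\mename 0\alpha$-cost equals $\eneq{t_*}{\llim q{t_*}}-\eneq{t_*}{\rlim q{t_*}}=\costq{\mename 0\alpha}{t_*}{\llim q{t_*}}{\rlim q{t_*}}$. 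This gives an optimal jump transition $\teta^{t_*}\in\AC([0,1];\Spq)$ with $q(t_*)=\teta^{t_*}(\hat r_{t_*})$, i.e.\ property (ii).

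Finally, for (iii) I would bound the total length of all jump transitions: summing the $\Spq$-length of $\sfq$ over the (at most countable, pairwise disjoint) plateau intervals $[s_*^-,s_*^+]$ and using $\sfq\in\AC([0,\sfS];\Spq)$ gives
\[
 \sum_{t_*\in\mathrm{J}[q]}\int_0^1\|(\teta^{t_*})'(r)\|_{\Spq}\,\rmd r
  \;\leq\; \int_0^{\sfS}\|\sfq'(s)\|_{\Spq}\,\rmd s \;<\;\infty,
\]
which is (iii). The main obstacle of the argument is the identification step, showing that the auxiliary parametrized limit $(\sft,\sfq)$ is genuinely compatible with the pointwise weak limit $q$ at every jump point and that $q(t_*)$ is attained along the transition; this requires carefully exploiting Helly's theorem in $\Spq$ together with the uniform convergence $\sft_{\epsk}\to\sft$, the monotonicity of $\sft$, and the pointwise weak convergence \eqref{ptw-weak-limit}.
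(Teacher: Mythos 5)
Your plan matches the route the paper itself indicates (the paper omits the proof, stating it combines the argument for Theorem~\ref{thm:exist-trueBV} with that of \cite[Thm.\,3.22]{MRS13}): augment the arclength by $\|z'_\epsk\|_{\Spz}$, apply Theorem~\ref{thm:exist-enh-pBV} to obtain an enhanced $\pBV$ limit $(\sft,\sfq)$, identify $q$ with it via the projection relation \eqref{proj.pBV.to.BV}, and read off the optimal transitions from the plateau intervals. Two steps, however, need repair.

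In your justification of property (i), the parenthetical claim that ``the sublevels of $\mfE$ are relatively compact in $\Spq$ (by Hypothesis~\ref{hyp:setup})'' is wrong: Hypothesis~\ref{hyp:setup} only yields $\Spx\Subset\Spz$ compactly, while $\Spw\subset\Spu$ is merely \emph{continuous}, so the $u$-component of energy sublevels need not be precompact in $\Spu$ (cf.\ the case $\Spw=\Spu$ in \eqref{spacesU}). Compactness is in fact unnecessary: since $z_{\epsk}(t)\weakto z(t)$ in $\Spz$ for every $t$, weak lower semicontinuity of $\|\cdot\|_{\Spz}$ applied to an arbitrary partition gives $\mathrm{Var}_{\Spz}(z;[0,T])\le\liminf_k\mathrm{Var}_{\Spz}(z_{\epsk};[0,T])\le\widehat S$, and together with $u\in\rmB\rmV([0,T];\Spu)$ (already supplied by Theorem~\ref{thm:exist-trueBV}) this yields (i); alternatively, cite the \emph{weak} vector-valued Helly theorem in a reflexive space, which also requires no strong compactness. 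Your derivation of (ii) via ``Proposition~\ref{prop:BV-charact}(iii)--\eqref{jumpBV}'' implicitly uses additivity of the Finsler cost across the intermediate point $q(t_*)$, which is not granted (the cost is only subadditive). The cleaner argument is the one in Step~1 of Theorem~\ref{th:pBV.v.BVsol}: integrating the pointwise identity \eqref{ptw-ident} over the plateau $[s_*^-,s_*^+]$ (where $\sft'\equiv 0$) gives $\int_{s_*^-}^{s_*^+}\mename0\alpha[\sft,\sfq,0,\sfq'](s)\,\rmd s=\eneq{t_*}{\llim q{t_*}}-\eneq{t_*}{\rlim q{t_*}}$, providing the upper bound on the cost, while the chain rule of Hypothesis~\ref{hyp:BV-ch-rule} (available via Lemma~\ref{l:nice-implication}) provides the matching lower bound; a constant-dissipation-speed reparametrization then gives the pointwise equality required in Definition~\ref{def-OT}. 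With these two repairs your argument is sound and coincides with the paper's intended proof.
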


\noindent Since the proof of Theorem \ref{thm:exist-nonpar-enh} follows from
combining the argument for Theorem \ref{thm:exist-trueBV} with that developed
for \cite[Thm.\,3.22]{MRS13}, it is omitted.

\Subsection{Comparing $\pBV$ and true $\BV$ solutions}
\label{su:Comp.pBV.BV}
In this final subsection we explore the relations 
between parametrized and true $\BV$ solutions, also in the enhanced case. 
Indeed, there is a very natural transition between parametrized and true $\BV$
solutions. The converse passage will be obtained by `filling the graph' of a
true $\BV$ solution at its jump points, by means of an optimal jump transition,
under the \emph{additional} assumption that it exists.  This condition is
codified in the following 

\begin{hypothesis}
\label{hyp:OJT} 
For every $t\in [0,T]$ and $q^-, q^+\in \Spq$ such that
$\slov ut{q_\pm} = \slov zt{q_\pm}=0$ and
\[
  \eneq t{q^-} - \eneq t{q^+} = \costq {\mename 0\alpha} t{q^-}{q^+} 
\]
there exists an \emph{optimal jump transition}
$\serifTeta^{\mathrm{opt}} \in \admtcq t{q^-}{q^+}$.
\end{hypothesis}

\begin{remark}
  Let us emphasize that Hypothesis \ref{hyp:OJT} plays no role in proving the
  existence of $\BV$ solutions. It only serves the purpose of showing that a
  true $\BV$ solution gives rise to a parametrized one. In this connection, let
  us mention in advance that, in the statement of Theorem \ref{th:pBV.v.BVsol},
  Hypothesis \ref{hyp:OJT} will not be required for relating \emph{enhanced}
  $\BV$ solutions to their parametrized analogues, as the definition of
  enhanced $\BV$ solutions already encompasses the information that optimal
  jump transitions exist.
\end{remark}

We are now ready to state the following  relations between true and
parametrized $\BV$ solutions. 

\begin{theorem}[$\pBV$ versus true $\BV$ solutions]
\label{th:pBV.v.BVsol}
Let $\RIS$ fulfill Hypothesis \ref{h:ch-rule-param}. \newline 
Then the following statements are true:
\begin{enumerate}
\item If $(\sft,\sfq):[0,\mathsf{S}]\to [0,T]\ti \Spq$ is a non-degenerate
  $\pBV$ solution of $\RIS$ with $\sft(0)=0$ and $\sft(\mathsf{S})=T$, then
  every $q:[0,T]\to \Spq$ satisfying
  \begin{equation}
    \label{eq:Project.pBV}
    q(t) \in \bigset{\sfq(s) }{ \sft(s)=t}
  \end{equation}
  is a (true) $\BV$ solution that enjoys, moreover, the following property: for
  every $t\in \mathrm{J}[q]$ there exists an optimal jump transition
  $\tetaopt \in \admtcq t{\llim qt}{\rlim qt}$ such that
  $q(t)=\tetaopt(\hat r)$ for some $\hat r \in [0,1]$.  
  Furthermore, 
   there holds
  \begin{equation}
    \label{equality-between-variations}
    \Variq{\mename 0\alpha}{q}{t_0}{t_1}
    = \int_{\sfs(t_0)}^{\sfs(t_1)} \mename 0\alpha [\sft, \sfq, \sft', \sfq'](s) \dd s 
     \quad \text{for all  } 0 \leq t_0\leq t_1\leq T\,.  
   \end{equation} 

\item Conversely, assume additionally Hypothesis \ref{hyp:OJT}. Then,
  for every $\BV$ solution $q:[0,T]\to \Spq$, there exists a non-degenerate,
  surjective $\pBV$ solution
  $(\sft,\sfq) \in \mathscr{A}([0,\mathsf{S}];[0,T]\ti \Spq)$ such that
  \eqref{eq:Project.pBV} and \eqref{equality-between-variations} hold. 

\item If $(\sft,\sfq):[0,\mathsf{S}]\to [0,T]\ti \Spq$ is a (non-degenerate)
  enhanced $\pBV$ solution with $\sft(0)=0$ and $\sft(\mathsf{S})=T$, then
  every $q:[0,T]\to \Spq$ given by \eqref{eq:Project.pBV} is an enhanced $\BV$
  solution, and \eqref{equality-between-variations} holds. 

\item Conversely, for any enhanced $\BV$ solution $q:[0,T]\to \Spq$, there
  exists a (non-degenerate, surjective) enhanced $\pBV$ solution
  $(\sft,\sfq) \in \mathscr{A}([0,\mathsf{S}];[0,T]\ti \Spq)$ such that
  \eqref{eq:Project.pBV} and \eqref{equality-between-variations} hold. 
\end{enumerate}
\end{theorem}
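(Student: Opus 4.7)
The plan is to establish the four statements by converting between the arclength parameter and the physical time via a natural projection/reparametrization correspondence. For part (1), let $(\sft,\sfq)$ be a non-degenerate $\pBV$ solution; since $\sft$ is nondecreasing and continuous, for each $t \in [0,T]$ the preimage $\sft^{-1}(\{t\})$ is either a single point or a compact interval $[s_-(t), s_+(t)]$, and the latter occurs for at most countably many $t$, which I identify with $\mathrm{J}[q]$. Any $q$ satisfying \eqref{eq:Project.pBV} is then uniquely determined off $\mathrm{J}[q]$, while on $\mathrm{J}[q]$ one has $\llim qt = \sfq(s_-(t))$ and $\rlim qt = \sfq(s_+(t))$ with $q(t) = \sfq(\hat s)$ for some $\hat s \in [s_-(t), s_+(t)]$. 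At any $t \notin \mathrm{J}[q]$, the corresponding $s = \sft^{-1}(t)$ has $\sft'(s) > 0$ a.e., and by Definition \ref{defM0} finiteness of the integrand in the parametrized energy balance forces the slopes $\slov u{t}{q(t)}$ and $\slov z{t}{q(t)}$ to vanish, which yields \eqref{stationary-u} and \eqref{loc-stab}.

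To obtain the energy-dissipation balance \eqref{enid-strictBV} and the identity \eqref{equality-between-variations}, I would apply the change of variable $t = \sft(s)$ in the parametrized balance \eqref{def-parBV}. Splitting $[0,\sfS] = N \cup J$ with $J = \bigcup_{t\in \mathrm{J}[q]} [s_-(t), s_+(t)]$, the integral over $N$ (where $\sft' > 0$ a.e.) reproduces $\Varname\calR(z; [t_0,t_1] \setminus \mathrm{J}[q])$ together with the power term $\int_{t_0}^{t_1} \pl_t \calE(r,q(r)) \dd r$, while on each plateau $[s_-(t_*), s_+(t_*)]$ the two halves of $\sfq$ lie in $\admtcq{t_*}{\llim q{t_*}}{q(t_*)}$ and $\admtcq{t_*}{q(t_*)}{\rlim q{t_*}}$ and their $\mename 0\alpha$-integrals bound $\costq{\mename 0\alpha}{t_*}{\llim q{t_*}}{q(t_*)} + \costq{\mename 0\alpha}{t_*}{q(t_*)}{\rlim q{t_*}}$ from above. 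Combining this with the $\BV$-chain rule from Hypothesis \ref{hyp:BV-ch-rule} (which follows from Hypothesis \ref{h:ch-rule-param} via Lemma \ref{l:nice-implication}) forces equality throughout: this simultaneously establishes \eqref{enid-strictBV}, the identity \eqref{equality-between-variations}, and the fact that each $\sfq|_{[s_-(t_*),s_+(t_*)]}$ is an optimal jump transition passing through $q(t_*)$.

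For the converse (2), given a $\BV$ solution $q$, Hypothesis \ref{hyp:OJT} supplies, for every $t \in \mathrm{J}[q]$, optimal transitions $\serifTeta_t^{\mathrm{L}} \in \admtcq t{\llim qt}{q(t)}$ and $\serifTeta_t^{\mathrm{R}} \in \admtcq t{q(t)}{\rlim qt}$ realizing the Finsler cost on each side of the jump. Following the reparametrization strategy of Remark \ref{rmk:non-deg}, I would define a strictly increasing rescaling on $[0,T] \setminus \mathrm{J}[q]$ via the arclength $t + \Varname\calR(z;[0,t]) + \|u\|_{\BV([0,t];\Spu)}$, and at each jump $t$ insert a plateau on which $\sft$ is constant at $t$ while $\sfq$ traces (reparametrizations of) $\serifTeta_t^{\mathrm{L}}$ and then $\serifTeta_t^{\mathrm{R}}$; a final rescaling produces a non-degenerate, surjective curve $(\sft,\sfq)$ satisfying \eqref{normalization-gained}. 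Admissibility in the sense of Definition \ref{def:adm-p-c}, and in particular the constancy of $\sft$ on connected components of $\SetG\alpha\sft\sfq$, follows from the sliding/viscous decomposition of optimal transitions discussed after Definition \ref{def-OT}. The parametrized balance \eqref{def-parBV} is then obtained by patching \eqref{enineq-ene-st} on the $\calR$-absolutely continuous intervals with the jump identities \eqref{jumpBV} on the plateaus, and \eqref{equality-between-variations} is built into the construction.

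Parts (3) and (4) run along the same lines. For (3), the extra regularity $\sfz \in \AC([0,\sfS];\Spz)$ upgrades $\sfq$ to $\AC([0,\sfS];\Spq)$ and passes to $z \in \BV([0,T];\Spz)$ after projection; the plateau restrictions $\sfq|_{[s_-(t_*), s_+(t_*)]}$ supply the $\AC$-optimal transitions required by Definition \ref{def:enhanced-strict-BVsols}(ii), and the summability condition (iii) there holds since $\sum_{t \in \mathrm{J}[q]} \int_{s_-(t)}^{s_+(t)} \|\sfq'(s)\|_{\Spq} \dd s \leq \int_0^{\sfS} \|\sfq'(s)\|_{\Spq} \dd s < \infty$. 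For (4), an enhanced $\BV$ solution already carries optimal jump transitions $\teta^t \in \AC([0,1];\Spq)$, so the gluing construction of part (2) applies without any appeal to Hypothesis \ref{hyp:OJT}. The main technical obstacle throughout is the tightness of \eqref{equality-between-variations}: the inequality ``$\geq$'' is immediate from Definition \ref{def:jump}, but the reverse inequality requires combining the upper energy-dissipation estimate (intrinsic to both solution notions) with the chain rule to rule out any ``wasted'' energy on the jump plateaus, and then using the identification of $\sfq$-restrictions as optimal transitions to transport the equality.
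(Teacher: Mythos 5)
Your overall strategy matches the paper's: project via an inverse $\sfs$ of $\sft$ for (1)\&(3), and glue optimal transitions onto a reparametrization of $q$ for (2)\&(4). However, there is a genuine gap in your argument for the stationarity condition \eqref{stationary-u} and the local stability \eqref{loc-stab} in part (1).

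You claim that ``at any $t \notin \mathrm{J}[q]$, the corresponding $s = \sft^{-1}(t)$ has $\sft'(s)>0$ a.e.'' This does not hold: for a non-degenerate $\pBV$ solution, $t\notin \mathrm{J}[q]$ only tells you that $\sft^{-1}(\{t\})$ is a single point $s$, but at that isolated $s$ the Lipschitz function $\sft$ may fail to be differentiable, or may have $\sft'(s)=0$. Consequently, you cannot invoke item (2) of Definition~\ref{def:adm-p-c} (or finiteness of $\mename 0\alpha$) at $s$ to conclude that $\slov u{t}{q(t)} = \slov z{t}{q(t)} = 0$. What is true is only a measure-theoretic statement: the slopes vanish for a.e.\ $s$ in $\{\sft'>0\}$. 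To upgrade this to \emph{every} $t\notin\mathrm{J}[q]$, the paper's argument shows that the set $\mathscr{H}^\alpha[q]$ where the relevant slope vanishes has complement $\sft(\SetG\alpha\sft\sfq)$, which is countable (hence null) because $\sft$ is constant on each connected component of $\SetG\alpha\sft\sfq$; combining density of $\mathscr{H}^\alpha[q]$ with continuity of $q$ off $\mathrm{J}[q]$ and the lower semicontinuity of the slopes from Hypothesis~\ref{hyp:Sept19} then gives the conclusion. Moreover, for $\alpha\in(0,1)$ this only yields \eqref{loc-stab}; a separate argument for \eqref{stationary-u} is needed, exploiting directly that $\slov u{\sft}{\sfq}\equiv 0$ on $\{\sft'>0\}$ and that any $t\notin\mathrm{J}[q]$ lies in $\sft\big(\,\overline{\{\sft'>0\}}\,\big)$. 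Your proposal skips this entire step.

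The rest of your argument is sound and agrees with the paper's route, up to cosmetic variations: for (2) you glue two optimal transitions $\serifTeta^{\mathrm L}_t$ and $\serifTeta^{\mathrm R}_t$ at each jump (left/right halves) where the paper speaks of a single concatenated $\tetaopt_m$ through $q(t_m)$; these are the same thing, since the jump conditions \eqref{jumpBV} plus the sub-additivity and lower bound on $\costname{\mename 0\alpha}$ force the concatenation to be optimal from $\llim qt$ to $\rlim qt$. Your choice of arclength $t+\Varname\calR(z;[0,t])+\|u\|_{\BV([0,t];\Spu)}$ off the jump set differs from the paper's $t+\Variq{\mename 0\alpha}q0t$, but since you insert plateaus by hand at each jump, both rescalings lead to an admissible parametrization, so this is a harmless variant.
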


\begin{remark}[Greater generality of true $\BV$ solutions]\slshape 
\label{rm:GeneralityBV}
Theorem \ref{th:pBV.v.BVsol} seems to suggest that true $\BV$ solutions are
more general than their parametrized analogues. Indeed, while, under the
standing assumptions of Section \ref{s:setup}, parametrized solutions always give
rise to true $\BV$ ones, the converse passage is possible under the additional
Hypothesis \ref{hyp:OJT}. Hence, the set of true $\BV$ solutions is apparently
bigger.

To emphasize this, we have chosen to prove that any limit curve $q$ for a
sequence $(q_{\epsk})_k$ of (non-parametrized) viscous solutions is a true
$\BV$ solution, as stated in Theorem \ref{thm:exist-trueBV}, by resorting to
Theorem \ref{thm:existBV} for parametrized solutions. Namely, in Sec.\
\ref{ss:8.2} we will use that the graphs of a sequence $(q_{\epsk})_k$ of
viscous solutions are contained in the image sets of their parametrized
counterparts $(\sft_{\epsk}, \sfq_{\epsk})_k$ and apply Theorem \ref{thm:existBV}
to the latter curves, guaranteeing their convergence to a $\pBV$ solution
$(\sft,\sfq)$.  We will then proceed to showing that $q$ and $(\sft,\sfq)$
are related by \eqref{eq:Project.pBV} and thus conclude, by Thm.\
\ref{th:pBV.v.BVsol}(1), that $q$ is a true $\BV$ solution.
\end{remark}

\begin{proof}
\STEP{1: From $\pBV$ to $\BV$ solutions.} First, we show that, given a
$\pBV$ solution $(\sft,\sfq) = (\sft,\sfu,\sfz)$, formula
\eqref{eq:Project.pBV} defines a curve
$q=(u,z) \in \rmB\rmV([0,T];\Spu) \ti (\Reg 0T\Spz {\cap} \rmB\rmV([0,T];\Spy)$. Indeed,
let $\sfs:[0,T]\to [0,\mathsf{\mathsf{S}}] $ be any inverse of $\sft$, with
jump set $ \mathrm{J}[\sfs]$. It can be easily checked that, since
$(\sft,\sfq) = (\sft,\sfu,\sfz) $ is non-degenerate,
\[
  t\in \mathrm{J}[q] = \mathrm{J}[u] \cup \mathrm{J}[z] \qquad
  \Longleftrightarrow \qquad  
  t \in \mathrm{J}[\sfs] \text{ and } \sft(s) \equiv t \text{ for
    all } s \in [\llim \sfs t, \rlim \sfs t]\,.
\]
If  for $t\in \mathrm{J}[\sfs]$  we have $q(t) = \sfq(s_*)$ for
some $s_*\in [\llim \sfs t, \rlim \sfs t]$, then defining $\sfs(t): = s_*$
gives the identity 
\begin{equation}
\label{not-clear-use}
 q(t) = (u(t), z(t))  = \sfq(\sfs(t)) =( \sfu(\sfs(t)), \sfz(\sfs(t))) \qquad
 \text{for all } t\in [0,T].  
\end{equation}
From this, we deduce $u\in \rmB\rmV([0,T];\Spu)$ and
$z\in \rmB\rmV([0,T];\Spy)$.  Moreover, since
$\sup_{t\in [0,T]} \mfE(q(t))\leq E$ for some $E>0$ and the functional
$\mfE + \| \cdot\|_\Spu + \| \cdot\|_{\Spy} $ has sublevels bounded in
$\Spw\ti \Spx$, we also have $z\in \rmL^\infty(0,T;\Spx)$, which gives
$z\in \Reg 0T{\Spz}$ thanks to \eqref{4later-use-embed}.

From \eqref{not-clear-use} we easily deduce that
\begin{equation}
\label{ingred-stim-var-1}
 \mathrm{Var}_{\calR}(z;[t_0,t_1]) =  \int_{\sfs(t_0)}^{\sfs(t_1)}
 \calR[\sfu'](s) \dd s \qquad \text{for all } 0\leq t_0\leq t_1 \leq T\,. 
\end{equation}
Furthermore, we mimic the argument from the proof of \cite[Prop.\,4.7]{MRS13}
and observe that for every $t\in  \mathrm{J}[q]$ the curve
$\sfq = (\sfu,\sfz): [\llim \sfs t, \rlim \sfs t] \to \Spu \ti \Spz$,
reparametrized in such a way that it is defined on the interval $[0,1]$, is an
admissible transition curve between $ \llim qt$ and $\rlim qt$.  Hence,
\begin{equation*}
      \costq{\mename 0\alpha}{t}{\llim qt}{q(t)} \leq \int_{\llim \sfs
       t}^{\sfs (t)} \!\!\mename 0\alpha [\sft,\sfq, 0,  \sfq'] (s) \dd
     s,\quad 
      \costq{\mename 0{\alpha\!}}{t}{q(t)}{\rlim qt} 
      \leq \int_{\sfs (t)}^{\rlim \sfs t} \!\!
     \mename 0\alpha [\sft,\sfq,  0,  \sfq'](s) \dd s\,.
\end{equation*}
Combining this with \eqref{ingred-stim-var-1} we conclude that
\begin{equation}
  \label{est-variations}
  \Variq{\mename 0\alpha}{q}{t_0}{t_1}
  \leq  \int_{\sfs(t_0)}^{\sfs(t_1)} 
  \mename 0\alpha [\sft, \sfq, \sft', \sfq'](s)    \dd s  
\end{equation}
for all $[t_0,t_1]\subset [0,T]$. Ultimately, we infer that $q$ fulfills the
energy-dissipation estimate \eqref{enineq-sBV0T}.
  
In order to show that $q$ complies with the stationary equation
\eqref{stationary-u} and the local  stability condition \eqref{loc-stab}, we
argue in the following way.  Recalling the definition of the sets
$\mathsf{\mathscr{G}}^\alpha$ from \eqref{setGalpha}, we introduce
\[
  \mathscr{H}^\alpha[q] : = 
    \begin{cases}
      \{ t \in [0,T]\, : \ 
      \slov ut{q(t)} = \slov zt{q(t)} =0 \}  & \text{if } \alpha \geq 1, 
      \\
      \{ t \in [0,T]\, : \ 
      \slov zt{q(t)} =0 \}     & \text{if } \alpha \in (0,1).
    \end{cases}
\]
Observe that the set $ \mathscr{H}^\alpha[q]$ is dense in $[0,T]$. Indeed, its
complement
$[0,T]\setminus \mathscr{H}^\alpha[q] = \sft(\SetG \alpha{\sft}{\sfq})$ has
null Lebesgue measure, since $\sft$ is constant  on each connected
component of the open set  $\SetG \alpha{\sft}{\sfq}$. Therefore, by the
lower semicontinuity properties of $\slovname u$ and $\slovname z$ ensured by
Hypothesis \ref{hyp:Sept19}, in the case $\alpha\geq 1$ we immediately conclude
\eqref{stationary-u} and \eqref{loc-stab}.  For $\alpha \in (0,1)$, the above
argument only yields \eqref{loc-stab}, and for the validity of
\eqref{stationary-u}, we observe that for any $t\notin \mathrm{J}[q] $, then
\[
t = \sft (\bar s)  \text{ and } q = \sfq (\bar s) \quad \text{for } \bar s \in \overline{\{ s \in [0,\mathsf{S}]\, : \ \sft'(s)>0  \}}\,.
\]
Then, since $\slov u{\sft}{\sfq} \equiv 0$ on the set
$\{ s \in (0,\mathsf{S})\, : \ \sft'(s) >0 \} $ as prescribed by Definition
\ref{def:adm-p-c}, we conclude that $\slov u{t}{q(t)}=0$.

Since $q$ complies with \eqref{stationary-u}, \eqref{loc-stab}, and
\eqref{enineq-sBV0T}, by Proposition \ref{prop:BV-charact} we conclude that it
is a true $\BV$ solution. In order to conclude
\eqref{equality-between-variations}, we observe that, for all
$0\leq t_0\leq t_1\leq T$ and $ s_0\leq s_1 \in [0,\sfS]$ such that
$\sft(s_i) = t_i$ for $i\in \{0,1\}$, there holds
\begin{equation}
\label{for-eq-var}
\begin{aligned}
  & \Variq{\mename 0\alpha}{q}{t_0}{t_1} \overset{\eqref{enid-strictBV}}{=}
  \eneq {t_0}{q(t_0)} - \eneq {t_1}{q(t_1)} +\int_{t_0}^{t_1} \pl_t \eneq
  {r}{q(r)} \dd r
  \\
  & = \eneq {\sft(s_0)}{\sfq(s_0)}- \eneq {\sft(s_1)}{\sfq(s_1)} +
  \int_{s_0}^{s_1} \pl_t \eneq {\sft(s)}{\sfq(s)} \sft'(s) \dd s
  \overset{\eqref{def-parBV}}{=} \int_{s_0}^{s_1} \mathfrak{M}_0^\alpha
  [\sft,\sfq, \sft',\sfq'](s) \dd s\,.
  \end{aligned}
\end{equation}

It is immediate to see that the above arguments also yield an enhanced $\BV$
solution from any enhanced $\pBV$ solution.  Hence, assertions (1) and (3)
are proved. 
\smallskip

\STEP{2: From $\BV$ to $\pBV$ solutions} First of all, we show that, under the
additional Hypothesis \ref{hyp:OJT}, with any true $\BV$ solution
$q\in \rmB\rmV([0,T];\Spu) \ti (\Reg 0T\Spz {\cap} \rmB\rmV([0,T];\Spy))$ we
can associate a non-degenerate, surjective curve
$(\sft,\sfq) = (\sft,\sfu,\sfz) \in \mathscr{A}([0,\mathsf{S}];[0,T]\ti \Spq)$
such that \eqref{eq:Project.pBV} holds and
\begin{equation}
  \label{balance-variations}
  \Variq{\mename 0\alpha}{q}0{T}  =   \int_{0}^{\mathsf{\mathsf{S}}} 
  \mename 0\alpha [\sft,\sfq,\sft',\sfq'](s)   \dd s\,.  
\end{equation}
Indeed, along the lines of \cite[Prop.\,4.7]{MRS13} we introduce the
parametrization $\sfs$, defined on $[0,T]$ by
\[
  \begin{aligned}
    & \sfs(t): = t+  \Variq{\mename 0\alpha}{q}{0}{t}, \qquad \sfS:
    = \sfs(T) \quad \text{with } \\ & \mathrm{J}[\sfs]= \mathrm{J}[u] \cup
    \mathrm{J}[z] =(t_m)_{m\in M} \text{ and $M$ a countable set}.
  \end{aligned}
\]
We set $I := \cup_{m\in M} I_m$ with $I_m = (\sfs(t_{m}^-), \sfs(t_{m}^+) )$.
Hence, we define $(\sft,\sfq) = (\sft,\sfu,\sfz)$ on $[0,\sfS]\setminus I$ by
$ \sft: = \sfs^{-1} : [0,\sfS]\setminus I \to [0,T] $ and
$ \sfq : = q {\circ} \sft $.  In order to extend $\sft$ and $\sfq$ to $I$, we
need to use the fact that, by Hypothesis \ref{hyp:OJT}, for every $m \in M$
there exists an optimal jump transition jump transition
$\tetaopt_m \in \admtcq t{\llim q{t_m}}{\rlim q{t_m}}$, defined on the
canonical interval $[0,1]$ and such that $\tetaopt_m(\hat{r}_m) = q(t_m)$ for
some $\hat r_m \in [0,1]$. We may then define $\sft$ and $\sfq$ on
$I = \cup_{m\in M} I_m$ by
\[
  \sft(s) \equiv t_m, \qquad \sfq(s) : = \tetaopt_m (\sfr_m(s))  \text{ for } s
  \in I_m, \quad  \text{where } \sfr_m(s) = 
  \tfrac{s\ -\ \sfs(t_m^-) }{\sfs(t_m^+)-\sfs(t_m^-)}\,.
\]
It can be easily checked that
$(\sft,\sfq) \in \mathscr{A}([0,\mathsf{S}]; [0,T]\ti \Spq)$.  By construction,
the curves $q$ and $(\sft,\sfq)$ satisfy \eqref{eq:Project.pBV}. Furthermore,
recalling \eqref{ingred-stim-var-1} and the fact that
$\tetaopt_m \in \admtcq t{\llim q{t_m}}{\rlim q{t_m}}$, it is not difficult to
check that \eqref{balance-variations} holds. Therefore, since $q$ is a $\BV$
solution, we infer that $(\sft,\sfq) $ is a $\pBV$ solution, and we obtain
\eqref{equality-between-variations} by repeating the argument in
\eqref{for-eq-var}.

This argument also allows us to prove that any enhanced $\BV$ solution gives
rise to an enhanced $\pBV$ solution.  Hence, the proof of Theorem
\ref{th:pBV.v.BVsol} is finished. 
\end{proof}

\Section{Proof of  major results} 
\label{s:8}

This section focuses on the proofs of our main existence results for $\pBV$ and
true $\BV$ solutions, i.e.\ Theorems \ref{thm:existBV} and
\ref{thm:exist-trueBV}.  They will be carried out in Sections \ref{ss:8.1} and
\ref{ss:8.2}, respectively.  Moreover, Section \ref{su:pr:char-Ctc-set}
provides the proof of Proposition \ref{pr:charact-Ctc-set}. 

Throughout this section and, in particular, in the statement of the various
auxiliary results, we will always tacitly assume the validity of Hypotheses
\ref{hyp:setup}, \ref{hyp:diss-basic}, \ref{hyp:1}, \ref{h:closedness},
\ref{hyp:Sept19}, and of the parametrized chain rule from Hyp.\
\ref{h:ch-rule-param}: recall that, by Lemma \ref{l:nice-implication} it
implies the $\BV$-chain rule \eqref{hyp:BV-ch-rule}.

\Subsection{Proof of Theorem  \ref{thm:existBV}}
\label{ss:8.1}
Our first result lays the ground for the vanishing-viscosity analysis of
Theorem \ref{thm:existBV} by settling the compactness properties of a sequence
of parametrized curves enjoying the a priori estimates
\eqref{condition-4-normali}.  We have chosen to extrapolate such properties
from the proof of Theorem \ref{thm:existBV}, since we believe them to be of
independent interest.

Prior to stating Proposition \ref{prop:compactness-param}, let us specify the
meaning of the third convergence in \eqref{compactness-7-2} below. Indeed, the
sequence $(\sfu_k)_k$ is contained in a closed ball
$\overline{B}_R \subset \Spu$ by virtue of estimate
\eqref{bounds-rescaled-curves} (cf.\ Hypothesis \ref{h:1}). Now, since $\Spu$
is reflexive and separable, it is possible to introduce a distance
$d_{\mathrm{weak}}$ inducing the weak topology on $\overline{B}_R$.  Hence,
convergence in $\rmC^0 ([0,\sfS]; \Spu_{\mathrm{weak}})$ means convergence
in $\rmC^0 ([0,\sfS]; (\Spu, d_{\mathrm{weak}}))$.
%
\begin{proposition}
\label{prop:compactness-param}
Let $(\sft_k, \sfq_k)_k \subset \AC([ 0,\sfS]; [0,T]\ti \Spq)$, with $\sft_k$
non-decreasing and $\sfq_k =  (\sfu_k,  \sfz_k)$, enjoy the following
bounds, along a null sequence $(\eps_k)_k$:
\begin{equation}
\label{bounds-rescaled-curves}
\exists\,  C_*\geq 1  \ \ \forall\, k \in \N \, : \quad 
\begin{cases}
  & \hspace{-0.1cm} \sup_{s\in [0,\mathsf{S}]} \mfE(\sfq_k(s)) \leq C_*,
  \smallskip
  \\
  & \hspace{-0.1cm} \sft_k'(s) + \calR (\sfz_k'(s)) + \mredq {\epsk} \alpha
  {\sft_k(s)}{\sfq_k(s)} {\sft_k'(s)}{\sfq_k'(s)} \\
  & 
  \qquad \qquad \qquad  \qquad +
  \|\sfu_k'(s)\|_{\Spu} \leq C_*
 \  \foraa\, s \in 
  (0,\mathsf{S}).
  \end{cases}
\end{equation}
Then, there exist an admissible parametrized curve
$(\sft, \sfq)=(\sft,\sfu, \sfz) \in \mathscr{A} ([0,\sfS]; [0,T]\ti \Spq)$ with
\begin{subequations}
  \label{compactness-7}
  \begin{align}
    \label{compactness-7-1}
    & \begin{aligned}
      &
      \sft \in \rmC_{\mathrm{lip}}^0 ([0,\sfS]; [0,T]), 
      \quad   \sfu \in \rmC_{\mathrm{lip}}^0 ([0,\sfS];\Spu), \ 
      \text{ and }  \sfz  \in 
      \rmC_{\mathrm{lip}}^0 ([0,\sfS];\Spy) \cap  \rmC^0([0,\sfS];\Spz),
    \end{aligned}
    \intertext{and a (not relabeled) subsequence such that  the following
      convergences  hold as $k\to\infty$:} 
    \label{compactness-7-2}
    &
    \left\{{\renewcommand{\arraystretch}{1.15}
      \begin{array}{ll}
        \sft_{k} \to \sft \text{ in }   \rmC^0([0,\sfS])  \quad \text{and}
         \quad \sft'_{k} \weaksto \sft' \text{ in }   \rmL^\infty(0,\sfS), 
        \\
        \sfu_k \weaksto \sfu \text{ in }  W^{1,\infty} (0,\sfS;\Spu),
        \\
        \sfu_k \to \sfu \text{ in }  \rmC^0 ([0,\sfS];\Spu_{\mathrm{weak}})  
        \\
        \sfz_k \to \sfz \text{ in }  \rmC^{0} ([0,\sfS];\Spz),
        \\
        \sfu_k(s) \weakto  \sfu(s) \text{ in } \Spw  \text{ and }  
        \sfz_k(s) \weakto  \sfz(s) \text{ in } \Spx  \quad \text{ for all } s \in
        [0,\sfS],
      \end{array}
     }\right.
\\
    \label{compactness-7-3}
    &
    \int_{0}^{\mathsf{S}}  \mathfrak{M}_{0}^\alpha [\sft,\sfq,\sft',\sfq'](\sigma)  \dd\sigma
    \leq \liminf_{k\to \infty}
    \int_{0}^{\sfS} \mathfrak{M}_{\eps_k}^\alpha (\sft_{k}(\sigma)
    ,\sfq_{k}(\sigma),\sft'_{k}(\sigma), \sfq'_{k}(\sigma)) \dd \sigma\,.
  \end{align}
\end{subequations}
\end{proposition}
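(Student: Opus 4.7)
\emph{Strategy.} The plan is to combine standard compactness arguments with an Ioffe-type lower semicontinuity principle for the integrated M-functions, using the Mosco convergence of Proposition~\ref{pr:Mosco.Meps} as the crucial ingredient. I would first extract convergent subsequences from the a priori bounds \eqref{bounds-rescaled-curves}: the pointwise control $\sft_k'(s) + \|\sfu_k'(s)\|_\Spu + \calR(\sfz_k'(s)) \leq C_*$ together with the $\Spy$-coercivity \eqref{R-coerc} of $\calR$ gives uniform Lipschitz estimates for $\sft_k$ in $[0,T]$, $\sfu_k$ in $\Spu$, and $\sfz_k$ in $\Spy$; the energy bound combined with \eqref{h:2} places $\sfq_k(s)$ uniformly in $\Spw \ti \Spx$. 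Arzel\`a-Ascoli then produces the uniform convergence $\sft_k \to \sft$ in $\rmC^0$ and $\sfu_k \to \sfu$ in $\rmC^0([0,\sfS];\Spu_{\mathrm{weak}})$, as well as the weak-$*$ convergences $\sft_k' \weaksto \sft'$ and $\sfu_k \weaksto \sfu$ in $W^{1,\infty}$. The compact embedding $\Spx \Subset \Spz$ combined with the $\Spy$-Lipschitz bound upgrades the convergence of $\sfz_k$ to strong in $\rmC^0([0,\sfS];\Spz)$ via a standard Aubin-Lions step; finally, weak compactness of bounded sets in the reflexive spaces $\Spw$ and $\Spx$ delivers the pointwise weak convergences listed in \eqref{compactness-7-2}.

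To verify $(\sft,\sfq) \in \mathscr{A}([0,\sfS];[0,T]\ti\Spq)$, I would check the five items of Definition~\ref{def:adm-p-c}. Items (1) and (4) are immediate from Step~1 and from the lower semicontinuity of $\mfE$ encoded in Hypothesis~\ref{hyp:1}. The essential input for (2), (3), and (5) is the uniform bound $\int_0^\sfS \mename{\epsk}\alpha(\sft_k,\sfq_k,\sft_k',\sfq_k') \dd s \leq C_*\sfS$: combining the weak $\Gamma$-$\liminf$ of Proposition~\ref{pr:Mosco.Meps} with the convexity of $(t',q')\mapsto \mename{\eps}\alpha(t,q,t',q')$ (which follows from the construction \eqref{eq:def.B.al.eps} of $\mfB_\eps^\alpha$ and the general fact \eqref{miracle-HS}) and applying Ioffe's Proposition~\ref{prop:Ioffe} yields
\[
  \int_0^\sfS \mename0\alpha[\sft,\sfq,\sft',\sfq'](s)\dd s \leq \liminf_{k\to\infty} \int_0^\sfS \mename{\epsk}\alpha(\sft_k,\sfq_k,\sft_k',\sfq_k')\dd s \leq C_*\sfS,
\]
which is precisely the summability condition (5). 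Finiteness of the limit integral, read against the explicit form \eqref{mename-0} of $\mename0\alpha$, forces $\slov u{\sft(s)}{\sfq(s)} = \slov z{\sft(s)}{\sfq(s)} = 0$ for a.a.\ $s \in \{\sft'>0\}$, which is item (2); in particular $\SetG\alpha\sft\sfq \subset \{\sft'=0\}$, so $\sft$ is constant on each connected component of $\SetG\alpha\sft\sfq$.

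For the remaining part of item (3), the local $\Spz$-absolute continuity of $\sfz$ on $\SetG\alpha\sft\sfq$, I would fix a compact $K \Subset \SetG\alpha\sft\sfq$ and invoke the lsc of the slopes from Hypothesis~\ref{hyp:Sept19} together with the pointwise convergences of Step~1 to secure a uniform positive lower bound $c>0$ on the relevant slope quantity (namely $\slov z$ for $\alpha \in (0,1)$, and $\slov u + \slov z$ for $\alpha \geq 1$) on $K$ for all large $k$. Lemma~\ref{new-lemma-Alex} then yields a uniform $\rmL^\infty(K;\Spz)$-bound $\|\sfz_k'\|_\Spz \leq C_*/\varkappa(c)$, which transfers to $\sfz'$ by weak compactness in $\rmL^1(K;\Spz)$ and delivers the desired absolute continuity. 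The final liminf inequality \eqref{compactness-7-3} is then a direct application of Proposition~\ref{prop:Ioffe}, whose hypotheses have already been verified on $[0,\sfS]$. The principal obstacle is precisely this upgrade of the regularity of $\sfz$: the a priori estimates provide $\sfz_k'$ only in $\rmL^\infty(0,\sfS;\Spy)$, and a $\Spz$-bound is accessible only indirectly through the sharp lower estimates \eqref{eq:1LowBo.Bae}--\eqref{eq:1LowBo.Bag1} of Lemma~\ref{le:LoBo.Bae}, which must be applied in the correct regime $\alpha \gtreqless 1$ dictated by the definition \eqref{setGalpha} of $\SetG\alpha\sft\sfq$.
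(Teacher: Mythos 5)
Your overall strategy is the right one, and you correctly identify the two hardest ingredients: (i) the upgrade of the convergence of $\sfz_k$ from $\Spy$ to $\Spz$ via the lsc of the slopes in Hypothesis~\ref{hyp:Sept19} and the coercivity estimates of Lemma~\ref{new-lemma-Alex}, and (ii) the Ioffe-type lower semicontinuity principle. However, there is a genuine gap in the last step.

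You assert that \eqref{compactness-7-3} is ``a direct application of Proposition~\ref{prop:Ioffe}, whose hypotheses have already been verified on $[0,\sfS]$''. This is not so. Proposition~\ref{prop:Ioffe} requires $(\sft,\sfq)\in \AC([a,b];\R\ti\Iof)$ with $\Iof \subset \Spq$, i.e.\ $\sfz\in \AC$ with respect to the \emph{$\Spz$-norm}, and the weak $\rmL^1(a,b;\Spq)$-convergence of the derivatives $\sfq_k' \weakto \sfq'$. On $[0,\sfS]\setminus\SetG\alpha\sft\sfq$ all you have established is that $\sfz$ is $\Spy$-Lipschitz, so $\sfz'(s)$ need not exist in $\Spz$ at all. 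This is precisely why $\mename{0}{\alpha}[\cdot]$ was defined in \eqref{short-hand-M0} using the scalar surrogate $\calR[\sfz']$ (which only needs $\sfz\in\AC([a,b];\Spy,\calR)$) and with the $\mredname{0}{\alpha}$-term restricted to $\SetG\alpha\sft\sfq$; your proposed display writes $\mename0\alpha[\sft,\sfq,\sft',\sfq']$ on the left but treats it as if it were the pointwise composition $\mathscr{M}_0(\sft,\sfq,\sft',\sfq')$ which Ioffe produces. The correct route is to decompose $\mename{\epsk}{\alpha}=\calR+\mredname{\epsk}{\alpha}$, pass the $\calR$-term to the limit via lower semicontinuity of the $\calR$-variation (using \eqref{cited-VarR-later} and the uniform $\Spy$-convergence of $\sfz_k$ — this is \emph{not} Ioffe), drop the nonnegative $\mredname{\epsk}{\alpha}$-integral over $[0,\sfS]\setminus\SetG\alpha\sft\sfq$, and apply Proposition~\ref{prop:Ioffe} only to $\mredname{\epsk}{\alpha}$ on $\SetG\alpha\sft\sfq$, where the $\Spz$-Lipschitz bounds obtained from Lemma~\ref{new-lemma-Alex} make $\sfz'\in\rmL^\infty_{\loc}(\SetG\alpha\sft\sfq;\Spz)$ and give $\sfq_k'\weakto\sfq'$ in $\rmL^1$ there. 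Summing \eqref{lim-pass-disr} and \eqref{reduced-lsc} then yields \eqref{compactness-7-3} and, a fortiori, the summability condition \eqref{summability}.

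A related ordering issue: you derive item (5) (and from it item (2)) before establishing item (3), but the integrand $\mename0\alpha[\cdot]$ in (5) involves $\sfz'(s)\in\Spz$ on $\SetG$ through the $\mredname0\alpha$-term, so (3) must come first. The decomposition above repairs this automatically. Two minor points: what you call an ``Aubin-Lions step'' for the strong $\rmC^0([0,\sfS];\Spz)$-convergence of $\sfz_k$ is in the paper an Ehrling argument combined with an Arzel\`a--Ascoli-type theorem (Aubin--Lions targets Bochner spaces, not $\rmC^0$); and the convexity needed in Ioffe's theorem is in $(\sft',\sfq')$ for each fixed $(\sft,\sfq)$, which is guaranteed for $\mredname{\epsk}{\alpha}$ and $\mredname0\alpha$ by \eqref{miracle-HS} and Proposition~\ref{pr:Mosco.Beps} rather than by ``the construction \eqref{eq:def.B.al.eps}'' alone.
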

\begin{proof}
We split the proof in  three steps. 

\STEP{1. Compactness:}
From \eqref{bounds-rescaled-curves} we infer the following compactness
information.

(1.A) By the Ascoli-Arzel\`a Theorem, there exists a non-decreasing
$\sft \in W^{1,\infty}(0,\mathsf{S})$ such that $\sft_k\to \sft$ uniformly in
$[0,\mathsf{S}]$ and weakly$^*$ in $ W^{1,\infty}(0,\mathsf{S})$.

(1.B) Since the sequence $(\sfu_k)_k$ is bounded in
$W^{1,\infty}(0,\mathsf{S};\Spu)$ we conclude that there exists $ \sfu $ with
the regularity from \eqref{compactness-7-1} such that, along a not relabeled
subsequence, the second convergence in \eqref{compactness-7-2} hold for
$(\sfu_k)_k$. The convergence in $\rmC^0([0,\sfS];\Spu_{\mathrm{weak}})$
follows from an Ascoli-Arzel\`a type theorem, see e.g.\ \cite[Prop.\,3.3.1]{AGS08}).

(1.C) From  $\sup_{s\in [0,\mathsf{S}]} \mfE(\sfq_k(s)) \leq C$ we deduce that
there exists a ball 
\begin{subequations}
  \label{compactness-Arzela}
\begin{equation}
  \label{comp-ingr-1}
  \text{$\overline{B}_M^{\Spx}\subset \Spx \Subset \Spz$ such that $\sfz_k(s) \in
    \overline{B}_M^{\Spx}$ for all $s\in [0,\mathsf{S}]$ and all $k\in \N$.} 
\end{equation}
 Using $\Spx\Subset \Spz\subset \Spy$ and the coercivity \eqref{R-coerc} of
$\calR$, Ehrling's lemma gives that 
\[
  \forall\, \omega>0 \ \exists\, C_\omega>0 \ \forall\, z\in
  \overline{B}_M^{\Spx} \quad \| z\|_{\Spz} \leq \omega +C_\omega \calR(z).
\]
Hence, defining $\Omega_M(r): = \inf_{\omega>0} ( \omega {+}C_\omega r)$
and noting that $\Omega_M(\lambda r) \leq \lambda \Omega_M(r)$ for all
 $\lambda \geq 1$,
 we find
\begin{equation}
  \label{comp-ingr-2}
  \| \sfz_k(s_1) {-}  \sfz_k(s_2)    \|_{\Spz} \leq \Omega_M(\calR(\sfz(s_1)
  {-}  \sfz_k(s_2) )) \leq  C_*  \Omega_M(|s_1{-}s_2|)
\text{  for all } 0 \leq s_1\leq s_2 \leq \mathsf{S},
\end{equation}
\end{subequations}
where the last estimate follows from the bound for $\calR(\sfz'_k)$ in
\eqref{bounds-rescaled-curves}.  We combine the compactness information
provided by \eqref{comp-ingr-1} with the equicontinuity estimate
\eqref{comp-ingr-2} and again apply, \cite[Prop.\,3.3.1]{AGS08} to deduce that
there exists $\sfz \in \rmC^0([0,\mathsf{S}];\Spz)$ such that, along a not
relabeled subsequence, $(\sfz_k)_k$ converges to $\sfz$ in the sense of
\eqref{compactness-7-2}.

Let us denote by $\sfq$ the curve $ (\sfu, \sfz)$.

\STEP{2. $\sfq$ is an admissible parametrized curve:} Combining the previously
found convergences with the first estimate in \eqref{bounds-rescaled-curves},
we obtain $ \sup_{s\in [0,\mathsf{S}]} \mfE(\sfq(s)) \leq C$.   Using the second
estimate in \eqref{bounds-rescaled-curves} and \eqref{R-coerc} we have
$\|\sfz(s_2){-}\sfz(s_1)\|_{\Spy} \leq C_*|s_2{-}s_1|/c_\calR$. With
\eqref{comp-ingr-2}  we also infer that
$\sfz \in \rmC_{\mathrm{lip}}^0([0,\sfS]; \Spy)$.

We will now show that $\sfz$ is locally absolutely continuous in the set
$ \SetG\alpha t{\sfq}$ from \eqref{setGalpha}.  Let us first examine the case
$\alpha \in (0,1)$. Since the function $s\mapsto \slov zt{\sfq(s)}$ is lower
semicontinuous thanks to Hypothesis \ref{hyp:Sept19}, for
every
$[\varsigma,\beta] \subset \SetG\alpha t {\sfq} $ there exists $ c>0$ such that
$\slov zt{\sfq(s)} \geq c$ for all $s\in [\varsigma,\beta]$. This estimate bears
two consequences:
\begin{compactenum}
\item Exploiting the \emph{uniform} convergence of  $\sfz_k$  to $\sfz$ and
  again relying on Hypothesis \ref{hyp:Sept19},
  \begin{equation}
    \label{positivity-duals}
    \exists\, \bar k \in \N \ \forall\, k \geq \bar k \, \ \forall\, s \in
    [\varsigma,\beta]\,  : \qquad  
    \slov zt{\sfq_k(s)} \geq  \frac c2.
  \end{equation}
  This implies that, for $k\geq \bar k$, the sets
  $ \SetG\alpha  t{\sfq_k} = \{ s\, : \, \slov zt{\sfq_k(s)}>0 \} $
  contain the interval $[\varsigma,\beta]$.
\item 
Since, by \eqref{bounds-rescaled-curves},
  $C_* \geq  \mredq {\epsk} \alpha
  {\sft_k(s)}{\sfq_k(s)} {\sft_k'(s)}{\sfq_k'(s)} $ 
   for almost all $s\in (\varsigma,\beta)$, we are
  in a position to apply estimate \eqref{est-Alex-1} from Lemma
  \ref{new-lemma-Alex} and deduce that
  \begin{equation}
    \label{pavia}
\exists\, \overline C>0 \ \exists\, \bar k \in \N \ \forall\, k \geq \bar k
    \ \foraa s \in (\varsigma,\beta)\, : \qquad 
       \|\sfz_k'(s)\|_{\Spz} \leq \overline{C}\,. 
  \end{equation}
\end{compactenum}
The discussion of the case $\alpha\geq1$ follows the very same lines: for every
$[\varsigma,\beta] \subset \SetG\alpha  t{\sfq}$ we find $\tilde{c}>0$ and
$\tilde k \in \N$ such that for every $k \geq \tilde k$ we have
$  \slov ut{\sfq_k(s)} {+} \slov zt{\sfq_k(s)}   \geq \frac{\tilde{c}}2$
for every $s\in [\varsigma,\beta]$.  Then, estimate \eqref{pavia} follows from
\eqref{est-Alex-1-bis} in Lemma \ref{new-lemma-Alex}.

All in all, for all $\alpha >0$ the curves $\sfz_k$ are uniformly
$\Spz$-Lipschitz on $[\varsigma,\beta]$.  This entails that $\sfz$ is ultimately
$\Spz$-Lipschitz on any subinterval
$[s_1,s_2] \subset \SetG\alpha  t{\sfq}$,  and reflexivity of $\Spz$ gives
us 
\begin{equation}
  \label{conv-teta-z-k-prime}
  \sfz_k \weaksto \sfz \text{ in } W^{1,\infty} (\varsigma,\beta;\Spz) \quad
  \text{for all } [s_1,s_2]\subset  \SetG\alpha  t{\sfq}.
\end{equation} 

\STEP{3. Proof of \eqref{compactness-7-3}:} In order to conclude that
$(\sft,\sfq) \in \mathscr{A} ([0,\sfS]; [0,T]\ti \Spq)$, it remains to show
that it fulfills property \eqref{summability}, which will a consequence of
\eqref{compactness-7-3}.  By the lower semicontinuity we have
\begin{equation}
  \label{lim-pass-disr}
  \begin{aligned}
    \liminf_{k\to\infty} \int_0^{\mathsf{S}} \calR(\sfz_k'(s)) \dd s
    \overset{(1)}{=} \liminf_{k\to\infty} \Varname{\calR}(\sfz_k;
    [0,\mathsf{S}]) & \geq \Varname{\calR}(\sfz; [0,\mathsf{S}])
    \overset{(2)}{=} \int_0^{\mathsf{S}} \calR[\sfz'](s) \dd s,
  \end{aligned}
\end{equation}
with $  \overset{(1)}{=} $ and $\overset{(2)}{=} $ due to
\eqref{cited-VarR-later}.  Furthermore, we have
\begin{equation}
\label{reduced-lsc}
\begin{aligned}
  \liminf_{k\to\infty}
  \int_{0}^{\sfS} \mredq {\eps_k}{\alpha}{\sft_{k}}
{\sfq_{k}}{\sft'_{k}}{\sfq'_{k}} \dd s
  &  \geq 
\liminf_{k\to\infty}
\int_{(0,\sfS)\cap \SetG\alpha \sft \sfq} \mredq {\eps_k}{\alpha}{\sft_{k}}
{\sfq_{k}}{\sft'_{k}}{\sfq'_{k}} \dd s
\\ & 
 \overset{(3)}{\geq} \int_{(0,\sfS)\cap \SetG\alpha \sft \sfq} 
 \mredq{0}{\alpha}{\sft}{\sfq}0{\sfq'} \dd s\,.
\end{aligned}\vspace{-0.4em}
\end{equation}
Here, $ \overset{(3)}{\geq} $ follows from Proposition \ref{prop:Ioffe},
applied to the functionals
$ \mredname{\epsk}{\alpha}$ and $\mredname 0\alpha$, which we consider
restricted to the (weakly closed, by assumption \eqref{h:1.1}) energy sublevel
$\Iof = \{ q \in \Spq\, : \ \mfE(q) \leq C\}$.  Combining \eqref{lim-pass-disr}
and \eqref{reduced-lsc}, we infer \eqref{compactness-7-3} and thus conclude the
proof of  Proposition \ref{prop:compactness-param}. 
\end{proof}

We are now in the position to conclude the

\noindent
\begin{proof}[Proof of Theorem  \ref{thm:existBV}]
Let $(\sft_\epsk,\sfq_\epsk)_k $ be a sequence of rescaled viscous
trajectories satisfying \eqref{condition-4-normali}.  We apply Proposition
\ref{prop:compactness-param} and conclude that there exist a limit
parametrized curve $(\sft,\sfq) \in \mathscr{A}([0,\sfS];[0,T]\ti \Spq)$,
fulfilling \eqref{continuity-properties}, and a (not relabeled) subsequence
along which convergences \eqref{cvs-eps} hold.

We now show that the curves $(\sft,\sfq) $ fulfill the upper energy-dissipation
estimate $\leq$ in \eqref{def-parBV} by passing to the limit as $\epsk\to 0^+$
in \eqref{reparam-enineq} for $s_1=0$ and $s_2= s\in (0,\sfS]$.  The key lower
semicontinuity estimate
\begin{equation}
  \label{keyLSC-HS}   
  \int_0^s \mathfrak{M}_0^\alpha[\sft, \sfq, \sft',\sfq'](\sigma) \dd \sigma \leq      \liminf_{k\to\infty}
  \int_{0}^{s} \calM_\eps^\alpha (\sft_\epsk(\sigma) ,\sfq_\epsk(\sigma), 
  \sft'_\epsk(\sigma), \sfq'_\epsk(\sigma))  
  \dd \sigma \quad \text{for all $s\in [0,\sfS]$}
\end{equation}
follows from \eqref{compactness-7-3} in Proposition
\ref{prop:compactness-param}.  Convergences \eqref{cvs-eps}, the 
lower semicontinuity \eqref{h:1.2} of $\calE$, and the continuity
\eqref{h:1.3e}  of $\pl_t \calE$ give for all $s\in [0,\sfS]$ that
\begin{equation}
  \label{lsc-en-usc-power}
  \liminf_{k\to\infty} \eneq {\sft_\epsk (s)}{\sfq_\epsk(s)} \geq \eneq
  {\sft(s)}{\sfq(s)}  \ \text{ and } \ 
   \int_0^{s} \pl_t \eneq {\sft_\epsk}{\sfq_\epsk}
    \sft_\epsk' \dd \sigma  \to  \int_0^{s}
    \pl_t \eneq {\sft}{\sfq} \sft' \dd \sigma\,.
\end{equation}
For the last convergence we use $\sft'_k \weaksto \sft'$ in
$\rmL^\infty(0,\sfS)$ and $|\pl_t \eneq
{\sft_\epsk(\sigma)}{\sfq_\epsk(\sigma)} | \leq C_\# \eneq
{\sft_\epsk(\sigma)}{\sfq_\epsk(\sigma)} \leq C$ by \eqref{h:1.3d} 
and \eqref{est1}, which together with \eqref{h:1.3e} gives
$\pl_t \eneq {\sft_\epsk}{\sfq_\epsk} \to \pl_t \eneq
{\sft}{\sfq} $ strongly in $\rmL^2(0,\sfS)$. 

Taking into account the convergence of the initial energies guaranteed by
\eqref{init-data-cv}, we complete the limit passage in \eqref{reparam-enineq}.
Thanks to Lemma \ref{l:characterizBV}, the validity of the upper
energy-dissipation estimate in \eqref{def-parBV} ensures that
$(\sft,\sfq) = (\sft,\sfu,\sfz)$ is a $\pBV$ solution.

The enhanced convergences \eqref{cvs-eps-energy} and
\eqref{cvs-eps-M} are a by-product of this limiting procedure.
Although the argument is standard, we recap it for the reader's convenience and later use, and introduce   the following place-holders for every $s\in [0,\sfS]$:
\[
\left\{
\begin{array}{lll}
E_{\epsk}^s :=  \eneq {\sft_\epsk(s)}{\sfq_\epsk(s)}, &&  E_{0}^s :=  \eneq {\sft(s)}{\sfq(s)}
\\
M^s_\epsk : = \int_{0}^{s}
    \mathfrak{M}_\epsk^\alpha (\sft_\epsk(\sigma), \sfq_\epsk(\sigma),
    \sft_\epsk'(\sigma), \sfq_\epsk'(\sigma)) \,\dd \sigma  
&& M^s_ 0: = \int_{0}^{s}
    \mathfrak{M}_0^\alpha [\sft, \sfq,
    \sft', \sfq'](\sigma) \,\dd \sigma 
    \\
 E_{\epsk}^0 :=     \eneq    {\sft_\epsk(0)}{\sfq_\epsk(0)} 
&&  E_{0}^0 :=  \eneq {\sft(0)}{\sfq(0)}
  \\
   P^s_\epsk: =  \int_{0}^{s} \pl_t \eneq
    {\sft_\epsk(\sigma)}{\sfq_\epsk(\sigma)} \,\sft'_\epsk(\sigma) \,\dd \sigma 
    && P^s_0: =  \int_{0}^{s} \pl_t \eneq
    {\sft(\sigma)}{\sfq(\sigma)} \,\sft'(\sigma) \,\dd \sigma\,.
    \end{array}
   \right.
\]
Hence, the parametrized energy-dissipation estimate \eqref{reparam-enineq}
rephrases as $ E_{\epsk}^s +M^s_\epsk \leq E_{\epsk}^0+ P^s_\epsk $,
%
%
and the limiting energy-dissipation balance rewrites as
$ E_{0}^s +M_0 = E_{0}^0 + P^s_0 $. So far, we have shown that
 \[
  E_{0}^s +M^s_0  \leq \liminf_{k\to\infty} \left( E_{\epsk}^s
    +M^s_\epsk\right) \leq \limsup_{k\to\infty} (E_{\epsk}^s    +M^s_\epsk) \leq
  \limsup_{k\to\infty} (E_{\epsk}^0+     P^s_\epsk) =  E_{0}^0 +     P^s_0 =
  E_{0}^s +M^s_0  \,. 
\]
Since we have $\liminf_{k\to\infty} E_{\epsk}^s \geq E_0^s$ and
$\liminf_{k\to\infty} M^s_\epsk \geq M^s_0$, we thus conclude that
$\liminf_{k\to\infty} E_{\epsk}^s = E_0^s$ and $\lim_{k\to\infty}M^s_\epsk = M^s_0$
for all $s\in [0,\sfS]$, which means \eqref{eq:E.M.cvg.PBV}. Thus, Theorem
\ref{thm:existBV} is established. 
\end{proof}

\Subsection{Proof of Theorem \ref{thm:exist-trueBV}}
\label{ss:8.2}

\begin{proof}
We split the argument in  three steps. \smallskip 

\STEP{1.  Construction of a suitable $\pBV$ solution.}  Let $(q_{\epsk})_k$,
$q$ be as in the statement of Theorem \ref{thm:exist-trueBV}.  Lemma \ref{l:1}
ensures the validity of the basic energy estimates \eqref{est-quoted-later} and
\eqref{est1} for the sequence $(q_k)_k = (u_{\epsk},z_\epsk)_k$. The additional
estimate for $(u_\epsk)_k$ in $\rmB\rmV([0,T];\Spu) $ is assumed in
\eqref{estBV}, such that the arc-length functions $\sfs_{\epsk}$ from
\eqref{arclength-est1-2} fulfill $\sup_{k\in \N} \sfs_\epsk (T) \leq C$. We
reparametrize the curves $q_\epsk$ by means of the rescaling functions
$\sft_\epsk: = \sfs_\epsk^{-1}$ by setting
$\sfq_\epsk: = q_\epsk{\circ} \sft_{\eps_k}$. Without  loss of generality
we may suppose  that $(\sft_\epsk,\sfq_\epsk)$ is surjective and 
defined on a fixed interval $[0,\sfS]$.  

Now, for the sequence $(\sft_\epsk, \sfq_\epsk)_k$ the a priori estimate
\eqref{condition-4-normali} holds.  Hence, we are in a position to apply Thm.\
\ref{thm:existBV} to the curves $(\sft_\epsk,\sfq_\epsk)_k$ and we conclude
that $(\sft_\epsk,\sfq_\epsk)_k$ convergence along a (not relabeled)
subsequence to a $\pBV$ solution $(\sft,\sfq): [0,\sfS]\to [0,T]\ti \Spq$.
%
%
In what follows, we will prove that $q$ is related to the parametrized curve
$(\sft,\sfq)$ via \eqref{eq:Project.pBV}.\smallskip

\STEP{2. Every limit point $q$ is a true $\BV$ solution:} We first prove that
\begin{equation}
  \label{proj.pBV.to.BV}
  q(t) \in \{ \sfq(s)\, : \ \sft(s) = t \}
  \text{ for all }   t \in [0,T]. 
\end{equation} 
 For this we fix $t_*\in [0,T]$ and choose $s_k\in [0,\sfS]$ such that
$\sft_\epsk(s_k)=t_*$. After choosing a (not relabeled) subsequence we may
assume $s_k\to s_*$. As  $(\sft_\epsk,\sfq_\epsk)_k$  converges uniformly to
$(\sft,\sfq)$ in $\rmC^0([0,\sfS];\R\ti \Spq_\mathrm{weak})$ we obtain
\[
\sft_\epsk(s_k)\to \sft(s_*) \quad \text{and} \quad \sfq_\epsk(s_k)\weakto \sfq(s_*).
\]    
However, by construction we have 
\[
\sft_\epsk(s_k)=t_* \quad \text{and} \quad 
\sfq_\epsk(s_k) \overset{\text{\tiny Step 1}}= q_\epsk(\sft_\epsk(s_k)) =
q_\epsk(t_*) \weakto  q(t_*),
\]
where the last convergence is the assumption in \eqref{ptw-weak-limit}. 
Hence we conclude $\sft(s_*)=t_*$ and $\sfq(s_*)=q(t_*)$ which is the desired
relation \eqref{proj.pBV.to.BV}.  

Thanks to \eqref{proj.pBV.to.BV}, we can apply Theorem
\ref{th:pBV.v.BVsol}(1), which ensures that $q$ is a true $\BV$
solution.\smallskip

\STEP{3. Proof of convergences \eqref{cvs-BV}:} Since $(q_{\eps_k})_k$ is
bounded in $\rmL^\infty (0,T;\Spw\ti \Spx)$ by estimate \eqref{est1} and
Hypothesis \ref{hyp:1}, the pointwise weak convergence in $\Spq$ improves to
convergences \eqref{cvs-BV-a}.  Next, we observe that for every
$ 0 \leq s_0 \leq s_1 \leq \mathsf{S}$ there holds
\[
  \begin{aligned}
    \lim_{k\to \infty} \int_{\sft(s_0)}^{\sft(s_1)} \meq
    \epsk{\alpha}{r}{q_\epsk(r)}1{q_\epsk'(r)} \dd r & = \lim_{k\to \infty}
    \int_{s_0}^{s_1} \meq
    \epsk{\alpha}{\sft_\epsk(\sigma)}{\sfq_\epsk(\sigma)}{\sft_\epsk'(\sigma)}
    {\sfq_\epsk'(\sigma)} \dd \sigma
    \\
    & \overset{(1)}{=} \int_{s_0}^{s_1} \mename 0{\alpha}[\sft, \sfq, \sft',
    \sfq'](\sigma) \dd \sigma \overset{(2)}{=} \Variq{\mename
      0\alpha}{q}{\sft(s_0)}{\sft(s_1)},
  \end{aligned}
\] 
with {\footnotesize (1)} due to \eqref{cvs-eps-M} and {\footnotesize (2)} due
to \eqref{equality-between-variations}. Hence, \eqref{cvs-BV-c} follows.

Finally, the lower semicontinuity of $\calE$, the  continuity
\eqref{h:1.3e} of $\pl_t \calE$,   give  that 
\[
  \begin{aligned}
    \liminf_{k\to\infty} \eneq t{q_k(t)} \geq \eneq t{q(t)} 
\ \text{  for all } t\in [0,T] \quad \text{and} \quad
    \int_0^t \pl_t \eneq s{q_k(s)} \dd s \to  \int_0^t
    \pl_t \eneq s{q(s)} \dd s.
  \end{aligned}
\]
Hence, with similar arguments as in the proof of Theorem \ref{thm:existBV}
(cf.\  the end of  Section \ref{ss:8.1}), we conclude 
\[
  \eneq t{q(t)} + \Variq{\mename 0\alpha} q0t = \lim_{k\to\infty} \eneq
  t{q_\epsk(t)} + \lim_{k\to\infty} \int_{0}^t \meq{\epsk}\alpha
  r{q_{\epsk}(r)}1{q_{\epsk}'(r)} \dd r \ \text{ for all } t \in [0,T],
\]
and \eqref{cvs-BV-b} ensues from the previously obtained \eqref{cvs-BV-c}.
This finishes the proof  of Theorem \ref{thm:exist-trueBV}. 
\end{proof}

\Subsection{Proof of Proposition \ref{pr:charact-Ctc-set}}
\label{su:pr:char-Ctc-set}%

Our task is to show inclusions \eqref{eq:CtcSet.Incl} for the contact sets
$\Ctc_\alpha$  and the flow regimes $\rgs Au \rgs Cz$  for the three
different cases for $\alpha$. We rely on the explicit form of
$\mename0\alpha=\calR+\mredname 0\alpha $ from \eqref{l:partial}.\smallskip

\begin{proof}[Proof of Proposition \ref{pr:charact-Ctc-set}]
\STEP{1: The case $t'>0$.} We start by showing that for all $\alpha>0$
  we have 
\[
  \Ctc_\alpha^{>0}:= \bigset{ (t,q,t',q')\in \Ctc_\alpha }{ t'>0} = \rgs
  Eu \rgs Rz = \rgs Eu \cap \rgs Rz . 
\]
Indeed, in the case $t'>0$ we have $\mename 0\alpha(t,q,t',q') <\infty$ if and
only if $\slov utq =\slov ztq =0$ and then
$\mename 0\alpha(t,q,t',q') = \calR(z')$.  From the former we obtain that, in
fact, every $(\mu,\zeta)\in \argminSlo utq {\times} \argminSlo ztq$ satisfies
$\mu=0$ and $-\zeta \in \pl \calR (0)$.  From the contact condition
$\calR(z') = \mathfrak{M}_0^\alpha(t,q,t',q') = {-}\pairing{}{\Spz}{\zeta}{z'}$
and the 1-homogeneity of $\calR$ we infer that $-\zeta \in \pl \calR(z')$, 
see \eqref{eq:subdiff.calR}.  
Taking into account that $\argminSlo xtq \subset \frsubq xtq$ for
$\mathsf{x} \in \{\mathsf{u}, \mathsf{z}\}$, we ultimately infer
\begin{equation}
  \label{EuRz-in-viscosity}
  \frsubq utq \ni 0, \qquad   \pl  \calR(z')+    \frsubq ztq \ni 0,
\end{equation}
namely system \eqref{static-tq} holds with $\thn u = \thn z =0$, i.e.\
$(t,q,t',q') \in \rgs Eu \rgs Rz $.  Hence, we have shown $\Ctc_\alpha^{>0}
\subset \rgs Eu \rgs Rz$. In fact,  reverting the arguments the
opposite inclusion holds as well.\smallskip

\STEP{2. The case $t'=0$.} We define  $\Ctc_\alpha^0:= \bigset{(t,q,t',q')\in
  \Ctc_{\alpha}}{ t'=0}$ and treat the three cases $\alpha=1$, $\alpha>1$,
and $\alpha \in (0,1)$, separately.\smallskip

\STEP{2.A. $t'=0$ and $\alpha=1$.} We want to show the inclusion
\begin{equation}
  \label{desired-alpha=1}
 \Ctc_{\alpha=1}^0 \subset \big(\rgs Eu \rgs Rz \cap \{t'=0\}\big) \cup \rgs V{uz}
 \cup \rgs Bu \rgs Bz\,.
\end{equation}
From \eqref{l:partial} we have
$\mename0\alpha(t,q,0,q')=\calR(z')+\mfb_{\disv u \oplus \disv z}(q',\slov
utq{+}\slov ztq)$. \newline Hence, for $\slov utq{+}\slov ztq=0$ we argue as in Step 1
and obtain $(t,q,0,q')\in \rgs Eu \rgs Rz \cap \{t'=0\}$.
 
We may now suppose that $\slov u{t}{q} {+} \slov z{t}{q}>0$  and
$q'=(u',z')=0$. Clearly,  the contact condition
$ \mename 0{\alpha=1}(t,q,t',q') = -\pairing{}{\Spu}{\mu}{u'}-
\pairing{}{\Spz}{\zeta}{z'} $ holds for all
$(\mu,\zeta) \in \argminSlo utq {\times} \argminSlo ztq$.  However,
$\slov u{t}{q} {+} \slov z{t}{q}>0$ gives
$\big(\{0\}{\times}\pl \calR(0) \big)\cap\frsubq qtq = \emptyset$, and because
of $ \argminSlo utq {\times} \argminSlo ztq \subset \frsubq qtq$ we conclude
that $(t,q,0,(0,0))$ fulfills system \eqref{static-tq} with
$\thn u = \thn z =\infty $. Hence, $(t,q,0,q')=(t,q,0,0) \in \rgs Bu \rgs Bz$
as desired.

Suppose now $ \disv z (z') {+} \disv u (u')>0$ in addition to
$\slov u{t}{q} {+} \slov z{t}{q}>0$.   According to Proposition
\ref{pr:VVCP}(b2) there exists  $\ell = \ell(t,q,q')>0$ with 
\[
\mfb_{\disv u{\oplus}\disv z}(q', \slov u{t}{q} {+} \slov z{t}{q})
 = \ell \:\Big(\disv u \big( \frac1\ell \, {u'}\big) {+}\disv z \big(
 \frac1{\ell}\,{z'}\big) {+} \slov u{t}{q}{+} \slov z{t}{q} \Big) \,.
\]
Now, $(t,q,0,q') \in \Ctc^0_1$ means that  there exists 
$(\mu,\zeta) \in \argminSlo u{t}{q} {\times} \argminSlo z{t}{q}$ fulfilling the contact condition 
\begin{align*}
\mename01(t,q,0,q')= \calR(z')+ \mfb_{\psi_\sfu{\oplus}\psi_\sfz}(q', \slov u{t}{q} {+} \slov
z{t}{q}) = - \pairing{}{\Spu}{\mu}{u'} 
 - \pairing{}{\Spz}{\zeta}{z'}.  
\end{align*}
Moreover, the definition of $\argminSlo xtq$ gives $\slov utq=\disv
u^*({-}\mu)$ and $\slov ztq=\conj z({-}\zeta)$. Together with the definition of
$\ell$ we find the identity
\begin{align*}
& \disv u \big( \frac1\ell \, {u'}\big)
            {+} \calR \big( \frac1{\ell}\,{z'} \big) 
            {+} \disv z \big( \frac1{\ell}\,{z'}\big) 
            {+} \disv u^*({-}\mu){+}   \conj z({-}\zeta) \\
&=\frac1\ell \:\mename01(t,q,0,q')= -\frac1\ell\big( \pairing{}{\Spu}{\mu}{u'} 
 {+} \pairing{}{\Spz}{\zeta}{z'} \big) =  
     \pairing{}{\Spu}{{-}\mu}{\frac1{\ell}\,{u'}}  
    {+} \pairing{}{\Spz}{{-}\zeta}{\frac1{\ell} \, {z'} }. 
\end{align*}
 Since $\disv u^*\oplus \conj z$ is is the Legendre-Fenchel dual of $\disv
u \oplus(\calR {+}\disv z)$ we conclude 
\[
-\mu \in \pl  \disv u\big(\frac1\ell\,u'\big) 
  = \pl \disve u{1/\ell}(u')
 \ \text{ and } \ 
{-} \zeta \in \pl \calR \big( \frac1\ell\,z' \big) 
            {+} \pl  \disv z  \big( \frac1\ell\,z' \big) 
 = \pl \calR(z') {+} \pl  \disve z{1/\ell}(z').  
\]
From this we see that $(t,q,0,q')$ system \eqref{static-tq} holds with
$\thn u = \thn z = 1/\ell \in (0,\infty)$, i.e.\ we have
$(t,q,0,q') \in \rgs V{uz}$, and the inclusion \eqref{desired-alpha=1} is
established.\smallskip

\STEP{2.B. $t'=0$ and $\alpha>1$.} Let us now examine the case $\alpha>1$ and
prove that
\begin{equation}
  \label{desired-alpha>1}
   \Sigma^0_\alpha \subset \big(\rgs Eu \rgs Rz \cap\{t'=0\}\big) 
   \cup \rgs Eu \rgs Vz   \cup \rgs Bz \,.
\end{equation}
Using the explicit expression for $\mredq 0\alpha tq0{q'}$ in \eqref{l:partial},
we  see that $\meq 0\alpha tq0{q'} < \infty$  implies that
 either (i) $\slov utq=0$ or (ii) $\big(\,\slov utq>0$ and
$z'=0\,\big)$.

In case (i),  which means $\rgs Eu$,  the contact condition reads
\[
 \slov utq=0 \quad \text{and} \quad \exists\, \zeta \in 
\argminSlo ztq\,{:} \ \    \calR(z')+ \mfb_{\psi_\sfz}(z', \slov ztq)
  = - \pairing{}{\Spz}{\zeta}{z'} . 
\]
If $\slov ztq=0$, we have $\mfb_{\psi_\sfz}(z', \slov ztq)=0$ and infer
$\calR(z')+ \pairing{}{\Spz}{\zeta}{z'}=0$.  Moreover, $\slov ztq=0$
implies  $\zeta \in \argminSlo ztq=\pl \calR(0)$, and we conclude
$\pl \calR(z') + \zeta \ni 0$  by \eqref{eq:subdiff.calR}. Hence,  we can
choose $\thn z=0$ in \eqref{subdiff-stat.z}  and obtain $(t,q,0,q')\in \rgs Eu
\rgs Rz$. 

If $z'=0$ holds but not $\slov ztq>0$, then \eqref{subdiff-stat.z} holds for $\thn
z=\infty$  and $(t,q,0,q')\in \rgs Bz$. 

Finally, if $z'\neq 0$ and $\slov ztq>0$, then the very same discussion as in
the last part of Step 2.A provides $\thn z \in (0,\infty)$ such that
$ \pl \calR(z') + \pl \disve z{ \thn z} (z') + \frsubq ztq \ni 0 $, 
which means $(t,q,0,q')\in \rgs Eu \rgs Vz$.  

The discussion of the case (ii) with $\slov utq>0$ and $z'=0$ proceeds
along the same lines, relying  on the contact condition 
\[
\exists\, \mu \in \argminSlo utq: \quad \meq 0\alpha
tq0{u',0}=\calR(0)+\mfb_{\psi_\sfz}(u', \slov utq) = -  
\pairing{}{\Spu}{\mu}{u'} . 
\]
For $u' \neq 0$ we find $\thn u\in (0,\infty)$ with
$\pl \disve u{\thn u}(u') + \mu \ni 0$,  which gives $(t,q,0,q')\in \rgs Vu
\rgs Bz$. For  $u'=0$ we can choose
$\thn u=\infty$ such that $\pl \disve u{\infty}(0) + \mu =\Spu^*+\mu \ni 0$.
Hence \eqref{static-tq} holds with $\thn z=\infty$ and $\thn u\in (0,\infty]$,
 i.e.\ $(t,q,0,q')\in \rgs Bz$.  

Thus,  in both cases, (i) and (ii),  we conclude
\eqref{desired-alpha>1},  and Step 2.B is completed.\smallskip 

\STEP{2.C. $t'=0$ and $\alpha \in (0,1)$.} This case works similarly as the
case $\alpha>1$ in Step 2.B, but with the roles of $u$ and $z$ interchanged,
where $\rgs Eu$ is interchanged with $\rgs Rz$. Thus, in analogy to
\eqref{desired-alpha>1} we obtain
$\Sigma^0_\alpha \subset \big(\rgs Eu \rgs Rz \cap\{t'=0\}\big) \cup \rgs Vu
\rgs Rz \cup \rgs Bu $.

This concludes the proof  of Proposition \ref{pr:charact-Ctc-set}. 
\end{proof}

\Section{Application to a model for delamination}
\label{s:appl-dam}

In this section we discuss the application of our vanishing-viscosity analysis
techniques to a PDE system modeling adhesive contact.  A previous
vanishing-viscosity (and vanishing-inertia, in the momentum balance) analysis
was carried out for a delamination model in \cite{Scala14} where, however, an
energy balance only featuring defect measures, in place of contributions
describing the dissipation of energy at jumps, was obtained in the
null-viscosity limit.

After introducing the viscous model and discussing its structure as a
generalized gradient system in Section \ref{ss:10.-1}, we are going to
state the existence of \emph{enhanced} $\BV$ and parametrized 
solutions to the corresponding rate-independent system $\RIS$ in Theorem
\ref{thm:BV-adh-cont}. This result will be proved throughout Sections
\ref{ss:10.0}--\ref{su:Dela.ExiApriGener} by showing that the `abstract' 
Theorems \ref{thm:exist-enh-pBV} and \ref{thm:exist-nonpar-enh} apply. 
 
As we will emphasize later on, our analysis crucially relies on the fact that,
in the delamination system, the coupling between the displacements and the
delamination parameter only occurs through lower order terms.

\Subsection{The `viscous'  system for delamination}
\label{ss:10.-1}

We consider two bodies located in two bounded Lipschitz domains
$\Omega^\pm \subset \R^3$ and adhering along a prescribed interface $\GC$, on
which some adhesive substance is present. We denote that part of $\Omega^\pm$
that coincides with $\GC$ by $\Gamma_\pm$, see Figure \ref{fig:delam.dom}, thus
being able to talk about one-sided boundary conditions.  In what follows, for
simplicity we will assume that $\GC$ is a `flat' interface, i.e., $ \GC$ is
contained in a plane, so that, in particular,
$\mathscr{H}^{2}(\GC) =\mathscr{L}^{2}(\GC) >0$. While the generalization to a
smooth curved interface is standard, this restriction will allow us to avoid
resorting to Laplace-Beltrami operators in the flow rule for the delamination
parameter.

The state variables in the model are indeed the displacement
$u : \Omega \to \R^3$, with $\Omega: = \Omega^+ \cup \Omega^- $, and the
delamination variable $z: \GC \to [0,1]$, representing the fraction of fully
effective molecular links in the bonding. Therefore, $z(t,x) =1$ ($z(t,x)=0$,
respectively) means that the bonding is fully intact (completely broken) at a
given time instant $t\in [0,T]$ and in a given material point $x\in \Gamma$.
We denote by $n^\pm$ the outer unit normal of $\Omega^\pm$ restricted to
$\Gamma_\pm$ and by $\JUMP{u}$ the jump of $u$ across $\GC$, namely
$\JUMP{u} = u|_{\Gamma_+} - u|_{\Gamma_-}$, but now defined as function on
$\GC$.
\begin{figure}
\begin{minipage}{0.45\textwidth}
\begin{tikzpicture}
\draw[very thick,black, fill=gray!20] (0,1)-- (4,1)--(4,2)--(3,3)--(1,3)--(0,2)--(0,1);
\node[black] at (3,2){$\Omega^+$};
\draw[very thick, red] (-0.03,1) node[left]{\raisebox{0.8em}{$\Gamma_+$}}--(4.03,1); 
\draw[very thick] (-0.03,0.9)--(4.03,0.9) node[right]{$\GC$};
\draw[very thick,black, fill=gray!20] (0,0.8)--
(4,0.8)--(4,-1.1)--(-1,-1.3) --(0,0.8);
\draw[very thick, red] (-0.03,0.8) node[left]{\raisebox{-1.6em}{$\Gamma_-$}}--(4.03,0.8); 
\node[black] at (3,-0.05){$\Omega^-$};
\draw[blue, very thick] (4,2)-- (3,3) node[pos=0.5, right]{\ $\GDir$};
\draw[blue, very thick] (4,-1.1)--(-1,-1.3) node[pos=0.05, below]{$\GDir$};
\end{tikzpicture}
\end{minipage}
\begin{minipage}{0.3\textwidth}
\caption{} \label{fig:delam.dom} The two domains $\Omega^+$ and $\Omega^-$
touch along the delamination hypersurface $\GC$.
\end{minipage}
\end{figure}
 
For simplicity, we impose homogeneous
Dirichlet boundary conditions $u=0$ on the Dirichlet part $\GDir$ of
the boundary $\pl \Omega$, 
with $\mathscr{H}^2(\GDir)>0$.  
We consider a given applied
traction $f$ on the Neumann part $\GNeu=\pl \Omega \setminus 
(\GDir{\cup} \GC) $.

All in all, we address the following \emph{rate-dependent} PDE system
\begin{subequations}
 \label{PDEadhc}
  \begin{align}
    &
    \label{PDEadhc-a}
    -\mathrm{div}(\eps^\alpha \bbD e(\dot u) + \bbC e(u))= F  && \text{ in }
    \Omega \ti (0,T), 
    \\
    & 
    \label{PDEadhc-b}
    u=0  && \text{ on } \GDir  \ti (0,T),
    \\
    \label{PDEadhc-c}
    & (\eps^\alpha\bbD e(\dot u) + \bbC e(u))|_{\GNeu} \nu = f && \text{ on }
    \GNeu  \ti (0,T),  
    \\
    & 
    \label{PDEadhc-d}
    (\eps^\alpha \bbD e(\dot u) + \bbC e(u))|_{\GC} n^\pm \pm \gamma (z) 
    \pl \psi(\JUMP{u}) \pm \beta(
    \JUMP{u} ) \ni 0 && \text{ on } \Gamma_\pm  \ti (0,T),  
    \\
    & 
    \label{PDEadhc-e}
    \pl \mathrm{R}(\dot z) +  \eps \dot z- 
    \Delta z +  \tilde{\phi}(z) +  
       \pl \gamma(z)  \psi(\JUMP{u}) \ni 0  &&
    \text{ on } \GC  \ti (0,T), 
  \end{align}
\end{subequations}
where $\dot u$ and $\dot z$ stand for the partial time derivatives of $u$ and
$z$.  Here, $F$ is a volume force,
$\bbD,\, \bbC \in \text{Lin}(\R^{d \ti d}_\mathrm{sym} ) $ the positive
definite and symmetric viscosity and elasticity tensors, $\nu$ the exterior
unit normal to $\pl (\Omega^+ {\cup} \GC {\cup} \Omega^-)$, and
$\mathrm{R}$ is given by
\begin{equation}
\label{explicit-R-adh}
  \mathrm{R}(r) = \kappa_+ \max\{r,0\} + \kappa_- \max\{{-}r,0\}
  \quad \text{ with } \kappa_\pm>0.    
\end{equation}
Hence, healing of the broken molecular links is disfavored, but not totally
blocked.  Giving up unidirectionality allows for a more straightforward
application of our abstract results. Nonetheless, we expect that, at the price
of some further technicalities our techniques could be adapted to deal with
unidirectionality by means of additional estimates (like for instance in the
application of $\BV$ solutions to \emph{unidirectional} damage
developed in \cite{KRZ13}).

The term   $\gamma(z)\pl \psi(\JUMP{u})$,  with $\gamma$ and $\psi$ nonnegative functions (we may think of 
$\gamma(z) = \max\{ z, 0\}$) and $\psi$ convex,
 in \eqref{PDEadhc-d} derives from the contribution
$\gamma(z) \psi(\JUMP{u})$ to the surface energy, cf.\ \eqref{energy-delamination} ahead,
which penalizes the constraint
$z\JUMP{u} =0$ a.e.\ on $\GC$,  typical of \emph{brittle} delamination models. Indeed,  to our
knowledge, existence results for brittle models are available only in the case
of a rate-independent evolution for $z$, cf.\ e.g.\ \cite{RoScZa09QDP,
  RosTho12ABDM}.  In fact, \eqref{PDEadhc} is rather a model for \emph{contact
  with adhesion} and will be accordingly referred to in this way. 
  Our assumptions on the constitutive functions $\gamma $, $\psi$  and $\beta$,  and on the multivalued operator  $\tilde\phi$ (indeed, on the mapping $z\mapsto \tilde\phi(z) -z$), will be specified in \eqref{eq:Del.Ass02General} ahead.

We define  the operators $\bsC,\bsD : \rmH^1(\Omega;\R^3)\to
\rmH^1(\Omega;\R^3)^*$  via 
\[
\pairing{}{\rmH^1(\Omega)}{\bsC u }{v}: = \int_{\Omega}\bbC e(u): e(v) \dd x,
\qquad \pairing{}{\rmH^1(\Omega)}{\bsD u }{v}: = \int_{\Omega}\bbD e(u): e(v)
\dd x, 
\]
while we denote by
$\bsJ:\rm\rmH^1(\Omega;\R^3)\to \rmL^4(\GC;\R^3); \; u\mapsto \JUMP{u}$ the
jump operator, by $\|\bsJ\|$ its operator norm, and by $\bsJ^*$ its adjoint. We
denote by $\bsA$ the Laplacian with homogeneous Neumann boundary
conditions 
\[
 \bsA: \rmH^1(\GC)\to \rmH^1(\GC)^* 
 \qquad \pairing{}{\rmH^1(\GC)}{\bsA z}{\omega}: = \int_{\GC} 
\big( \nabla z \nabla \omega + z \omega \big) \dd x\,.
\]
In particular, we have $\|z\|^2_{\rmH^1(\GC)}= \pairing{}{\rmH^1(\GC)}{\bsA
  z}{z}$.  Finally, we denote by 
  $\ell_u: (0,T)\to \Spu^*$  the functional encompassing the volume
and surface forces $F$ and $f$, namely
\[
  \pairing{}{\rmH^1(\Omega)}{\ell_u(t)}{u}: =
  \int_{\Omega} F(t)  u \dd x +\int_{\GNeu} f(t) u \dd S\,.
\]
Throughout, we will assume that 
\begin{equation}
  \label{bold-force-F}
  \ell_u \in \rmC^1([0,T]; \rm\rmH^1(\Omega;\R^3)^*)\,.
\end{equation}
Hence, system \eqref{PDEadhc} takes the form
\begin{subequations}
  \label{eq:DelamSyst}
\begin{align}
\label{eq:DelamSyst.a}
0& \in \eps^\alpha \bsD \dot u + \bsC u + \bsJ^*\big(\beta(\JUMP{u}) +
 \gamma(z) \pl   \psi(\JUMP{u}) \big) - \ell_u && \text{in } \rmH^1(\Omega;\R^3)^* \\
\label{eq:DelamSyst.b}
0&\in \pl  \mathrm{R}(\dot z) + \eps \dot z  + \bsA z + \pl  \wh\phi(z) +
\pl  \gamma(z)\psi(\JUMP{u}) && \aein\, \GC 
\end{align}
\end{subequations}  
almost everywhere in $(0,T)$. In \eqref{eq:DelamSyst}, 
 $\widehat{\beta}$ a primitive for $\beta$ and $\wh \phi$ a primitive for  the multivalued operator $z \mapsto \tilde\phi(z)-z$. 
\subsubsection*{\bf Structure as a  (generalized) gradient system}   
First of all, let us specify our assumptions 
 on  the constitutive functions $\widehat\beta$,
 $\gamma$, $\widehat \phi$, and $\psi$: 
 \begin{equation}
  \label{eq:Del.Ass02General}
\left. \begin{aligned}
  &\psi,\,\wh\beta:\R^3\to [0,\infty) \text{ are lsc and convex with }
  \psi(0)=\wh\beta(0)=0,\\
  &\exists\, C_\psi>0 \ \forall\, a \in \R^3\, : 
    \quad  \psi(a)\leq C_\psi (1{+}|a|^2), 
  \\
  & 
 \wh \beta \in \rmC^1(\R^3) \text{ and }  \beta = \rmD\wh\beta \text{ is globally Lipschitz}
  \\
  &  \gamma \text{ is convex, non-decreasing and $1$-Lipschitz, with } \gamma(0)=0, 
  \\
  &\wh\phi:\R \to [0,\infty] \text{ is lsc and $(-\Lambda_\phi)$-convex for some $\Lambda_\phi>0$,  with  }
  \wh\phi(z)=\infty \text{ for }z\notin[0,1].\quad  
\end{aligned} \right\}
\end{equation}
Hence, $\pl \gamma$ and $\pl \psi$ in \eqref{eq:DelamSyst} are convex analysis
subdifferentials, while $\pl \wh \phi :\R \rightrightarrows \R$ is the
Fr\'echet subdifferential of $\wh \phi$.
 
To fix ideas, prototypical choices for $\widehat\beta$, $\gamma$,
$\widehat \phi$, and $\psi$ would be:
\begin{compactitem}
\item[\emph{(i)}] $\widehat\beta$ the Yosida regularization of the indicator
  function of the cone $C = \{ v \in \R^3\, : \ v \cdot n^+ \leq 0 \} $ (cf.\
  also Remark \ref{rmk:why-no-indicator});
\item[\emph{(ii)}] $\gamma(z) = \max\{ z,0\}$;
\item[\emph{(iii)}] $\widehat \phi$ encompassing the indicator function
  $I_{[0,1]}$, which would ensure that $z \in [0,1]$;
\item[\emph{(iv)}] $\psi (\JUMP{u}) = \tfrac k2 |\JUMP{u} |^2$ with $k>0$.
\end{compactitem}
  
Observe that \eqref{eq:DelamSyst}  falls into the class of gradient systems
\eqref{dne-q}, with the ambient spaces
\begin{subequations}
\label{setup-adh-cont}
\begin{equation}
  \label{spaces-adh-cont}
    \Spu = \rmH_{\GDir}^1(\Omega;\R^3), \quad  \Spz = \rmL^2(\GC), \quad \Spy =
    \rmL^1(\GC) 
\end{equation}
where $\rmH_{\GDir}^1(\Omega;\R^3)$ the space of $\rmH^1$-functions on $\Omega$
fulfilling a homogeneous Dirichlet boundary condition on $\GDir$.  By Korn's
inequality, the quadratic form associated with the operator $\bsD$ induces on
$\Spu$ a norm equivalent to the $\rm\rmH^1$-norm; hereafter, we will in fact
use that
\[
   \|u\|_\Spu^2 :=   
    \sideset{_{}}{_{\rmH^1(\Omega)}}  {\mathop{\langle \bsD u ,u \rangle}}, 
 \quad \sideset{_{}}{_{\rmH^1(\Omega)}}  {\mathop{\langle \bsC u ,u \rangle}}  
  \geq    c_{\bsC}\|u\|_\Spu^2 . 
\]
The $1$-homogeneous dissipation potential
$\calR: \Spy \to  [0,\infty)$  is defined by
\begin{equation}
  \label{diss-pot-adhc}
  \calR(\dot z): = \int_{\GC} \mathrm{R}(\dot z) \dd x \qquad \text{with } \mathrm{R} \text{ from \eqref{explicit-R-adh}.}
\end{equation} 
The viscous dissipation potentials $\disv u : \Spu \to [0,\infty)$ and
$\disv z : \Spu \to [0,\infty)$ are
\begin{equation}
  \label{disv-adhc}
  \disv u (\dot u) :=\frac12  \sideset{_{}}{_{\rmH^1(\Omega)}}  {\mathop{\langle \bsD \dot u ,\dot u
\rangle}} \ \qquad  \disv z (\dot z): = \int_{\GC} \tfrac12 |\dot z |^2
  \dd x. 
\end{equation}
The driving
  energy functional
$\calE : [0,T]\ti \Spu \ti \Spz\to (\infty,+\infty]$ is given by 
\begin{equation}
  \begin{aligned}
  \label{energy-delamination}
   \ene tuz : =   \frac12 \pairing{}{\rmH^1(\Omega)}{\bsC u}u   
   &   - \pairing{}{\rmH^1(\Omega)}{\ell_u(t)}{u} 
       + \frac12 \pairing{}{\rmH^1(\GC)}{\bsA z}z 
      + \int_{\GC} \big(       \widehat{\beta}(\JUMP{u})
     {+}  \gamma(z) \psi(\JUMP{u}) {+}  \widehat{\phi}(z) \big) \dd x
  \\
   & \quad  \text{ if } z \in \rmH^1(\GC) \text{ and } 
   \widehat{\phi}(z) \in \rmL^1(\GC),
  \end{aligned}
\end{equation}
and $\infty$ otherwise.
\end{subequations}

As we will see in Proposition \ref{l:comprehensive}, 
under the conditions on
$\widehat\beta$,
 $\gamma$, $\widehat \phi$, and $\psi$   specified in \eqref{eq:Del.Ass02General},
 
$\calE$ complies with the coercivity conditions from Hyp.\
\ref{hyp:1} with the spaces
\begin{equation}
\label{coercivity-spaces-delam}
  \Spw = \Spu =\rmH_{\GDir}^1(\Omega;\R^3) \quad \text{and} \quad
  \Spx =\rmH^1(\GC) \Subset \Spz,
\end{equation}
and  its  Fr\'echet subdifferential
$\pl_q \calE : [0,T] \ti \Spu \ti \Spz \rightrightarrows \Spu^* \ti
\Spz^*$  is given by  
\begin{equation}
  \label{Frsub-adh-cont}
  \begin{aligned}
    &   (\mu,\zeta)  \in \pl_q \ene tuz \qquad \text{ if and only if }
        \\
    & \!  \! \! \! \!  \
    \begin{cases}
      &  \!  \! \!  \mu  =  \bsC u + \bsJ^*\big( \beta(\JUMP{u})  +
 \gamma(z)\varrho \big) - \ell_u(t)  \text{ for  some selection } 
 \GC \ni x \mapsto \varrho(x)\in \pl  \psi(\JUMP{u(x)}) 
      \\
      & \!  \! \!   \zeta = \bsA z +\omega \psi(\JUMP{u}) + \phi
     \text{ for selections }   \GC \ni x \mapsto\omega(x) \in \pl \gamma(z(x)) \text{ and } 
       \GC \ni x \mapsto\phi(x) \in \pl  \widehat{\phi}(z(x)) \text{ s.t. }  
      \\ 
      & \qquad \qquad \qquad \qquad \qquad \qquad \qquad \qquad \qquad \qquad \bsA z + \phi \in \rmL^2(\GC)
    \end{cases}
  \end{aligned}
\end{equation}
(indeed, observe that, by the growth properties of $\gamma$ and $\psi$, the term $
\omega \psi(\JUMP{u}) $ is in  $\rmL^2(\GC)$ for any selection $\omega \in \pl \gamma$).
In particular, here we have used that the Fr\'echet subdifferential of the ($({-}\Lambda_\phi)$-convex) functional
$\calF: \Spz \to [0,\infty]$
\begin{subequations}
\label{calF-stuff}
\begin{align}
&
\calF(z): =
\left\{
\begin{array}{ll}\!  \! \!
 \tfrac12 \pairing{}{\rmH^1(\GC)}{\bsA z}z  +\int_{\GC} \wh\phi(z) \dd x 
 & \text{ if $z\in \rmH^1(\GC)$ and $\wh\phi(z) \in \rmL^1(\GC)$},
 \\
\!  \! \! \infty &  \text{else},
 \end{array}
 \right.
\intertext{is given by}
&
\pl \calF(z) = \{ \bsA z + \tilde\phi\, : \, \tilde \phi(x) \in 
  \pl  \wh \phi(x) \ \foraa x \in \GC, \  \bsA z + \tilde\phi \in \rmL^2(\GC)\}\,.
\end{align}
\end{subequations}

We also point out for later use that  $\frname q\calE$ fulfills the structure condition \eqref{it-is-product}, i.e.\
$  \frsub qtuz =\frsub utuz \ti \frsub ztuz $ for every $(t,u,z) \in [0,T]\ti \Spu \ti \Spz$. 

\subsubsection*{\bf Existence for the viscous system} 
As we will check in Proposition \ref{l:comprehensive} ahead, our general
existence result, Theorem \ref{th:exist}, applies to the viscous delamination
system. Hence, for every pair of initial data
$(u_0,z_0) \in \rmH_{\GDir}^1(\Omega;\R^3) \ti \rmH^1(\GC)$ there exists a
solution
\begin{equation}
\label{regularity-viscous-solutions-delam}
u   \in \rmH^1(0,T;\rmH_{\GDir}^1(\Omega;\R^3))  \text{ and } 
  z\in  \rmL^\infty(0,T;\rmH^1(\GC)) \cap \rmH^1(0,T;\rmL^2(\GC)),
\end{equation}
to the Cauchy problem for system  \eqref{eq:DelamSyst}.

\Subsection{The vanishing-viscosity limit}
\label{ss:10.0}
We will now address the vanishing-viscosity limit as $\eps\to 0^+$ of system
\eqref{eq:DelamSyst}.  Our main result states the convergence of (a selected
family of) viscous solutions to an \emph{enhanced} $\BV$ solution to the system
$\RIS$ defined by \eqref{setup-adh-cont}, in fact enjoying the additional
regularity $z\in \BV ([0,T];\Spx)$ (with $\Spx = \rm\rmH^1(\GC)$).
Analogously, we also obtain the existence of parametrized $\BV$ solutions, to
which Theorem \ref{thm:diff-charact} applies, providing a characterization in
terms of system \eqref{diff-charact-delam} ahead.

In fact, we will be able to obtain solutions to the viscous system
\eqref{eq:DelamSyst} enjoying estimates, uniform with respect to the viscosity
parameter, suitable for the vanishing-viscosity analysis, only by performing
calculations on a version of system \eqref{eq:DelamSyst} in which the functions
$\widehat\beta$, $\gamma$, $\widehat \phi$, and $\psi$ are suitably smoothened,
cf.\ \eqref{eq:DelAss01.b}.  That is why, in Theorem \ref{thm:BV-adh-cont}
below will state:
\begin{itemize}
\item[\emph{(i)}] the existence of \emph{qualified} viscous solutions to (the
  Cauchy problem for) \eqref{eq:DelamSyst}, where by `qualified' we mean
  enjoying estimates \eqref{qualified-estimates} below;
\item[\emph{(ii)}] their convergence (up to a subsequence) to an enhanced $\BV$
  solution (we mention that, since the viscous dissipation potentials from
  \eqref{disv-adhc} are both $2$-homogeneous, the formulas in
  \eqref{decomposition-M-FUNCTION} and \eqref{RJMF-p-hom} yield an explicit
  representation formula for the functional $\mename 0\alpha$ involved in the
  definition of $\BV$ solution);
\item[\emph{(iii)}] the convergence of reparametrized (\emph{qualified})
  viscous solutions to an enhanced $\pBV$ solution for which the differential
  characterization from Theorem \ref{thm:diff-charact} holds.
\end{itemize} 
 
For simplicity, in Theorem \ref{thm:BV-adh-cont} we shall not consider a sequence
of initial data $(u_0^\eps,z_0^\eps)_{\eps}$ but confine the statement to the
case of \emph{fixed} data $(u_0,z_0)$.  We will impose that $(u_0,z_0)$ fulfill
the additional `compatibility condition' \eqref{eq:Del.IniCompati}.

\begin{theorem}
\label{thm:BV-adh-cont}
Assume conditions \eqref{bold-force-F} and \eqref{eq:Del.Ass02General}.  Let
$(u_0,z_0) \in \Spu \times \Spx$ fulfill
\begin{equation}
  \label{eq:Del.IniCompati}
  u_0\in \rmH_{\GDir}^1(\Omega;\R^3),\qquad \Delta z_0 \in \rmL^2(\GC), \quad
  \pl\wh\phi(z_0) \cap  \rmL^2(\GC) \neq \emptyset. 
\end{equation}

Then,   there exists a family  
\begin{equation}
\label{additional-regularity}
(u_\eps,z_\eps)_\eps  \subset  \rmH^1(0,T;\rmH_{\GDir}^1(\Omega;\R^3)) \ti \rmH^1(0,T;\rmH^1(\GC))
\end{equation}
solving the Cauchy problem for the 
 viscous delamination system \eqref{regularity-viscous-solutions-delam} 
 with the initial data $(u_0,z_0)$, and enjoying the following estimate
 \begin{equation}
 \label{qualified-estimates}
 \sup_{\eps>0} \int_0^T \big\{ \|\dot u_\eps\|_{\rmH^1(\Omega)} + \|\dot z_\eps\|_{\rmH^1(\GC)}\big\}  \dd t  \leq C\,.
 \end{equation}
 
Moreover, for any null sequence $(\eps_k)_k$ the sequence $(u_{\eps_k},z_{\eps_k})_k $
admits  a (not relabeled) subsequence, and there exists
a pair
\[
(u,z) \in \rmB\rmV([0,T]; \rmH_{\GDir}^1(\Omega;\R^3)) \ti \BV ([0,T];\rmH^1(\GC)),
\]
such that 
\begin{enumerate}
\item the following convergences hold as $k\to\infty$
\begin{equation}
\label{pointwise-cvg-delam}
u_{\eps_k}(t) \weakto u(t) \text{ in } \rmH_{\GDir}^1(\Omega;\R^3), \quad 
z_{\eps_k}(t) \weakto z(t) \text{ in } \rmH^1(\GC) \qquad \text{for all } t \in [0,T];
\end{equation}
\item $(u,z)$ is an \emph{enhanced}  $\BV$ solution to  the delamination system $\RIS$
from \eqref{setup-adh-cont}.
\end{enumerate}

Finally, reparametrizing the sequence $(u_{\eps_k},z_{\eps_k})_k $ in such a
way that the rescaled curves $(\sft_{\eps_k}, \sfu_{\eps_k}, \sfz_{\eps_k})_k$
enjoy estimates \eqref{condition-4-normali} and
\eqref{condition-4-normali-enhn}, up to a subsequence we have convergence of
$(\sft_{\eps_k}, \sfu_{\eps_k}, \sfz_{\eps_k})_k$, in the sense of
\eqref{cvs-eps}, to an enhanced $\pBV$ solution
$(\sft,\sfu,\sfz): [0,\mathsf{S}] \to [0,T]\ti \rmH_{\GDir}^1(\Omega;\R^3)\ti
\rmH^1(\GC)$ for which the differential characterization from Theorem
\ref{thm:diff-charact} holds. Namely, there exist measurable functions
$\thn u,\, \thn z: (0,\sfS)\to [0,\infty] $ satisfying for almost all
$s\in (0,\sfS)$ the switching conditions \eqref{eq:SwitchCond} and the
subdifferential inclusions
\begin{subequations}
  \label{diff-charact-delam} 
\begin{align}
\label{diff-charact-delam.a}
0&  \in \thn u(s) \bsD \dot{\sfu}(s) + \bsC \sfu(s) + \bsJ^*\big(\beta(\JUMP{\sfu(s)}) +
 \gamma(\sfz(s)) \pl  \psi(\JUMP{\sfu(s)}) \big) - \ell_u(\sft(s)) &&  \text{in } \rmH^1(\Omega;\R^3)^*  \\
\label{diff-charact-delam.b}
0&\in \pl  \mathrm{R}(\dot{\sfz}(s)) + \thn z(s) \dot{\sfz}(s)  + \bsA \sfz(s) +  \pl  \wh\phi(\sfz(s)) +
 \pl  \gamma(\sfz(s))\psi(\JUMP{\sfu(s)}) &&  \aein\, \GC
\end{align}
\end{subequations} 
(with convention \eqref{convention-recall} in the case $\thn x(s) = \infty$). 
\end{theorem}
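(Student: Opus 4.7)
The plan is to obtain Theorem \ref{thm:BV-adh-cont} as an application of the abstract convergence results from Sections \ref{s:4+} and \ref{ss:4.3}, with the substantial work lying in the verification of the abstract hypotheses for $\RIS$ and in the derivation of the qualified estimate \eqref{qualified-estimates}. First I would check that the concrete setup \eqref{setup-adh-cont}--\eqref{energy-delamination} fits the abstract framework of Section \ref{s:setup}. Hypotheses \ref{hyp:setup} and \ref{hyp:diss-basic} are straightforward: $\disv u,\disv z$ in \eqref{disv-adhc} are quadratic and positive definite, while $\calR$ in \eqref{diss-pot-adhc} is $1$-homogeneous and $\rmL^1(\GC)$-coercive since $\min\{\kappa_+,\kappa_-\}>0$. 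Hypothesis \ref{hyp:1} follows from Korn's inequality, the non-negativity of $\wh\beta,\psi,\gamma,\wh\phi$, and the confinement $z\in [0,1]$ enforced by $\wh\phi$, together with \eqref{bold-force-F} for the power-control \eqref{h:1.3d}. Hypothesis \ref{h:closedness} is obtained via the compact trace $\rmH^1(\Omega)\hookrightarrow\rmL^4(\GC)$ and $\Spx=\rmH^1(\GC)\Subset \Spz$, using the explicit form \eqref{Frsub-adh-cont} and weak closedness of the maximal monotone graphs $\pl\psi,\pl\gamma,\pl\wh\phi$. Hypothesis \ref{h:ch-rule} is established in the spirit of Proposition \ref{prop:ch-ruleApp}, relying on the $(-\Lambda_\phi)$-convexity of $\wh\phi$ and the global Lipschitz regularity of $\beta,\gamma$. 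Since \eqref{Frsub-adh-cont} exhibits the product structure \eqref{it-is-product}, Lemma \ref{l.4.13} delivers Hypothesis \ref{hyp:Sept19}; moreover, as the quadratic $\disv u,\disv z$ trivially fulfil \eqref{coercivity-VVCP}, Proposition \ref{prop:better-chain-rule-MOexpl} provides the parametrized chain rule Hypothesis \ref{h:ch-rule-param}. Theorem \ref{th:exist} then supplies, for every $\eps\in (0,1]$, a viscous solution $(u_\eps,z_\eps)$ of \eqref{eq:DelamSyst} with the regularity \eqref{regularity-viscous-solutions-delam} and satisfying the full energy-dissipation balance \eqref{enid.a}.

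The main difficulty is the qualified estimate \eqref{qualified-estimates}, which cannot be obtained from Proposition \ref{l:3.2}: the energy \eqref{energy-delamination} does not decompose in the form \eqref{eq:E=E1+E2cond} because the coupling term $\int_\GC \gamma(z)\psi(\JUMP u)$ fails to be uniformly Lipschitz in $z$ with values in $\Spu^*$, and moreover the bound sought for $\dot z_\eps$ is in $\rmL^1(0,T;\rmH^1(\GC))$, which exceeds the abstract conclusion. The plan is to regularize the system at scale $\delta>0$ by replacing $\beta,\pl\gamma,\pl\psi,\pl\wh\phi$ with their Yosida approximations, yielding a smoothened system with a \emph{semilinear} structure in the sense of \cite[Sec.\,4.4]{Miel11DEMF} and \cite[Sec.\,2]{Mielke-Zelik}: the principal parts $\eps^\alpha\bsD,\bsC,\bsA,\eps\mathrm{Id}$ are linear and self-adjoint, while the residual nonlinear terms are globally Lipschitz. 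One then differentiates each evolution equation formally in time (rigorously by finite-difference quotients, as in the proof of Proposition \ref{l:3.2}) and tests the $u$-equation with $\dot u_\eps^\delta$ and the $z$-equation with $\bsA\dot z_\eps^\delta$. The quadratic coercivity of $\bsC$ and $\bsA$ absorbs the Lipschitz cross-terms via Gr\"onwall, producing a bound for $\|\dot u_\eps^\delta\|_{\rmL^\infty(0,T;\Spu)} + \|\dot z_\eps^\delta\|_{\rmL^\infty(0,T;\rmH^1(\GC))}$ uniform in both $\eps$ and $\delta$, provided the initial time derivatives are controlled. The compatibility condition \eqref{eq:Del.IniCompati} is designed precisely to supply this control through evaluation of the two equations at $t=0$, giving $\dot u_\eps^\delta(0)\in \Spu$ and $\dot z_\eps^\delta(0)\in \rmH^1(\GC)$ with uniform bounds. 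Passing $\delta\to 0^+$ at fixed $\eps$ by weak compactness, lower semicontinuity and the chain-rule characterization of viscous solutions in Remark \ref{rmk:GS-used-later} yields solutions of the original viscous system \eqref{eq:DelamSyst} enjoying \eqref{qualified-estimates}.

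With \eqref{qualified-estimates} in hand, the remainder of the argument is essentially a packaging step. Combined with the basic energy estimate \eqref{est1}, it yields the uniform bounds $\sup_\eps\mfE(q_\eps(\cdot))\leq C$ together with $\|u_\eps\|_{\rmB\rmV([0,T];\Spu)} + \|z_\eps\|_{\rmB\rmV([0,T];\rmH^1(\GC))}\leq C$. A standard Helly/Ascoli compactness argument on the reflexive sublevels of $\mfE$ extracts a subsequence $(u_{\eps_k},z_{\eps_k})$ converging pointwise in the sense of \eqref{pointwise-cvg-delam}. Since assumption \eqref{estBV-u} is precisely the content of \eqref{qualified-estimates}, Theorem \ref{thm:exist-nonpar-enh} identifies the limit $(u,z)$ as an enhanced $\BV$ solution of $\RIS$. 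For the parametrized statement, reparametrizing the selected viscous solutions via the energy-dissipation arclength \eqref{arclength-est1-2} gives rescaled curves $(\sft_{\eps_k},\sfq_{\eps_k})$ satisfying both \eqref{condition-4-normali} and \eqref{condition-4-normali-enhn}; Theorem \ref{thm:exist-enh-pBV} then provides convergence to an enhanced $\pBV$ solution $(\sft,\sfu,\sfz)$, and the differential characterization \eqref{diff-charact-delam} with the switching conditions \eqref{eq:SwitchCond} follows from Theorem \ref{thm:diff-charact}, whose hypotheses hold thanks to the product structure \eqref{Frsub-adh-cont} of $\pl_q\calE$.
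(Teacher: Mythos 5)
Your overall outline matches the paper's strategy — verify the abstract Hypotheses (Proposition~\ref{l:comprehensive}), smoothen the nonlinearities, derive an $\eps$- and $\delta$-uniform estimate, pass to the limit, and then invoke Theorems~\ref{thm:exist-nonpar-enh}, \ref{thm:exist-enh-pBV}, \ref{thm:diff-charact}. However, the heart of the matter, the derivation of the qualified estimate \eqref{qualified-estimates}, is wrong as you describe it.

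You claim a bound for $\|\dot u_\eps^\delta\|_{\rmL^\infty(0,T;\Spu)} + \|\dot z_\eps^\delta\|_{\rmL^\infty(0,T;\rmH^1(\GC))}$ \emph{uniform in $\eps$}, obtained by differentiating both equations in time and running a Gr\"onwall argument. This cannot work. An $\eps$-uniform $\rmL^\infty$-in-time bound on the rates would force the vanishing-viscosity limit to be Lipschitz in time and hence continuous, which is incompatible with the jump behavior that the very concept of $\BV$ solution is designed to capture. Concretely, differentiating \eqref{eq:DelamSyst} and testing (with $\dot u_\eps$ and $\dot z_\eps$ — testing with $\bsA\dot z_\eps$ is not what is needed, since $\langle \bsA\dot z_\eps, \dot z_\eps\rangle$ already produces $\|\dot z_\eps\|^2_{\rmH^1(\GC)}$) yields a system of the form \eqref{eq:Del.theta.syst}, where the time-derivative terms carry small prefactors $\eps^\alpha$ and $\eps$ while the damping is $O(1)$; this is a fast-relaxation (singularly perturbed) system, not a classical Gr\"onwall situation. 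Moreover, the inhomogeneity in the $\bftheta_{\Spx}$-inequality contains the term $\calR(\dot z_\eps)$, which is controlled \emph{only} in $\rmL^1(0,T)$ — uniformly in $\eps$ — by the basic dissipation estimate; from an $\rmL^1$ input one cannot extract an $\rmL^\infty$ output. The paper's argument is therefore intrinsically an $\rmL^1$-in-time argument: it uses the $\eps$-independent bound $\int_0^T\calR(\dot z_\eps)\dd t\leq C$ together with the Gagliardo--Nirenberg-type interpolation \eqref{eq:Interpol.dotz}, $\|\dot z\|_\Spz \leq C\,\calR(\dot z)^{1/2}\|\dot z\|_{\Spx}^{1/2}$, to control the cross terms, then treats the fast $u$-equation by the variation-of-constants formula with the exponential kernel $K_\eps(t)=\eps^{-\alpha}\rme^{-c_\bsC t/\eps^\alpha}$, whose $\rmL^1(0,\infty)$-norm is $1/c_\bsC$ \emph{independently of $\eps$}; integrating the $z$-inequality over $[0,T]$ and closing the resulting algebraic system of $\rmL^1$ quantities $I_\Spu, I_\Spx, I_R$ gives \eqref{eq:ApriSemLinCase}. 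None of this machinery appears in your sketch, and it cannot be replaced by the coercivity-plus-Gr\"onwall shortcut you propose.

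A secondary, more minor gap: replacing $\pl\psi$ by its plain Yosida approximation does not preserve the required uniform bound $|\rmD\psi^\delta|\leq C_\psi^{(1)}$ in \eqref{eq:DelAss01.b}; the paper instead uses inf-convolution with the smooth, linearly growing function $h(a)=\sqrt{1+|a|^2}-1$ followed by mollification precisely to control $\rmD\psi^\delta$ and $\rmD^2\psi^\delta$ uniformly (this matters for the uniqueness/higher-regularity Step~3, i.e.~estimate \eqref{constant-CF}). Your passage from $\delta\to 0^+$ at fixed $\eps$ via Remark~\ref{rmk:GS-used-later} is then a reasonable plan, though the identification of the Fr\'echet subdifferential in the limit requires the Minty-type argument \eqref{prop-delta-3} rather than pure lower semicontinuity.
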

\begin{proof}
It is  sufficient to check that the rate-independent system
$\RIS$ from \eqref{setup-adh-cont} complies with the assumptions of Theorems
\ref{thm:exist-enh-pBV},  \ref{thm:diff-charact}, and 
\ref{thm:exist-nonpar-enh}, and that there exist `qualified' viscous solutions enjoying 
estimates \eqref{qualified-estimates}. More precisely,
\begin{enumerate}
\item In Proposition \ref{l:comprehensive} ahead we will check that the
  rate-independent delamination system $\RIS$ complies with Hypotheses
  \ref{hyp:setup}, \ref{hyp:diss-basic}, \ref{hyp:1}, \ref{h:closedness},
  \ref{hyp:Sept19}, and \ref{h:ch-rule-param} (in fact, the parametrized chain
  rule \eqref{better-chain-rule-MOexpl} holds).
\item We will obtain the existence of viscous solutions enjoying estimates
  \eqref{qualified-estimates} by working on a smoothened version of system
  \eqref{eq:DelamSyst}, introduced in Section \ref{su:DelamSmooth}
  ahead. Therein, we will obtain estimates for the solutions to the regularized
  viscous system \emph{uniform} with respect to the regularization
  parameter. Hence, with Proposition \ref{pr:Del.ViscSolImprov} in Section
  \ref{su:Dela.ExiApriGener} we will conclude the existence of `qualified'
  solutions for which \eqref{qualified-estimates} holds, and thereby conclude
  the proof of Theorem \ref{thm:BV-adh-cont}.
\end{enumerate}
\end{proof}

In what follows, we will most often use the place-holders $\Spu$, $\Spz$,
... (cf.\ \eqref{spaces-adh-cont} and \eqref{coercivity-spaces-delam}) for the
involved function spaces.

\Subsection{Properties of the rate-independent system for delamination}
\label{ss:prop-comprehensive}

This section is centered around Proposition \ref{l:comprehensive} below, in
which we check the rate-independent system $\RIS$ from \eqref{setup-adh-cont}.
complies with the `abstract' Hypotheses from Section \ref{s:setup}.  In
particular, from the following result we gather that Theorem \ref{th:exist} is
applicable, yielding the existence of solutions as in
\eqref{regularity-viscous-solutions-delam} to the viscous delamination system.

\begin{proposition}
  \label{l:comprehensive}
  Assume \eqref{bold-force-F} and \eqref{eq:Del.Ass02General}. Then, the
  delamination system $\RIS$ from \eqref{setup-adh-cont} fulfills Hypotheses
  \ref{hyp:setup}, \ref{hyp:diss-basic}, \ref{hyp:1}, \ref{h:closedness},
  \ref{hyp:Sept19}, and \ref{h:ch-rule-param}.
\end{proposition}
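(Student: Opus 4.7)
The plan is to check the six hypotheses in turn, exploiting the explicit additive structure of the energy in \eqref{energy-delamination} and the fact that the two viscous potentials in \eqref{disv-adhc} are quadratic. Hypothesis \ref{hyp:setup} is immediate from classical Sobolev theory: $\Spw=\Spu$ trivially, $\Spx=\rmH^1(\GC)\Subset\Spz=\rmL^2(\GC)$ by Rellich on the flat Lipschitz interface, and $\Spz\subset\Spy=\rmL^1(\GC)$ continuously and densely. Hypothesis \ref{hyp:diss-basic} for the viscous potentials reduces to the fact that $\disv u$ and $\disv z$ are halves of squares of the Hilbert norms on $\Spu$ and $\Spz$, so superlinearity in both primal and dual and the limit in \eqref{later-added} are automatic; for the $1$-homogeneous part the pointwise inequalities $\min\{\kappa_+,\kappa_-\}\,\|\dot z\|_{\rmL^1}\le \calR(\dot z)\le \max\{\kappa_+,\kappa_-\}\,\|\dot z\|_{\rmL^1}$ give both bounds in \eqref{R-coerc}.

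For Hypothesis \ref{hyp:1}, weak lower semicontinuity follows termwise: the quadratic forms $\tfrac12\langle\bsC u,u\rangle$ and $\tfrac12\langle\bsA z,z\rangle$ are weakly lsc; the functionals $u\mapsto\int_{\GC}\widehat\beta(\JUMP u)\dd x$ and $z\mapsto\int_{\GC}\widehat\phi(z)\dd x$ are weakly lsc by the lsc convexity assumptions; and the coupling $(u,z)\mapsto\int_{\GC}\gamma(z)\psi(\JUMP u)\dd x$ is in fact weakly \emph{continuous} thanks to the compact trace embedding $\Spu\Subset \rmL^4(\GC;\R^3)$, the quadratic growth of $\psi$, and the Lipschitz monotonicity of $\gamma$. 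Coercivity is obtained via Korn's inequality, the coercivities $\langle\bsC u,u\rangle\ge c_{\bsC}\|u\|_\Spu^2$ and $\langle\bsA z,z\rangle=\|z\|_{\rmH^1(\GC)}^2$, Young's inequality to absorb $-\langle\ell_u(t),u\rangle$, and $\widehat\beta,\widehat\phi,\gamma,\psi\ge0$; the $(-\Lambda_\phi)$-convexity of $\widehat\phi$ is harmless because $\widehat\phi\ge0$ and already enforces $z\in[0,1]$ a.e. After adding a constant $K\ge C\sup_t\|\ell_u(t)\|_{\Spu^*}^2$ one gets $\calE(t,q)+K\ge c\bigl(\|u\|_\Spu^2+\|z\|_{\rmH^1(\GC)}^2\bigr)+C_0$, which delivers \eqref{h:1.2} and the sublevel bound \eqref{h:2}. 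Only the forcing depends on time, so $\pet tq=-\langle\dot\ell_u(t),u\rangle$, and \eqref{h:1.3d} follows by $\|\dot\ell_u\|_{\Spu^*}\|u\|_\Spu\le C_\#\calE$ from the coercivity just established together with \eqref{bold-force-F}.

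For Hypothesis \ref{h:closedness}, along a sequence $(t_n,q_n,\xi_n)$ with the stated convergences and $\sup_n\mfE(q_n)<\infty$, the coercivity of (H.\ref{hyp:1}) gives boundedness of $(u_n)$ in $\Spu$ and of $(z_n)$ in $\Spx$, so after extraction $u_n\to u$ strongly in $\rmL^4(\GC;\R^3)$ via the trace and $z_n\to z$ strongly in every $\rmL^p(\GC)$, $p<\infty$. This permits passing to the limit in each summand of the explicit formula \eqref{Frsub-adh-cont}: $\beta(\JUMP{u_n})\to\beta(\JUMP u)$ strongly in $\rmL^2(\GC;\R^3)$ by the global Lipschitz assumption, while the monotone selections $\varrho_n\in\pl\psi(\JUMP{u_n})$, $\omega_n\in\pl\gamma(z_n)$, $\phi_n\in\pl\widehat\phi(z_n)$ are bounded in suitable Lebesgue spaces by the quadratic growth of $\psi$ and boundedness of $\gamma'$, and the pointwise graph-closedness of the convex subdifferentials combined with strong-weak convergence passes to the limit. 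Since $\pl_t\calE(t_n,q_n)=-\langle\dot\ell_u(t_n),u_n\rangle\to-\langle\dot\ell_u(t),u\rangle$ by strong continuity of $\dot\ell_u$ and weak convergence of $u_n$, \eqref{h:1.3e} follows. Hypothesis \ref{hyp:Sept19} is then a direct consequence of the product structure \eqref{it-is-product}, which is visible in \eqref{Frsub-adh-cont}, via Lemma \ref{l.4.13}.

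It remains to establish Hypothesis \ref{h:ch-rule-param}; this is the main technical step and I would derive it from Proposition \ref{prop:better-chain-rule-MOexpl}. Its hypothesis \eqref{coercivity-VVCP} is automatic since $\disv u$ and $\disv z$ have the form $\zeta(\|\cdot\|)$ with $\zeta(r)=r^2/2$, so Lemma \ref{le:psi.norm} and Example \ref{ex:VVCP}(A) yield exactly $\mfb_{\disv x}(v,\disv x^*(\eta))=\|v\|_{\mathbf X}\|\eta\|_{\mathbf X^*}$ with $c_\sfx=1$. The non-parametrized chain rule \eqref{eq:48strong} is the genuine obstacle. I would split $\calE(t,\cdot)$ as the sum of a convex, time-affine part $\calE^{\mathrm{cvx}}(t,u,z) =\tfrac12\langle\bsC u,u\rangle+\int_{\GC}\widehat\beta(\JUMP u)\dd x+\calF(z)-\langle\ell_u(t),u\rangle$ (with $\calF$ from \eqref{calF-stuff}, whose $(-\Lambda_\phi)$-convexity is neutralized by adding $\tfrac{\Lambda_\phi}{2}\|z\|_{\rmL^2}^2$) plus the Fr\'echet-smooth coupling $(u,z)\mapsto\int_{\GC}\gamma(z)\psi(\JUMP u)\dd x$, whose derivative is continuous on energy sublevels by the Lipschitz bounds on $\gamma$ and the quadratic growth of $\psi$ together with the compact trace $\Spu\Subset\rmL^4(\GC;\R^3)$. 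For absolutely continuous curves, the convex-functional chain rule for reflexive Banach spaces (as collected in Proposition \ref{prop:ch-ruleApp} of Appendix \ref{s:app-CR}) applies to each summand of $\calE^{\mathrm{cvx}}$ under the integrability condition in \eqref{conditions-1}, the classical $\rmC^1$ chain rule handles the smooth coupling, and the affine time-dependence yields the $-\langle\dot\ell_u(t),u(t)\rangle$ contribution pointwise; recombining these identities and applying Proposition \ref{prop:better-chain-rule-MOexpl} delivers \eqref{better-chain-rule-MOexpl}, hence Hypothesis \ref{h:ch-rule-param}, and completes the proof.
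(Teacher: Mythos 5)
Your verification of Hypotheses \ref{hyp:setup}, \ref{hyp:diss-basic}, \ref{hyp:1}, \ref{h:closedness}, and \ref{hyp:Sept19} is essentially correct and matches the paper's route, including the observation that the product structure \eqref{it-is-product} is visible in \eqref{Frsub-adh-cont} and feeds Lemma \ref{l.4.13}. The gap is in the last step, for Hypothesis \ref{h:ch-rule-param}: you split off the coupling term $\calE_{\mathrm{coupl}}(u,z)=\int_{\GC}\gamma(z)\psi(\JUMP u)\dd x$ and claim it is \emph{Fr\'echet-smooth}, intending to handle it with the classical $\rmC^1$ chain rule. That claim is false under the standing assumptions \eqref{eq:Del.Ass02General}: there $\gamma$ is only convex, non-decreasing, and $1$-Lipschitz (the prototype being $\gamma(z)=\max\{z,0\}$, nondifferentiable at $z=0$), and $\psi$ is only lsc and convex with quadratic growth. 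Accordingly, the Fr\'echet subdifferential in \eqref{Frsub-adh-cont} contains genuinely multi-valued selections $\varrho\in\pl\psi(\JUMP u)$ and $\omega\in\pl\gamma(z)$, so $\calE_{\mathrm{coupl}}$ is nonsmooth in both variables, and the $\rmC^1$ chain rule simply does not apply to it. The smoothened constitutive functions of \eqref{eq:DelAss01.b} appear only in the auxiliary regularization used to obtain the a priori estimate; they are not in force when verifying the hypotheses for the limiting rate-independent system.

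What closes the gap — and this is the content of Step 3 of the paper's proof — is to forgo any differentiability of $\calE_{\mathrm{coupl}}$ and instead verify the \emph{uniform subdifferentiability} condition \eqref{uniform-subdiff} from Proposition \ref{prop:ch-ruleApp} termwise. The convex $\calE_{\mathrm{elast}}(t,\cdot)$ and the $(-\Lambda_\phi)$-convex $\calF$ satisfy it automatically (you already noted this). For $\calE_{\mathrm{coupl}}$, a direct computation using only the convexity of $\gamma$ and of $\psi$, the sign $\gamma\ge0$, and the $1$-Lipschitz bound on $\gamma$ yields
\[
\int_{\GC}\!\big(\gamma(\hat z)\psi(\JUMP{\hat u}){-}\gamma(z)\psi(\JUMP{u})\big)\dd x
-\!\int_{\GC}\!\gamma(z)\,\varrho\,\JUMP{\hat u{-}u}\dd x
-\!\int_{\GC}\!\omega\,\psi(\JUMP{u})(\hat z{-}z)\dd x
\ \ge\ -\|\hat z{-}z\|_{\rmL^2(\GC)}\,\|\psi(\JUMP{\hat u}){-}\psi(\JUMP u)\|_{\rmL^2(\GC)},
\]
so one may take $\varpi^E(t,q,\hat q)=\|\psi(\JUMP{\hat u}){-}\psi(\JUMP u)\|_{\rmL^2(\GC)}$, which is continuous on energy sublevels (via the trace $\Spu\to\rmL^4(\GC;\R^3)$ and dominated convergence) and vanishes on the diagonal. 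With \eqref{uniform-subdiff} established, Proposition \ref{prop:ch-ruleApp} gives Hypothesis \ref{h:ch-rule}, and then Proposition \ref{prop:better-chain-rule-MOexpl} — whose coercivity input \eqref{coercivity-VVCP} you did correctly reduce to the quadratic form of $\disv u,\disv z$ via Lemma \ref{le:psi.norm} — delivers Hypothesis \ref{h:ch-rule-param}. So the correct mechanism is uniform subdifferentiability of the nonsmooth coupling, not smooth differentiation.
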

\begin{proof}  The proof consists of three steps. 

\noindent
\STEP{1. Hypotheses \ref{hyp:setup}, \ref{hyp:diss-basic}, \ref{hyp:1}, and
  \ref{hyp:Sept19}:} 
The validity of Hypothesis \ref{hyp:setup},
\ref{hyp:diss-basic}, and \ref{hyp:1} is obvious.  A straightforward
calculation shows that the Fr\'echet subdifferential of $\calE$ is given by
\eqref{Frsub-adh-cont}, so that the structure condition
$\frsubq qtq = \frsubq utq \ti \frsubq ztq$ holds at every
$q=(u,z) \in \Spu\times \Spz$. Therefore, by Lemma \ref{l.4.13}, Hypothesis
\ref{hyp:Sept19} will be ensured by the validity of Hypothesis \ref{h:closedness},
which we now check.

\noindent
\STEP{2. Hypothesis \ref{h:closedness}:} Let $(t_n)_n \subset [0,T]$ and
$(u_n,z_n)_n\subset \Spu \ti \Spz$ be in the conditions of Hypothesis
\ref{h:closedness}, and let   $(\mu_n,\zeta_n)_n$, with
$\mu_n   \in \frsub u{t_n}{u_n}{z_n} $ and
$\zeta_n \in \frsub z{t_n}{u_n}{z_n} $, fulfill  $\mu_n \weakto \mu $  in $\Spu^*$
and $\zeta_n\weakto \zeta $ in $\Spz^*$. Hence,
\[
  \begin{aligned}
    &  \mu_n  = \bsC u_n +\bsJ^* (\beta(\JUMP{u_n}){+}\gamma(z_n)\varrho_n)
    -\ell_u(t_n) \qquad \text{with } \varrho_n\in \pl \psi(\JUMP{u_n}) 
    \text{ a.e.\ in } \GC,
    \\
    & \zeta_n = \bsA z_n + \omega_n \psi(\JUMP{u_n}) + \phi_n \text{ for some }
    \omega_n\in \pl \gamma(z_n) \text{ and } \phi_n \in \pl 
    \widehat{\phi}(z_n) \,.
  \end{aligned}
\]   
  
We observe that, by Sobolev embeddings and trace theorems, from the
convergences $u_n\weakto u $ in $\Spw$ and $z_n\weakto z$ in $\Spx $ we
infer that $\JUMP{u_n}\to \JUMP{u}$ in $\rmL^{q}(\GC;\R^3)$ for all
$1\leq q<4$, and $z_n\to z $ in $\rmL^p(\GC)$ for all $1\leq
p<\infty$. Furthermore, since $0 \leq z_n \leq 1 $ a.e.\ on $\GC$, we even have
$z_n\weaksto z$ in $\rmL^\infty(\GC)$.  Since $\gamma$ is Lipschitz, we gather
that $\gamma(z_n) \to \gamma(z)$ in $\rmL^p(\GC)$ for all $1\leq p<\infty$,
too.  By the growth properties of $\psi$, we have that the sequence
$(\varrho_n)_n$ with $\varrho_n \in \pl \psi(\JUMP{u_n})$ a.e.\ in $\GC$ is
bounded in $\rmL^4(\GC)$ and thus, up to a subsequence, it weakly converges in
$\rmL^4(\GC)$ to some $\varrho$.  By the strong-weak closedness of the graph of
$\pl \psi$ (or, rather, of the maximal monotone operator that
$\pl  \psi: \R^3 \rightrightarrows \R^3$ induces on $\rmL^{2}(\GC)$), we
have that $\varrho \in \pl \psi(\JUMP u)$ a.e.\ in $\GC$.  Moreover, we
find that $\gamma(z_n)\varrho_n \weakto \gamma(z) \varrho$, for instance
in $\rmL^2(\GC)$.  Since $\beta $ is Lipschitz, we also have
$\beta(\JUMP{u_n})\to \beta(\JUMP{u})$ in $\rmL^{q}(\GC;\R^3)$ for all
$ 1\leq q<4$. Also taking into account that $\ell_u \in \rmC^1([0,T];\Spu^*)$,
we then conclude the weak limit  $\mu$ of the sequence $(\mu_n)_n$  belongs to
$\frsubq utq$.
  
Let us now discuss the weak $\Spz$-limit $\zeta$ of the sequence
$(\zeta_n)_n$. First of all, from the Lipschitz continuity of $\gamma$ we
gather that the sequence $(\omega_n)_n$ is bounded in $\rmL^\infty (\GC)$.
Hence, $ (\omega_n \psi(\JUMP{u_n}) )_n$ is bounded in $\rmL^2(\GC)$ and, a
fortiori, we gather that also the terms $ (\bsA z_n + \phi_n)_n$ are bounded in
$\rmL^2(\GC)$.  By the strong-weak closedness of the graph of (the operator
induced by) $\pl  \gamma$ (on $\rmL^2(\GC)$), we infer that
$\omega \in \pl  \gamma(z)$ a.e. in $\GC$. Since $\psi$ has at most
quadratic growth, from $\JUMP{u_n}\to \JUMP{u}$ in $\rmL^{q}(\GC;\R^3)$ for all
$1\leq q<4$ we obtain that $\psi(\JUMP{u_n})\to \psi(\JUMP{u})$ in
$\rmL^{q/2}(\GC;\R^3)$ via the dominated convergence theorem. All in all, we
have that
$\omega_n \psi(\JUMP{u_n}) \weakto \omega \psi(\JUMP{u}) \in \psi(\JUMP{u})
\pl \gamma(z)$ in $\rmL^2(\GC)$.
  
Now, with arguments similar to those in the previous lines and that in
$\rmL^2(\GC)$, since $\omega_n \weaksto \omega$ in $\rmL^\infty(\GC)$ and
$\psi(\JUMP{u_n}) \to \psi(\JUMP{u})$ in $\rmL^{q/2}(\GC)$ for all $1\leq
q<4$. In turn, also taking into account that the sequence $(\bsA z_n)_n$ is
itself bounded in $\rmH^1(\GC)^*$, it is immediate to check that there exists
$\phi \in \rmH^1(\GC)^*$ such that $\bsA z_n + \phi_n\weakto \bsA z + \phi$ in
$\rmL^2(\GC)$.  Since the functional $\calF$ from \eqref{calF-stuff} $\calF$ is
also $({-\Lambda_\phi})$-convex, its Fr\'echet subdifferential has a
strongly-weakly closed graph in $\rmL^2(\GC) \ti \rmL^2(\GC)$, and we thus
infer that $\phi \in \pl  \wh \phi(z) $ a.e.\ in $\GC$. All in all, we
conclude that that $\zeta \in \frsubq ztq$.  This concludes the proof of
Hypothesis \ref{h:closedness}.

\par
\noindent
\STEP{3. Hypothesis \ref{h:ch-rule-param}:} Let us now turn to the
parametrized chain rule from Hypothesis \ref{h:ch-rule-param}.  Since the viscous
dissipation potentials are $2$-homogeneous, the associated vanishing-viscosity
contact potentials are given by \eqref{p-homo-mfb} (cf.\ Example \ref{ex:VVCP})
so that, in particular, the coercivity condition \eqref{coercivity-VVCP} holds,
and Proposition \ref{prop:better-chain-rule-MOexpl} is applicable. Therefore,
Hypothesis \ref{h:ch-rule-param} follows from the chain rule of Hyp.\
\ref{h:ch-rule}.  The latter chain-rule property can be verified by resorting
to Proposition \ref{prop:ch-ruleApp} ahead. Hence, we need to show $\calE$
complies with condition \eqref{uniform-subdiff}, which states that the
Fr\'echet subdifferential $\pl_q \calE$ can be characterized by a
\emph{global} inequality akin to that defining the convex analysis
subdifferential: for every $E>0$, and energy sublevel $\subl E$, there exists
an upper semicontinuous function
$\varpi^E : [0,T]\ti \subl E \ti \subl E\to [0,\infty]$, with
$\varpi^E(t,q,q) =0 $ for every $t \in [0,T]$ and $ q \in \subl E$, such that
\begin{equation}
\label{modulus-subdif-delam}
\eneq t{\hat q} - \eneq tq  \geq \pairing{}{\Spq}{\xi}{\hat{q}{-}q} - \varpi^E(t,q,\hat{q}) \| \hat{q}{-} q\|_{\Spq}
\ \text{for all } t \in [0,T], \  q,\, \hat{q} \in \subl E \text{ and all } \xi \in \pl_q \calE(t, q)\,.
\end{equation}
In order to check \eqref{modulus-subdif-delam},
we will resort to a decomposition for
 the energy functional from \eqref{energy-delamination}  as 
\begin{subequations}
\label{decomposition-energy}
\begin{align}
&
\ene tuz = \calE_{\mathrm{elast}}(t,u) + \calF(z) + \calE_{\mathrm{coupl}}(u,z)
\intertext{with $\calF$ from \eqref{calF-stuff}, }
&
\calE_{\mathrm{elast}}(t,u) :=   \tfrac12 \pairing{}{\rmH^1(\Omega)}{\bsC u}u     - \pairing{}{\rmH^1(\Omega)}{\ell_u(t)}{u},
\intertext{while, for later convenience, we encompass the surface term $\int_{\GC}  \widehat{\beta}(\JUMP{u}) \dd x$ in the coupling energy}
&
 \calE_{\mathrm{coupl}}(u,z) := 
       \int_{\GC}    
   \left(  \widehat{\beta}(\JUMP{u})  {+} \gamma(z)
  \psi(\JUMP{u}) \right) \dd x \,.
 \end{align}
 \end{subequations}
Now,
$\calE_{\mathrm{elast}}(t,\cdot)$ is convex  while  $\calF$  is $(-\Lambda_\phi)$-convex. Hence, they both comply with \eqref{modulus-subdif-delam}.
Hence,
 it is sufficient to check its validity for  $\calE_{\mathrm{coupl}}$, and indeed for its second contribution, only, since
 $\wh\beta$ is also convex.
 Indeed, for every $\hat u, \, u  \in \Spu$ and $\hat z,\, z \in \Spz$
 and for all selections $\GC \ni x  \mapsto \varrho(x) \in \pl  \psi(\JUMP{u(x)})$ and $
\GC \ni x \mapsto \omega(x) \in \pl \gamma(z(x)) $ there holds
 \[
 \begin{aligned}
 &
\int_{\GC} \big( \gamma(\hat z)
  \psi(\JUMP{\hat u}) {-}  \gamma(z)
  \psi(\JUMP{u})  \big) \dd x  - \int_{\GC} \gamma(z) \varrho \JUMP{\hat u{-}\hat u} \dd x 
   - \int_{\GC}\omega\psi(\JUMP{u}) (\hat{z}{-}z) \dd x 
   \\
   &
   =
   \int_{\GC} \gamma(\hat z) \big\{ \psi(\JUMP{\hat u}{-}\psi(\JUMP u) \big\} \dd x  - \int_{\GC} \gamma(z) \varrho \JUMP{\hat u{-}\hat u} \dd x 
   + \int_{\GC} \big\{ \gamma(\hat z){-}\gamma(z) {-} \omega (\hat{z}{-}z) \big\}  \psi(\JUMP u)  \dd x 
   \\
   & \stackrel{(1)}{\geq} 
    \int_{\GC} \big(\gamma(\hat z){-} \gamma(z) \big)  \big( \psi(\JUMP{\hat u}){-}\psi(\JUMP u) \big) \dd x
    + \int_{\GC} \gamma(z) \big\{ \psi(\JUMP{\hat u}){-}\psi(\JUMP u) {-} \varrho \JUMP{\hat u{-} u}  \big\} \dd x 
    \\
   & \stackrel{(2)}{\geq} 
    \int_{\GC} \big(\gamma(\hat z){-} \gamma(z) \big)  \big( \psi(\JUMP{\hat u}){-}\psi(\JUMP u) \big) \dd x
    \\
    & \stackrel{(3)}{\geq}  - \| \hat {z}{-} z\|_{\rmL^2(\GC)} \| \psi(\JUMP{\hat u}){-}\psi(\JUMP u)\|_{\rmL^2(\GC)}
   \end{aligned}
 \]
 where {\footnotesize (1)} \& {\footnotesize (2)}   follow from the convexity of $\gamma$  and $\psi$, respectively, whereas {\footnotesize (3)} is due to the $1$-Lipschitz continuity of $\gamma$. Then, estimate  \eqref{modulus-subdif-delam} follows with the function $\varpi_t^E(\hat q, q): =  \| \psi(\JUMP{\hat u}){-}\psi(\JUMP u)\|_{\rmL^2(\GC)}$. 
 We have thus checked the validity of \eqref{modulus-subdif-delam} and, a fortiori, of 
 Hypothesis  \ref{h:ch-rule-param}.  This concludes the proof. 
\end{proof}

\begin{remark}\slshape
\label{rmk:why-no-indicator}
The Lipschitz continuity of $\beta$ has played a key role in the proof that
$\calE$ complies with the closedness condition from Hyp.\
\ref{h:closedness}. In fact, we could allow for a nonsmooth $\widehat \beta$,
but with a suitable polynomial growth condition, that would still ensure that
the maximal monotone operator induced by $\beta = \pl \wh\beta$ on
$\rmL^2(\GC)$ is strongly-weakly closed. However, it would not be possible to
check Hypothesis \ref{h:closedness} in the case $\beta$ is an unbounded maximal
monotone operator, such as the subdifferential of an indicator function. That
is why, we are not in a position to encompass in our analysis the
non-interpenetration constraint between the two bodies $\Omega^+$ and
$\Omega^-$.
\end{remark}

\Subsection{A priori estimates for the smooth semilinear  system}
\label{su:DelamSmooth}

In this section we address a version of the viscous system \eqref{eq:DelamSyst}
in which the functions $\widehat\beta$, $\gamma$, $\widehat \phi$, and $\psi$,
complying with \eqref{eq:Del.Ass02General}, are also smoothened. Namely, we
will additionally suppose that they fulfill
\begin{align}
 \label{eq:DelAss01.b}
  &  \left.\begin{aligned}  &
   \gamma,\,\wh\phi \in \rmC^2(\R;\R), \quad 
     \psi,\,\wh\beta \in \rmC^2(\R^3;\R),
     \\ &
    \gamma'',\, \wh\phi'',\, \rmD^2\wh\beta \text{ are bounded},\quad 
     |\rmD\psi(a)|\leq C_\psi^{(1)} \text{ for all }a\in \R^3 .
   \end{aligned} \right\}
\end{align} 
These conditions will allow us to \emph{rigorously} perform, on the solutions
to system \eqref{eq:DelamSyst}, calculations that will ultimately lead to
bounds, uniform with respect to viscosity parameter, suitable for our
vanishing-viscosity analysis.  Such estimates will however only depend on the
constants occurring in \eqref{eq:Del.Ass02General}, and not on those in
\eqref{eq:DelAss01.b}.  For these calculations we will crucially make use of
the \emph{semilinear} structure of this regularized system and of the fact that
the coupling between the displacement equation and the flow rule for the
delamination parameter is weak enough to allow us to treat those equations
separately.

As already mentioned, for all $\eps \in (0,1)$ and all initial data
$(u_0,z_0)\in \Spu\ti \Spx$ system \eqref{eq:DelamSyst} admits finite-energy
solutions $(u_\eps,z_\eps)$ with the standard time regularity
\eqref{regularity-viscous-solutions-delam}.  We now aim to derive higher
order estimates as well, and to show that these estimates are independent of
$\eps$. We will make them as explicit as possible. Let us mention in advance
that one crucial argument involves the interpolation between the different
norms for the time derivative $\dot z$, namely
\begin{equation}
  \label{eq:Interpol.dotz}
\forall \, \dot z \in \Spx:\quad \|\dot z\|_\Spz \leq C_\text{GN} \calR(\dot z)^{1/2}
\| \dot z\|_{\Spx}^{1/2}.   
\end{equation}
Indeed, \eqref{eq:Interpol.dotz} follows by combining the lower bound $\calR(v)\geq
c_R\|v\|_{\rmL^1}$ with the classical Gagliardo-Nirenberg interpolation
$\|v\|_{\rmL^2}^2 \leq C \|v\|_{\rmL^1}  \|v\|_{\rm\rmH^1}$. This will allows us
to exploit the $\eps$-independent dissipation estimate $\int_0^T \calR(\dot
z_\eps)\dd t \leq C$. 
\medskip

\STEP{1. Basic energy and dissipation estimates:} The simple energy-dissipation
estimate stemming from the energy balance \eqref{enid.a} (cf.\ Lemma
\ref{l:1}), together with $\ell_u \in \rmC^1([0,T;\Spu^*)$ implies that
for all $E_0$ there exists $C^{E_0}_1>0$ such all solutions $(u_\eps,z_\eps)$
of \eqref{eq:DelamSyst} with $\calE(0,u_\eps(0),z_\eps(0)\leq E_0$ satisfy the
basic energy estimates
\begin{equation}
  \label{eq:DelamEst01}
  \int_0^T \!\!\big\{ \calR (\dot z_\eps(t)) + \eps^\alpha\|\dot u_\eps(t)\|_\Spu^2
  + \eps \| \dot z_\eps\|_\Spz^2\big\} \dd t \leq C^{E_0}_1 \quad \text{and} \quad
\forall\, t\in [0,T]:\ \|u_\eps(t)\|_\Spu + \| z_\eps(t)\|_{\Spx} \leq C^{E_0}_1 . 
\end{equation}
As a consequence of this a priori bound, of the fact that
$\bsJ: \Spu \to \rmL^4(\GC;\R^3)$ is a bounded operator, and of upper
estimates on $\psi$ via the constants $C_\psi$ and $C^{(1)}_\psi$, we
find another constant $C^{E_0}_2$ such that all solutions $(u_\eps,z_\eps)$ of
\eqref{eq:DelamSyst} with $\calE(0,u_\eps(0),z_\eps(0))\leq E_0$ satisfy
\begin{subequations}
  \label{eq:Del.psi.est}
  \begin{align}
    \label{eq:Del.psi.est.a}
    &\| \psi(\JUMP{u_\eps(t)})\|_{\rmL^2} \leq C_\psi C^{E_0}_2, \quad 
     \| \rmD\psi(\JUMP{u_\eps(t)})\|_{\rmL^4} \leq  C_\psi C^{E_0}_2,
  \\
  \label{eq:Del.psi.est.b}
  &\| \psi(\JUMP{u_\eps(t)})\|_{\rmL^4}\leq C^{(1)}_\psi C^{E_0}_2, \quad 
   \| \rmD\psi(\JUMP{u_\eps(t)})\|_{\rmL^\infty}\leq C^{(1)}_\psi C^{E_0}_2. \medskip
\end{align} 
\end{subequations}
Estimate \eqref{eq:Del.psi.est.b} will in fact only be used for gaining enhanced regularity of  the viscous solutions $(u_\eps,z_\eps)$, and not for the vanishing-viscosity analysis. 

\STEP{2. Estimate for $\dot u_\eps$:} Because of the smoothness of $\wh\beta$
and $\psi$, the  displacement equation \eqref{eq:DelamSyst.a} for $u_\eps$ is a semilinear
equation with a smooth nonlinearity, if we consider
$z_\eps \in \rm\rmH^1(0,T;\Spz)$ as \emph{given} datum. Thus, we can use the classical
technique of difference quotients to show that $u_\eps \in \rm\rmH^2(0,T;\Spu)$
provided that $\dot u_\eps(0)= \eps^{-\alpha}\bsD^{-1}\big(\bsC
u(0)+\bsJ^*(\cdots)-\ell_u(0)\big) \in \Spu$. Hence, it is
possible to differentiate 
\eqref{eq:DelamSyst.a}
 with respect to time, which yields
\begin{equation}
  \label{eq:DelamEqn.dotu}
  0=\eps^\alpha \bsD \ddot u_\eps + \bsC \dot u_\eps + \bsJ^*\Big(\rmD^2 \wh\beta 
  (\JUMP{u_\eps})\JUMP{\dot u_\eps} 
        + \gamma(z_\eps) \rmD^2\psi (\JUMP{u_\eps})\JUMP{\dot u_\eps} 
   + \gamma'(z_\eps)\dot z_\eps \rmD\psi(\JUMP{u_\eps})\Big) - \dot \ell_u(t).
\end{equation}
We can test \eqref{eq:DelamEqn.dotu} by  $\dot u_\eps \in \rm\rmH^1(0,T;\Spu)$  and obtain
\begin{align*}
0&=\frac{\eps^\alpha}2 \,\frac{\rmd}{\rmd t}\langle \bsD \dot u_\eps, 
                      \dot u_\eps\rangle_\Spu 
  +  \langle \bsC \dot u_\eps,\dot u_\eps\rangle_\Spu   +  \langle \rmD^2
  \wh\beta (\JUMP{u_\eps})\JUMP{\dot u_\eps}  
          {+} \gamma(z_\eps) \rmD^2\psi (\JUMP{u_\eps})\JUMP{\dot u_\eps}  , 
          \JUMP{\dot u_\eps}\rangle_\Spz\\
&\quad -\langle \dot \ell_u,\dot u_\eps\rangle_\Spu - 
   \langle \gamma'(z_\eps)\dot z_\eps \rmD\psi(\JUMP{u_\eps}) , \JUMP{\dot
     u_\eps}\rangle_\Spz 
\end{align*}
Here the last duality product in the first line  is nonnegative, because
$a\mapsto \wh\beta(a) + \gamma(z)\psi(a)$ is convex. The last duality product
can be estimated using \eqref{eq:Del.psi.est.a}.
Defining  $\bftheta^\eps_\Spu$, $\bftheta^\eps_\Spz$, and $\lambda_{\Spu^*}$  via 
\[
\bftheta^\eps_\Spu (t)^2:=\langle \bsD \dot u_\eps(t),\dot u_\eps(t)\rangle_\Spu
\quad \text{and} \quad 
\bftheta^\eps_\Spz (t)^2:=\| \dot z_\eps(t)\|_{\Spz}^2 , \quad \text{and }
\lambda_{\Spu^*}(t) = \|\dot \ell_u(t)\|_{\Spu^*}, 
\]
we have established the estimate 
\[
\frac{\eps^\alpha}2\,\frac{\rmd}{\rmd t}(\bftheta^\eps_\Spu)^2 +
c_{\bsC} (\bftheta^\eps_\Spu)^2 \leq \lambda_{\Spu^*} \bftheta^\eps_\Spu 
 + 1 \, C_\psi  C^{E_0}_2 C_{\rmH^1,\rmL^4} \|\bsJ\| \bftheta^\eps_\Spz
  \bftheta^\eps_\Spu
\] 
where we have also used that $\gamma$ is $1$-Lipschitz continuous, and
$ C_{\rm\rmH^1,\rmL^4} $ denotes the constant associated with the continuous
embedding $\Spu \subset \rmL^4(\GC;\R^3)$. Using
$\frac{\rmd}{\rmd t}(\bftheta^\eps_\Spu)^2= 2 \bftheta^\eps_\Spu
\,\dot\bftheta^\eps_\Spu$ we can divide by $ \bftheta^\eps_\Spu\geq 0$ and
obtain
\begin{equation}
  \label{eq:Delam.Est02}
  \eps^\alpha \dot\bftheta^\eps_\Spu + c_{\bsC}\bftheta^\eps_\Spu \leq
  \lambda_{\Spu^*} + C_\psi  C^{E_0}_2 C_{\rm\rmH^1,\rmL^4}  \|\bsJ\|\,  \bftheta^\eps_\Spz.\medskip
\end{equation}
Let us mention that the above estimate could be rigorously obtained by
replacing $ \bftheta^\eps_\Spu$ by $\sqrt{ (\bftheta^\eps_\Spu)^2 +\delta}$,
which satisfies the same estimate, and then letting $\delta \down 0$, cf.\
\cite[Sec.\,4.4]{Miel11DEMF}.

\STEP{3. Uniqueness and higher regularity of $\dot z_\eps$:} We first observe
that \emph{given} $u_\eps \in \rm\rmH^1([0,T];\Spu)$ and $z_0$ there is a unique solution
$z_\eps$ for \eqref{eq:DelamSyst.b}. Indeed, assuming that $z_1$ and $z_2$ are
solutions (with $\varrho_j \in \pl\rmR(\dot z_j))$ we set  $w=z_1{-}z_2$ and
test the difference of the two equations by $\dot w=\dot z_1{-}\dot z_2$,
which yields
\begin{align}
  \label{eq:Del.Uniquen}
0=& \pairing{}{\Spz}{
  \varrho_1{-}\varrho_2}{\dot z_1{-}\dot z_2}  +\eps\|\dot w\|_\Spz^2  
  + \frac12\,\frac\rmd{\rmd t} \pairing{}{\Spx}{\bsA w}{w}  + 
  \pairing{}{\Spz}{G(u_\eps,z_1)-G(u_\eps,z_2)}{\dot w}, 
\end{align}    
  where  we have set $G(u,z)=\wh\phi'(z) + \gamma'(z)\psi(\JUMP u)$.  By our strengthened
assumptions \eqref{eq:DelAss01.b} 
and Gagliardo-Nirenberg interpolation 
we
have 
\begin{align*}
\| G(u_\eps,z_1)-G(u_\eps,z_2)\|_{\Spz^*} &\leq \|\wh\phi''\|_\infty \| z_1{-}z_2\|_\Spz +
 \|\gamma'(z_1){-} \gamma'(z_2)\|_{\rmL^4}\|\psi(\JUMP u_\eps) \|_{\rmL^4} 
\\
& \leq \big( \|\wh\phi''\|_\infty \| z_1{-}z_2\|_\Spz + \| \gamma''\|_{\infty} \|z_1{-}z_2\|_{\rmL^4}  
 C^{(1)}_\psi C^{E_0}_2 \big)\leq  C_G  \|w\|_\Spz^{1/2} \| w\|_{\Spx}^{1/2}.  
\end{align*}

By using the monotonicity of $\pl\rmR$, the first term in \eqref{eq:Del.Uniquen} is nonnegative. Using
$\|w\|_{\Spx}^2 =\pairing{}{\rmH^1(\GC)}{\bsA w}{w} $ we obtain
\[
 \frac12\,\frac\rmd{\rmd t} \|w\|_{\Spx}^2 + \eps\|\dot w\|_\Spz^2
 \leq  C_G \|w\|_\Spz^{1/2} \| w\|_{\Spx}^{1/2}\|\dot w\|_\Spz \leq 
\frac{C_G^2}{4\eps}  \|w\|_\Spz\| w\|_{\Spx} + \eps\|\dot w\|_\Spz^2.
\]
Canceling the terms $\eps\|\dot w\|_\Spz^2$ and using $\|w\|_\Spz\leq \|
w\|_{\Spx}$ provides the estimate 
\begin{equation}
\label{constant-CF}
 \| z_1(t){-}z_2(t)\|_{\Spx} \leq  \rme^{{C_G}^2(t-s)/(4\eps)} 
  \| z_1(s){-}z_2(s)\|_{\Spx} \quad \text{ for } 0\leq s\leq t \leq T.
\end{equation}
We emphasize that this uniqueness result is special and relies
strongly on the semilinear structure of the flow rule for $z$ under the
strengthened assumption \eqref{eq:DelAss01.b}. It is indeed thanks to
\eqref{eq:DelAss01.b} that $G(u,\cdot):\Spx \to \Spz^*$ is globally
Lipschitz, and in fact the constant $C_G$ in \eqref{constant-CF} does
depend on $ C^{(1)}_\psi $.

This uniqueness is central to derive higher regularity as it is now possible to
use suitable regularizations such as Galerkin approximations or replacing the
nonsmooth function $\rmR$ by a smoothed version $\rmR_\delta$. We do not go
into detail here, but refer to \cite{Mielke-Zelik} and
\cite[Sec.\,4.4]{Miel11DEMF}. In particular, our problem fits exactly into the
abstract setting of \cite[Sec.\,3]{Mielke-Zelik} with $H=\Spz=\rmL^2(\GC)$,
$\calB=\bsA$, and
$\Phi(t,z)= \int_\Omega \big(\wh\phi(z)+ \gamma(z) \psi(\JUMP{u(t)}\big) \dd
x$. 

Thus, under the additional condition $\bsA z_0\in \Spz$ (or $z_0\in
\rm\rmH^2(\GC)$), the unique solution $z_\eps$ with $z_\eps(0)=z_0$ satisfies 
the following higher regularity properties:
\begin{equation}
  \label{eq:Del.HighRegul}
  \dot z_\eps \in \rmL^\infty(0,T;\Spx) \quad \text{and} \quad 
\sqrt {t\,}\, \ddot z_\eps \in \rmL^2(0,T;\Spz).
\end{equation}
Of course, at this stage we have no control over the dependence on $\eps$ of
the corresponding norms.\medskip 

\STEP{4. Identities not involving $\rmR$:} Surprisingly, there
are two identities for the solution $z_\eps$ that are completely independent of
$\rmR$, i.e.\ they look like energy estimates for a semilinear parabolic
problem:
\begin{subequations}
  \label{eq:Dela.Ident}
\begin{align}
  \label{eq:Dela.Ident.B} 
\frac\eps2\,\frac\rmd{\rmd t} \| \dot z_\eps\|_\Spz^2 + \| \dot z_\eps\|_{\Spx}^2 +  
\pairing{}{\Spz}{\rmD^2_z \Phi(u_\eps,z_\eps) \dot z_\eps}{\dot z_\eps} + \pairing{}{\Spz}{
\rmD_z\rmD_u \Phi(u_\eps,z_\eps)\dot u_\eps}{\dot z_\eps} &=0,
\\
  \label{eq:Dela.Ident.C}
\eps \| \ddot z_\eps\|_\Spz^2 + \frac12\,\frac\rmd{\rmd t} \| \dot
z_\eps\|_{\Spx}^2  + \pairing{}{\Spz}{\rmD^2_z
\Phi(u_\eps,z_\eps) \dot z_\eps}{\ddot z_\eps} + \pairing{}{\Spz}{\rmD_z\rmD_u
\Phi(u_\eps,z_\eps)\dot u_\eps}{\ddot z_\eps} &\leq 0. 
\end{align}
\end{subequations}
We refer to \cite[Eqn.\,(95) and Lem.\,4.16]{Miel11DEMF} for a rigorous
derivation based on the smoothness established in \eqref{eq:Del.HighRegul}.
Relations \eqref{eq:Dela.Ident}
can be formally derived from  equation \eqref{eq:DelamSyst.b} by forgetting the
nonsmooth term $\pl\rmR$, then  differentiating the whole equation
with respect to $t$, and finally testing with $\dot z_\eps$ or $\ddot z_\eps$
respectively. Indeed, \eqref{eq:Dela.Ident.C} will not be used below any more,
but its relevance is obvious by comparison  with \eqref{eq:Del.Uniquen} and for
deriving the ($\eps$-dependent) a priori estimate for $\sqrt{t\,}\,\ddot
z_\eps$ (via Galerkin approximations). 

It is the identity \eqref{eq:Dela.Ident.B} that will be crucial for deriving
$\eps$-independent a priori estimates. It origin can formally understood by
looking at general smooth $p$-homogeneous dissipation potentials $\bfPsi$ (i.e.\
fulfilling 
$\bfPsi(\gamma v)=\gamma^p\bfPsi(v)$ for all $v$ and $\gamma>0$). Then, Euler's
formula gives $\langle \rmD \bfPsi(v),v\rangle = p \bfPsi(v) $, and we find the identity
\[
\big\langle \frac{\rmd}{\rmd t} \big( \rmD\bfPsi(\dot z)\big),\dot z \big\rangle 
=\rmD^2\bfPsi(\dot z)[\ddot z,\dot z]= \frac{\rmd}{\rmd t}\big( \langle
\rmD\bfPsi(\dot z), \dot z\rangle- \bfPsi(\dot z)\big) = (p{-} 1)\, \frac{\rmd}{\rmd
  t}\bfPsi(\dot z) .
\]
The quadratic case $p=2$ was applied above several times. Of course, in the  case 
$p=1$ the potential $\calR$ is nonsmooth. Hence, the proof in
\cite[Lem.\,4.16]{Miel11DEMF} is different and uses simple arguments
based on the characterization of $\pl  \calR$ in the
$1$-homogeneous  case.\medskip

\STEP{5. $\rmL^1$ estimates for $\bftheta^\eps_\Spu$, $\bftheta^\eps_\Spz$, and
  $\bftheta^\eps_{\Spx}$:} 
In \eqref{eq:Dela.Ident.B}  the coupling term $ \langle
\rmD_z\rmD_u \Phi(u_\eps,z_\eps)\dot u_\eps, \dot z_\eps\rangle$ can be
estimated via the weaker assumption \eqref{eq:Del.Ass02General}, namely
\[
\begin{aligned}
\pairing{}{\Spz}
{\rmD_z\rmD_u \Phi(u_\eps,z_\eps)\dot u_\eps}{\dot z_\eps} &  \leq 1 \| \dot
z_\eps\|_\Spz \| \rmD\psi(\JUMP{u_\eps})\|_{\rmL^4} \| \JUMP{\dot
  u_\eps})\|_{\rmL^4}
  \\
   &  \leq 
   C_3
   \, \bftheta^\eps_\Spz(t)
  \,\bftheta^\eps_\Spu(t) \text{ with } C_3:=C_\psi C^{E_0}_2 C_{\rm\rmH^1,\rmL^4} 
  \|\bsJ\|\,,
  \end{aligned}
\]
where we exploited the $1$-Lipschitz continuity of $\gamma$ and
\eqref{eq:Del.psi.est.a}. Introducing the short-hand notation 
$\bftheta^\eps_{\Spx}$ via
$(\bftheta^\eps_{\Spx}(t))^2=\|\dot z_\eps(t)\|^2_{\Spx} =
\pairing{}{\rmH^1(\GC)}{\bsA \dot z_\eps(t)}{\dot z_\eps(t)}$ and exploiting
the $\Lambda_\phi$-convexity of $\wh\phi$ and the convexity of $\gamma$,
identity \eqref{eq:Dela.Ident.B} yields
\[
\eps \bftheta^\eps_\Spz \dot \bftheta^\eps_\Spz + \big(\bftheta^\eps_{\Spx}\big)^2
\leq \Lambda_\phi \big(\bftheta^\eps_\Spz\big)^2 + C_3 \, \bftheta^\eps_\Spz  \,\bftheta^\eps_\Spu\,.
\]
For the first term on the right-hand side we can now exploit the interpolation
\eqref{eq:Interpol.dotz} and after division by $\bftheta^\eps_\Spz\geq 0$ (recall
$\bftheta^\eps_\Spz \leq \bftheta^\eps_{\Spx}$) we arrive, together with \eqref{eq:Delam.Est02},
at the differential estimates 
\begin{subequations}
  \label{eq:Del.theta.syst}
  \begin{align}
 \label{eq:Del.theta.syst.a}
  \eps^\alpha \dot\bftheta^\eps_\Spu + c_{\bsC}\bftheta^\eps_\Spu &\leq
  \lambda_{\Spu^*} +C_\text{GN}C_3\,  \big(\calR(\dot
  z_\eps) \bftheta^\eps_{\Spx}\big)^{1/2},   
 \\   \label{eq:Del.theta.syst.b}
   \eps \dot \bftheta^\eps_\Spz + \bftheta^\eps_{\Spx}
   &\leq \Lambda_\phi C_\text{GN} \calR(\dot z_\eps) +C_3    \,
   \bftheta^\eps_\Spu\,.
  \end{align}
\end{subequations}
We emphasize that all the appearing coefficients, except for the leading
factors $\eps^\alpha$ and $\eps$, are independent of $\eps \in (0,1)$ and indeed depend only on 
$C_\psi$.  
From the first equation we obtain via the constants-of-variation formula (or
Gr\"onwall's lemma) the estimate
\[
\bftheta^\eps_\Spu(t)\leq K_\eps(t) \eps^\alpha\bftheta^\eps_\Spu(0) + \!\int_0^t\!\!
K_\eps(t{-}s)\big( \lambda_{\Spu^*}(s) {+} C_\text{GN}C_3 \big(\calR(\dot
  z_\eps(s)) \bftheta^\eps_{\Spx}(s)\big)^{1/2} \big)\dd s \ \text{
    with } K_\eps(t)=\frac{\rme^{-c_{\bsC} t/\eps^\alpha}}{\eps^\alpha}. 
\]
Because of $\| K_\eps\|_{\rmL^1} = \int_0^\infty K_\eps(t)\dd t =1/c_{\bsC}$ the
$\rmL^1$-convolution estimate leads to 
\[
I_U:=\int_0^T \!\!\bftheta^\eps_{\Spu}(t) \dd t \leq \frac1{c_{\bsC}} \Big(\eps^\alpha 
  \bftheta^\eps_\Spu(0) + \int_0^T\!\! \lambda_{\Spu^*}(t)\dd t + C_\text{GN}C_3 
 \int_0^T\!\!\big(\calR(\dot z_\eps(t)) \bftheta^\eps_{\Spx}(t)\big)^{1/2} 
  \dd t  \Big) .
\]
Applying the Cauchy-Schwarz inequality to the last integral and integrating 
\eqref{eq:Del.theta.syst.b}  over $[0,T]$ we obtain the estimates 
\begin{align*}
I_\Spu& \leq \frac1{c_{\bsC}} \Big(\eps^\alpha 
  \bftheta^\eps_\Spu(0) +  \int_0^T\!\! \lambda_{\Spu^*}(t)\dd t + C_\text{GN}C_3 \,
 I_R^{1/2} I_{\Spx}^{1/2}  \Big) , 
\\
I_{\Spx}&:=\int_0^T \!\!\bftheta^\eps_{\Spx}(t) \dd t \leq \eps \bftheta^\eps_\Spz(0) + \Lambda_\phi
C_\text{GN}  I_R + C_3 I_\Spu, \quad \text{where } I_R:=\int_0^T\!\! 
 \calR(\dot z_\eps(t)) \dd t. 
\end{align*}
From this it is easy to show that there exists a constant $C_*$, which only
depends on $C_3 =C_\psi C^{E_0}_2 C_{\rm\rmH^1,\rmL^4} 
  \|\bsJ\|$, $c_{\bsC}$, $C_\text{GN}$, and
$\Lambda_\phi$, such that $I_\Spu{+}I_{\Spx}$ can be estimated by
$C_*\big(\eps^\alpha \bftheta^\eps_\Spu(0) + \eps \bftheta^\eps_\Spz(0)
+\int_0^T\lambda_{\Spu^*}\dd t +I_R\big) $.
We have thus proved the following result. 

\begin{lemma}[Rate-independent a priori estimate in the semilinear case]
\label{le:ApriSemiLinCase} 
Assume \eqref{bold-force-F} and \eqref{eq:Del.Ass02General}. Additionally,
let $\widehat\beta$, $\gamma$, $\widehat\phi$, and $\psi$ satisfy
\eqref{eq:DelAss01.b} and let the initial data
$(u_0,z_0) \in \Spu \times \Spx$ comply with \eqref{eq:Del.IniCompati}.  Then,
There exists a constant $C_*>0$, only depending on the initial data and on the
constants $\Lambda_\phi$ and $C_\psi$ from \eqref{eq:Del.Ass02General}, 
such that the unique solution $(u_\eps,z_\eps)$ of \eqref{eq:DelamSyst}
satisfies the a priori estimate
\begin{equation}
  \label{eq:ApriSemLinCase}
  \int_0^T\!\Big( \| \dot u_\eps\|_\Spu+ \| \dot z_\eps\|_{\Spx} \Big) \dd t
  \leq  C_*\Big( \eps^\alpha\|  \dot u_\eps(0)\|_\Spu + \eps \| \dot
  z_\eps\|_{\Spz} + \int_0^T \!\!\big( \| \dot \ell_u\|_{\Spu^*} + \calR(\dot
  z_\eps)\big) \dd t \Big) .  
\end{equation}
\end{lemma}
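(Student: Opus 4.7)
\smallskip\noindent\emph{Plan of proof.} The plan is to close the coupled pair of integral inequalities for
\[
I_\Spu := \int_0^T\bftheta^\eps_\Spu(t)\,\rmd t, \qquad I_\Spx := \int_0^T\bftheta^\eps_\Spx(t)\,\rmd t, \qquad I_R := \int_0^T\calR(\dot z_\eps(t))\,\rmd t
\]
already derived in the paragraph preceding the statement. All the genuinely hard work has been done: the higher regularity from Step~3 makes the time-differentiations of \eqref{eq:DelamSyst.a} and \eqref{eq:DelamSyst.b} rigorous; the $\calR$-free identity \eqref{eq:Dela.Ident.B} and the interpolation \eqref{eq:Interpol.dotz} together produce the scalar differential system \eqref{eq:Del.theta.syst}. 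What is left is a Gr\"onwall-type bookkeeping argument at the scalar level.

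First I would confirm the two building blocks. Applying the variation-of-constants formula to \eqref{eq:Del.theta.syst.a} writes $\bftheta^\eps_\Spu$ as the convolution of the nonnegative forcing with the kernel $K_\eps(t)=\rme^{-c_{\bsC}t/\eps^\alpha}/\eps^\alpha$, whose $\rmL^1$-norm on $(0,\infty)$ equals $1/c_{\bsC}$ \emph{independently of $\eps$}; Young's convolution inequality, followed by Cauchy--Schwarz applied to the mixed term $(\calR(\dot z_\eps)\,\bftheta^\eps_\Spx)^{1/2}$, yields
\[
I_\Spu \leq \tfrac{1}{c_{\bsC}}\bigl(\eps^\alpha\bftheta^\eps_\Spu(0) + \textstyle\int_0^T \lambda_{\Spu^*}\,\rmd t + C_\text{GN}C_3\, I_R^{1/2}I_\Spx^{1/2}\bigr).
\]
Integrating \eqref{eq:Del.theta.syst.b} over $[0,T]$ and dropping the nonnegative boundary term $\eps\bftheta^\eps_\Spz(T)$ gives
\[
I_\Spx \leq \eps\bftheta^\eps_\Spz(0) + \Lambda_\phi C_\text{GN} I_R + C_3 I_\Spu.
\]

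Next I would close the system. Substituting the bound for $I_\Spu$ into that for $I_\Spx$ produces an inequality of the form $I_\Spx \leq A + B\,I_R^{1/2}I_\Spx^{1/2}$, where $A$ is a linear combination of $\eps^\alpha\bftheta^\eps_\Spu(0)$, $\eps\bftheta^\eps_\Spz(0)$, $\int_0^T\|\dot\ell_u\|_{\Spu^*}\,\rmd t$ and $I_R$ (with coefficients depending only on $c_{\bsC}$, $C_\text{GN}$, $C_3$ and $\Lambda_\phi$), and $B$ is a constant of the same flavour. Young's inequality $B\,I_R^{1/2}I_\Spx^{1/2} \leq \tfrac{1}{2}I_\Spx + \tfrac{B^2}{2}I_R$ absorbs the problematic term into the left-hand side, giving a closed bound for $I_\Spx$. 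Feeding this back into the estimate for $I_\Spu$ and adding the two yields \eqref{eq:ApriSemLinCase} with $C_*$ depending only on $\Lambda_\phi$, $C_\psi$, $c_{\bsC}$, $C_\text{GN}$, $C_{\rmH^1,\rmL^4}$, $\|\bsJ\|$ and on $C^{E_0}_2$ (hence on the initial datum only through the bound $E_0$ on $\calE(0,u_\eps(0),z_\eps(0))$). The identification $\lambda_{\Spu^*}(t)=\|\dot\ell_u(t)\|_{\Spu^*}$ then delivers the form announced in the lemma.

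The main subtlety I anticipate is not computational but of a bookkeeping nature: one must check that the constant $C_*$ is independent of $\eps$, which hinges on the $\eps$-independence of $\|K_\eps\|_{\rmL^1}=1/c_{\bsC}$ and on the fact that the singular factors $\eps^\alpha$ and $\eps$ appear only multiplying the initial-data norms $\bftheta^\eps_\Spu(0)$ and $\bftheta^\eps_\Spz(0)$, which are exactly the terms tolerated on the right-hand side of \eqref{eq:ApriSemLinCase}. No further delicate estimate is needed beyond this absorption argument.
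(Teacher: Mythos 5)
Your plan is exactly the closure argument the paper leaves implicit in Step 5 (``From this it is easy to show...''): substitute the $I_\Spu$ bound into the $I_\Spx$ bound to get $I_\Spx \leq A + B\,I_R^{1/2}I_\Spx^{1/2}$, absorb via Young's inequality, and feed back. The bookkeeping of where $\eps^\alpha$ and $\eps$ appear and why $\|K_\eps\|_{\rmL^1}$ is $\eps$-independent is correct, so your proposal matches the paper's route.
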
 

\Subsection{Existence and a priori estimates in the general case} 
\label{su:Dela.ExiApriGener}
 
We now return to the setup of Sections \ref{ss:10.-1} and \ref{ss:10.0}, in
which the constitutive functions $\wh \beta$, $\gamma$, $\wh \phi$, and $\psi$
only comply with \eqref{eq:Del.Ass02General}. We exhibit approximations of
$\wh \beta$, $\gamma$, $\wh \phi$, and $\psi$ that also satisfy
\eqref{eq:DelAss01.b}. For this, we will resort to the following general
construction.

\paragraph{\bf Smoothening the Yosida approximation}
Following, e.g., the lines of \cite[Sec.\,3]{Gilardi-Rocca}, for a given convex
function $\widehat{\chi}: \R^d \to [0,\infty]$ with subdifferential
$\chi= \pl \wh\chi: \R^d \rightrightarrows \R^d$, and for a fixed
$\delta\in (0,1)$, we define
\[
\chi^\delta : = \chi_\delta^{\mathrm{Y}} \star  \eta_\delta 
\]
where $ \chi_\delta^{\mathrm{Y}} $ is the Yosida regularization of the maximal
monotone operator $\chi$ (we refer to, e.g., \cite{Brez73OMMS}) and
\begin{equation}
\label{convol-kernel}
\eta_\delta(x): = \tfrac1{\delta^{2}} \eta \left( \tfrac x{\delta^2}\right)
\qquad \text{with }  
\left\{
  \begin{array}{ll}
    \eta \in \rmC^\infty(\R^d), 
    \\
    \|\eta\|_{1} =1,
    \\
    \mathrm{supp}(\eta) \subset B_1(0).
  \end{array}
\right.
\end{equation}
Thus, $\chi^\delta \in \rmC^\infty(\R^d)$ and it has been shown in
\cite{Gilardi-Rocca} that
\begin{subequations}
\label{properties-delta-approx}
\begin{equation}
\label{prop-delta-1}
\|\rmD\chi^\delta\|_{\infty} \leq \frac1\delta,  \qquad  |\chi^\delta(x){-} \chi_\delta^{\mathrm{Y}}(x)| \leq \delta 
\text{ for all } x \in \R^d\,.
\end{equation}
Taking into account the properties of the Yosida we deduce that 
\begin{equation}
\label{prop-delta-1-bis}
|\chi^\delta(x)| \leq  |\chi^o(x)| +\delta  \qquad  \text{ with }  |\chi^o(x)| = \inf\{|y|\, : \, y \in \chi(x) \}\,.
\end{equation}
Furthermore, $\chi^\delta $ admits a convex potential $\widehat\chi^\delta$
satisfying, as a consequence of \eqref{prop-delta-1}, (below
$\wh\chi_\delta^{\mathrm{Y}}$ denotes the Yosida approximation of $\wh \chi$):
\begin{equation}
\label{prop-delta-2}
-\delta |x|  \leq  \wh{\chi}_\delta^{\mathrm{Y}} (x) -\delta |x| \leq
\widehat{\chi}^\delta(x) \leq  \wh\chi_\delta^{\mathrm{Y}} (x) +\delta |x| \leq
\widehat\chi(x)+\delta|x|  \  \text{ and } \  \widehat\chi^\delta(x) \to
\widehat\chi(x) \qquad \text{for all } x \in \R^d\,. 
\end{equation} 
Finally, the following analogue of Minty's trick holds: given $O \subset \R^m$
and sequence $(v_\delta)_\delta\, v,\, \chi \in \rmL^2 (O;\R^d)$ such that
$v_\delta\weakto v$ and $\chi^\delta(v_\delta) \weakto \eta $ in
$ \rmL^2 (O;\R^d)$,
\begin{equation}
  \label{prop-delta-3}
  \limsup_{\delta\to 0^+} \int_O \chi^\delta(v_\delta) \cdot v_\delta \dd x
  \leq \int_O \eta  \cdot  v \quad \Longrightarrow \quad \eta \in
  \pl \widehat{\chi}(v) \text{ a.e.\ in } O. 
\end{equation}
\end{subequations}
 
We apply this construction to $\gamma$, obtaining a smooth approximation
$\gamma^\delta$.  The definition of $\wh \beta^\delta$ clearly simplifies,
since we have already required that $\wh\beta \in \rmC^1(\R)$ with $\beta$
Lipschitz. As for $\phi$, we define
\[
  \phi^\delta:\R \to \R \qquad \phi^\delta(z): = f^\delta(z) -
  \frac{\Lambda_\phi}{2}z^2
\]
with $f^\delta$ the smoothened Yosida approximation of the convex function
$z\mapsto f(z)= \wh\phi(z) +\frac{\Lambda_\phi}{2}z^2 $.  It follows from
\eqref{prop-delta-1} that $\wh \beta^\delta$, $\gamma^\delta$ and
$ \phi^\delta$ comply with \eqref{eq:DelAss01.b}.

\paragraph{\bf The construction of $\psi^\delta$.}  In smoothening $\psi$ we
also have to take care of the linear growth constraint encompassed in
\eqref{eq:DelAss01.b}.  Hence, we construct $\psi^\delta$ in two steps:

\noindent
\STEP{1. Inf-convolution}
We define $\psi_\delta^{\mathrm{ic}}: \R^3 \to [0,\infty)$ via inf-convolution with the smooth function $h:\R^3 \to [0,\infty)$, $h(a): = \sqrt{1{+}|a|^2}-1$ by setting
\begin{equation}
\label{inf-convol-psi}
\psi_\delta^{\mathrm{ic}}(a): = \inf_{x\in \R^3} \left(\frac1\delta h(x{-}a)+\psi(x) \right)\,.
\end{equation}
It turns out that $\psi_\delta^{\mathrm{ic}}$ is convex, of class $\rmC^1$, and since $h(0)=0$ we have that 
\begin{subequations}
\begin{equation}
\label{bound-from-above}
\psi_\delta^{\mathrm{ic}}(a)\leq \psi(a) \qquad \text{for all } a \in \R^3.
\end{equation}
Since $h$ is even, we also have
$\psi_\delta^{\mathrm{ic}}(a) = \inf_{x\in \R^3} \{ \tfrac1\delta
h(x)+\psi(a{-}x)\}$. Hence, recalling that $\psi(0)=0$ we find that
\begin{equation}
\label{linear-growth-ic}
\psi_\delta^{\mathrm{ic}}(a)\leq \frac1\delta h(a) \qquad \text{for all } a \in \R^3,
\end{equation}
so that, in particular, $\psi_\delta^{\mathrm{ic}}$ has linear growth. Finally,
let $a_\delta \in \mathop{\mathrm{Argmin}}\limits_{x\in \R^3} 
{\{\tfrac1\delta h(x{-}a)+\psi(x)\} }$.  Then,
$\tfrac1\delta h(a_\delta{-}a) \leq \psi_\delta^{\mathrm{ic}} \leq \psi(a)$, so
that $\lim_{\delta \to 0^+}h(a_\delta{-}a) =0$, hence
$|a_\delta{-}a| = \sqrt{(h(a_\delta{-}a){+}1)^2{-}1} \longrightarrow 0$ as
$\delta \to 0^+$.  All in all, we conclude that
\begin{equation}
\label{bound-from-below}
\psi_\delta^{\mathrm{ic}}(a) = \frac1{\delta} h(a_\delta{-}a) +\psi(a_\delta)
\geq \psi(a_\delta) \qquad \text{with } a_\delta \to a \text{ as } \delta \to
0^+\,. 
\end{equation}
\end{subequations}
\STEP{2. Smoothening} We then  
define $\psi^\delta \in \rmC^\infty(\R^3; \R)$ via
\begin{equation}
\label{final-psi-delta}
\psi^\delta: = \psi_\delta^{\mathrm{ic}} \star \eta_\delta \qquad \text{with
  $\eta_\delta$ from \eqref{convol-kernel}.}  
\end{equation}
Clearly, $\psi^\delta$ is also convex. Combining \eqref{prop-delta-2} and
\eqref{bound-from-above}, \eqref{linear-growth-ic}, and
\eqref{bound-from-below} we gather that
\begin{subequations}
\label{psi-delta-properties}
\begin{equation}
\label{psi-delta-also-linear}
-   \delta |a| \leq  \psi(a_\delta)  - \delta |a| \leq \psi^\delta(a) \leq \min\big\{  \frac1\delta h(a),\psi(a)  \big \} + \delta|a| \qquad   \text{with } a_\delta \to a \text{ as } \delta \to 0^+\,.
\end{equation}
Thus, $\psi^\delta$ has also linear growth.  Taking into account that it is convex, from \eqref{psi-delta-also-linear} we easily deduce that 
\begin{equation}
\label{linear-bound-der-psi-delta}
|\rmD \psi^\delta(a)| \leq |\pl  \psi^\circ(a)| + \delta  \qquad \text{for all } a \in \R^3,
\end{equation}
(where we have again used the notation $ |\pl  \psi^\circ(a)| = \inf\{ |\eta|\, : \ \eta \in \pl \psi(a)\}$. 
\end{subequations}
Finally, 
\begin{equation}
\label{converg-psi-delta}
\lim_{\delta \to 0^+}  \psi^\delta(a) = \psi(a) \qquad \text{for all } a \in \R^3\,.
\end{equation}

The delamination system \eqref{eq:DelamSyst} featuring $\wh \beta^\delta$,
$\gamma^\delta$, $\wh \phi^\delta$ and $\psi^\delta$ obviously has a gradient
structure in the ambient spaces \eqref{spaces-adh-cont}, with the dissipation
potentials from \eqref{diss-pot-adhc} and \eqref{disv-adhc}, and with the
driving energy (cf.\ \eqref{decomposition-energy})
\begin{subequations}
\begin{align}
&
\calE^\delta(t,u,z):= \calE_{\mathrm{elast}}(t,u) + \calF^\delta(z) + \calE_{\mathrm{coupl}}^\delta(u,z)
\intertext{with $ \calE_{\mathrm{elast}}$ from \eqref{decomposition-energy}, and}
&
 \calF^\delta(z) : =  \frac12\pairing{}{\rmH^1(\GC)}{\bsA
z}{z} +\int_{\GC}  \wh\phi^\delta(z)  \dd x \quad \text{if } z \in \rmH^1(\GC), \text{ and $\infty$ else}, 
\\
& 
\calE_{\mathrm{coupl}}^\delta(u,z) : =  \int_{\GC} \big(\wh\beta^\delta(\JUMP
u)+\gamma^\delta(z) \psi^\delta(\JUMP u)) \dd x \,.
\end{align}
\end{subequations}
which indeed Mosco converges as $\delta \to 0^+$, with respect to the topology
of $\Spu\ti\Spz$, to the energy functional $\calE$ from
\eqref{energy-delamination}.  We will pass to the limit, as $\delta\to 0^+$, in
the corresponding energy-dissipation balance \eqref{enid.a} to prove that the
solutions $(u^\eps_\delta,z^\eps_\delta)_\delta$ to the regularized
delamination system converge to a solution of the original system 
\eqref{eq:DelamSyst}, satisfying the basic energy estimate
\eqref{eq:DelamEst01} as well as the rate-independent a priori estimate
\eqref{eq:ApriSemLinCase}.

\begin{proposition}[Existence of viscous solutions with improved estimates]
\label{pr:Del.ViscSolImprov} 
Under assumptions \eqref{eq:Del.Ass02General} for $\wh\beta$, $\psi$, $\gamma$,
and $\wh\phi$ and the compatibility conditions \eqref{eq:Del.IniCompati} on the
initial data $(u_0,z_0)$, there exists a constant $C_*>0$ such that for all
$\eps>0$ there exist a solution
$(u_\eps,z_\eps) \in \rmH^1(0,T;\Spu)\ti \rmH^1(0,T;\Spx)$ satisfying the
energy estimate \eqref{eq:DelamEst01} with $C_1^{E_0}=C_*$, as well as
  the improved estimate
\[
\int_0^T \big( \|\dot u_\eps\|_\Spu + \|\dot z_\eps\|_{\Spx}\big)  \dd t  \leq C_*.
\]
\end{proposition}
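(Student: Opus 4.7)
The plan is to construct $(u_\eps,z_\eps)$, for fixed $\eps>0$, as the limit as $\delta\to 0^+$ of solutions $(u_\eps^\delta,z_\eps^\delta)$ to the regularized delamination system whose constitutive functions $\wh\beta^\delta,\gamma^\delta,\wh\phi^\delta,\psi^\delta$ satisfy the stronger condition \eqref{eq:DelAss01.b}, and then to transfer the improved a priori estimate from Lemma \ref{le:ApriSemiLinCase} to the limit via lower semicontinuity. For each $\delta\in(0,1)$, the regularized system is a generalized gradient system whose driving energy $\calE^\delta$ satisfies Hypotheses \ref{hyp:1}, \ref{h:closedness}, and \ref{h:ch-rule} by the same arguments as in Proposition \ref{l:comprehensive} (with $\calE^\delta$ now smooth in the $u$-variable and in the $z$-variable up to the $\wh\phi^\delta$-term). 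Theorem \ref{th:exist} yields a solution $(u_\eps^\delta,z_\eps^\delta)$ with the stated regularity starting from $(u_0,z_0)$, which by the construction and uniqueness in Step 3 of Section~\ref{su:DelamSmooth} is unique and satisfies the semilinear a priori estimate
\begin{equation*}
 \int_0^T\!\big(\|\dot u_\eps^\delta\|_\Spu+\|\dot z_\eps^\delta\|_{\Spx}\big)\dd t
 \leq C_*^\delta\Big(\eps^\alpha\|\dot u_\eps^\delta(0)\|_\Spu+\eps\|\dot z_\eps^\delta(0)\|_\Spz+\int_0^T\!\!\big(\|\dot\ell_u\|_{\Spu^*}+\calR(\dot z_\eps^\delta)\big)\dd t\Big)
\end{equation*}
from Lemma \ref{le:ApriSemiLinCase}.

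The crucial observation is that the constant $C_*^\delta$ depends only on $\Lambda_\phi$, on the quadratic growth constant $C_\psi$ of $\psi$, and on $c_{\bsC}$, $C_{\rm\rmH^1,\rmL^4}$, $\|\bsJ\|$: these are all inherited by the approximants uniformly in $\delta$ thanks to \eqref{prop-delta-2} and \eqref{psi-delta-also-linear}, so $C_*^\delta$ can be chosen equal to a single constant $C_*$. Next I would show that the right-hand side of the estimate is uniformly bounded in $\delta$ (and in $\eps$). The integral involving $\calR(\dot z_\eps^\delta)$ is controlled by the basic energy estimate \eqref{eq:DelamEst01} which, via Mosco-convergence $\calE^\delta\to\calE$, holds with the same constant $C^{E_0}_1$ as in the limiting problem. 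For the initial-slope terms I would test the subdifferential inclusions at $t=0$: from \eqref{eq:DelamSyst.a} one obtains
\begin{equation*}
\eps^\alpha\|\dot u_\eps^\delta(0)\|_\Spu\leq C\big\|\bsC u_0+\bsJ^*(\beta^\delta(\JUMP{u_0}){+}\gamma^\delta(z_0)\rmD\psi^\delta(\JUMP{u_0}))-\ell_u(0)\big\|_{\Spu^*},
\end{equation*}
which is bounded uniformly in $\delta$ by \eqref{prop-delta-1-bis}, \eqref{linear-bound-der-psi-delta}, and the Lipschitz bound on $\beta^\delta$; analogously, from \eqref{eq:DelamSyst.b} the compatibility \eqref{eq:Del.IniCompati} together with $\Delta z_0\in\rmL^2(\GC)$ and the existence of a selection in $\pl\wh\phi(z_0)\cap\rmL^2(\GC)$ gives a uniform bound on $\eps\|\dot z_\eps^\delta(0)\|_\Spz$.

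Equipped with these uniform bounds, I would then extract, as $\delta\to 0^+$, a (not relabelled) subsequence with $u_\eps^\delta\weakto u_\eps$ in $\rmH^1(0,T;\Spu)$ and $z_\eps^\delta\weakto z_\eps$ in $\rmH^1(0,T;\Spx)$, together with strong convergences $\JUMP{u_\eps^\delta}\to\JUMP{u_\eps}$ in $\rmC^0([0,T];\rmL^q(\GC;\R^3))$ for $q<4$ and $z_\eps^\delta\to z_\eps$ in $\rmC^0([0,T];\rmL^p(\GC))$ for $p<\infty$ by Aubin--Lions. To identify $(u_\eps,z_\eps)$ as a solution of \eqref{eq:DelamSyst}, in view of Remark \ref{rmk:GS-used-later} it suffices to pass to the limit in the upper energy-dissipation estimate for the regularized system. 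The primal dissipation integrals are lower semicontinuous, the initial energies converge ($\calE^\delta(0,u_0,z_0)\to\calE(0,u_0,z_0)$ by \eqref{prop-delta-2} and \eqref{converg-psi-delta}), and the power integral $\int_0^t\pl_t\calE^\delta=-\int_0^t\langle\dot\ell_u,u_\eps^\delta\rangle$ is independent of the $\delta$-dependent constitutive functions and passes to the limit trivially. The final estimate of the proposition then follows by weak lower semicontinuity:
\begin{equation*}
\int_0^T\!\big(\|\dot u_\eps\|_\Spu+\|\dot z_\eps\|_{\Spx}\big)\dd t\leq\liminf_{\delta\to 0^+}\int_0^T\!\big(\|\dot u_\eps^\delta\|_\Spu+\|\dot z_\eps^\delta\|_{\Spx}\big)\dd t\leq C_*.
\end{equation*}

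\textbf{Main obstacle.} The delicate step is the limit passage in the energy $\calE^\delta$ itself, specifically in the coupling term $\int_{\GC}\gamma^\delta(z_\eps^\delta)\psi^\delta(\JUMP{u_\eps^\delta})\dd x$ and, in the framework of the Mosco convergence, in the nonsmooth contribution $\int_{\GC}\wh\phi^\delta(z_\eps^\delta)\dd x$. The former is handled by the strong convergence $\JUMP{u_\eps^\delta}\to\JUMP{u_\eps}$ combined with the pointwise convergence and domination $\psi^\delta\leq\psi+\delta|\cdot|$ from \eqref{psi-delta-also-linear}, together with the Lipschitz bound $0\leq\gamma^\delta\leq 1+\delta$. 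The latter is the Mosco-$\liminf$ direction that follows from \eqref{prop-delta-2}, Fatou's lemma, and the strong $\rmL^2(\GC)$-convergence of $z_\eps^\delta$. Once this $\Gamma$-convergence has been established, the liminf estimate for the energies, coupled with the strict convergence of the power term and the lsc of the dissipation, closes the energy-dissipation inequality and identifies $(u_\eps,z_\eps)$ as the desired viscous solution.
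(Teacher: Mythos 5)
Your proposed approach matches the paper's: regularize the constitutive functions to obtain the semilinear structure, observe that the constant in Lemma~\ref{le:ApriSemiLinCase} depends only on the original growth/convexity constants of \eqref{eq:Del.Ass02General}, obtain uniform-in-$\delta$ bounds on the initial slopes via \eqref{eq:Del.IniCompati}, pass to the $\delta\to 0^+$ limit, and transfer the bound by weak lower semicontinuity. The uniformity of $C_*^\delta$ and the treatment of the initial-slope terms are both handled correctly.

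However, there is a genuine gap in the limit passage that your proposal does not address. The energy-dissipation balance for the regularized system, \eqref{enid.a-delta}, contains not just the primal dissipation, the energy, and the power, but also the dual terms $\int\frac1{\eps^\alpha}\disv u^*({-}\mu_k)$ and $\int\frac1\eps\conj z({-}\zeta_k)$, where $(\mu_k,\zeta_k)\in\frsubq q{\cdot}{q_k}$ for the \emph{$\delta_k$-regularized} energy $\calE^{\delta_k}$. Passing to the limit in this balance produces an inequality featuring the weak limits $(\mu_\eps,\zeta_\eps)$, and the resulting estimate is only an energy-dissipation upper estimate for the original system~\eqref{eq:DelamSyst} if one \emph{identifies} $\mu_\eps(t)\in\frsub ut{u_\eps(t)}{z_\eps(t)}$ and $\zeta_\eps(t)\in\frsub zt{u_\eps(t)}{z_\eps(t)}$ for a.a.~$t$. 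Without this, Remark~\ref{rmk:GS-used-later} simply does not apply: that remark characterizes solutions in terms of a pair $(q,\xi)$ with $\xi$ a \emph{valid} selection from $\pl_q\calE$, and you have not established that the weak limits $(\mu_\eps,\zeta_\eps)$ are valid selections. This identification is nontrivial because the subdifferential itself changes with $\delta$. The paper's proof devotes most of its length to precisely this: bounding and extracting weak limits of the pointwise selections $\omega_k\in\pl\gamma^{\delta_k}(z_k)$, $\phi_k\in\pl\wh\phi^{\delta_k}(z_k)$, $\varrho_k=\rmD\psi^{\delta_k}(\JUMP{u_k})$, and $\beta^{\delta_k}(\JUMP{u_k})$, showing each limit lies in the corresponding maximal-monotone graph via the Minty-type tool \eqref{prop-delta-3}, and then closing the identification of $\mu_\eps$ by testing the regularized $u$-equation with $u_k$ and a $\limsup$ argument. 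Your paragraph on the ``main obstacle'' discusses only convergence of the \emph{values} of $\calE^\delta$, which is a different and easier matter; the difficult step is the closedness of the graph of $\pl_q\calE^{\delta_k}$ as $\delta_k\to 0^+$, and it needs to be carried out explicitly.
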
 
\begin{proof} Let $(\delta_k)_k$ be a null sequence and, for $\eps>0$ fixed, let 
$(q^\eps_{\delta_k})_k$ be the corresponding  sequence of solutions to the regularized system \eqref{eq:DelamSyst}; from now on, we will simply write $(q_k)_k$. Our starting point is 
the energy-dissipation balance
\begin{align}
\label{enid.a-delta}
& \calE^{\dk}(t,q_k(t))  + \int_s^t \Big( \disve u{\eps^\alpha}
 (\eps^\alpha u_k'(r)) + \calR(z_k'(r)) +  \disve z \eps
  (\eps\,z_k'(r)) 
\Big) \dd r 
\\ \nonumber
 &\quad +  \int_s^t \Big( \frac1{\eps^\alpha}  \disv
u^*({-} \mu_k (r))    +  \frac1\eps  \conj z({-}\zeta_k(r))\Big)  \dd r 
 = \calE^{\dk} (s,q_k(s)) + \int_s^t\pl_t  \calE^{\dk} (r,q_k(r)) \dd r   
  \text{ for all 
$[s,t]\subset [0,T]$}
\end{align}
with 
\[
\begin{aligned}
&
  \mu_k(t)   = \bsC u_k(t) + \bsJ^*\big(\beta^{\dk}(\JUMP{u_k(t)})+
 \gamma^{\dk}(z_k(t))\rmD \psi^{\dk} (\JUMP{u_k(t)}) \big) - \ell_u(t),
       \\
      &   \zeta_k(t) = \bsA z_k(t) +(\gamma^{\dk})'(z_k(t) ) \psi^\delta(\JUMP{u_k(t) }) + \phi^{\dk}(z_k(t))\,.
\end{aligned}
\]
Relying on the energy estimate \eqref{eq:DelamEst01}
and on well known compactness results, we infer that there exists $q_\eps = (u_\eps,z_\eps) $ such that, along a not relabeled subsequence,
\begin{equation}
\label{ptwise-q}
q_k \weakto q_\eps \text{ in } \rmH^1(0,T;\Spu\ti \Spz)  \quad \text{ and } \quad q_k (t) \weakto q_\eps(t) \text{ in } \Spu\ti \Spx \text{ for all } t \in [0,T]\,.
\end{equation}
It also follows from estimate \eqref{est-quoted.a} in Lemma \ref{l:1} that
there exist   $\mu_\eps$   and $\zeta_\eps$ such that, up to a further subsequence, 
\[
 \mu_k  \weakto   \mu_\eps  \text{ in } \rmL^2(0,T;\Spu^*) \quad \text{ and } \quad 
\zeta_k \weakto \zeta_\eps \text{ in } \rmL^2(0,T;\Spz^*)\,.
\]

In order to identify the weak limit $\zeta_\eps(t)$ as an element of $\frsub z {tt}{u_\eps(t)}{z_\eps(t)}$ for almost all $t\in (0,T)$, we observe that, by \eqref{prop-delta-1-bis},
$|(\gamma^{\dk})'(z_k ) | \leq \delta + |\pl \gamma^o(z_k) | \leq \delta +1$, taking into account that $\gamma(z) = \max\{ z,0\}$. Therefore, 
\[
\| (\gamma^{\dk})'(z_k ) \psi^\delta(\JUMP{u_k })\|_{\rmL^2} 
\stackrel{(1)}{\leq} (\delta{+}1) \left( \| \psi(\JUMP{u_k })\|_{\rmL^2}{+}\delta \| \JUMP{u_k }\|_{\rmL^2} \right)
\stackrel{(2)}{\leq}  (\delta{+}1)  \left( C_\psi^{(2)}  \| \JUMP{u_k }\|_{\rmL^4}^2  {+}\delta \| \JUMP{u_k }\|_{\rmL^2}{+}C\right)
\]
with {\footnotesize (1)} due to \eqref{psi-delta-also-linear}  and 
{\footnotesize (2)} 
to \eqref{eq:Del.Ass02General}. Since $(u_k)_k$ is bounded in $\rmL^\infty(0,T;\rmH^1(\Omega;\R^3))$, we immediately deduce that 
$((\gamma^{\dk})'(z_k ) \psi^\delta(\JUMP{u_k }) )_k$ is bounded in $\rmL^\infty(0,T;\rmL^2(\GC))$. 
A standard argument based on the fact that   $z\mapsto \phi^{\delta_k}(z)+\Lambda_\phi z$ is a non-decreasing function then yields a separate estimate in $\rmL^2(0,T;\rmL^2(\GC))$ for both $(\bsA z_k)_k$ and $( \phi^{\dk}(z_k))_k$ so that, up to a subsequence, $\phi^{\dk}(z_k) \weakto \phi$ in $\rmL^2(0,T;\rmL^2(\GC))$ for some $\phi$. Combining this with the fact that $z_k\to z_\eps$ in $\rmL^2(0,T;\rmL^2(\GC))$ 
we immediately conclude by \eqref{prop-delta-3} that $\phi \in \pl \wh\phi(z_\eps)$ a.e.\ in $(0,T)\ti \GC$. With the same arguments we find that $(\gamma^{\dk})'(z_k )\weaksto \omega$
in $\rmL^\infty((0,T)\ti\GC)$
 with $\omega \in \pl \gamma(z_\eps)$ a.e.\ in $(0,T)\ti \GC$.
 Finally, 
 again applying   \eqref{psi-delta-also-linear}   to estimate $| \psi^\delta(\JUMP{u_k })| $ and  taking into account that $\JUMP{u_k }\to \JUMP{u}$ strongly in $\rmL^\infty(0,T;\rmL^q(\GC))$ for every $1\leq q<4$, with the dominated convergence theorem we conclude that 
 $\psi^\delta(\JUMP{u_k })\to \psi(\JUMP{u_\eps})$, for instance, in $\rmL^{3/2}((0,T)\ti \GC)$. All in all, we find that
 $(\gamma^{\dk})'(z_k ) \psi^\delta(\JUMP{u_k }) \weakto \omega  \psi(\JUMP{u_\eps})$  in $\rmL^{3/2}((0,T)\ti \GC)$.
 We have thus proved that 
 \[
 \zeta_\eps  =\bsA z + \omega  \psi(\JUMP{u_\eps})+ \phi \qquad \text{with } \omega \in \pl \gamma(z_\eps), \ \phi \in \pl \wh\phi(z_\eps) \ \aein\, (0,T)\ti \GC\,,
 \]
 and thus $\zeta_\eps(t) \in \frsub  zt{u_\eps(t)}{z_\eps(t)}$. 
 
 The identification of   $\mu_\eps$  as an element of $\frsub u {\cdot}{u_\eps(\cdot)}{z_\eps(\cdot)}$ first of all  follows
  from observing 
 that, by \eqref{ptwise-q},
 $
 \bsC u_k \weaksto \bsC u$ in $\rmL^\infty(0,T;\Spu^*)$.
 Moreover, with similar arguments as in the above lines,
 based on properties \eqref{properties-delta-approx}, we find 
  that 
 $\gamma^{\dk}(z_k) \to \gamma(z_\eps)$ in $\rmL^q((0,T)\ti\GC)$
  for all $1\leq q<\infty$ and, recalling that $\beta$ is Lipschitz, that 
  there exists $\widetilde \beta \in \rmL^\infty (0,T;\rmL^4(\GC))$ such that  
  $\beta^{\delta_k}(\JUMP {u_k}) \weakto \widetilde{\beta}$  in $ \rmL^\infty (0,T;\rmL^4(\GC))$. 
  Finally, 
taking into account \eqref{linear-bound-der-psi-delta} and the fact that $\psi$  has quadratic growth we conclude that
  there exists $\varrho \in \rmL^\infty (0,T;\rmL^4(\GC))$
such that   $\rmD\psi^{\delta_k}(\JUMP{u_k}) \weaksto \varrho $ in $\rmL^\infty (0,T;\rmL^4(\GC))$. All in all, we find that 
\[
 \bsJ^*(\beta^{\delta_k}(\JUMP {u_k})+\gamma^{\delta_k}(z_k)
 \rmD \psi^{\delta_k}(\JUMP {u_k}) ) \weakto \eta =\widetilde{\beta} +\gamma(z_\eps) \varrho  \quad \text{in }  \rmL^2(0,T;\Spu^*),
\]
and it remains to show that $\eta =  \bsJ^*(\beta(\JUMP {u})+\gamma(z)
 \rmD \psi(\JUMP {u}) )$. For this, we observe that 
 the functionals $\mathcal{J}^{\delta_k} : \rmL^2(0,T; \Spu{\ti}\Spz) \to \R$ defined by 
 $
\mathcal{J}^{\delta_k} (u,z): = \int_0^T \mathcal{}\int_{\GC} \big(\wh\beta^{\delta_k}(\JUMP u)+\gamma^{\delta_k}(z)
\psi^{\delta_k}(\JUMP u)\big) \dd x \dd t,
 $
clearly fulfilling
 \[
 \rmD_u \mathcal{J}^{\delta_k} (u,z)  = \bsJ^*(\beta^{\delta_k}(\JUMP {u})+\gamma^{\delta_k}(z)
 \rmD \psi^{\delta_k}(\JUMP {u}))
\qquad  \text{for  every $(u,z) \in  \rmL^2(0,T; \Spu{\ti}\Spz)$},
  \]
 enjoy the following property:
 \[
 \left\{
 \begin{array}{ll}
 (u_k, z_k) \weakto (u,z) \text{ in }  \rmL^2(0,T; \Spu{\ti}\Spz),
 \\
 \rmD_u \mathcal{J}^{\delta_k} (u_k,z_k) \weakto \eta  \text{ in }  \rmL^2(0,T; \Spu^*{\ti}\Spz^*),
\\
\limsup_{k\to\infty} \int_0^T \pairing{}{\Spu}{ \rmD_u \mathcal{J}^{\delta_k} (u_k,z_k)}{u_k} \dd t \leq 
\int_0^T \pairing{}{\Spu}{ \eta}{u} \dd t 
 \end{array}
 \right.
 \quad \Longrightarrow \quad \eta  \in \bsJ^*\big(\beta(\JUMP{u})+
 \gamma(z)\pl  \psi (\JUMP{u}) \big)\,.
\]
Hence,  we need to prove that 
\[
  \limsup_{k\to\infty} \int_0^T \int_{\GC}
  \big\{ \beta^{\dk}(\JUMP{u_k}) \JUMP{u_k} {+}
  \gamma^{\delta_k}(z_k)
 \rmD \psi^{\delta_k}(\JUMP {u_k}) \JUMP{u_k} \big\} 
   \dd x \dd t  \leq \int_0^T \pairing{}{\rmH^1(\Omega)}{\eta}{u} \dd t\,.
\]
This follows from testing the $u$-equation \eqref{eq:DelamSyst.a} at the level
$\delta_k$ by $u_k$, taking the limit as $k\to\infty$, and using that, by the
convergence arguments in the above lines, the quadruple
$(u,z,\tilde \beta,\varrho)$ fulfills the limit equation
$ 0 = \eps^\alpha \bsD \dot u_\eps + \bsC u_\eps +
+\bsJ^*(\tilde\beta{+}\gamma(z_\eps) \varrho) - \ell_u $ in $\Spu^*$ a.e.\ in
$(0,T)$.  All in all, we conclude that
$ \bsJ^*(\tilde\beta {+} \gamma(z_\eps) \varrho) \in \bsJ^*(\beta(z_\eps){+}
\gamma(z_\eps) \pl  \psi(\JUMP{u_\eps}))$, so that
\[
  \mu_\eps   \in   \bsC u_\eps + \bsJ^*\big(\beta(\JUMP{u_\eps})+
 \gamma(z_\eps)\pl \psi (\JUMP{u_\eps}) \big) -\ell_u(t) = \frsub
 ut{u_\eps}{z_\eps}\,.
\]
 
Therefore, passing to the limit as $k\to\infty$ in \eqref{enid.a} we infer that
the quadruple $(u_\eps,z_\eps,  \mu_\eps,  \zeta_\eps)$ fulfills
$(  \mu_\eps(t),  \zeta_\eps(t)) \in \frsubq q t{q_\eps(t)}$ for almost all
$t\in (0,T)$, joint with the energy-dissipation upper estimate in
\eqref{enid.a}. Now, by Proposition \ref{l:comprehensive} the energy functional
$\calE$ from \eqref{energy-delamination} complies with the chain rule of
Hypothesis \ref{h:ch-rule}.  Hence, by Remark \ref{rmk:GS-used-later} the
validity of the energy-dissipation upper estimate is sufficient to conclude
that $(u_\eps,z_\eps)$ solve the Cauchy problem for the delamination system
\eqref{eq:DelamSyst}.
  
By lower semicontinuity arguments, the a priori estimate
\eqref{eq:ApriSemLinCase} is inherited by $(u_\eps,z_\eps)$. This concludes
the proof of Proposition \ref{pr:Del.ViscSolImprov} and, ultimately, of
Theorem \ref{thm:BV-adh-cont}.
\end{proof}

\appendix

\Section{Chain rules}
\label{s:app-CR}

In this section we first of all provide a sufficient condition for the
chain-rule Hypothesis \ref{h:ch-rule} (and, in fact, for the closedness
Hypothesis \ref{h:closedness} as well).  More precisely, we will show that its
validity is guaranteed by a sort of \emph{uniform subdifferentiability}
property of the energy $\calE$ on its sublevels, cf.\ \eqref{uniform-subdiff}
below, which we borrow from \cite{MRS2013}; as already observed in the proof of
Proposition \ref{l:comprehensive}, \eqref{uniform-subdiff} for instance holds
for $\lambda$-convex functionals.  The proof of the following result combines
the argument for \cite[Prop.\,2.4]{MRS2013} with results from \cite{AGS08}.

\begin{proposition}
\label{prop:ch-ruleApp}
Let $\calE: [0,T]\times \mathbf{Q} \to (-\infty, +\infty] $ comply with
Hypothesis \ref{hyp:1}. Assume that for every $E>0$ there exists a modulus of
subdifferentiability $\varpi^E: [0,T]\ti \subl E \ti \subl E \to [0,\infty)$
such that for all $t\in [0,T]:$
\begin{equation}
  \label{uniform-subdiff}
  \begin{gathered}
    \varpi^E(t,q,q) =0 \qquad \text{for every } q \in \subl E,
    \\
    \text{the map } (t,q,\hat{q}) \to \varpi^E (t,q,\hat q) \text{ is upper
      semicontinuous, and }
    \\
    \eneq t{\hat q} - \eneq tq \geq \pairing{}{\Spq}{\xi}{\hat{q}{-}q} -
    \varpi^E(t,q,\hat{q}) \| \hat{q}{-} q\|_{\Spq} \qquad \text{for all } q,\,
    \hat{q} \in \subl E \text{ and all } \xi \in \pl_q \calE(t, q)\,.
  \end{gathered}
\end{equation}
Then, $\calE$ complies with Hypothesis 
\ref{h:ch-rule}.
\end{proposition}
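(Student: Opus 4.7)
The plan is to derive the chain rule by combining the global subdifferential inequality \eqref{uniform-subdiff} with a partition-based (Riemann-sum) argument, upper and lower bounding the increment $\calE(t,q(t))-\calE(s,q(s))$ by finite telescopic sums and then passing to the limit as the partition mesh tends to zero. First I would use the hypothesis $\sup_{r\in (0,T)} |\calE(r,q(r))|<\infty$ combined with the power-control \eqref{h:1.3d} and Grönwall's lemma to pin the curve $q$ in a fixed energy sublevel $\subl E$, so that throughout the proof one may work with a single modulus $\varpi:=\varpi^E$ and use \eqref{uniform-subdiff} as a \emph{global} inequality.

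For any partition $s=s_0<s_1<\dots<s_N=t$ of $[s,t]\subset [0,T]$ and writing $e(r):=\calE(r,q(r))$, I would decompose
\[
e(s_{i+1})-e(s_i) \;=\; \int_{s_i}^{s_{i+1}} \!\!\partial_t\calE(r,q(s_i))\,\rmd r \;+\; \bigl[\calE(s_{i+1},q(s_{i+1})) - \calE(s_{i+1},q(s_i))\bigr],
\]
and apply \eqref{uniform-subdiff} at time $s_{i+1}$ in its reverse form with $\xi(s_{i+1})\in\partial_q\calE(s_{i+1},q(s_{i+1}))$ to bound the spatial bracket from above by $\langle \xi(s_{i+1}), q(s_{i+1}){-}q(s_i)\rangle + \varpi\bigl(s_{i+1},q(s_{i+1}),q(s_i)\bigr)\|q(s_{i+1}){-}q(s_i)\|_{\Spq}$. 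Summing produces an upper estimate for $e(t)-e(s)$; the parallel argument, splitting via $q(s_i)$ instead of $q(s_{i+1})$ and using $\xi(s_i)$ in the direct form of \eqref{uniform-subdiff}, yields the matching lower estimate. As the mesh $|\pi|\to 0$, the remainders $\sum_i \varpi(\cdot,q(s_{i+1}),q(s_i))\|q(s_{i+1}){-}q(s_i)\|$ vanish by the upper semicontinuity of $\varpi$ combined with the vanishing $\varpi(r,q(r),q(r))=0$ and the bound $\|q(s_{i+1}){-}q(s_i)\|\le \int_{s_i}^{s_{i+1}}\|q'\|$; the time-shifted sums $\sum_i\int_{s_i}^{s_{i+1}}\partial_t\calE(r,q(s_i))\,\rmd r$ converge to $\int_s^t\partial_t\calE(r,q(r))\,\rmd r$ by $|\partial_t\calE|\le C_\#\calE$ and dominated convergence applied at Lebesgue points.

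The main obstacle is the convergence of the Riemann-type sums $\sum_i\langle \xi(s_i),q(s_{i+1}){-}q(s_i)\rangle$ (and its counterpart with $\xi(s_{i+1})$) to $\int_s^t\langle\xi(r),q'(r)\rangle\,\rmd r$ for a \emph{merely measurable} selection $\xi$. I would handle this by rewriting the sum as $\int_s^t \langle \xi_\pi(r),q'(r)\rangle\,\rmd r$ with $\xi_\pi(r):=\xi(s_i)$ on $(s_i,s_{i+1})$, and approximating $\xi$ by a continuous selection via Lusin's theorem on the set where $\|\xi\|_{\Spq^*}\|q'\|_{\Spq}$ is integrable: on the Lusin set the piecewise-constant approximants converge uniformly, while the complementary set can be made arbitrarily small, where the contribution is controlled by the absolute continuity of the integral of $\|\xi\|\|q'\|$. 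Combining the resulting matching upper and lower bounds shows that $r\mapsto \calE(r,q(r))$ is absolutely continuous on $[0,T]$ and satisfies
\[
\calE(t,q(t))-\calE(s,q(s)) = \int_s^t\!\bigl(\partial_t\calE(r,q(r)) + \langle \xi(r),q'(r)\rangle_{\Spq}\bigr)\,\rmd r
\]
for every $0\le s\le t\le T$, which is exactly \eqref{eq:48strong}.
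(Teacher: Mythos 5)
Your proposal takes a genuinely different route from the paper's. The paper first reduces to the time-independent case, reparametrizes $q$ to a $1$-Lipschitz curve $\tilde q$ on $[0,L]$ (so that the selection $\tilde\xi$ becomes $\rmL^1$), deduces absolute continuity of $s\mapsto\overline\calE(\tilde q(s))$ from a two-sided Lipschitz-type estimate following \cite[Thm.\,1.2.5]{AGS08}, and finally obtains the pointwise identity \eqref{eq:48strong} by a difference-quotient computation at a single Lebesgue/differentiability point, using \eqref{uniform-subdiff} once from each side. You instead aim for the integral formula directly via telescoping over partitions; this avoids the reparametrization but shifts the weight of the argument onto convergence of Riemann-type sums.

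That is where there is a genuine gap. For a merely measurable selection $\xi$ with $\int\|\xi\|_{\Spq^*}\|q'\|_{\Spq}\,\rmd t<\infty$, the piecewise-constant approximant $\xi_\pi$ built from the values $\xi(s_i)$ need not converge to $\xi$ on the Lusin set $K$, because the left endpoints $s_i$ of the partition cells need not lie in $K$; the claim that ``on the Lusin set the piecewise-constant approximants converge uniformly'' requires $s_i\in K$, which you cannot arrange for an arbitrary sequence of partitions with vanishing mesh (a closed set of measure $\ge T-\eta$ can still have empty interior). Worse, for $\xi$ only in $\rmL^1$ the Riemann sums $\sum_i\langle\xi(s_i),q(s_{i+1}){-}q(s_i)\rangle$ simply need not converge to $\int\langle\xi,q'\rangle$ along a general mesh sequence; one would have to restrict to carefully chosen partitions (e.g.\ Lebesgue points, or translates of equispaced partitions for which convergence holds a.e.\ by Fubini), and at those points one must also ensure $\partial_q\calE(s_i,q(s_i))\neq\emptyset$ so that $\xi(s_i)$ is meaningful. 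The paper's pointwise difference-quotient argument sidesteps this entirely by never forming Riemann sums, and the reparametrization step gives the $\rmL^1$ control needed to invoke the absolute-continuity criterion from \cite{AGS08}. Your argument could likely be repaired by mollifying $\xi$ in time or by selecting partition nodes from a full-measure set of Lebesgue points and invoking a Fubini argument, but as written the Riemann-sum passage is not justified.
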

\begin{proof}
In order to show Hypothesis \ref{h:ch-rule}, let $q\in \AC([0,T];\Spq)$ and 
$\xi \in \rmL^1(0,T;\Spq^*)$ fulfill \eqref{conditions-1}, and let $E>0$ be such
that $q(t) \in \subl E$ for all $t\in [0,T]$.  Preliminarily, let us suppose
that $\calE$ is independent of time, i.e.\ $\calE(t,q) = \overline{\calE}(q)$
(with a modulus of subdifferentiability
$\varpi^E (t,\cdot,\cdot) = \varpi^E(\cdot,\cdot)$), and let us prove the
absolute continuity of $[0,T] \ni t \mapsto \overline{\calE}(q(t))$. For this,
as in \cite{MRS2013} we resort to \cite[Lemma 1.1.4]{AGS08} and reparametrize
$q$ to a $1$-Lipschitz curve $\tilde q: [0,L]\to \Spq$, with
$L = \int_0^T \| q'(t)\|_{\Spq} \dd t$, $\tilde q: = q \circ \tilde t$, and
$\tilde t: [0,L]\to [0,T]$ the left-continuous, increasing map
\[
  \tilde{t}(s): = \min \{ t\in [0,T]\, : \ \int_0^t \|q'(r)\|_{\Spq} \dd r =
  s\}\,.
\]
Let us set $\tilde\xi(s):= \xi(\tilde t(s))$.  Then, it follows from
\eqref{conditions-1} and a version of the change of variables formula (cf.,
e.g., \cite[Thm.\,5.8.30]{Bogachev07}), that
\[
  \int_0^L \| \tilde{\xi}(s) \|_{\Spq^*} \dd s = \int_0^L \| \tilde{\xi}(s)
  \|_{\Spq^*} \| \tilde q'(s)\|_{\Spq} \dd s = \int_0^T \| \xi(t) \|_{\Spq^*}
  \| q'(t)\|_{\Spq} \dd t<\infty,
\]
whence $\tilde \xi \in \rmL^1(0,L)$. Hence, we are in a position to repeat the
very same arguments from the proof of \cite[Prop.\,2.4]{MRS2013}. Namely,
relying on the validity of \eqref{uniform-subdiff} we obtain that
\[
  \overline{\calE} (\tilde{q}(s_2)) - \overline{\calE}( \tilde{q}(s_1)) \geq
  \pairing{}{\Spq}{\tilde{\xi}(s_1)}{\tilde{q}(s_2){-}\tilde{q}(s_1)} -
  \varpi^E (\tilde q(s_1), \tilde{q}(s_2))\|
  \tilde{q}(s_2){-}\tilde{q}(s_1)\|_{\Spq}
\]
for all $0\leq s_1\leq s_2 \leq L$. Exchanging the role of $s_1$ and $s_2$ and
exploiting the $1$-Lipschitz continuity of $\tilde q$ leads to
\[
  \left| \overline{\calE} (\tilde{q}(s_2)) {-} \overline{\calE}(
    \tilde{q}(s_1)) \right| \leq
  \left(\|\tilde{\xi}(s_1)\|_{\Spq^*}{+}\|\tilde{\xi}(s_2)\|_{\Spq^*}{+}\varpi^E
    (\tilde q(s_1), \tilde{q}(s_2)){+}\varpi^E (\tilde q(s_2), \tilde{q}(s_1))
  \right)|s_1-s_2|\,.
\]
From the above estimate the absolute continuity of the function
$[0,L]\ni s\mapsto \overline{\calE} (\tilde{q}(s))$ follows by repeating the
very same arguments as in the proof of \cite[Thm.\,1.2.5]{AGS08}.  Again
changing variables in the integral we infer that
$[0,T]\ni t \mapsto \overline{\calE}(q(t))$ is absolutely continuous.

In the general case, let us first of all check the absolute continuity of
$[0,T]\ni t \mapsto \eneq t{q(t)}$, namely that for every $\eps>0$ there exists
$\delta>0$ such that for every collection of pairwise disjoint intervals
$(a_i,b_i) \subset (0,T)$, $i=1,\ldots, M$, there holds
\begin{equation}
  \label{AC-2-SHOW}
  \sum_{i=1}^{M} (b_i{-} a_i)<\delta \ \Longrightarrow \   \sum_{i=1}^{M}|\eneq
  {b_i}{q(b_i)}{-} \eneq {a_i}{q(a_i)}|<\eps\,. 
\end{equation}
In order to obtain the above estimate, we use that
\begin{equation}
\label{split-AC}
|\eneq {b_i}{q(b_i)}{-} \eneq {a_i}{q(a_i)}|\leq |\eneq {b_i}{q(b_i)}{-} \eneq
{a_i}{q(b_i)}| + |\eneq {a_i}{q(b_i)}{-} \eneq {a_i}{q(a_i)}| 
\end{equation}
and estimate the first term via \eqref{h:1.3d}, so that 
\[
  |\eneq {b_i}{q(b_i)}{-} \eneq {a_i}{q(b_i)}| \leq \int_{a_i}^{b_i}
  |\pl_t \eneq r{q(b_i)} | \dd r \leq C_\# \int_{a_i}^{b_i} |\eneq
  r{q(b_i)} | \dd r \leq C_\# E |b_i-a_i|,
\]
Hence, we choose $\delta_1 =\frac{\eps}{2C_1E}$ and get
$\sum_{i=1}^{M}|\eneq {b_i}{q(b_i)}{-} \eneq {a_i}{q(a_i)}|<\tfrac\eps2 $. As
for the second term in \eqref{split-AC}, we rely on the absolute continuity of
$\overline{\calE}(q): = \eneq {a_i}q$. We thus conclude \eqref{AC-2-SHOW}.
Finally, to show the chain-rule formula \eqref{eq:48strong}, we fix a point
$t\in (0,T)$ in which $q'(t)$ and $\frac{\rmd}{\rmd t}\eneq t{q(t)}$ exist, and
derive from \eqref{uniform-subdiff} that
\begin{equation}
\label{we-re-there}
\begin{aligned}
  & \eneq {t+h}{q(t+h)} -\eneq t{q(t)}
  \\
  & = \eneq {t+h}{q(t+h)} -\eneq {t}{q(t+h)} +\eneq {t}{q(t+h)}-\eneq t{q(t)}
  \\
  & \geq \int_t^{t+h} \pl_t \eneq r{q(t+h)} \dd r +
  \pairing{}{\Spq}{\xi(t)}{q(t+h){-}q(t)} - \varpi^E (t,q(t), q(t+h))
  \\
  &
\begin{aligned}
  \geq & \int_t^{t+h} \left( \pl_t \eneq r{q(t+h)}{-} \pl_t \eneq
    r{q(r)} \right) \dd r +\int_t^{t+h} \pl_t \eneq r{q(r)} \dd r 
  \\
  & \qquad \qquad + \pairing{}{\Spq}{\xi(t)}{q(t+h){-}q(t)} - \varpi^E (t,q(t),
  q(t+h)) \| q(t+h) {-} q(t)\|_{\bfQ}
\end{aligned}
\end{aligned}
\end{equation}
We now divide the above estimate by $h>0$ and take the limit as $h\to 0^+$.
Now, recall that $q(t) \in \subl E$ for all $t\in [0,T]$.  Therefore, thanks to
Hypothesis \ref{h:closedness} the function
$[0,T]\ti [0,T]\ni (r,s) \mapsto \pl_t \eneq r{q(s)}$ is uniformly
continuous, with a modulus of continuity
$\omega: [0,T]\times [0,T]\to [0,\infty)$, so that
\[
\left|\tfrac1h  \int_t^{t+h}  \left( \pl_t \eneq r{q(t{+}h)}  {-} \pl_t
    \eneq r{q(r)} \right) \dd r   \right|  \leq\tfrac1h  \int_t^{t+h}
\omega(|t{+}h{-}r|) \dd r \leq \omega(h) \rightarrow 0 \text{ as } h \to 0^+\,. 
\]
Taking into account that
$\lim_{h\to 0^+ } \tfrac1h \int_t^{t+h} \pl_t \eneq r{q(r)} \dd r =
\pl_t \eneq t{q(t)}$, \eqref{we-re-there} thus leads to the estimate
$\geq$ in \eqref{eq:48strong}. The converse inequality follows by dividing
\eqref{we-re-there} by $h<0$ and taking the limit as $h \to 0^-$.  This
concludes the proof.
\end{proof}
  
Let us now carry out the proof of Proposition
\ref{prop:better-chain-rule-MOexpl}, which shows the validity of the
parametrized chain rule from Hypothesis \ref{h:ch-rule-param} if Hypothesis
\ref{h:ch-rule} holds and, in addition, the vanishing-viscosity contact
potentials associated with $\disv u$ and $\disv z$ satisfy the coercivity
property \eqref{coercivity-VVCP}.

\noindent
\begin{proof}[Proof of Proposition \ref{prop:better-chain-rule-MOexpl}]
Preliminarily, we observe that, if the vanishing-viscosity contact 
potentials $\mfb_{\disv u}$ and $\mfb_{\disv z}$ satisfy
\eqref{coercivity-VVCP}, then the `reduced' \RJMF\ $\mredname 0\alpha$ enjoys
the following coercivity property:
\begin{equation}
  \label{stronger-than-true40}
  \begin{aligned}
  \!  \! \! \!  \!  \!    \exists\, c>0 \  \forall\, (t,q,t',q') \in [0,T]\ti \domq \ti [0,\infty)
    \ti \Spq : 
     \ \  & \mredq 0\alpha tq{t'}{q'} \geq c \left( \|\mu \|_{\Spu^*} \|u'\|_\Spu +
      \|\zeta{+}\sigma \|_{\Spu^*} \|z'\|_\Spz \right)
    \\
    & \text{ for all } (\mu,\zeta)\in \argminSlo utq \ti\argminSlo ztq
    \text{ and  some } \sigma\in \pl \calR(0).
  \end{aligned}
\end{equation}
Clearly, the above estimate trivially holds if $t'>0$, as then
$\slov utq = \slov ztq=0$, so that $ \argminSlo utq= \{0\}$ and every
$ \zeta \in \argminSlo ztq$ fulfills $-\zeta \in \pl  \calR (0)$. To show
it for $t'=0$, we will separately discuss the cases $\alpha>1$ (the arguments
for $\alpha \in (0,1)$ are indeed specular) and $\alpha=1$.  In the case
$\alpha>1$, we have that, if $\slov utq=0$, then by \eqref{l:partial} we have
\[
  \mredq 0\alpha tq{0}{q'} = \mfb_{\disv z} (z',\slov ztq) \overset{(1)}{\geq}
 c_{\sfz}   \|z'\|_{\Spz}\|\zeta{+}\sigma\|_{\Spz^*}
\]
for all $ \zeta \in \argminSlo ztq$ and all $\sigma \in \pl \calR(0)$ with
$ \conj z(\zeta) = \disv z^*(\zeta{-}\sigma)$, where {\footnotesize (1)}
follows from \eqref{coercivity-VVCP}.  Analogously, if $\slov utq>0$, then
$\mredq 0\alpha tq{t'}{q'} = \mfb_{\disv u} (u',\slov utq) $ and we have the
analogous estimate.  If $\alpha =1$, then by \eqref{l:partial} and again
\eqref{used-later-HS} we have
\[
  \begin{aligned}
    \mredq 0\alpha tq{0}{q'} & = \mfb_{\disv u \oplus \disv z}(q',\slov
    utq{+}\slov ztq) \\ &  \geq   \mfb_{\disv u}(u',\slov utq)
    +\mfb_{\disv z}(z',\slov ztq) \\ &   \geq  
  c_{\mathsf{u}}  \|u'\|_{\Spu} \| \mu \|_{\Spu^*} + c_{\sfz}
    \|z'\|_{\Spz}\|\zeta{+}\sigma\|_{\Spz^*}
  \end{aligned}
\]
for all  $  \mu $,  $\zeta$, and $\sigma$ as in \eqref{stronger-than-true40}.

Let us now consider an admissible curve
$ (\sft,\sfq) \in \mathscr{A} ([a,b];[0,T]\ti \Spq)$ such that, in addition,
$\sfz \in \AC ([a,b];\Spz)$. Hence,
$\mathfrak{M}_0^\alpha [\sft,\sfq,\sft',\sfq'] =
\mathfrak{M}_0^\alpha(\sft,\sfq,\sft',\sfq') $ a.e.\ in $(a,b)$.  Then,
\eqref{summability} yields that
$ \mathfrak{M}_0^\alpha(\sft,\sfq,\sft',\sfq') \in \rmL^1(a,b)$.  Let us now
choose measurable selections
$(a,b) \ni s \mapsto   \mu(s)  \in \argminSlo u{\sft(s)}{\sfq(s)} $,
$(a,b) \ni s \mapsto \zeta(s) \in \argminSlo z{\sft(s)}{\sfq(s)} $, and
$(a,b) \ni s \mapsto\sigma(s) \in \pl  \calR(0)$ such that
\[
  \| \mu(s)  \|_{\Spu^*} \|\sfu'(s)\|_\Spu {+} \|\zeta(s) {+}\sigma(s)\|_{\Spz^*}
  \|\sfz'(s)\|_\Spz \leq \mredq 0\alpha {\sft(s)}{\sfq(s)}{\sft'(s)}{\sfq'(s)}
  \qquad \foraa\, s\in (a,b).
\]
Hence, we have $  \int_a^b \left( \| \mu(s)  \|_{\Spu^*} \|\sfu'(s)\|_\Spu {+} \|\zeta(s)
    {+}\sigma(s)\|_{\Spz^*} \|\sfz'(s)\|_\Spz \right) \dd s <\infty$, and using 
$\int_a^b \|\sigma(s)\|_{\Spz^*} \|\sfz'(s)\|_\Spz \dd s $ $\leq C_\calR 
\int_a^b \|\sfz'(s)\|_\Spz \dd s <\infty$ by  \eqref{eq:l:classic}, we 
ultimately deduce that
\[
  \int_a^b \left( \|  \mu(s)  \|_{\Spu^*} \|\sfu'(s)\|_\Spu {+} \|\zeta(s)
    \|_{\Spz^*} \|\sfz'(s)\|_\Spz \right) \dd s <\infty\,.
\]
We are thus in a position to apply the chain rule from Hypothesis
\ref{h:ch-rule} and conclude that $s\mapsto \eneq{\sft(s)}{\sfq(s)}$ is
absolutely continuous and that
\[
  \frac{\rmd}{\rmd s} \eneq{\sft}{\sfq} - \pl_t \eneq{\sft}{\sfq} \sft' =
  \pairing{}{\Spu}{\mu}{\sfu'}   + \pairing{}{\Spz}{\zeta}{\sfz'} \quad \aein\,
  (a,b),
\]
Then, the chain-rule estimate \eqref{better-chain-rule-MOexpl} follows from
observing that the right-hand side in the above formula estimates
$- \mathfrak{M}_0^\alpha (\sft,\sfq,\sft',\sfq') $ from above thanks to Lemma
\ref{new-lemma-Ricky}.  This concludes the proof.  
\end{proof}

We conclude this section with a result relating parametrized chain rule from
Hypothesis \ref{h:ch-rule-param} to that in Hypothesis \ref{hyp:BV-ch-rule}.

\begin{lemma}
\label{l:nice-implication}
If the rate-independent system $\RIS$ satisfies the
chain rule of Hypothesis \ref{h:ch-rule-param},  then it
also satisfies Hypothesis \ref{hyp:BV-ch-rule}.
\end{lemma}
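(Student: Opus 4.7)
The strategy is to lift any curve $q$ meeting the hypotheses of Hyp.~\ref{hyp:BV-ch-rule} to an admissible parametrized curve $(\sft_\eta, \sfq_\eta) \in \mathscr{A}([0, S_\eta]; [0,T]\ti \Spq)$ for every $\eta > 0$, apply the parametrized chain rule of Hyp.~\ref{h:ch-rule-param} on the parametrized side, and then send $\eta \to 0^+$. Fix $q = (u,z)$ and a closed subinterval $[t_0, t_1] \subset [0,T]$; enumerate $\mathrm{J}[q] \cap [t_0, t_1] = \{t_m\}_{m \in M}$ with $M$ at most countable. The definition \eqref{Finsler-cost} of the Finsler cost furnishes, for each $m \in M$, admissible transition curves $\serifTeta^{m,\eta}_- \in \admtcq{t_m}{\llim{q}{t_m}}{q(t_m)}$ and $\serifTeta^{m,\eta}_+ \in \admtcq{t_m}{q(t_m)}{\rlim{q}{t_m}}$ whose $\mename{0}{\alpha}$-integrals exceed the corresponding Finsler costs by at most $\eta\,2^{-|m|-1}$; if $t_m \in \{t_0, t_1\}$, one retains only the one-sided transition that appears in \eqref{eq:def.EVJC}.

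The next step is to glue a BV-arclength reparametrization of the continuous part of $q$ with these near-optimal jump transitions. Setting $\phi(t) := (t - t_0) + \Varname{\Spu}(u; [t_0, t]) + \Varname{\calR}(z; [t_0, t])$ produces a non-decreasing function with jumps exactly at $\mathrm{J}[q]$; I let $\sigma(t) := \phi(t) + \#\{m : t_m \leq t\}$ and $S_\eta := \sigma(t_1)$. On the image of $\sigma$ minus the open jump intervals $I_m := (\llim{\sigma}{t_m}, \rlim{\sigma}{t_m})$, define $\sft_\eta := \sigma^{-1}$ and $\sfq_\eta := q \circ \sft_\eta$; by construction $\sft_\eta, \sfu_\eta$, and $\sfz_\eta$ are $1$-Lipschitz in $\R$, $\Spu$, and (with respect to $\calR$) in $\Spy$, respectively. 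On each $I_m$ I set $\sft_\eta \equiv t_m$ and let $\sfq_\eta$ traverse, after affine reparametrization, the concatenation of $\serifTeta^{m,\eta}_-$ and $\serifTeta^{m,\eta}_+$. Admissibility in the sense of Def.~\ref{def:adm-p-c} then holds: (1) follows from the Lipschitz regularity and the admissibility of each $\serifTeta^{m,\eta}_\pm$; (2) uses the stationary equation \eqref{stationary-u} and the local stability \eqref{loc-stab} at every $t \notin \mathrm{J}[q]$, which are exactly the points where $\sft_\eta'(s)>0$; (3) holds because $\SetG{\alpha}{\sft_\eta}{\sfq_\eta}$ is contained in $\bigcup_m I_m$, on which $\sft_\eta$ is constant, while the local $\Spz$-absolute continuity is inherited from each $\serifTeta^{m,\eta}_\pm$; (4)--(5) follow from $\sup_t \mfE(q(t)) \leq E$ and the near-optimality of the transitions (one may assume $\Variq{\mename 0\alpha}{q}{t_0}{t_1} < \infty$, otherwise \eqref{BV-ch-rule} is trivially true).

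Applying Hyp.~\ref{h:ch-rule-param} to $(\sft_\eta, \sfq_\eta)$ and integrating on $[0, S_\eta]$ yields $\eneq{t_1}{q(t_1)} - \eneq{t_0}{q(t_0)} - \int_0^{S_\eta} \pl_t \eneq{\sft_\eta}{\sfq_\eta}\,\sft_\eta' \dd s \geq -\int_0^{S_\eta} \mathfrak{M}_0^\alpha[\sft_\eta, \sfq_\eta, \sft_\eta', \sfq_\eta'] \dd s$. The change of variables $s = \sigma(t)$ on the continuous part, combined with $\sft_\eta' \equiv 0$ on each $I_m$, converts the time-derivative integral into $\int_{t_0}^{t_1} \pl_t \eneq{r}{q(r)} \dd r$. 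By \eqref{cited-VarR-later} and the near-optimal choice of $\serifTeta^{m,\eta}_\pm$, the right-hand side is at most $\Varname{\calR}(z; [t_0, t_1]) + \mathrm{eVJV}_{\mename 0\alpha}(q; [t_0, t_1]) + \eta = \Variq{\mename 0\alpha}{q}{t_0}{t_1} + \eta$, where the telescoping uses that the $\calR$-parts of the jump costs combine with the continuous $\calR$-variation to reconstruct $\Varname{\calR}(z; [t_0, t_1])$ exactly. Sending $\eta \to 0^+$ produces \eqref{BV-ch-rule}. The $\BV$-regularity of $t \mapsto \eneq{t}{q(t)}$ then follows by applying the just-proved inequality over an arbitrary partition of $[0,T]$: together with the power-control bound $|\pl_t \calE(r, q(r))| \leq C_\# E$ from \eqref{h:1.3d} and the two-sided pointwise bound $\eneq t{q(t)} \in [C_0, E]$, the sum of negative increments is bounded by $\Var_{\mename 0\alpha}(q; [0,T]) + C_\# E T$, and the sum of positive increments is controlled by telescoping plus the negative-increment bound. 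The main obstacle is the careful verification of admissibility for the concatenation $(\sft_\eta, \sfq_\eta)$ across the countable jump set $M$, in particular that the arclength reparametrization of the continuous part glues with the inserted transitions at every $\sigma$-image of a $t_m$ so that conditions (1) and (3) of Def.~\ref{def:adm-p-c} survive uniformly; summability of the error series $\sum_m \eta\,2^{-|m|-1}$ is precisely what allows an arbitrarily small excess over $\Variq{\mename 0\alpha}{q}{t_0}{t_1}$.
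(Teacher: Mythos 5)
Your proposal follows the same overall strategy as the paper's (very terse) proof: lift $q$ to an admissible parametrized curve by a jump-resolved arclength reparametrization, apply the parametrized chain rule of Hypothesis~\ref{h:ch-rule-param}, and transfer back to $[t_0,t_1]$ by a change of variables. The noteworthy and genuinely useful difference is your use of $\eta$-near-optimal transition curves with a summable error budget $\eta\,2^{-|m|-1}$, followed by $\eta\to 0^+$. The paper refers to the construction of Theorem~\ref{th:pBV.v.BVsol}(2) and asks the lift to satisfy the \emph{equality} \eqref{balance-variations}, which in that theorem is secured via the extra Hypothesis~\ref{hyp:OJT} (existence of optimal jump transitions) — an assumption that is \emph{not} listed among the hypotheses of Lemma~\ref{l:nice-implication}. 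Since \eqref{BV-ch-rule} is a one-sided estimate, your near-optimal version showing $\int_0^{S_\eta} \mathfrak{M}_0^\alpha[\cdot]\,\dd s \leq \Variq{\mename 0\alpha}{q}{t_0}{t_1} + \eta$ is exactly what is needed and quietly repairs this imprecision; it is also where your error allocation buys you the freedom to sidestep attainment of the infimum in the Finsler cost. Your derivation of the BV-regularity of $t \mapsto \eneq t{q(t)}$ via the negative-increment bound from the power control \eqref{h:1.3d} and the (finite, by assumption) $\mename 0\alpha$-total variation is also sound.

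One concrete defect in the write-up: you define $\sigma(t) := \phi(t) + \#\{m : t_m \leq t\}$, which diverges whenever $\mathrm{J}[q]\cap[t_0,t_1]$ is countably infinite (and the paper explicitly allows this, noting $\mathrm{J}[q]$ is ``at most countable''). Replace the jump counter by a summable accumulation, e.g.\ $\sum_{m:\,t_m\leq t} c_m$ with $c_m = 2^{-|m|}$, which is in fact the same weighting you already use to distribute the error $\eta$; with that change the interval $[0,S_\eta]$ is bounded and the gluing of the countably many transition pieces with the arclength parametrization of the continuous part produces a curve of finite total $\mathfrak{M}_0^\alpha$-cost, as required by \eqref{summability}. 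A subsidiary point worth noting (though not a flaw): your verification that $\SetG\alpha{\sft_\eta}{\sfq_\eta}\subset\bigcup_m I_m$ implicitly uses that the left/right limits $\llim q{t_m}$, $\rlim q{t_m}$ at jump points also satisfy the slope conditions; this follows from \eqref{stationary-u}, \eqref{loc-stab} by the lower semicontinuity of $\slovname u,\slovname z$ (Hypothesis~\ref{hyp:Sept19}), approaching $t_m$ through $[0,T]\setminus\mathrm{J}[q]$.
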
 
\begin{proof}
Consider a curve $q \in \mathrm{BV}([0,T];\Spu) \ti (\mathrm{R}([0,T];\Spz) 
{\cap} \rmB\rmV([0,T];\Spy) )$ fulfilling the stationary equation
\eqref{stationary-u} and the local stability \eqref{loc-stab}, and let us fix
$[t_0,t_1] \subset [0,T]$.  As in Theorem \ref{th:pBV.v.BVsol}(2), we associate
with $q$ a parametrized curve
$(\sft,\sfq) = (\sft,\sfu,\sfz) \in \mathscr{A} ([\sfS_0,\sfS_1]; [t_0,t_1]\ti
\Spq)$ such that \eqref{balance-variations} holds on the interval $[t_0,t_1]$.
The parametrized chain-rule inequality \eqref{ch-rule} reads for a.a.\
$s\in (\sfS_0,\sfS_1) $
\[
  \frac{\rmd}{\rmd s} \eneq{\sft(s)}{\sfq(s)} - \pl_t
  \eneq{\sft(s)}{\sfq(s)} \sft'(s) \geq - \calR[z'](s) - \mredq 0\alpha
  {\sft(s)}{\sfq(s)}0{\sfq'(s) } \,.
\]
Integrating on the interval $ (\sfS_0,\sfS_1)$ and using
\eqref{balance-variations}, we conclude the desired chain-rule inequality
\eqref{BV-ch-rule}.  With this, Lemma \ref{l:nice-implication} is proved.  
\end{proof}

\Section{Measurability in Theorem \ref{thm:diff-charact}} 
\label{appendix-measurability}

To prove the statement in Theorem \ref{thm:diff-charact} concerning the
existence of measurable selections $ \xi=(\mu,\zeta):  (0,\sfS) \to \Spu^*
\ti \Spz^*$ and $(\thn u,\thn z): (0,\sfS)\to [0,\infty]^2 $  satisfying
\eqref{e:diff-char}, we will resort to 
the following generalization of Filippov's Selection Theorem, proved in
\cite[Prop.\,B.1.2]{MieRouBOOK}.

\begin{proposition}
\label{prop:Filippov}
Let $(O,\mathfrak{O}, \mu)$ be a $\sigma$-finite complete measure space and $X$
a complete separable metric space.  Let $F: O \rightrightarrows X$ be a
measurable set-valued mapping with closed non-empty images,
$\mathrm{graph}(F): = \{ (s,x)\, : \ x \in F(s)\} $ its graph, and let
$\mathfrak{G}$ be the $\sigma$-algebra given by the restriction of
$\mathfrak{O}\times \mathfrak{B}(X)$ (with $\mathfrak{B}(X)$ the Borel
$\sigma$-algebra on $X$) to $\mathrm{graph}(F)$.  Let
$g: \mathrm{graph}(F) \to \R$ be a $\mathfrak{G}$-measurable mapping such that
\begin{equation}
\label{properties-of-g}
\forall\, s \in O\, : \qquad \left\{
\begin{array}{ll}
\exists\, x \in F(s)\, : \qquad g(s,x)=0,
\\
g(s,\cdot): F(s)\to \R \text{ is continuous.}
\end{array}
\right.
\end{equation}
Then, there exists a measurable selection $f: O \to X $ of $F$ such that
$ g(s,f(s)) =0 $ for all $s\in O$.
\end{proposition}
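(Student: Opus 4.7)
The plan is to reduce this generalization of Filippov's selection theorem to a classical measurable selection theorem (Kuratowski--Ryll-Nardzewski or Aumann) applied to the auxiliary set-valued mapping
\[
G: O \rightrightarrows X, \qquad G(s) := \{\, x\in F(s) \mid g(s,x)=0 \,\}.
\]
First I would verify that $G$ has non-empty, closed images: non-emptiness is exactly the first condition in \eqref{properties-of-g}, while closedness of $G(s)$ in $X$ follows from closedness of $F(s)$ in $X$ together with the continuity of $g(s,\cdot)$ on $F(s)$, which makes $G(s) = F(s) \cap g(s,\cdot)^{-1}(\{0\})$ relatively closed in $F(s)$ and hence closed in $X$.

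Next I would establish the measurability of $G$. Since $g$ is $\mathfrak{G}$-measurable, the sub-level set $\{(s,x)\in \mathrm{graph}(F) \mid g(s,x)=0\}$ belongs to $\mathfrak{G}$, and it coincides with $\mathrm{graph}(G)$. By the definition of $\mathfrak{G}$ as the restriction of $\mathfrak{O}\otimes \mathfrak{B}(X)$, we obtain $\mathrm{graph}(G) \in \mathfrak{O}\otimes\mathfrak{B}(X)$. Moreover, $\mathrm{graph}(F)$ is itself in $\mathfrak{O}\otimes \mathfrak{B}(X)$ since $F$ is a measurable set-valued mapping with closed images (in a Polish space these two properties pass between the set-valued map and its graph).

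Given this graph-measurability, I would invoke the projection theorem: using that $(O,\mathfrak{O},\mu)$ is $\sigma$-finite and complete, the projection $\pi_O(\mathrm{graph}(G))$ is $\mathfrak{O}$-measurable, and by non-emptiness this projection equals $O$. To finally produce the pointwise selection $f$ with $f(s)\in G(s)$, I would appeal to the measurable selection theorem of Aumann (or equivalently Kuratowski--Ryll-Nardzewski applied to a countable dense net in the closed-valued mapping $G$ on the Polish space $X$); the output is a $\mathfrak{O}$-measurable $f: O\to X$ with $f(s)\in F(s)$ and $g(s,f(s))=0$ everywhere on $O$, as required.

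The main obstacle I anticipate is the measurability bookkeeping for $G$: one must ensure that the $\sigma$-algebra $\mathfrak{G}$ (the trace of $\mathfrak{O}\otimes\mathfrak{B}(X)$ on $\mathrm{graph}(F)$) genuinely suffices to conclude $\mathrm{graph}(G) \in \mathfrak{O}\otimes \mathfrak{B}(X)$, and that the completeness of $\mu$ is strong enough to sustain the projection step — this is where the hypothesis of a $\sigma$-finite complete measure space (rather than the more restrictive Borel setting) is essential, as projections of Borel sets in $X$ are in general only analytic and not Borel.
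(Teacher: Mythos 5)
The paper does not supply its own proof of Proposition \ref{prop:Filippov}: it is stated verbatim and cited from \cite[Prop.\,B.1.2]{MieRouBOOK}, so there is nothing in the paper to compare against line by line. Your self-contained argument is nevertheless correct and follows what is essentially the standard route to Filippov-type implicit selection lemmas, so it is entirely consistent with the cited reference.

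A few remarks on the details. The reduction to the auxiliary multifunction $G(s)=\{x\in F(s):g(s,x)=0\}$ is the right move, and you correctly identify that the delicate step is upgrading $\mathrm{graph}(G)\in\mathfrak{G}$ to $\mathrm{graph}(G)\in\mathfrak{O}\otimes\mathfrak{B}(X)$: this genuinely requires $\mathrm{graph}(F)\in\mathfrak{O}\otimes\mathfrak{B}(X)$, which you invoke via the standard fact that a measurable closed-valued multifunction into a Polish space has measurable graph (e.g.\ by a Castaing representation, $\mathrm{graph}(F)=\{(s,x):d(x,F(s))=0\}$ being the zero set of a Carath\'eodory function). Once you have $\mathrm{graph}(G)\in\mathfrak{O}\otimes\mathfrak{B}(X)$ together with completeness and $\sigma$-finiteness of $\mu$, the von Neumann--Aumann selection theorem directly produces a measurable $f:O\to X$ with $(s,f(s))\in\mathrm{graph}(G)$ for every $s\in O=\pi_O(\mathrm{graph}(G))$; at that point the closed-valuedness of $G$ and the Kuratowski--Ryll-Nardzewski theorem are not needed. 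Your final paragraph hedges between the two selection theorems -- it would read better if you committed to the Aumann/projection route (which your measurability bookkeeping is set up for) and kept the closedness observation only as a side remark that would allow the alternative, projection-free, KRN route. Otherwise the proof is complete.
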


%

For the construction of the parameters $\thn u$ and $\thn z$ introduce the
sets $\Loptimal x{v}{\sigma}$ via 
\begin{equation}
\label{characterization-optimal}
\Loptimal{x}{v}{\sigma}: =   \mathop{\mathrm{Argmin}}\limits_{\lambda>0}
{\Bfu {\disv{x}}{\tfrac1\lambda}{v}\sigma}  
=  \mathop{\mathrm{Argmin}}\limits_{\lambda>0}   \frac1\lambda \big( 
    \disve{x}{}(\lambda v) +  \sigma\big)  \qquad \text{for  }
(v,\sigma) \in \mathbf{X} \ti [0,\infty) 
\end{equation}
with $\mathsf{x} \in \{\sfu,\sfz\}$ and $ \mathbf{X} \in \{\Spu,
\Spz\}$. Recall that Proposition \ref{pr:VVCP} guarantees that
$\Loptimal{x}{v}{\sigma} \neq \emptyset$ for all $\sigma>0$. Analogously, for
$q'=(u',z')$ we will use the notation
\[
\Loptimal{uz}{q'}{\sigma_{\sfu}{+}\sigma_{\sfz}} :=
\mathop{\mathrm{Argmin}}\limits_{\lambda>0}   {\Bfu {\disv{u}\oplus \disv
    z}{\tfrac1\lambda}{q'}{\sigma_\sfu {+} \sigma_\sfz}} =
\mathop{\mathrm{Argmin}}\limits_{\lambda>0}   \frac1\lambda \big( 
    \disve{u}{}(\lambda u') +  \disve{z}{}(\lambda z')  +
     \sigma_\sfu +  \sigma_{\sfz}\big) \,. 
\]
A close perusal of the proof of Proposition \ref{pr:charact-Ctc-set} then
reveals that, for a given $ (t,q,t',q') \in \Ctc_\alpha$ with $\alpha \neq 1$,
\[
(t,q,t',q') \in \rgs{V}x \text{ if and only if 
system \eqref{static-tq} holds with } \thn x \in \Loptimal{x}{x'}{\slov x tq},
\]
for $ \sfx \in \{ \sfu, \sfz\}$ and $ x' \in \{ u', z'\} $.  Analogously, in
the case $\alpha =1$, we have that $(t,q,t',q') \in \Ctc_1 \cap \rgs V{uz}$ if
and only if \eqref{static-tq} holds with
$\lambda \in \Loptimal{uz}{q'}{\slov u tq{+} \slov ztq} $.

We are now in a position to  the missing part of the proof of Theorem
\ref{thm:diff-charact}.  

\begin{proof}[Proof of Part (1) of Theorem \ref{thm:diff-charact}]
  Let $(\sft,\sfq) \in \mathscr{A}([0,\sfS];[0,T]\ti \Spq)$ be a enhanced
  $\pBV$ solution of the rate-independent system $\RIS$.  We will now prove the
  existence of measurable $\thn u,\, \thn z: (0,\sfS)\to [0,\infty] $, and
  $ \mu:   (0,\sfS) \to \Spu^*$ and $\zeta: (0,\sfS) \to \Spz^*$ satisfying
  \eqref{e:diff-char} in the case $\alpha>1$; with similar arguments one can
  obtain the analogous statement for $\alpha=1$ and $\alpha \in (0,1)$.  To
  avoid trivial situations, we also suppose that $(\sft,\sfq)$ is
  non-degenerate.\smallskip

\STEP{1: Existence of measurable $\thn u,\thn z: (0,\sfS)\to [0,\infty]$.}
Proposition \ref{pr:char-eBV} shows 
that $(\sft(s),\sfq(s),\sft'(s),\sfq'(s)) $ belongs to the contact set
$ \Ctc_\alpha$ for a.a.\ $s\in (0,\sfS)$ and that, in turn,
$\Ctc_\alpha \subset \rgs Eu \rgs Rz \cup \rgs Eu \rgs Vz \cup \rgs Bz$.
Recalling the short-hand notation \eqref{short-hand-regimes}, we introduce the
short-hand $((0,\sfS) \cap \rgs Bz)^\circ$ for the set
$\{(\sft,\sfq,\sft',\sfq') \in \rgs Bz \text{ and } \slov u {\sft}{\sfq}
>0\}$. Since  for $\alpha>1$ we have  $\sft'\equiv 0$ and $\sfz'\equiv 0$ on
$((0,\sfS) \cap \rgs Bz)^\circ$ and  since $(\sft,\sfq)$ is non-degenerate, we have
$\sfu' \neq 0$ on $((0,\sfS) \cap \rgs Bz)^\circ$. Therefore, thanks to
Proposition \ref{pr:VVCP}, the set
$ \Loptimal{u}{\sfu'(s)}{\slov u{\sft(s)}{\sfq(s)}} =
\mathop{\mathrm{Argmin}}_{\lambda>0} {\Bfu
  {\disv{u}}{\tfrac1\lambda}{\sfu'(s)}{\slov u{\sft(s)}{\sfq(s)}}} $ is
non-empty for every $s\in ((0,\sfS) \cap \rgs Bz)^\circ$.  We consider the
multi-valued mapping
$\Gamma_\sfu: ((0,\sfS) \cap \rgs Bz)^\circ \rightrightarrows [0,\infty)$
defined by
$\Gamma_\sfu(s) : = \Loptimal{u}{\sfu'(s)}{\slov u{\sft(s)}{\sfq(s)}}$ and
observe that its graph is a Borel subset of $(0,\infty)^2$. Indeed,
$\Gamma_\sfu$ is given by the composition of the upper semicontinuous
multi-valued mapping $\Lambda_{\sfu}$, with the Borel function $\sfu'$ and the
lower semicontinuous function $\slov u {\sft}{\sfq}$.

Hence, we are in a position to apply the von Neumann-Aumann selection theorem
(cf.\ \cite[Thm.\,III.22]{Castaing-Valadier77}) to $\Gamma_\sfu$ and conclude
that
\begin{equation}
  \label{meas-selec-lu}
  \text{there exists a measurable }   \widetilde{\lambda}_{\sfu}: ((0,\sfS)
  \cap \rgs Bz)^\circ \to (0,+\infty) \text{ with }
  \widetilde{\lambda}_{\sfu}(s) \in  \Loptimal{u}{\sfu'(s)}{\slov
    u{\sft(s)}{\sfq(s)}}\,. 
\end{equation}
Let $N$ be the negligible subset of $(0,\sfS)$ on which either $\sft'$ or
$\sfq'$ do not exist, or $(\sft,\sfq,\sft',\sfq') \notin \Ctc_\alpha$: since
$\Ctc_\alpha \subset \rgs Eu \rgs Rz \cup \rgs Eu \rgs Vz \cup \rgs Bz$, we
have that
\[
  (0,\sfS) {\setminus} N = A_1\cup A_2\cup A_3 \cup A_4 \text{ with } \left\{
  \begin{array}{ll}
   A_1  =  (0,\sfS) \cap \rgs Eu\rgs Rz,
  \\
  A_2  =  (0,\sfS) \cap \rgs Eu\rgs Vz,
  \\
  A_3   = \{ s \in  (0,\sfS) \cap \rgs Bz\, : \  \slov u {\sft(s)}{\sfq(s)} =0\},
  \\
  A_4   = \{ s \in  (0,\sfS) \cap \rgs Bz\, : \  \slov u {\sft(s)}{\sfq(s)} >0\}\,.
    \end{array}
  \right.
\]
Hence, we define $\thn u$ on $(0,\sfS){\setminus} N$ by
\begin{equation}
  \label{definition-thn-u}
  \thn u(s): =
  \begin{cases}
  0 &\text{if } s \in A_1\cup A_2 \cup A_3, 
    \\
  \widetilde{\lambda}_{\sfu}(s) & \text{if } s\in A_4\,. 
  \end{cases}
\end{equation}
Analogously, we define $\thn z$ on $(0,\sfS) {\setminus} N$ by
\begin{equation}
  \label{definition-thn-z}
  \thn z(s): = \begin{cases}
  0 &\text{if } s \in A_1,
  \\
   \widetilde{\lambda}_{\sfz}(s)    &\text{if } s \in A_2, 
    \\
    \infty & \text{if } s\in A_3{\cup}A_4\,. 
  \end{cases}
\end{equation}
Here, $\widetilde{\lambda}_{\sfz}: (0,\sfS)\cap\rgs Eu\rgs Vz \to(0,+\infty)$
is a measurable selection  with  
$\widetilde{\lambda}_{\sfz}(s) \in \Loptimal{z}{\sfz'(s)}{\slov
  z{\sft(s)}{\sfq(s)}}$, whose existence is again guaranteed by
\cite[Thm.\,III.22]{Castaing-Valadier77}.

Clearly, $\thn u$ and $\thn z$ satisfy the switching conditions
\eqref{eq:SwitchCond}. Also taking into account
\eqref{characterization-optimal}, we conclude that $(\sft,\sfq)$ solve the
subdifferential system \eqref{param-subdif-incl} with $\thn u$ and $\thn z$.\smallskip

\STEP{2: existence of measurable $  \xi=(\mu,\zeta): (0,\sfS) \to \Spu^*
  \ti \Spz^*$.} We aim to apply Filippov's theorem,  in the
form of Proposition \ref{prop:Filippov},  with
$O = (0,\sfS){\setminus} N$, $X = \Spu^* \ti \Spz^*$, and the multi-valued
mapping
\[
  F:(0,\sfS) {\setminus} N\rightrightarrows \Spu^* \ti \Spz^*, \qquad F(s): =
    \begin{cases}
      \frsubq q{\sft(s)}{\sfq(s)}  & \text{if } (\sft(s),\sfq(s)) \in
      \mathrm{dom}(\pl_q \calE), 
      \\
      \{(0,0)\} &\text{otherwise}. 
\end{cases}
\]
Observe that $F$ is measurable, with (non-empty) closed images thanks to
Hypothesis \ref{h:closedness}. We now consider the mapping 
$g: \mathrm{graph}(F) \to \R$ given by  
\[
g(s,\mu,\zeta): = 
  \begin{cases}
\disv u^*({-}\mu) + \conj z({-}\zeta)   &\text{if } s \in A_1,
  \\ 
  \disv u^*({-}\mu)+ \calR(\sfz'(s)) +  
\frac1{\thn z (s)} \disv z(\thn z(s) \sfz'(s)) + \frac1{\thn z(s)} \conj z
(-\zeta)  +\!\pairing{}{\Spz}{\zeta}{\sfz'(s)}      &\text{if } s \in A_2, 
    \\
 \disv u^*({-}\mu) + \conj z({-}\zeta)      & \text{if } s\in A_3,
    \\
\frac1{\thn u(s)} \disv u(\thn u (s) \sfu'(s)) + \frac1{\thn u(s)} \disv u^*
(-\mu)  +\! \pairing{}{\Spu}{\mu}{\sfu'(s)}  & \text{if } s\in A_4\,. 
  \end{cases}
\]
It turns out that, also in view of the discussion developed in Step $1$, $g$ is
measurable. Furthermore, for every $s\in (0,\sfS)$ the functional
$g(s,\cdot,\cdot)$ is continuous thanks to the continuity of $\calR$ and
$\disv x$ and $\disv x^*$, $\sfx \in \{\sfu, \sfz\}$.  Finally, the first of
conditions \eqref{properties-of-g} holds since, by Step $1$, $(\sft,\sfq)$
solve the subdifferential system \eqref{param-subdif-incl} with $\thn u$ and
$\thn z$, which exactly means that for every $s\in (0,\sfS) {\setminus} N$
there exist $(\mu_s, \zeta_s) \in \frsubq q{\sft(s)}{\sfq(s)} $ such that
$g(s,\mu_s,\zeta_s) = 0$.  Hence, we are in a position to apply 
Proposition \ref{prop:Filippov},  thus concluding that there exists a
measurable selection
$(0,\sfS) {\setminus} N \ni s\mapsto (\mu(s),\zeta(s)) \in F(s)$ such that
$ g(s,\mu(s),\zeta(s))\equiv 0\,.  $ This yields the desired  selection as
stated in \eqref{exist-meas-select}. 
\end{proof}

\paragraph*{Acknowledgments.} A.M. was partially supported by Deutsche
Forschungsgemeinschaft (DFG) via the Priority Program SPP\,2256
``\emph{Variational Methods for Predicting Complex Phenomena in Engineering
  Structures and Materials}'' (project no.\,441470105), subproject Mi 459/9-1 
\emph{Analysis for thermo-mechanical models with internal variables}. 
The authors are grateful to Giuseppe Savar\'e for helpful and stimulating
discussions. 

\markboth{References}{References}


\providecommand{\bysame}{\leavevmode\hbox to3em{\hrulefill}\thinspace}

\end{document}